\newtheorem{theorem}{Theorem}[section]
\newtheorem{proposition}[theorem]{Proposition}
\newtheorem{cor}[theorem]{Corollary}
\newtheorem{lemma}[theorem]{Lemma}
\newtheorem{remark}[theorem]{Remark}
\newtheorem*{proposition*}{Proposition}
\renewcommand{\theequation}{\arabic{section}.\arabic{equation}}
\def\bcb{\begin{color}{blue}}
\def\bcr{\begin{color}{red}} 
\def\bcv{\begin{color}{violet}} 
\def\ec{\end{color}}
\title{Global existence of the nonisentropic compressible Euler equations with vacuum boundary surrounding a variable entropy state}                
\author{Calum Rickard\footnote{Department of Mathematics, University of Southern California, Los Angeles, USA}, \ Mahir Had\v zi\'c\footnote{Department of Mathematics, University College London, 25 Gordon Street, London, WC1E 6XA, UK}, \
Juhi Jang\footnote{Department of Mathematics, University of Southern California, Los Angeles, USA and Korea Institute for Advanced Study, Seoul, Republic of Korea}}
\date{}
\begin{document}

\maketitle  

\abstract{
Global existence for the nonisentropic compressible Euler equations with vacuum boundary for all adiabatic constants $\gamma > 1$ is shown through perturbations around a rich class of background nonisentropic affine motions. The notable feature of the nonisentropic motion lies in the presence of non-constant entropies, and it brings a new mathematical challenge to the stability analysis of nonisentropic affine motions. In particular, the estimation of the curl terms requires a careful use of algebraic, nonlinear structure of the pressure. With suitable regularity of the underlying affine entropy, we are able to adapt the weighted energy method developed for the isentropic Euler \cite{1610.01666} to the nonisentropic problem. For large $\gamma$ values, inspired by \cite{shkoller2017global}, we use time-dependent weights that allow some of the top-order norms to potentially grow as the time variable tends to infinity. We also exploit coercivity estimates here via the fundamental theorem of calculus in time variable for norms which are not top-order.
}   

\section{Introduction}
We consider compressible Euler equations for ideal gases in three space dimensions
\begin{align} 
\rho(\partial_t \mathbf{u} + \mathbf{u} \cdot \nabla \mathbf{u}) + \nabla p  & = 0, \label{E:MOM} \\
\partial_t \rho + \text{div}(\rho\mathbf{u})   & = 0, \label{E:M} \\ 
\partial_t \epsilon + \mathbf{u} \cdot \nabla \epsilon + (\gamma-1) \epsilon \, \text{div}(\mathbf{u}) & = 0, \label{E:EE}
\end{align}                 
where $\mathbf{u}$ is the fluid velocity vector field, $\rho$ is the density, $\epsilon$ is the internal energy, $p$ is the pressure and $\gamma > 1$ is the adiabatic constant. Coupled with the equation of state for an ideal gas
\begin{equation} 
p(\rho,\epsilon)=(\gamma-1)\rho\epsilon, \label{E:EEOS}
\end{equation}
equations (\ref{E:MOM})-(\ref{E:EEOS}) describe the compressible flow of an inviscid, non-conducting and adiabatic gas.

It is often convenient to use another unknown - the entropy $S$ - instead of the internal energy. Equation (\ref{E:EE}) is then equivalently replaced by  
\begin{align}
\partial_t S + \mathbf{u} \cdot \nabla S & = 0, \label{E:S}
\end{align}
and the equation of state reads 
\begin{equation}  
p(\rho,S)=\rho^{\gamma} e^{S}. \label{E:SEOS} 
\end{equation}
Note that the entropy is just transported by the flow, and therefore the entropy formulation will be in particular useful in Lagrangian coordinates. See Section \ref{S:LAGR}. 

A special case where the entropy $S$ remains constant represents the isentropic process and in that case, the equation of state relates the pressure of the gas to the density only: $p=\rho^{\gamma}$ and the unknown variables for the system are the density and velocity. In this article, we are interested in the dynamics of the nonisentropic gas whose entropy changes in both time and space but is finite. 
         
We study the vacuum free boundary problem: that is, we consider the Euler equations (\ref{E:MOM})-(\ref{E:EE}) and (\ref{E:S}) in the following time dependent open bounded domain
$$ 
\Omega(t) \subset \mathbb{R}^3 \text{ with boundary } \partial \Omega(t) \text{ where } t \in [0,T] \text{ for some } T>0.
$$
The boundary conditions are then the physical vacuum boundary condition coupled with kinematic boundary condition
\begin{alignat}{2} 
p&=0 & \qquad \text{on} \enskip \partial \Omega(t), \label{E:VACUUM} \\
-\infty < \frac{\partial \epsilon}{\partial \mathbf{n}} &<0 & \qquad \text{on} \enskip \partial \Omega(t), \label{E:ENORMALDERIVATIVE} \\ 
 \mathcal{V} (\partial\Omega(t))&= \mathbf{u}\cdot \mathbf{n}(t) & \qquad \text{on} \enskip \partial \Omega(t),\label{E:VELOCITYBDRYE}       
 \end{alignat}   
with $\mathbf{n}$ the outward unit normal vector to $\partial\Omega(t)$, $\frac{\partial}{\partial \mathbf{n}}$ the outward normal derivative, and $\mathcal{V} (\partial\Omega(t))$ the normal velocity of $\partial\Omega(t)$. The condition (\ref{E:ENORMALDERIVATIVE}) is most convenient for us to express in terms of the internal energy $\epsilon$. When the entropy is bounded from below and from above, the physical vacuum condition \eqref{E:ENORMALDERIVATIVE} can be written as 
\begin{equation}\label{E:PHYSICALVACUUM2}
-\infty < \frac{\partial \rho^{\gamma-1}}{\partial \mathbf{n}}<0.
\end{equation}
Physically, this condition implies a nontrivial acceleration of the gas at the boundary in the normal direction and mathematically, it  implies an inherent lack of regularity of the enthalpy at the vacuum boundary. Moreover it is clear from~\eqref{E:PHYSICALVACUUM2} that the solution is not even $C^1$ at the boundary. This is not just a mathematical curiosity, but it is in fact fundamental to give up the requirement of smoothness in order to track the evolution of compactly supported gases; we are led to study the free boundary problem and it turns out that~\eqref{E:PHYSICALVACUUM2} is a key condition to guarantee well-posedness of the free boundary. The physical vacuum condition is not merely a technical condition but it is realized for a wide range of physical systems for gaseous fluids. In particular a key impetus for the physical vacuum condition~\eqref{E:PHYSICALVACUUM2} comes from astrophysics. A famous class of equilibria of the gravitational Euler-Poisson system, known as the Lane-Emden stars, satisfy~\eqref{E:PHYSICALVACUUM2} and any rigorous theory that attempts to study the nonlinear dynamics in the vicinity of Lane-Emden stars must contend with~\eqref{E:PHYSICALVACUUM2}~\cite{Ch,GoWe,jang2014}. It also arises naturally in the context of Euler equations with damping for the gas flow through a porous medium \cite{L2,liu2000compressible,zeng2019almost}. For more detail on the physical vacuum, we refer to \cite{Coutand2012,JaMa2009,JM1,JM2012,doi:10.1002/cpa.21517,LXZ}.  

Finally, we recall we are working on the time interval $[0,T]$ and we consider the initial conditions 
\begin{equation} 
(\rho(0,\cdot),\mathbf{u}(0,\cdot),
S(0,\cdot),\Omega(0))=(\rho_0,\mathbf{u}_0,S_0,\Omega_0). \label{E:IC}
\end{equation} 
Collectively, we will study the vacuum free boundary nonisentropic 
Euler system (\ref{E:MOM})-(\ref{E:IC}).

Before we move on, we briefly discuss some known results for the Euler equations. Due to vast literature, we will only mention the works relevant to the present article.  We refer to \cite{Coutand2012, 1610.01666, doi:10.1002/cpa.21517, luk2018shock} and reference therein for more thorough review. We begin with the Cauchy problem in the whole space. It is well-known that the Euler equations are hyperbolic and the existence of $C^1$  local-in-time  positive density solutions follows from the theory of symmetric hyperbolic systems \cite{kato1975cauchy,majda1984compressible}. Serre \cite{Se1997} and Grassin \cite{grassin1998global} proved global existence for a special class of initial data in the whole space by the perturbation of expansive wave solutions to the vectorial Burgers equation with linearly growing velocities at infinity - a related idea was used in the work of Rozanova~\cite{Ro}.
On the other hand, Sideris \cite{sideris1985} showed that singularities must form if the density is a strictly positive constant outside of a bounded set. A detailed description of shock formation starting with smooth initial data for irrotational relativistic fluids around is given by Christodoulou~\cite{christodoulou2007formation} for special-relativistic fluids and Christodoulou-Miao~\cite{MiCr} in the nonrelativistic case. For a more general framework covering a wider class of equations leading to shock formation see the works of Speck and Luk-Speck~\cite{luk2018shock,Sp}. Makino-Ukai- Kawashima \cite{MUK1986} proved that singularities form starting from compactly supported smooth solutions. We remark that these singularity and shock formation results do not apply to the physical vacuum free boundary problem.

In the vacuum free boundary framework, a lot of important progress has been made in the past decade. Local well-posedness for compressible Euler equations with physical vacuum has been established by Coutand-Shkoller \cite{Coutand2012} and Jang-Masmoudi \cite{doi:10.1002/cpa.21517}.  

First examples of global-in-time solutions surrounded by vacuum and satisfying the physical vacuum condition were given by Sideris~\cite{MR3634025}. These are the so-called affine motions found by a separation-of-variables ansatz for the Lagrangian flow map $\zeta(t,y)$
\begin{equation}\label{E:AFFINEANSATZ}
\zeta(t,y) = A(t)y,
\end{equation}
where $t\mapsto A(t)$ is an unknown $3\times3$ matrix. Such an ansatz severely reduces the dynamic degrees of freedom resulting in an ordinary differential equation (ODE) for the matrix $A(t)$. We will therefore refer to such solutions as ODE-type solutions. It should be noted, that the idea of 
considering the ansatz~\eqref{E:AFFINEANSATZ} was considered before in the context of nonisentropic flows and goes back to the works of Ovsyannikov \cite{ovsyannikov1956new} and Dyson \cite{10.2307/24902147}, wherein the ODE satisfied by $A(t)$ was already discovered. The affine motions constructed by Sideris importantly satisfy the physical vacuum condition~\eqref{E:PHYSICALVACUUM2}, which is a critical assumption in the general well-posedness framework developed in~\cite{Coutand2012,doi:10.1002/cpa.21517}.

We remark that the ansatz (\ref{E:AFFINEANSATZ}) has been widely exploited in various physical applications beyond the solutions considered in this paper. For instance, it is explicitly seen to hold for special physical solutions to the Navier-Stokes equations;  
 specifically, as described in \cite{majda1986vorticity}, a jet which compresses in one coordinate direction and stretches in another, a fluid which drains purely along one direction from planes parallel to the alternate coordinate plane, and the pure rotation of a rigid body, all obey (\ref{E:AFFINEANSATZ}). Another physical application of the affine ansatz (\ref{E:AFFINEANSATZ}) can be found in the dynamics of gaseous stars governed by the Euler-Poisson system; in particular, in \cite{GoWe,Makino92}, the star collapse and expansion were demonstrated by using (\ref{E:AFFINEANSATZ}) in the radially symmetric setting. 

In the {\em isentropic} case, the nonlinear stability of the Sideris solutions
was shown  by Had\v zi\'c-Jang \cite{1610.01666} for $\gamma \in (1,\frac53]$ and then extended to the full range $\gamma>1$ by Shkoller-Sideris~\cite{shkoller2017global}. In a recent work \cite{PHJ2019}, Parmeshwar-Had\v zi\'c-Jang showed the global existence of expanding solutions with small densities without relying on the background affine solutions, again in the class of isentropic flows. Also very recently in the isentropic setting with damping \cite{zeng2019almost}, Zeng has established the existence of almost global solutions by perturbing background approximate solutions known as Barenblatt solutions with sub-linear expansion which contrasts with the linear expansion considered otherwise.

By contrast to the isentropic case, it has remained an open question to construct open sets of initial data in the physically important {\em nonisentropic} case that lead to 
global existence in the presence of free vacuum boundaries. This is the main goal of this article. Indeed, the expansion of gas into vacuum is an important physical phenomenon \cite{greenspan1962expansion}. Our result demonstrates the linear expansion of nonisentropic gas into vacuum is a stable mechanism that avoids shock formation.

\subsection{Nonisentropic Affine Motion}\label{S:AFF}
As mentioned above, Sideris~\cite{MR3634025} constructed a family of affine motions satisfying the physical vacuum boundary condition. Such solutions are blobs of gas initially occupying $B_1(\mathbf{0})$ - the unit ball in $\mathbb{R}^3$. Their evolving support is given as the image of $B_1(\mathbf{0})$ under the matrix $A(t)$ i.e. $\Omega(t)=A(t)B_1(\mathbf{0})$, where $t\mapsto A(t)$ is an a priori unknown matrix. This generically gives us a gas supported on an evolving ellipsoid.
At the level of Lagrangian coordinates, this translates into separating variables and writing the flow map in the form~\eqref{E:AFFINEANSATZ}, see~\cite{10.2307/24902147,ovsyannikov1956new,MR3634025}. After plugging this back into the Lagrangian formulation of the problem, an algebraic manipulation leads to the following fundamental system of ODEs satisfied by $A(t)$
\begin{align}
A''(t)&= \delta (\det A(t))^{1-\gamma} A(t)^{-\top}, \label{E:AODE}\\
(A(0),A'(0),\delta) &\in \text{GL}^+(3) \times \mathbb M^{3\times3} \times \mathbb{R}_+, \label{E:FUNDAMENTALSYSTEMFIRST}
\end{align}
for fixed $\delta > 0$. In the above $\mathbb{M}^{3\times3}$ denotes the set of $3 \times 3$ matrices over $\mathbb{R}$ and $\text{GL}^+(3)=\{A \in \mathbb{M}^{3\times3} : \det A > 0\}$. With $A \in C(\mathbb{R},\text{GL}^+(3)) \cap C^\infty(\mathbb{R},\mathbb{M}^3)$ solving this system of ODEs, the associated solution of the Euler equations is given by
\begin{align}
\mathbf{u}_A(t,x)&=A'(t)A(t)^{-1}x, \label{E:AFFINEVELOCITY} \\
\rho_A(t,x)&=\accentset{\circ}{\rho}(|A(t)^{-1}x|)/(\det A(t)), \label{E:AFFINEDENSITY}\\
\epsilon_A(t,x)&=\accentset{\circ}{\epsilon} (|A(t)^{-1}x|)/(\det A(t))^{\gamma-1}.\label{E:AFFINEINTERNALENERGY}
\end{align}
We should think of the density $\rho_A$ and the internal energy $\epsilon_A$ as the basic profiles $\accentset{\circ}{\rho}=\accentset{\circ}{\rho}(r)$ and $\accentset{\circ}{\epsilon}=\accentset{\circ}{\epsilon}(r)$, modulated by the matrix $A(t)$ through~\eqref{E:AFFINEDENSITY}--\eqref{E:AFFINEINTERNALENERGY} respectively.

Following Sideris \cite{MR3634025}, to see why we have the fundamental system of ODEs (\ref{E:AODE}) first write $x=A(t)y$ with $y \in B_1(\mathbf{0})$ which follows from $\Omega(t)=A(t)B_1(\mathbf{0})$. Then using the chain rule
\begin{equation}\label{E:CHAINRULEUA}
\frac{d}{d t} \mathbf{u}_A(t,x(t,y)) = \partial_t \mathbf{u}_A + \partial_t x \cdot \nabla \mathbf{u}_A = \partial_t \mathbf{u}_A + A'(t) y \cdot \nabla \mathbf{u}_A = \partial_t \mathbf{u}_A + \mathbf{u}_A \cdot \nabla \mathbf{u}_A.
\end{equation}
On the other hand with (\ref{E:AFFINEVELOCITY}),
\begin{equation}
\frac{d}{d t} \mathbf{u}_A(t,x(t,y)) = \frac{d}{d t} (A'(t) y) = A''(t) y.
\end{equation}
Also using the chain rule, with $r=|y|$ as above,
\begin{equation}
\nabla_{x}  = A^{-\top} \nabla_{y} = A^{-\top} y \, \partial_r.
\end{equation}
Then substituting these formulas as well as (\ref{E:AFFINEDENSITY})-(\ref{E:AFFINEINTERNALENERGY}) and the equation of state (\ref{E:EEOS}) into the momentum equation (\ref{E:MOM}) we find
\begin{equation}
(\det A(t))^{-1} \accentset{\circ}{\rho}(r) A''(t) y + (\det A(t))^{-\gamma} A(t)^{-\top} y \, \accentset{\circ}{p}'(r) = 0,
\end{equation}
where $\accentset{\circ}{p}=(\gamma-1)\accentset{\circ}{\rho}\accentset{\circ}{\epsilon}$. Separate time and space variables using the common constant $\delta > 0$ and eliminate $y$ to obtain
\begin{align}
A''(t)&= \delta (\det A(t))^{1-\gamma} A(t)^{-\top}, \\
\accentset{\circ}{p}'(r)&=-\delta r \accentset{\circ}{\rho}(r).
\end{align}
Hence we have derived the fundamental system ODEs for $A(t)$ (\ref{E:AODE}) and also the fundamental ODE in space, see (\ref{E:SIDERISPRESSUREODE}) below.

Next we require the density profile $\accentset{\circ}{\rho}$ has the following properties (Lemma 1 \cite{MR3634025})
\begin{subequations}
\begin{equation}\label{E:RHOZEROAFFREGULARITYA}
\accentset{\circ}{\rho} \in C^0[0,1] \cap C^1[0,1),
\end{equation}
\begin{equation}\label{E:RHOZEROAFFREGULARITYB}
\accentset{\circ}{\rho}(r) > 0 \text{ for } r \in [0,1),
\end{equation}
\begin{equation}\label{E:RHOZEROAFFREGULARITYC}
\accentset{\circ}{\rho}'(0)=\accentset{\circ}{\rho}(1)=0,
\end{equation}
\begin{equation}\label{E:RHOZEROAFFLIMIT}
0<\lim_{r \rightarrow 1^-} (1-r)^{-\sigma} \accentset{\circ}{\rho}(r) < \infty \text{ for some }  \sigma> 0.
\end{equation}
\end{subequations}
Now the corresponding internal energy profile $\accentset{\circ}{\epsilon}$ is found by solving the fundamental ODE in space
\begin{equation}\label{E:SIDERISPRESSUREODE}
(\accentset{\circ}{p})'(r)=-\delta r \accentset{\circ}{\rho}(r),
\end{equation}
where we recall $\accentset{\circ}{p}=(\gamma-1)\accentset{\circ}{\rho}\accentset{\circ}{\epsilon}$ and $\delta > 0$.

From (\ref{E:S}) the entropy of the Sideris affine solution satisfies, using the same argument as (\ref{E:CHAINRULEUA}),
\begin{equation}
\frac{d}{d t} S_A(t,x(t,y)) = \partial_t S_A + \mathbf{u}_A \cdot \nabla S_A = 0.
\end{equation}
Hence $S_A(t,x)$ depends on $y$ only. Moreover using (\ref{E:AFFINEDENSITY})-(\ref{E:AFFINEINTERNALENERGY}) and the equivalent equations of state (\ref{E:EEOS})-(\ref{E:SEOS}),
\begin{equation}
(\gamma-1) \frac{\accentset{\circ}{\epsilon}(r) \accentset{\circ}{\rho}(r)}{(\det A(t))^\gamma} = \frac{\accentset{\circ}{\rho}(r)^\gamma}{(\det A(t))^\gamma} e^{S_A(y)}.
\end{equation}
Therefore canceling $(\det A(t))^\gamma$ we see that $S_A(y)$ must be a function of $r=|y|=|A(t)^{-1}x|$, that is, the entropy of the Sideris affine solutions is given by
\begin{equation}\label{E:AFFENTROPY}
S_A(t,x)=\accentset{\circ}{S}(|A(t)^{-1}x|).
\end{equation} 
Here, $\accentset{\circ}{S}=\accentset{\circ}{S}(r)$ is defined through the relationship $\accentset{\circ}{p}=(\accentset{\circ}{\rho})^{\gamma} e^{\accentset{\circ}{S}}$. Through (\ref{E:SIDERISPRESSUREODE}) we can obtain an explicit formula for $\accentset{\circ}{S}$ in terms of $\accentset{\circ}{\rho}$ which together with (\ref{E:RHOZEROAFFREGULARITYA})-(\ref{E:RHOZEROAFFLIMIT}) defines the nonisentropic entropy profile associated with the Sideris affine solutions 
\begin{equation}\label{E:AFFENTROPY}  
\accentset{\circ}{S}(r) =  \ln \left( \frac{ \delta \int_{r}^1 \ell \accentset{\circ}{\rho} (\ell) \, d \ell }{(\accentset{\circ}{\rho}(r))^\gamma} \right).
\end{equation}
This formula highlights an important feature of the nonisentropic affine setting: the solutions are defined as a class of functions through the choice of $\accentset{\circ}{\rho}$ which becomes an additional parameter in the solution scheme. In particular, the space which ``parametrizes" the nonisentropic affine motions is infinite dimensional, by contrast to the isentropic case:
\begin{remark}
In the isentropic case, $\accentset{\circ}{\rho}$ is fixed as follows~\cite{MR3634025}
\begin{equation}
\accentset{\circ}{\rho}(r)=\left[\frac{\delta(\gamma-1)}{2\gamma}(1-r^2)\right]^{\frac{1}{\gamma-1}}.
\end{equation} 
On the other hand, in the nonisentropic case, the choice of $\accentset{\circ}{\rho}$ is essentially arbitrary as long as the constraints~(\ref{E:RHOZEROAFFREGULARITYA})-(\ref{E:RHOZEROAFFLIMIT}) are met.
\end{remark}

\begin{remark}
One can in fact provide a different derivation of the affine motions using only the Eulerian formulation of the problem. This approach exploits heavily the rich symmetries of the problem. The details are given in Appendix~\ref{A:SCALING}.
\end{remark}

Motivated by physical considerations \cite{bekenstein1981universal}, it is important to isolate the affine motions with uniformly bounded entropies up to the vacuum boundary. Using the formula~(\ref{E:AFFENTROPY}), we obtain the following simple, but important characterization of affine entropy behavior.


\begin{lemma}\label{L:ENTROPYLIMIT}
For $r \in [0,1), \text{ }0 < (e^{\accentset{\circ}{S}})(r) < \infty$. Furthermore
$$(e^{\accentset{\circ}{S}})(1)=\lim_{r \rightarrow 1^-} (e^{\accentset{\circ}{S}})(r)=\begin{cases} 
\infty & \sigma(\gamma-1) > 1 \\
(\delta L^{1-\gamma})/ \sigma \gamma & \sigma(\gamma-1) = 1 \\
0 & \sigma(\gamma-1) < 1 \\
 \end{cases},
 $$
 where $\sigma> 0$ from~\eqref{E:RHOZEROAFFLIMIT} is the particular value such that
$$0 < \lim_{r \rightarrow 1^-} (1-r)^{-\sigma} \accentset{\circ}{\rho} (r):=L < \infty.$$
\end{lemma}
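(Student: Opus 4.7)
My plan is to work directly with the explicit formula
$$e^{\accentset{\circ}{S}}(r) = \frac{\delta \int_r^1 \ell\, \accentset{\circ}{\rho}(\ell)\, d\ell}{(\accentset{\circ}{\rho}(r))^\gamma}$$
obtained from~\eqref{E:AFFENTROPY}. For the first claim, I simply note that for $r \in [0,1)$ the denominator is strictly positive and finite by~\eqref{E:RHOZEROAFFREGULARITYA}--\eqref{E:RHOZEROAFFREGULARITYB}, while the numerator is strictly positive since $\ell \accentset{\circ}{\rho}(\ell) > 0$ on $(r,1)$ by~\eqref{E:RHOZEROAFFREGULARITYB}, and finite because $\accentset{\circ}{\rho}$ is continuous on $[0,1]$. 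This establishes $0 < e^{\accentset{\circ}{S}}(r) < \infty$.

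For the limit as $r \to 1^-$, the key input is~\eqref{E:RHOZEROAFFLIMIT}, which states $\accentset{\circ}{\rho}(r)/(1-r)^\sigma \to L$. This immediately yields $(\accentset{\circ}{\rho}(r))^\gamma/(1-r)^{\sigma\gamma} \to L^\gamma$. To handle the numerator I would apply L'H\^opital's rule: since both $\int_r^1 \ell \accentset{\circ}{\rho}(\ell)\, d\ell$ and $(1-r)^{\sigma+1}$ tend to $0$, the fundamental theorem of calculus together with~\eqref{E:RHOZEROAFFLIMIT} gives
$$\lim_{r \to 1^-} \frac{\int_r^1 \ell \accentset{\circ}{\rho}(\ell)\, d\ell}{(1-r)^{\sigma+1}} = \lim_{r \to 1^-} \frac{r \accentset{\circ}{\rho}(r)}{(\sigma+1)(1-r)^{\sigma}} = \frac{L}{\sigma+1}.$$

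Combining these two asymptotics yields
$$\lim_{r \to 1^-} (1-r)^{\sigma(\gamma-1)-1}\, e^{\accentset{\circ}{S}}(r) = \frac{\delta L^{1-\gamma}}{\sigma+1}.$$
The case analysis based on the sign of the exponent $\sigma(\gamma-1)-1$ then produces the three alternatives: if $\sigma(\gamma-1) > 1$ the prefactor $(1-r)^{\sigma(\gamma-1)-1}$ vanishes so $e^{\accentset{\circ}{S}}(r) \to \infty$; if $\sigma(\gamma-1) < 1$ the prefactor blows up and $e^{\accentset{\circ}{S}}(r) \to 0$; in the borderline case $\sigma(\gamma-1)=1$ the prefactor equals $1$ and, using $\sigma+1 = \sigma\gamma$, the limit simplifies to $\delta L^{1-\gamma}/(\sigma\gamma)$, matching the stated constant.

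The only mildly delicate step is the L'H\^opital computation, where I need the hypothesis~\eqref{E:RHOZEROAFFLIMIT} to transfer the asymptotic behavior of $\accentset{\circ}{\rho}$ to its antiderivative; no further regularity of $\accentset{\circ}{\rho}$ beyond~\eqref{E:RHOZEROAFFREGULARITYA}--\eqref{E:RHOZEROAFFLIMIT} is needed, and the rest is algebra. No substantive obstacle is anticipated.
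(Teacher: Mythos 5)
Your proof is correct and follows essentially the same route as the paper: both arguments use the explicit formula for $e^{\accentset{\circ}{S}}$, the positivity and continuity of $\accentset{\circ}{\rho}$ on $[0,1)$ for the first claim, and L'H\^opital's rule together with~\eqref{E:RHOZEROAFFLIMIT} to extract the asymptotics of $\int_r^1 \ell\accentset{\circ}{\rho}(\ell)\,d\ell$ near $r=1$. The only (cosmetic) difference is that you normalize the numerator by $(1-r)^{\sigma+1}$ while the paper normalizes by $(1-r)^{\sigma\gamma}$; in the borderline case $\sigma(\gamma-1)=1$ these coincide, and your identity $\sigma+1=\sigma\gamma$ recovers the stated constant.
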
 

\begin{proof}
First by (\ref{E:RHOZEROAFFREGULARITYA})-(\ref{E:RHOZEROAFFREGULARITYB}), $0 < \accentset{\circ}{\rho} (r) < \infty$ for $r \in [0,1)$ and hence   
$$0 < (e^{\accentset{\circ}{S}})(r)= \frac{ \delta  \int_r^1 \ell  \accentset{\circ}{\rho} (\ell) \, d\ell }{\accentset{\circ}{\rho}(r)^\gamma} < \infty \text{ for } r \in [0,1).$$
Now
\begin{align*}
\lim_{r \rightarrow 1^-} (e^{\accentset{\circ}{S}})(r)&=\delta \lim_{r \rightarrow 1^-}  \frac{ \int_r^1 \ell  \accentset{\circ}{\rho} (\ell) \, d\ell }{\accentset{\circ}{\rho}(r)^\gamma} = \delta \lim_{r \rightarrow 1^-}  \frac{(1-r)^{\sigma \gamma}}{\accentset{\circ}{\rho}(r)^\gamma}\frac{ \int_r^1 \ell  \accentset{\circ}{\rho} (\ell) \, d\ell }{(1-r)^{\sigma \gamma}}  \\
&= \frac{\delta }{L^\gamma} \lim_{r \rightarrow 1^-} \frac{ \int_r^1 \ell  \accentset{\circ}{\rho} (\ell) \, d\ell }{(1-r)^{\sigma \gamma}} = \frac{\delta}{L^\gamma} \lim_{r \rightarrow 1^-} \frac{-r \accentset{\circ}{\rho} (r)}{-\sigma \gamma (1-r)^{\sigma \gamma -1}} \text{ (L'Hospital's Rule)} \\
&=\frac{\delta}{\sigma \gamma L^\gamma} \lim_{r \rightarrow 1^-} \frac{\accentset{\circ}{\rho}(r)}{(1-r)^\sigma} \frac{r}{(1-r)^{\sigma(\gamma-1)-1}} = \frac{\delta L^{1-\gamma}}{\sigma \gamma} \lim_{r \rightarrow 1^-} \frac{r}{ (1-r)^{\sigma(\gamma-1)-1}},
\end{align*}
wherefrom the claim follows.
\end{proof}


Lemma \ref{L:ENTROPYLIMIT} shows that the only value of $\sigma$ allowing a uniformly bounded  entropy is $\sigma= \frac{1}{\gamma-1}$. In this paper, we restrict our attention to the class of nonisentropic affine solutions with finite entropies: namely, we demand $\sigma=\frac{1}{\gamma-1}>0$ in (\ref{E:RHOZEROAFFLIMIT}). Then with $\sigma=\frac{1}{\gamma-1}$ and in view of the condition (\ref{E:RHOZEROAFFREGULARITYA})-(\ref{E:RHOZEROAFFREGULARITYC}), we shall from now on consider the profiles $\accentset{\circ}{\rho}$  of the form
\begin{equation}\label{E:RHOZEROAFFDEMAND}
\accentset{\circ}{\rho}(r)=(1-r)^{\frac{1}{\gamma-1}}\phi(r),
\end{equation}
where    
$\phi \in C^{k}[0,1], \ \phi>0$ satisfying $\phi'(0)=\frac{1}{\gamma-1}\phi(0)$ with $k\in \mathbb N$ to be specified. Here we demand the condition  $\phi'(0)=\frac{1}{\gamma-1}\phi(0)$ to ensure the regularity of $\accentset{\circ}{\rho}$ at the center as in \eqref{E:RHOZEROAFFREGULARITYC}.

In the following, we show that the affine entropy enjoys the same regularity as $\phi$.      

 
\begin{lemma}\label{L:ENTROPYREGULARITY}
Let $\phi \in C^{k}[0,1], \ \phi>0$ be given for some $k \in \mathbb{Z}_{\geq 0}$. With (\ref{E:RHOZEROAFFDEMAND}), we have that $e^{\accentset{\circ}{S}} \in C^{k}[0,1]$.
\end{lemma}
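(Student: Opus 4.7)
The plan is to reduce the claim to a smoothness statement about a one-sided Abel-type integral by substituting the ansatz \eqref{E:RHOZEROAFFDEMAND} into the explicit formula \eqref{E:AFFENTROPY} and then eliminating the apparent singularity at the vacuum boundary by a scaling change of variables.

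First, I would substitute $\accentset{\circ}{\rho}(r) = (1-r)^{1/(\gamma-1)}\phi(r)$ into \eqref{E:AFFENTROPY} to obtain
\begin{equation*}
e^{\accentset{\circ}{S}}(r) = \frac{\delta \int_r^1 \ell (1-\ell)^{1/(\gamma-1)} \phi(\ell)\, d\ell}{(1-r)^{\gamma/(\gamma-1)} \phi(r)^\gamma}.
\end{equation*}
Since $\phi \in C^k[0,1]$ and $\phi > 0$, the factor $\phi(r)^\gamma$ is $C^k[0,1]$ and bounded below away from zero, so the denominator contributes no trouble apart from the boundary weight $(1-r)^{\gamma/(\gamma-1)}$. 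To move the singularity to the origin, set $s = 1-r$ and $u = 1-\ell$, and write $\psi(u) := (1-u)\phi(1-u) \in C^k[0,1]$. The expression becomes
\begin{equation*}
e^{\accentset{\circ}{S}}(1-s) = \frac{\delta \int_0^s u^{1/(\gamma-1)} \psi(u)\, du}{s^{\gamma/(\gamma-1)} \phi(1-s)^\gamma}.
\end{equation*}

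The heart of the proof is the Hardy-type identity obtained from the rescaling $u = sv$: for every $s \in (0,1]$,
\begin{equation*}
\int_0^s u^{1/(\gamma-1)} \psi(u)\, du = s^{\gamma/(\gamma-1)} \int_0^1 v^{1/(\gamma-1)} \psi(sv)\, dv.
\end{equation*}
This is the crucial cancellation: the boundary weight $s^{\gamma/(\gamma-1)}$ in the denominator is produced by the integral itself precisely because of the matching $\sigma = \frac{1}{\gamma-1}$ from Lemma~\ref{L:ENTROPYLIMIT}. Substituting yields
\begin{equation*}
e^{\accentset{\circ}{S}}(1-s) = \frac{\delta \int_0^1 v^{1/(\gamma-1)} \psi(sv)\, dv}{\phi(1-s)^\gamma},
\end{equation*}
which is manifestly free of any singular factor at $s=0$.

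It then remains to show that this quotient is a $C^k$ function of $s \in [0,1]$. The denominator is $C^k$ and bounded away from zero, so it suffices to establish that $H(s) := \int_0^1 v^{1/(\gamma-1)} \psi(sv)\, dv$ is $C^k[0,1]$. Since $\psi \in C^k[0,1]$ and all derivatives $\partial_s^j \psi(sv) = v^j \psi^{(j)}(sv)$ for $j \le k$ are jointly continuous in $(s,v) \in [0,1]^2$ and dominated by integrable functions of $v$ (as $v^{1/(\gamma-1)+j}$ is integrable on $[0,1]$), differentiation under the integral sign gives
\begin{equation*}
H^{(j)}(s) = \int_0^1 v^{1/(\gamma-1)+j}\, \psi^{(j)}(sv)\, dv, \qquad 0 \le j \le k,
\end{equation*}
with each $H^{(j)}$ continuous on $[0,1]$. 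Putting the pieces together, $e^{\accentset{\circ}{S}}(r) = \delta H(1-r)/\phi(r)^\gamma$ lies in $C^k[0,1]$, as required. The only conceptual obstacle is identifying the correct scaling that converts the boundary singularity into a regular integral, and this is exactly dictated by the choice $\sigma = \frac{1}{\gamma-1}$ that was singled out in Lemma~\ref{L:ENTROPYLIMIT}; once that identity is in hand, the proof reduces to a routine differentiation-under-the-integral argument.
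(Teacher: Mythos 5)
Your proof is correct, but it follows a genuinely different route from the paper. The paper treats the general quotient $z(r)=\int_r^1 a(\ell)(1-\ell)^{X}\,d\ell\,/\,\bigl((1-r)^{Y}b(r)\bigr)$ and proves $z\in C^k$ by an iterative scheme: differentiate once, integrate by parts to cancel the dangerous negative powers of $(1-r)$, observe that the surviving nonsmooth pieces are again of the same form with $a,b$ replaced by $C^{k-1}$ data, and repeat $k-1$ times. You instead exploit the exact exponent matching $Y=X+1$ (equivalently $\sigma=\tfrac{1}{\gamma-1}$) through the single rescaling $u=sv$, which converts the Abel-type integral into the regular parameter integral $H(s)=\int_0^1 v^{1/(\gamma-1)}\psi(sv)\,dv$ and reduces the whole lemma to differentiation under the integral sign; the hypotheses for that (joint continuity of $v^{1/(\gamma-1)+j}\psi^{(j)}(sv)$ on $[0,1]^2$ and the uniform integrable bound) are verified correctly, and the positivity and $C^k$ regularity of $\phi(1-s)^\gamma$ handle the denominator. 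Your argument is shorter and makes the mechanism of the cancellation more transparent, at the cost of being tailored to the critical case $Y=X+1$; the paper's integration-by-parts scheme is more flexible in that it also covers $Y\leq X$, though that is not needed for $e^{\accentset{\circ}{S}}$. Both arguments are complete and yield the same conclusion.
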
 

\begin{proof}
The case $k=0$ follows from Lemma \ref{L:ENTROPYLIMIT}. For $k \geq 1$, first notice that by (\ref{E:AFFENTROPY}) and (\ref{E:RHOZEROAFFDEMAND})
\begin{equation}\label{E:AFFENTROPYPOSTRHOZEROAFFDEMAND}
(e^{\accentset{\circ}{S}})(r)=\frac{  \int_r^1 \delta \ell \phi(\ell)(1-\ell)^{\frac{1}{\gamma-1}} \, d\ell }{(1-r)^{\frac{\gamma}{\gamma-1}}(\phi(r))^\gamma}.
\end{equation}
To prove higher-order regularity for $\accentset{\circ}{S}$ we first consider general functions of the form
\begin{equation}
z(r):=\frac{ \int_r^1 a(\ell)(1-\ell)^{X} \, d\ell }{(1-r)^{Y}b(r)},
\end{equation}
where $a, b \in C^{k}[0,1]$, $b \neq 0$ and $X, Y \in \mathbb{R}_{>0}$ with $Y \leq X+1$. Now use the product rule, the fundamental theorem of calculus to differentiate $z$, and then integrate-by-parts to obtain
\begin{align}
z'(r)&=-a(r)(1-r)^{X-Y}[b(r)]^{-1} \notag \\
&+ \int_r^1 a(\ell)(1-\ell)^{X} \, d \ell \left[Y(1-r)^{-Y-1}(b(r))^{-1}-[b(r)]^{-2}b'(r)(1-r)^{-Y}\right] \notag \\
&=\left( \frac{Y}{X+1}-1 \right) (1-r)^{X-Y} a(r)[b(r)]^{-1} + \left(\frac{Y}{X+1}\right) \frac{ \int_r^1 a'(\ell)(1-\ell)^{X+1} \, d \ell}{(1-r)^{Y+1}b(r)} \notag \\ 
&-\left(\frac{1}{X+1}\right) a(r)[b(r)]^{-2}b'(r)(1-r)^{X+1-Y} - \left(\frac{1}{X+1}\right) \frac{ b'(r) \int_r^1 a'(\ell)(1-\ell)^{X+1} \, d \ell}{(1-r)^{Y}[b(r)]^{2}} \notag \\
&=:(i)+(ii)+(iii)+(iv). \label{E:ZDERIVATIVE}
\end{align}
The key step in the above calculation was to use integration by parts to obtain a desirable form for $(i)$ by combining terms to avoid potentially unbounded negative powers of $(1-r)$. On this note, the term $(i)$ is either $0$ (when $Y=X+1$) or $C^k$ (when $Y \leq X$). Also term $(iii)$ is $C^{k-1}$. For term $(ii)$ notice that
\begin{align*}
|(ii)|=\left\lvert\   \left(\frac{Y}{X+1}\right) \frac{ \int_r^1 a'(\ell)(1-\ell)^{X+1} \, d \ell}{(1-r)^{Y+1}b(r)}  \right\rvert\ &\leq \frac{(\sup_{0\leq r \leq 1} |a'(r)|) \int_r^1 (1-\ell)^{X+1}\, d\ell}{(1-r)^{Y+1} |b(r)|} \\
&=\frac{(\sup_{0\leq r \leq 1} |a'(r)|)\frac{1}{X+2}(1-r)^{X+2}}{(1-r)^{Y+1}|b(r)|}.
\end{align*}
Hence $(ii)$ is bounded since $Y+1 \leq X+1+1 = X+2$. Similarly, term $(iv)$ is bounded. Furthermore, we note that terms $(ii)$ and $(iv)$ are of the same general form as $z(r)$ except with $C^{k-1}$ functions replacing  the $C^k$ functions $a, b$ inside the expression for $z$. Therefore we can repeat the same procedure as for the first derivative above $k-1$ times to obtain that $z \in C^k[0,1]$. Since $e^{\accentset{\circ}{S}}$ can be realized as a special instance of the function $z$ with $a(r)=\delta r\phi(r)$, $b(r)=\phi(r)^\gamma$, $X=\frac1{\gamma-1}$, and $Y=X+1$, see~(\ref{E:AFFENTROPYPOSTRHOZEROAFFDEMAND}), we conclude $e^{\accentset{\circ}{S}} \in C^k[0,1]$, as claimed.
\end{proof}


We collect these results into an important consequence that will be used throughout the paper.


\begin{cor}\label{C:ENTROPYREGULARITYCOROLLARY}
Let $\phi \in C^{k}[0,1], \ \phi>0$ be given for some $k \in \mathbb{Z}_{\geq 0}$. With (\ref{E:RHOZEROAFFDEMAND}), then
 there exist constants $0 < c \leq C$ such that for all $r \in [0,1]$
\begin{equation}\label{E:EXPSLOWERBOUNDUPPERBOUND}
0 < c \leq (e^{\accentset{\circ}{S}})(r) \leq C,
\end{equation}
\begin{equation}\label{E:EXPSDERVSUPPERBOUND}
\left\vert\left(\frac{d}{dr}\right)^j(e^{\accentset{\circ}{S}})(r)\right\vert \leq C, \ \ 1 \leq j \leq k.
\end{equation}
\end{cor}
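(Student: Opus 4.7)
The corollary is essentially a clean packaging of the two preceding lemmas together with compactness, so I would keep the argument short and structural rather than computational.

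The plan is to observe that the hypothesis $\sigma = \frac{1}{\gamma-1}$ has been adopted globally just before the statement, so we are squarely in the middle case of Lemma \ref{L:ENTROPYLIMIT}: $\sigma(\gamma-1)=1$. Hence $e^{\accentset{\circ}{S}}$ extends continuously to the endpoint $r=1$ with the strictly positive value $\delta L^{1-\gamma}/(\sigma\gamma)$. Combined with the pointwise positivity $0<e^{\accentset{\circ}{S}}(r)<\infty$ on $[0,1)$ from the same lemma, this gives $e^{\accentset{\circ}{S}}>0$ everywhere on the compact interval $[0,1]$.

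Next I would invoke Lemma \ref{L:ENTROPYREGULARITY} to upgrade this to $e^{\accentset{\circ}{S}}\in C^k[0,1]$ (using the assumed regularity $\phi\in C^k[0,1]$ with $\phi>0$). In particular $e^{\accentset{\circ}{S}}$ is continuous on the compact set $[0,1]$, so it attains its maximum and minimum. The maximum furnishes an upper constant $C$ in \eqref{E:EXPSLOWERBOUNDUPPERBOUND}, and since the function is strictly positive at every point of $[0,1]$ by the previous paragraph, the minimum is itself strictly positive and provides the lower constant $c>0$.

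For the derivative bound \eqref{E:EXPSDERVSUPPERBOUND}, I would again appeal to $e^{\accentset{\circ}{S}}\in C^k[0,1]$: each derivative $\bigl(\tfrac{d}{dr}\bigr)^{j}(e^{\accentset{\circ}{S}})$ for $1\leq j\leq k$ is continuous on $[0,1]$ and hence bounded, giving a common constant $C$ (after enlarging if necessary to dominate all $j\leq k$ simultaneously). There is no genuine obstacle here; the only subtlety worth flagging is checking that we are in the right case of Lemma \ref{L:ENTROPYLIMIT} at $r=1$, since that is precisely what ensures the lower bound $c$ does not degenerate at the vacuum boundary — this is exactly why the standing choice $\sigma=\frac{1}{\gamma-1}$ was imposed in \eqref{E:RHOZEROAFFDEMAND}.
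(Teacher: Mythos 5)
Your proposal is correct and follows essentially the same route as the paper: the strict positivity on $[0,1]$ comes from Lemma \ref{L:ENTROPYLIMIT} (with the standing choice $\sigma=\tfrac{1}{\gamma-1}$ placing us in the case where the boundary limit is $\delta L^{1-\gamma}/\sigma\gamma>0$), and the upper bounds in \eqref{E:EXPSLOWERBOUNDUPPERBOUND}--\eqref{E:EXPSDERVSUPPERBOUND} come from $e^{\accentset{\circ}{S}}\in C^k[0,1]$ via Lemma \ref{L:ENTROPYREGULARITY} together with compactness of $[0,1]$. Your explicit remark that continuity plus pointwise positivity on the compact interval is what prevents the lower constant $c$ from degenerating at $r=1$ is a useful elaboration of what the paper leaves implicit.
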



\begin{proof}
The positive lower bound in (\ref{E:EXPSLOWERBOUNDUPPERBOUND}) follows from Lemma \ref{L:ENTROPYLIMIT} and the upper bounds in  (\ref{E:EXPSLOWERBOUNDUPPERBOUND})-(\ref{E:EXPSDERVSUPPERBOUND}) follow from Lemma \ref{L:ENTROPYREGULARITY} since $C^k$ functions are bounded. 
\end{proof}


To conclude our characterization of nonisentropic affine motion, we finally provide precise asymptotics-in-time for $A(t)$.


\begin{lemma}\label{L:AASYMPTOTICS}
Consider the initial value problem~\eqref{E:AODE}--\eqref{E:FUNDAMENTALSYSTEMFIRST} with $\delta>0$.
For $\gamma\in(1,\tfrac{5}{3}]$, the unique solution $A(t)$ to the fundamental system~\eqref{E:AODE}--\eqref{E:FUNDAMENTALSYSTEMFIRST} has the property 
\begin{equation}\label{E:DETATCUBED1}
\det A(t) \sim 1 + t^3, \quad t \geq 0 
\end{equation}
Furthermore in this case, there exist matrices $A_0,A_1,M(t)$ such that
\begin{align}\label{E:AASYMP} 
A(t) = A_0 + t A_1 + M(t), \quad t \geq 0.
\end{align}
where $A_0,A_1$ are time-independent and $M(t)$ satisfies the bounds
\begin{align}\label{E:MASYMP} 
\|M(t)\| = o_{t \rightarrow \infty}(1+t), \ \ \|\partial_t M(t)\| \lesssim (1+t)^{3-3\gamma}.
\end{align}
For $\gamma>\tfrac53$, given matrices $A_0, A_1$ with $A_1$ positive definite, there exists a unique solution $A(t)$ to the fundamental system~\eqref{E:AODE}--\eqref{E:FUNDAMENTALSYSTEMFIRST} such that (\ref{E:DETATCUBED1}), (\ref{E:AASYMP}) and (\ref{E:MASYMP}) hold.
\end{lemma}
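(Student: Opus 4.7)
The plan is to follow Sideris \cite{MR3634025} in the range $\gamma\in(1,\tfrac{5}{3}]$ and Shkoller-Sideris \cite{shkoller2017global} for $\gamma>\tfrac{5}{3}$, combining a conservation law, a virial identity for $P:=AA^\top$, and (in the latter case) a scattering-type contraction. First, I would verify that
\[
E(t):=\tfrac{1}{2}\|A'(t)\|^2+\tfrac{\delta}{\gamma-1}(\det A(t))^{1-\gamma}
\]
is conserved along \eqref{E:AODE}; this follows from taking the trace of \eqref{E:AODE} paired with $(A')^\top$, together with the identity $\tfrac{d}{dt}(\det A)^{1-\gamma}=(1-\gamma)(\det A)^{1-\gamma}\,\text{tr}(A^{-1}A')$. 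Since $\gamma>1$ both summands in $E$ are nonnegative, giving the uniform bounds $\|A'(t)\|\lesssim 1$ and $\det A(t)\ge c_0>0$; integration in time then yields $\|A(t)\|\lesssim 1+t$, and the singular-value AM-GM inequality $(\det A)^{2/3}\le\tfrac{1}{3}\|A\|^2$ gives the upper bound $\det A(t)\lesssim (1+t)^3$. Global existence and uniqueness forward in time follow from the local Lipschitz continuity of the right-hand side of \eqref{E:AODE} on $\text{GL}^+(3)$.

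The matching lower bound $\det A(t)\gtrsim (1+t)^3$, needed in the case $\gamma\le\tfrac{5}{3}$, is the principal obstacle. To obtain it, I would introduce $P(t):=A(t)A(t)^\top$ and use the ODE identity $A''A^\top=AA''^\top=\delta(\det A)^{1-\gamma}I$ to compute
\[
P''(t)=2A'(t)A'(t)^\top+2\delta(\det A(t))^{1-\gamma}I\;\succeq\;2\delta(\det A(t))^{1-\gamma}I.
\]
This matrix-valued convexity of $P$, combined with the bound $(\det A)^{1-\gamma}\gtrsim(1+t)^{3-3\gamma}$ from the previous step, forces the smallest eigenvalue of $P$ to grow like $t^2$ via a bootstrap; the threshold $\gamma\le\tfrac{5}{3}$ is exactly the regime in which this propagation of positivity closes. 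Once $\det A(t)\sim(1+t)^3$ is established, the adjugate estimate $\|A^{-\top}\|\lesssim\|A\|^2/\det A$ gives
\[
\|A''(t)\|\lesssim(\det A)^{-\gamma}\|A\|^2\lesssim(1+t)^{2-3\gamma},
\]
which is integrable on $[0,\infty)$ for every $\gamma>1$. Hence $A_1:=\lim_{t\to\infty}A'(t)$ exists; setting $A_0:=A(0)$ and $M(t):=A(t)-A_0-tA_1$, one has $\partial_t M(t)=-\int_t^\infty A''(s)\,ds$, so $\|\partial_t M(t)\|\lesssim(1+t)^{3-3\gamma}$ as claimed, and a further time integration yields $\|M(t)\|=o(1+t)$ (the borderline case $\gamma=\tfrac{4}{3}$ produces a logarithmic factor, still $o(1+t)$).

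For $\gamma>\tfrac{5}{3}$, the forward virial mechanism no longer guarantees linear expansion from arbitrary initial data, so I would instead prescribe $A_0$ and the positive-definite $A_1$ as scattering data at infinity and construct $A(t)$ by contraction. Writing $A(t)=A_0+tA_1+M(t)$ and demanding $\partial_t M(t)\to 0$ as $t\to\infty$, the ODE \eqref{E:AODE} becomes the integral equation
\[
M(t)=\int_t^\infty(s-t)\,\delta\bigl(\det(A_0+sA_1+M(s))\bigr)^{1-\gamma}(A_0+sA_1+M(s))^{-\top}\,ds,
\]
posed on $[T_\ast,\infty)$ with $T_\ast$ large enough that $A_0+sA_1+M(s)$ stays in $\text{GL}^+(3)$, which is ensured because $A_1\succ 0$ forces $\det(A_0+sA_1)\sim s^3\det A_1$. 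A contraction argument in the Banach space of $C^1$ matrix-valued functions on $[T_\ast,\infty)$ with weighted norm $\sup_t(1+t)^{3\gamma-3}\|\partial_t M(t)\|$ produces $M$, and the resulting $A$ is extended backward to $t=0$ by local well-posedness of \eqref{E:AODE}. Reading off $(A(0),A'(0))$ yields the initial data realising the prescribed scattering pair $(A_0,A_1)$, and the asymptotics \eqref{E:DETATCUBED1}--\eqref{E:MASYMP} are built into the contraction estimates.
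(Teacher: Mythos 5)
Your proposal attempts a self-contained proof, whereas the paper simply invokes Theorem~3 and Lemma~6 of Sideris \cite{MR3634025}, Lemma~A.1 of \cite{1610.01666}, and Lemma~1 of \cite{shkoller2017global}. Much of your architecture is sound and matches those references: the conserved energy $E=\tfrac12\|A'\|^2+\tfrac{\delta}{\gamma-1}(\det A)^{1-\gamma}$ is correct and immediately gives $\|A'\|\lesssim1$, $\det A\ge c_0>0$, $\|A\|\lesssim 1+t$, hence $\det A\lesssim(1+t)^3$ and global existence; and once $\det A\sim(1+t)^3$ is granted, your adjugate estimate $\|A''\|\lesssim(1+t)^{2-3\gamma}$ and the two time integrations correctly yield \eqref{E:AASYMP}--\eqref{E:MASYMP}. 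The scattering construction for $\gamma>\tfrac53$ is also the right idea and is essentially Lemma~1 of \cite{shkoller2017global}.

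The genuine gap is the lower bound $\det A(t)\gtrsim(1+t)^3$ for $\gamma\in(1,\tfrac53]$, which is the entire content of the hard direction of \eqref{E:DETATCUBED1}, and your mechanism for it does not close. From $P''\succeq 2\delta(\det A)^{1-\gamma}I$ and the best available lower bound on the forcing, $(\det A)^{1-\gamma}\gtrsim(1+t)^{3-3\gamma}$, two integrations give only
\[
\langle P(t)v,v\rangle\;\ge\;\langle P(0)v,v\rangle+t\,\langle P'(0)v,v\rangle+c\,(1+t)^{5-3\gamma},
\]
and $5-3\gamma<2$ for every $\gamma>1$ (indeed the gain is at most linear once $\gamma\ge\tfrac43$), so the forcing term can never overcome a negative linear term $t\langle P'(0)v,v\rangle$, let alone produce $\lambda_{\min}(P)\gtrsim t^2$. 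The bootstrap you gesture at also runs the wrong way: an improved lower bound on $\det A$ \emph{weakens} the forcing $(\det A)^{1-\gamma}$, so positivity does not self-improve. The quadratic growth of $\lambda_{\min}(P)$ must instead come from the $2A'(A')^\top$ term, i.e.\ from showing that $A_1=\lim_{t\to\infty}A'(t)$ is \emph{nonsingular} --- which is essentially equivalent to the statement being proved. In Sideris's actual argument the threshold $\gamma\le\tfrac53$ enters through the virial identity $\tfrac{d^2}{dt^2}\tfrac12\|A\|^2=2E_0+\delta\tfrac{3\gamma-5}{\gamma-1}(\det A)^{1-\gamma}$ (the coefficient changes sign at $\gamma=\tfrac53$), not through the propagation-of-positivity mechanism you describe; this step cannot be reconstructed from your sketch and should either be proved in detail or, as the paper does, cited.
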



\begin{proof}
For all $\gamma >1$, we use Theorem 3 and Lemma 6 from \cite{MR3634025} to obtain the results. For $\gamma \in (1,\tfrac 53]$, we additionally use Lemma A.1 from \cite{1610.01666}. For $\gamma > \tfrac53$, we additionally use Lemma 1 from \cite{shkoller2017global}.
\end{proof}
In this paper we restrict our attention to the class of nonisentropic affine solutions expanding linearly in each coordinate direction: namely we require
\begin{equation}\label{E:DETATCUBED2}
\det A(t) \sim 1 + t^3, \quad t \geq 0.
\end{equation}
By Lemma \ref{L:AASYMPTOTICS}, for $\gamma\in(1,\tfrac53]$ this is not a restriction at all in fact since $A(t)$ will immediately satisfy (\ref{E:DETATCUBED2}). For $\gamma > \tfrac53$, Lemma \ref{L:AASYMPTOTICS} shows there exists a rich class of $A(t)$ satisfying (\ref{E:DETATCUBED2}). 
 
We denote the set of affine motions under consideration by $\mathscr{S}$. To recap,  the set $\mathscr{S}$ is parametrized by the quadruple
\begin{equation}    
(A(0),A'(0),\delta,\phi) \in \text{GL}^+ (3) \times \mathbb{M}^{3 \times 3} \times \mathbb{R}_+ \times \mathcal Z_k,
\end{equation}
where
\begin{equation}\label{Z_k}
\mathcal Z_k := \left\{ \phi \in C^k[0,1]: \phi>0, \  \phi'(0)= \frac{1}{\gamma-1}\phi(0) \right \}
\end{equation}
and we take $k \in \mathbb{N}$ sufficiently large (to be specified later in Theorems \ref{T:LWPGAMMALEQ5OVER3} and \ref{T:MAINTHEOREMGAMMALEQ5OVER3}). 
We recall $(A(0),A'(0),\delta)$ are parameters for the fundamental system~\eqref{E:AODE}--(\ref{E:FUNDAMENTALSYSTEMFIRST}) and $\phi$ appears in the formula for $\accentset{\circ}{\rho}$ (\ref{E:RHOZEROAFFDEMAND}). The choice of $\phi \in C^{k}[0,1]$  highlights a new freedom in the specification of affine motions with respect to the isentropic setting.

With our set of nonisentropic affine motions $\mathscr{S}$ in hand, the goal of this paper is to establish the global-in-time stability of the nonisentropic Euler system (\ref{E:MOM})-(\ref{E:IC}) for all $\gamma > 1$ by perturbing around the expanding affine motions. 

\section{Formulation and Main Global Existence Result}\label{S:FORM}  

\subsection{Lagrangian Coordinates}\label{S:LAGR}
In order to analyze the stability problem for affine motions, we will use the Lagrangian formulation that brings the problem onto the fixed domain. We first define the flow map $\zeta$ as follows     
\begin{align}
\partial_t \zeta (t,y) &= \mathbf{u}(t,\zeta(t,y)), \\
\zeta(0,y)&=\zeta_0(y),
\end{align}
where $\zeta_0$ is a sufficiently smooth diffeomorphism to be specified. We introduce the notation
\begin{align} 
\mathscr{A}_\zeta := [D \zeta]^{-1} \quad &\text{(Inverse of the Jacobian matrix)}\label{E:SCRAZETA} \\ 
\mathscr{J}_\zeta := \det[D \zeta] \quad &\text{(Jacobian determinant)}\label{E:SCRJZETA} \\
f:=\rho \circ \zeta \quad &\text{(Lagrangian density)} \\
g:=S \circ \zeta \quad &\text{(Langrangian entropy)} \\
a_\zeta:= \mathscr{J}_\zeta  \mathscr{A}_\zeta \quad &\text{(Cofactor matrix)}.
\end{align}
In this framework material derivatives reduce to pure time derivatives and in particular, the entropy equation (\ref{E:S}) is simply reformulated as
\begin{equation}\label{E:LS}
\partial_t g(t,y) = 0 \text{ which implies } g(t,y)=S_0(\zeta_0(y)). 
\end{equation} 
In other words, the entropy remains constant along fluid particle worldline given by flow maps. Furthermore, it is well-known \cite{Coutand2012,doi:10.1002/cpa.21517} that the conservation of mass equation (\ref{E:M}) gives
\begin{equation}\label{E:LD}
f(t,y)=(\mathscr{J}_\zeta(t,y))^{-1} \rho_0(\zeta_0(y)) \mathscr{J}_\zeta(0,y).
\end{equation}
Finally using the nonisentropic equation of state $p=\rho^{\gamma} e^{S}$ the momentum equation (\ref{E:MOM}) is reformulated as
\begin{equation}\label{E:LMOM}
f \partial_{tt} \zeta_i + [\mathscr{A}_\zeta]_i^k (f^\gamma e^g)_{,k}=0.
\end{equation}
Here we use coordinates $i=1,2,3$ with the Einstein summation convention and the notation $F,_k$ to denote the $k^{th}$ partial derivative of $F$.

Next introduce the following notations
\begin{align} 
\bar{S}(y)&:=S_0(\zeta_0(y)), \label{E:BARSNOTATION} \\
w(y)&:=[ \rho_0(\zeta_0(y)) \mathscr{J}_\zeta(0,y)]^{\gamma-1}  . \label{E:WNOTATION}
\end{align}
Then using $[\mathscr{A}_\zeta]_i^k =  \mathscr{J}_\zeta^{-1} [a_\zeta]_i^k$, we obtain  
\begin{equation}\label{E:LIE}
w^\alpha \partial_{tt} \zeta_i + [a_\zeta]_i^k (w^{1+\alpha} \mathscr{J}_\zeta^{\left(-\frac{1}{\alpha}-1\right)}  e^{\bar{S}})_{,k}=0,
\end{equation}
where we set
$$\alpha:=\frac{1}{\gamma-1}.$$
Using the Piola identity
\begin{equation}
([a_\zeta]_i^k)_{,k} = 0,
\end{equation}
we rewrite (\ref{E:LIE}) as
\begin{equation}\label{E:LAGRANGIANPREAFF}
w^\alpha \partial_{tt} \zeta_i  + (w^{1+\alpha} e^{\bar{S}} [\mathscr{A}_\zeta]_i^k \mathscr{J}_\zeta^{-\frac{1}{\alpha}} )_{,k}=0.
\end{equation}
Affine motions described in Section \ref{S:AFF} can be realized as special solutions of (\ref{E:LAGRANGIANPREAFF}) of the form $\zeta(t,y)=A(t)y$. In this case $\mathscr{A}_\zeta^{\top}=[D \zeta]^{-\top}=A(t)^{-\top}$ and $\mathscr{J}_\zeta = \det A$. Hence the ansatz transforms (\ref{E:LAGRANGIANPREAFF}) into
\begin{equation}\label{E:POSTAFFANSATZ}
w^\alpha A_{tt}y + (\det A)^{1-\gamma} A^{-\top} \nabla(w^{1+\alpha} e^{\bar{S}}) = 0.
\end{equation}
We have that $w^{1+\alpha} e^{\bar{S}}$ is independent of $t$ and hence (\ref{E:POSTAFFANSATZ}) will hold if we require
\begin{align}
A_{tt}&=\delta (\det A)^{1-\gamma} A^{-\top}, \label{E:AFFREQ1} \\
w^{\alpha}\delta y &=  -\nabla (w^{1+\alpha}e^{\bar{S}}), \label{E:AFFREQ2}
\end{align}
for $\delta > 0$. 
At this stage, we demand
\begin{align}
[w(y)]^\alpha&=(1-|y|)^\alpha \phi(|y|), \label{E:WDEMAND} \\
\bar{S}(y)&=\bar{S}(|y|), \label{E:BARSDEMAND}
\end{align}
where we require by recalling \eqref{Z_k}
\begin{equation} 
\phi \in \mathcal Z_k \text{ for }k \in \mathbb{N} \text{ taken sufficiently large  to be specified  later by Theorems \ref{T:LWPGAMMALEQ5OVER3} and \ref{T:MAINTHEOREMGAMMALEQ5OVER3}}. 
\label{E:PHIDEMAND} 
\end{equation}
Note with Corollary \ref{C:ENTROPYREGULARITYCOROLLARY} and (\ref{E:BARSDEMAND})-(\ref{E:PHIDEMAND}), $e^{\bar{S}} \in C^k[0,1]$ with $e^{\bar{S}} \geq c > 0$.
 
We observe that (\ref{E:AFFREQ1})-(\ref{E:PHIDEMAND}) are nothing but the affine solutions described in Section \ref{S:AFF}, and produce the set of affine motions $\mathscr{S}$ under consideration. Fix an element of $\mathscr{S}$.
\begin{remark}
Through (\ref{E:BARSNOTATION})-(\ref{E:WNOTATION}), the initial data $\rho_0$, $S_0$ for our problem are chosen such that (\ref{E:WDEMAND})-(\ref{E:PHIDEMAND}), which include the regularity demands on $\phi$, are satisfied.
\end{remark}
With an affine motion from $\mathscr{S}$ fixed, we define the modified flow map $\eta:=A^{-1}\zeta$. Then $\mathscr{A}_\zeta^{\top}=A^{-\top}\mathscr{A}_\eta^\top$, $\mathscr{J}_\zeta = (\det A) \mathscr{J}_\eta$ where $\mathscr{A}_\eta^\top$, $\mathscr{J}_\eta$ are the $\eta$ equivalents of (\ref{E:SCRAZETA}), (\ref{E:SCRJZETA}) respectively. Now from (\ref{E:LAGRANGIANPREAFF}) we have 
\begin{align}
&w^\alpha(\partial_{tt}\eta_{i} + 2 [A^{-1}]_{i \ell} \partial_t A_{\ell j} \partial_t \eta_{j} + [A^{-1}]_{i \ell} \partial_{tt} A_{\ell j} \eta_j) \notag \\
&\qquad \qquad \qquad +(\det A)^{1-\gamma} [A^{-1}]_{i\ell} [A^{-1}]_{j \ell} (w^{1+\alpha} e^{\bar{S}} [\mathscr{A}_\eta]_j^k \mathscr{J}_\eta^{-\frac{1}{\alpha}} )_{,k}=0.
\end{align} 
Via $(\ref{E:AFFREQ1})$ we rewrite the above equation as      
\begin{equation}\label{E:ETAPRETAU}
w^\alpha((\det A)^{\gamma - \frac{1}{3}} \partial_{tt} \eta_i + 2 (\det A)^{\gamma - \frac{1}{3}} [A^{-1}]_{i \ell} \partial_t A_{\ell j} \partial_t \eta_{j}) + \delta w^\alpha \Lambda_{i \ell} \eta_{\ell} + (w^{1+\alpha} e^{\bar{S}} \Lambda_{ij}[\mathscr{A}_\eta]_j^k \mathscr{J}_\eta^{-\frac{1}{\alpha}} )_{,k}=0,
\end{equation} 
where the notation $\Lambda:=(\det A)^{\frac{2}{3}} A^{-1} A^{-\top}$ has been introduced.

Next make a change of time variable by setting 
$$\frac{d \tau}{dt}=(\det A)^{-\frac{1}{3}}.$$ Then we can formulate (\ref{E:ETAPRETAU}) as 
\begin{align}
&w^\alpha((\det A)^{\gamma-1} \partial_{\tau \tau} \eta_{i} - \tfrac{1}{3}(\det A)^{\gamma - 2} (\det A)_{\tau} \partial_\tau \eta_{i}  + 2 (\det A)^{\gamma-1} [A^{-1}]_{i \ell} \partial_\tau A_{\ell j} \partial_\tau \eta_{j}) + \delta w^\alpha \Lambda_{i \ell} \eta_{\ell} \notag \\
& \quad \quad \quad \quad + (w^{1+\alpha} e^{\bar{S}} \Lambda_{ij}[\mathscr{A}_\eta]_j^k \mathscr{J}_\eta^{-\frac{1}{\alpha}} )_{,k}=0.  
\end{align}  
Writing $A=\mu O$ where $\mu:=(\det A)^{\frac{1}{3}}$ and $O \in \text{SL}(3)$, we have $A^{-1}A_{\tau} = \mu^{-1}\mu_{\tau} I + O^{-1}O_{\tau}$. The $\eta$ equation is then   
\begin{equation}\label{E:POSTMUINTROEQN}
w^\alpha(\mu^{3 \gamma-3} \partial_{\tau \tau}\eta_{i} +  \mu^{3 \gamma - 4} \mu_{\tau} \partial_\tau \eta_{i}  + 2 \mu^{3 \gamma -3} \Gamma^*_{ij} \partial_\tau \eta_{j}) + \delta w^\alpha \Lambda_{i \ell} \eta_{\ell} + (w^{1+\alpha} e^{\bar{S}} \Lambda_{ij}[\mathscr{A}_\eta]_j^k \mathscr{J}_\eta^{-\frac{1}{\alpha}} )_{,k}=0.
\end{equation}  
where we have defined $\Gamma^*: = O^{-1}O_{\tau}$. Note $\eta(y) \equiv y$ corresponds to affine motion. Introducing the perturbation
\begin{equation}
\uptheta(\tau,y):=\eta(\tau,y)-y,
\end{equation}
and using (\ref{E:AFFREQ2}), equation (\ref{E:POSTMUINTROEQN}) can be written in terms of $\uptheta$
\begin{equation}\label{E:THETAGAMMALEQ5OVER3}
w^\alpha \mu^{3 \gamma -3} \left(\partial_{\tau \tau} \uptheta_i + \frac{\mu_\tau}{\mu} \partial_\tau \uptheta_i  + 2 \Gamma^*_{ij} \partial_{\tau} \uptheta_j\right) + \delta w^\alpha \Lambda_{i \ell} \uptheta_\ell + \left(w^{1+\alpha} e^{\bar{S}} \Lambda_{ij} \left( [\mathscr{A}_\eta]_j^k \mathscr{J}_\eta^{-\frac{1}{\alpha}}-\delta_j^k\right)\right) ,_k = 0,
\end{equation}
with the initial conditions 
\begin{equation}\label{E:THETAICGAMMALEQ5OVER3}
\uptheta(0,y)=\uptheta_0(y), \quad \uptheta_\tau(0,y)=\mathbf{V}(0,y)=\mathbf{V}_{0}(y), \quad (y \in \Omega=B_1(\mathbf{0})).
\end{equation}
Above we have introduced the notation $\mathbf{V}:=\partial_\tau \uptheta$ which will be used interchangeably. 
\begin{remark}[Initial entropy $S_0$ and initial density $\rho_0$] 
The initial entropy $S_0$ and the initial density $\rho_0$ are connected to the background affine motion via
\begin{align*}
S_0(x)&=S_A((\eta_0 \circ \zeta_A(0))^{-1}(x)), \\
\rho_0(x)&=\rho_A((\eta_0 \circ \zeta_A(0))^{-1}(x)) \det[D(\zeta_0^{-1}(x))]^{-1},
\end{align*}
where the composed maps are defined by $\eta_0(y):=A^{-1}(0)\zeta_0(y)$ and $\zeta_A(0)(y):=A(0)y$.
\end{remark}
 
\subsection{Notation}\label{S:NOTATION}
For ease of notation first set
\begin{equation}
\mathscr{A}:=\mathscr{A}_\eta; \quad \mathscr{J}:=\mathscr{J}_\eta.
\end{equation}
Using $\mathscr{A} [D\eta] = \text{{\bf Id}}$, we have the differentiation formulae for $\mathscr{A}$ and $\mathscr{J}$
\begin{equation}\label{E:AJDIFFERENTIATIONFORMULAE}
\partial \mathscr{A}^k_i = - \mathscr{A}^k_\ell \partial \eta^\ell,_s \mathscr{A}^s_i \  ; \quad \partial \mathscr{J} = \mathscr{J} \mathscr{A}^s_\ell\partial\eta^\ell,_s
\end{equation}
for $\partial=\partial_\tau$ or $\partial=\partial_i$, $i=1,2,3$.

Let $\mathbf{F}:\Omega \rightarrow \mathbb{R}^3$ and $f:\Omega \rightarrow \mathbb{R}$ be an arbitrary vector field and function respectively. First define the gradient and divergence along the flow map $\eta$ respectively
\begin{equation}
[\nabla_\eta \mathbf{F}]^i_r:=\mathscr{A}^s_r \mathbf{F}^i_{,s}; \quad \text{div}_\eta \mathbf{F}:=\mathscr{A}^s_\ell \mathbf{F}^\ell_{,s}.
\end{equation}
For curl estimates, introduce the anti-symmetric curl and cross product matrices respectively
\begin{equation}\label{E:CURLCROSSPRODMATRICES}
[\text{Curl}_{\Lambda\mathscr{A}}\mathbf{F}]^i_j :=\Lambda_{jm}\mathscr{A}^s_m\mathbf{F}^i,_s- \Lambda_{im}\mathscr{A}^s_m\mathbf{F}^j,_s; \quad [\Lambda\mathscr{A}\nabla f \times \mathbf{F}]^i_j :=\Lambda_{jm}\mathscr{A}^s_m f_{,s} \mathbf{F}^i- \Lambda_{im}\mathscr{A}^s_m f_{,s} \mathbf{F}^j.
\end{equation}
For any $k \in \mathbb{Z}_{\geq 0}$ and any continuous, non-negative function $\varphi:\Omega\to\mathbb R^+$, we consider the weighted $L^2$ norm
\begin{equation}
\| f \|^2_{k,\varphi} := \int_{\Omega} \varphi w^k |f(y)|^2 \, dy.
\end{equation}
We generalize this definition to vector fields $\mathbf{F}$ and 2-tensors $\mathbf{T}:\Omega \rightarrow \mathbb{M}^{3 \times 3}$
\begin{equation}
\|\mathbf{F}\|_{k,\varphi}^2 : =\sum_{i=1}^3 \|\mathbf{F}_i\|_{k,\varphi}^2,  \ \ \|\mathbf{T}\|_{k,\varphi}^2 : =\sum_{i,j=1}^3 \|\mathbf{T}_{ij}\|_{k,\varphi}^2.
\end{equation}
The weight function $\varphi$ will often include the smooth cut-off function $\psi:B_1(\mathbf{0}) \rightarrow [0,1]$ which satisfies
\begin{equation}
\psi=0 \text{ on } B_{\frac{1}{4}}(\mathbf{0}) \text{ and } \psi = 1  \text{ on } B_1(\mathbf{0}) \setminus B_{\frac{3}{4}}(\mathbf{0}).
\end{equation}
The following derivative operators will be used near the boundary 
\begin{equation}\label{E:DERIVATIVEDEFN}
\slashed\partial_{ji}:=y_j\partial_i -y_i\partial_j, \  i,j=1,2,3;  \quad X_r:=r\partial_r \text{ where } \partial_r:=\frac{y}{r}\cdot\nabla \text{ and } r=|y|,
\end{equation}
which represent angular derivatives tangent to the boundary and the radial derivative normal to the boundary.

\subsection{High-order Quantities}\label{S:HOQGAMMALEQ5OVER3}
Our time weights will differ depending on whether $\gamma \in (1,\frac53]$ or $\gamma > \frac53$. This is because we take a slightly different approach for $\gamma > \frac53$ by an adaptation of \cite{shkoller2017global}, applied to our nonisentropic setting.

On this note, introduce the following $\gamma$ dependent exponents
\begin{equation}
d(\gamma):=\begin{cases}
3\gamma - 3 & \text{ if } \  1<\gamma\leq \frac53 \\
2 & \text{ if } \  \gamma>\frac53
\end{cases}; \quad b(\gamma):=d(\gamma)+3-3\gamma=\begin{cases}
0 & \text{ if } \  1<\gamma\leq \frac53 \\
5-3 \gamma & \text{ if } \  \gamma>\frac53
\end{cases}.
\end{equation}     	 
Let $N \in \mathbb{N}$. To measure the size of the deviation $\uptheta$, we define the high-order weighted Sobolev norm as follows 
\begin{align}
\mathcal{S}^N(\uptheta,\mathbf{V})(\tau) &:= \sup_{0\leq \tau' \leq \tau} \Big\{  \sum_{a+|\beta| \leq N} \Big( \mu^{d(\gamma)}\| X_r^a \slashed\partial^\beta \mathbf{V}\|^2_{a+\alpha,\psi e^{\bar{S}}} + \| X_r^a \slashed\partial^\beta \uptheta \|_{a+\alpha,\psi e^{\bar{S}}}^2 \Big) \notag \\ 
& \qquad + \sum_{a+|\beta| \leq N-1} \Big( \|\nabla_\eta X_r^a \slashed\partial^\beta \uptheta\|^2_{a+\alpha+1,\psi e^{\bar{S}}}+\|\text{div}_\eta X_r^a \slashed\partial^\beta \uptheta \|^2_{a+\alpha+1,\psi e^{\bar{S}}} \Big) \notag \\
&\qquad  + \sum_{a+|\beta| = N} \Big( \mu^{b(\gamma)} \|\nabla_\eta X_r^a \slashed\partial^\beta \uptheta\|^2_{a+\alpha+1,\psi e^{\bar{S}}}+\mu^{b(\gamma)} \|\text{div}_\eta X_r^a \slashed\partial^\beta \uptheta \|^2_{a+\alpha+1,\psi e^{\bar{S}}} \Big)     \Big\}  \notag \\ 
&+\sup_{0\leq \tau' \leq \tau} \Big\{  \sum_{|\nu| \leq N} \Big( \mu^{d(\gamma)} \| \partial^\nu \mathbf{V}\|^2_{\alpha,(1-\psi) e^{\bar{S}}} + \| \partial^\nu \uptheta \|_{\alpha,(1-\psi) e^{\bar{S}}}^2 \Big) \notag \\
& \qquad+  \sum_{|\nu| \leq N-1} \Big(\|\nabla_\eta \partial^\nu\uptheta\|^2_{\alpha+1,(1-\psi) e^{\bar{S}}}+\|\text{div}_\eta \partial^\nu \uptheta \|^2_{\alpha+1,(1-\psi) e^{\bar{S}}} \Big) \notag \\
& \qquad  +\sum_{|\nu|=N} \Big( \mu^{b(\gamma)} \|\nabla_\eta \partial^\nu\uptheta\|^2_{\alpha+1,(1-\psi) e^{\bar{S}}}+ \mu^{b(\gamma)} \|\text{div}_\eta \partial^\nu \uptheta \|^2_{\alpha+1,(1-\psi) e^{\bar{S}}} \Big) \Big\}.  \label{E:SNNORMGAMMALEQ5OVER3} 
\end{align}
Modified curl terms arise during energy estimates which are not a priori controlled by the norm $\mathcal{S}^N(\tau)$. These are measured via the following high-order quantity 
\begin{align}\label{E:BNNORMGAMMALEQ5OVER3}
&\mathcal{B}^N[\mathbf{V}](\tau) \notag \\
& :=\sup_{0 \leq \tau' \leq \tau} \Big\{ \sum_{a+|\beta| \leq N-1} \| \text{Curl}_{\Lambda\mathscr{A}} X_r^a \slashed\partial^\beta \mathbf{V} \|^2_{a + \alpha + 1,\psi e^{\bar{S}}} + \sum_{a+|\beta| = N} \mu^{b(\gamma)} \| \text{Curl}_{\Lambda\mathscr{A}} X_r^a \slashed\partial^\beta \mathbf{V} \|^2_{a + \alpha + 1,\psi e^{\bar{S}}} \Big\} \notag \\ 
& +\sup_{0 \leq \tau' \leq \tau} \Big\{ \sum_{|\nu| \leq N-1} \| \text{Curl}_{\Lambda \mathscr{A}} \partial^\nu \mathbf{V}\|^2_{\alpha+1,(1-\psi)e^{\bar{S}}}+ \sum_{|\nu| = N } \mu^{b(\gamma)} \| \text{Curl}_{\Lambda \mathscr{A}} \partial^\nu \mathbf{V}\|^2_{\alpha+1,(1-\psi)e^{\bar{S}}} \Big\},
\end{align}
with $\mathcal{B}^N[\uptheta]$ defined in the same way: $\uptheta$ replaces $\mathbf{V}$ in (\ref{E:BNNORMGAMMALEQ5OVER3}).

\subsection{Main Theorem}\label{S:MAINTHEOREM}
Before giving our main theorem, first define the important $\mu$ related quantities
\begin{equation}\label{E:MU1MU0DEFNGAMMALEQ5OVER3}
\mu_1:=\lim_{\tau \rightarrow \infty} \frac{\mu_\tau(\tau)}{\mu(\tau)}, \quad \mu_0:=\frac{d(\gamma)}{2}\mu_1,
\end{equation}          
where we recall the quantity $d(\gamma)=\begin{cases}
3\gamma - 3 & \text{ if } \  1<\gamma\leq \frac53 \\ 
2 & \text{ if } \  \gamma>\frac53
\end{cases}.$ \\

\textbf{Local Well-Posedness.} Next, we give the local well-posedness of our system. 
\begin{theorem}\label{T:LWPGAMMALEQ5OVER3} 
Suppose $\gamma > 1$. Fix $N\geq 2\lceil \alpha \rceil +12$. Let $k \geq N+1$ in (\ref{E:PHIDEMAND}). Then suppose we have initial data for (\ref{E:THETAGAMMALEQ5OVER3}) $(\uptheta_0,{\bf V}_0)$ that satisfies $\mathcal{S}^N(\uptheta_0, {\bf V}_0) + \mathcal{B}^N({\bf V}_0)<\infty$. In this case there exists a unique solution $(\uptheta(\tau), {\bf V}(\tau)):\Omega \rightarrow \mathbb R^3\times \mathbb R^3$ to (\ref{E:THETAGAMMALEQ5OVER3})-(\ref{E:THETAICGAMMALEQ5OVER3}) for all $\tau\in [0,T]$, for some $T>0$. The solution has the property $\mathcal{S}^N (\uptheta, {\bf V})(\tau) + \mathcal{B}^N[{\bf V}](\tau) \leq 2(\mathcal{S}^N(\uptheta_0, {\bf V}_0) + \mathcal{B}^N({\bf V}_0))$ for each $\tau\in[0,T]$. Furthermore, the map $[0,T]\ni\tau\mapsto\mathcal{S}^N(\tau)\in\mathbb R_+$ is continuous. 
\end{theorem}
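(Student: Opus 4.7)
The plan is to obtain Theorem \ref{T:LWPGAMMALEQ5OVER3} via the now-standard degenerate hyperbolic local well-posedness scheme for physical-vacuum free-boundary Euler equations in Lagrangian variables, in the spirit of Coutand--Shkoller~\cite{Coutand2012} and Jang--Masmoudi~\cite{doi:10.1002/cpa.21517}, adapted in the same way as \cite{1610.01666,shkoller2017global} to the Sideris-type background and, critically, modified to accommodate the variable entropy factor $e^{\bar S}$. Concretely, I would introduce a degenerate parabolic regularization of~(\ref{E:THETAGAMMALEQ5OVER3}) by adding a vanishing viscosity term of the form $\kappa(w^{1+\alpha}e^{\bar S}\Lambda_{ij}[\mathscr{A}_\eta]_j^k \partial_\tau\eta^i)_{,k}$ (or its cofactor analogue) on the right-hand side, for $\kappa>0$. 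For each fixed $\kappa$, the regularized system is a genuinely parabolic system in a weighted Sobolev setting and can be solved on a short time interval by a straightforward linearization-and-fixed-point argument, yielding a family $\{\uptheta^{(\kappa)}\}_{\kappa>0}$ of smooth solutions.

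The heart of the proof is then to establish $\kappa$-independent bounds on $\mathcal{S}^N(\uptheta^{(\kappa)},\mathbf{V}^{(\kappa)})(\tau)+\mathcal{B}^N[\mathbf{V}^{(\kappa)}](\tau)$ on a uniform time interval $[0,T]$. This is done by testing~(\ref{E:THETAGAMMALEQ5OVER3}) with the appropriate weighted multipliers, separately in the two regimes encoded in $\mathcal{S}^N$. Near the boundary we use the tangential/radial fields $\slashed\partial_{ij}$ and $X_r$ (which preserve the weight $w$), and away from the boundary, localized by $1-\psi$, we use ordinary partials $\partial^\nu$; in both cases the Hardy-type inequalities tied to the weight $w^{a+\alpha}$ control lower-order terms by top-order gradient and divergence terms, yielding an inequality of the form
\begin{equation*}
\frac{d}{d\tau}\bigl[\mathcal{S}^N+\mathcal{B}^N\bigr] \;\le\; C\,\mathcal{P}\bigl(\mathcal{S}^N+\mathcal{B}^N\bigr)
\end{equation*}
for a polynomial $\mathcal{P}$, with constants depending only on $N$, $\gamma$, $\delta$, $\|\phi\|_{C^{N+1}[0,1]}$, $\|e^{\bar S}\|_{C^{N+1}[0,1]}$ (the latter finite by Corollary \ref{C:ENTROPYREGULARITYCOROLLARY} since $k\ge N+1$), and the affine-motion quantities $\mu,\mu_\tau/\mu,\Lambda,\Gamma^*$, all of which are bounded on any finite $\tau$-interval by Lemma \ref{L:AASYMPTOTICS}. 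A standard Gronwall/continuity argument then yields $T>0$ and the doubling bound $\mathcal{S}^N+\mathcal{B}^N\le 2(\mathcal{S}^N_0+\mathcal{B}^N_0)$.

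The main obstacle will be the curl estimate controlling $\mathcal{B}^N[\mathbf{V}^{(\kappa)}]$. In the isentropic setting one exploits the exact conservation $\partial_\tau\mathrm{Curl}_{\Lambda\mathscr{A}}\mathbf{V}=\text{commutator}$ with a clean algebraic structure. Here, applying $\mathrm{Curl}_{\Lambda\mathscr{A}}$ to~(\ref{E:THETAGAMMALEQ5OVER3}) and using the Piola identity, the pressure term $w^{1+\alpha}e^{\bar S}\Lambda_{ij}([\mathscr{A}_\eta]_j^k\mathscr{J}_\eta^{-1/\alpha}-\delta_j^k)$ no longer has the ``gradient structure'' that annihilates the curl, since $\nabla e^{\bar S}\ne 0$. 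One therefore has to rewrite the pressure using the nonlinear expansion
\begin{equation*}
w^{1+\alpha}e^{\bar S}\Lambda_{ij}\bigl([\mathscr{A}_\eta]_j^k\mathscr{J}_\eta^{-1/\alpha}-\delta_j^k\bigr)_{,k}
= e^{\bar S}\,(\text{isentropic pressure gradient}) + (\nabla e^{\bar S})\cdot(\cdots),
\end{equation*}
and observe that the entropy-derivative remainder, once commuted through $\mathrm{Curl}_{\Lambda\mathscr{A}}$, produces terms that are either controllable by $\mathcal{S}^N$ (since they carry the same weights $w^{a+\alpha+1}$) or reproduce $\mathcal{B}^N$ itself at lower order, so that Gronwall closes. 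Here the bounds $e^{\bar S}\ge c>0$ and $\|e^{\bar S}\|_{C^k}\le C$ from Corollary~\ref{C:ENTROPYREGULARITYCOROLLARY} are used crucially.

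Finally, with uniform estimates in hand, I would pass to the limit $\kappa\to 0^+$ using weak-$\ast$ compactness in the $\mathcal{S}^N$-norm plus strong compactness at one derivative lower via Aubin--Lions to handle nonlinear terms like $\mathscr{A}$ and $\mathscr{J}^{-1/\alpha}$, and verify that the limit solves~(\ref{E:THETAGAMMALEQ5OVER3}) in the distributional sense with the stated quantitative bound. Continuity $\tau\mapsto\mathcal{S}^N(\tau)$ follows from weak continuity plus upper semicontinuity of the norm combined with the energy identity (so that also the norm is lower semicontinuous). Uniqueness is obtained by a low-order weighted energy estimate on the difference of two solutions, using only finitely many derivatives less than $N$ so that all nonlinear coefficients are Lipschitz in the relevant weighted spaces — this is the same mechanism as in~\cite{doi:10.1002/cpa.21517,1610.01666}, with the extra $e^{\bar S}$ factor handled trivially thanks to its smoothness and positive lower bound.
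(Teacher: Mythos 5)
Your plan is correct in substance and matches the paper's at the level that matters: the theorem is reduced to the closed a priori weighted energy and curl estimates of Sections \ref{S:CURLGAMMALEQ5OVER3}--\ref{S:ENERGYGAMMALEQ5OVER3}, together with the facts that $w$, $\bar S$ are solution-independent with $e^{\bar S}\in C^k$, $e^{\bar S}\ge c>0$ (Corollary \ref{C:ENTROPYREGULARITYCOROLLARY}) and that the time weights $\mu^{d(\gamma)}$, $\mu^{b(\gamma)}$ are solution-independent, after which one runs a known physical-vacuum local well-posedness machinery. Where you differ is in which machinery: the paper's (very terse) proof adapts the scheme of Jang--Masmoudi \cite{doi:10.1002/cpa.21517}, whereas you propose the Coutand--Shkoller route \cite{Coutand2012} of degenerate parabolic regularization, $\kappa$-uniform estimates, and an Aubin--Lions passage to the limit. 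Both are legitimate and rest on the same a priori bounds, so this is a difference of implementation rather than of idea; your version is arguably more explicit about the construction, while the paper's buys brevity by leaning on \cite{doi:10.1002/cpa.21517}. One caveat deserves emphasis if you pursue the viscous route: the artificial viscosity term does not respect the structure that makes $\text{Curl}_{\Lambda\mathscr{A}}$ estimable here --- in the nonisentropic setting the curl equation is only recovered after multiplying the momentum equation by $e^{-\bar S/\gamma}$ to restore an exact $\Lambda\nabla_\eta(\cdot)$ gradient (Section \ref{S:CURLGAMMALEQ5OVER3}), and the regularizing term generates $O(\kappa)$ curl errors whose uniform control must be checked separately; this is precisely the technical burden the Jang--Masmoudi scheme is designed to sidestep. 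A second, cosmetic, point: $\mathcal S^N$ and $\mathcal B^N$ are running suprema over $[0,\tau]$, so the Gronwall step should be phrased as an integral inequality rather than $\frac{d}{d\tau}[\mathcal S^N+\mathcal B^N]\le C\mathcal P(\mathcal S^N+\mathcal B^N)$.
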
 
\begin{proof}
The proof follows by adapting the argument in \cite{doi:10.1002/cpa.21517}. Notably, \cite{doi:10.1002/cpa.21517} proves local well-posedness for the isentropic Euler equations using spatial weights to handle the physical vacuum condition and we are considering the nonisentropic physical vacuum setting that also includes time weights. However using the regularity of $\bar{S}$ and $w$, which are independent of the solutions, as well as the solution independent time weights, we can obtain bounds on $\mathcal{S}^N$ and $\mathcal{B}^N$ through the estimates in Section \ref{S:CURLGAMMALEQ5OVER3} and Section \ref{S:ENERGYGAMMALEQ5OVER3}. These crucial esimates let us use $\mathcal{S}^N$ and $\mathcal{B}^N$ in the techniques of \cite{doi:10.1002/cpa.21517}.  
\end{proof}

\textbf{A priori assumptions.} Finally before our main theorem, make the following a priori assumptions on our local solutions from Theorem \ref{T:LWPGAMMALEQ5OVER3} 
\begin{equation}\label{E:APRIORI}
S^N(\tau) < \frac{1}{3}, \quad \| \mathscr{A}-\textbf{Id} \|_{W^{1,\infty}(\Omega)} < \frac{1}{3}, \quad \| D\uptheta \|_{W^{1,\infty}(\Omega)} < \frac{1}{3}, \quad \| \mathscr{J}-\textbf{Id} \|_{W^{1,\infty}(\Omega)} < \frac{1}{3},
\end{equation}
for all $\tau \in [0,T]$.

We are now ready to give our main theorem.   
\begin{theorem}\label{T:MAINTHEOREMGAMMALEQ5OVER3}\label{T:MAINTHEOREMGAMMALEQ5OVER3}
Suppose $\gamma >1$. Fix $N\geq 2\lceil \alpha \rceil +12$. Let $k \geq N+1$. Consider a fixed quadruple
\begin{equation}
(A(0),A'(0),\delta,\phi)\in \text{GL}^+ (3) \times \mathbb{M}^{3 \times 3} \times \mathbb{R}_+ \times \mathcal Z_k, 
\end{equation} 
parametrizing a nonisentropic affine motion from the set $\mathscr{S}$ so that $\det A(t) \sim 1 + t^3, \quad t \geq 0$. Then there is an $\varepsilon_0>0$ such that for every $\varepsilon \in [0,\varepsilon_0]$ and pair of initial data for (\ref{E:THETAGAMMALEQ5OVER3}) $(\uptheta_0,{\bf V}_0)$ satisfying $\mathcal{S}^N(\uptheta_0, {\bf V}_0) + \mathcal{B}^N({\bf V}_0)\leq \varepsilon$, there exists a global-in-time solution, $(\uptheta,{\bf V})$, to the initial value problem (\ref{E:THETAGAMMALEQ5OVER3})-(\ref{E:THETAICGAMMALEQ5OVER3}).  
\end{theorem}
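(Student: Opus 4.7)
The proof proceeds by a standard continuation argument built on Theorem~\ref{T:LWPGAMMALEQ5OVER3}, reducing the global existence claim to a uniform a~priori bound on $\mathcal{S}^N(\uptheta,\mathbf{V})(\tau)+\mathcal{B}^N[\mathbf{V}](\tau)$. More precisely, let $T^{*}>0$ be the supremum of times on which the local solution provided by Theorem~\ref{T:LWPGAMMALEQ5OVER3} exists and satisfies the bootstrap assumptions~\eqref{E:APRIORI}. My plan is to show that, provided $\varepsilon_0$ is small enough, on $[0,T^*)$ one has the closed estimate
\begin{equation}
\mathcal{S}^N(\uptheta,\mathbf{V})(\tau)+\mathcal{B}^N[\mathbf{V}](\tau) \leq C\varepsilon,
\end{equation}
with $C$ independent of $T^*$. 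Since $\mathscr{A}-\mathbf{Id}$, $\mathscr{J}-\mathbf{Id}$, and $D\uptheta$ can be controlled in $W^{1,\infty}$ by $\mathcal{S}^N$ via the Sobolev embedding associated with the weighted norms (using $N\geq 2\lceil\alpha\rceil+12$, as in \cite{1610.01666,shkoller2017global}), this closed estimate strictly improves~\eqref{E:APRIORI} for $\varepsilon$ small, forcing $T^{*}=\infty$.

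The bulk of the argument is the derivation of this closed estimate. First, I would run high-order weighted energy estimates on~\eqref{E:THETAGAMMALEQ5OVER3} by applying the tangential operators $X_r^a\slashed\partial^\beta$ near the vacuum boundary and $\partial^\nu$ on the interior region (weighted by $\psi$ and $1-\psi$ respectively), then testing against $\partial_\tau X_r^a\slashed\partial^\beta\uptheta$ or $\partial_\tau \partial^\nu\uptheta$. The principal-symbol calculation produces the natural wave-type energy whose kinetic part carries the factor $\mu^{d(\gamma)}$ and whose potential part furnishes $\|\nabla_\eta\cdot\|_{\alpha+1,\psi e^{\bar S}}^2+\|\mathrm{div}_\eta\cdot\|_{\alpha+1,\psi e^{\bar S}}^2$ modulo curl contributions; the factor $e^{\bar{S}}$ is treated as an $L^\infty$ multiplier via Corollary~\ref{C:ENTROPYREGULARITYCOROLLARY}. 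Using $\mu_\tau/\mu\sim\mu_1/\tau$ and $\det A\sim 1+t^3$, the zeroth-order ``Gronwall''-type terms produced by the damping $(\mu_\tau/\mu)\partial_\tau\uptheta$ and the rotation $\Gamma^*_{ij}$ yield inequalities with integrable-in-time forcing when the top-order norms are weighted by $\mu^{b(\gamma)}$ (which vanishes for $\gamma\leq \tfrac{5}{3}$ and gives the Shkoller--Sideris-type slow growth for $\gamma>\tfrac{5}{3}$, cf.~\cite{shkoller2017global}).

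The main obstacle, and the genuinely nonisentropic step, is the curl estimate. Because $e^{\bar{S}}$ is non-constant, taking $\mathrm{Curl}_{\Lambda\mathscr{A}}$ of the momentum equation~\eqref{E:THETAGAMMALEQ5OVER3} does not annihilate the pressure gradient; instead one picks up a term of the form $[\Lambda\mathscr{A}\nabla\bar{S}\times\nabla_\eta(\cdots)]$ plus commutators, cf.~\eqref{E:CURLCROSSPRODMATRICES}. I would exploit the algebraic identity $p=\rho^\gamma e^{\bar S}$ with $w^{1+\alpha}=\rho_0\mathscr{J}_\eta(0,\cdot)$ to rewrite the bad term as a time derivative of a controlled quantity (up to lower-order commutators), so that the ensuing ODE for $\mathcal{B}^N[\mathbf{V}]$ closes against $\mathcal{S}^N$. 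This mirrors the spirit of~\cite{1610.01666} but must now keep track of $\bar{S}$-derivatives; the regularity $e^{\bar{S}}\in C^{k}[0,1]$ from Lemma~\ref{L:ENTROPYREGULARITY} with $k\geq N+1$ is precisely what is needed for the commutator terms at top order.

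Finally, I would handle the non-top-order components of $\mathcal{S}^N$ via coercivity: for $a+|\beta|\leq N-1$ (and $|\nu|\leq N-1$) I would use the fundamental theorem of calculus in $\tau$,
\begin{equation}
\|\nabla_\eta X_r^a\slashed\partial^\beta\uptheta(\tau)\|_{a+\alpha+1,\psi e^{\bar{S}}}
\leq \|\nabla_\eta X_r^a\slashed\partial^\beta\uptheta(0)\|_{a+\alpha+1,\psi e^{\bar{S}}}
+\int_0^\tau \|\nabla_\eta X_r^a\slashed\partial^\beta\mathbf{V}(\tau')\|_{a+\alpha+1,\psi e^{\bar{S}}}\,d\tau',
\end{equation}
together with the fact that $\mu^{-d(\gamma)/2}$ is integrable in $\tau$ (by Lemma~\ref{L:AASYMPTOTICS} and~\eqref{E:DETATCUBED2}), to absorb the missing $\mu^{b(\gamma)}$ weight at sub-top order. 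Combining the top-order energy/curl bounds with these coercivity bounds yields an integrated inequality of the form $\mathcal{S}^N(\tau)+\mathcal{B}^N[\mathbf{V}](\tau)\leq C\varepsilon+C\int_0^\tau \mu_\tau/\mu\cdot(\mathcal{S}^N+\mathcal{B}^N)\,d\tau'$ (with an additional integrable-in-$\tau$ forcing), which upon Gronwall and the expansion rate of $\mu$ closes the bootstrap, completing the proof.
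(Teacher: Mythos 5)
Your overall architecture — local well-posedness plus a bootstrap on $\mathcal{S}^N+\mathcal{B}^N$, closed by high-order weighted energy estimates, a separate curl estimate exploiting the algebraic structure of $p=\rho^\gamma e^{\bar S}$, and coercivity via the fundamental theorem of calculus in $\tau$ for the sub-top-order pieces — is the same as the paper's (which reduces the theorem to the Curl Estimates and Energy Estimate Propositions and then runs the continuity argument of \cite{1610.01666}). However, there is a concrete error in how you propose to close the estimate. You assert $\mu_\tau/\mu\sim\mu_1/\tau$ and end with an integrated inequality whose Gronwall kernel is $\mu_\tau/\mu$. In the rescaled time $\tau$ defined by $d\tau/dt=(\det A)^{-1/3}$ one has $\mu(\tau)\sim e^{\mu_1\tau}$ and $\mu_\tau/\mu\to\mu_1>0$, a \emph{constant} (this is the definition of $\mu_1$ in \eqref{E:MU1MU0DEFNGAMMALEQ5OVER3}); even in the original time $t$ the ratio behaves like $1/(1+t)$, which is not integrable. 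In either case Gronwall applied to your inequality produces exponential (respectively polynomial) growth of $\mathcal{S}^N+\mathcal{B}^N$ and the bootstrap does \emph{not} close.

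The correct mechanism is different: the coefficient $\mu_\tau/\mu$ never appears as a Gronwall kernel. It appears only in the good-signed dissipation $\mathcal{D}^N\geq0$ (for $\gamma\le\tfrac53$; for $\gamma>\tfrac53$ the equation is first divided by $\mu^{3\gamma-5}$ precisely to avoid the anti-damping), and that term is kept on the left and discarded. The integrability of the error terms comes instead from the weight $\mu^{d(\gamma)}$ on the velocity part of $\mathcal{S}^N$, which yields $\|X_r^a\slashed\partial^\beta\mathbf{V}\|\lesssim e^{-\mu_0\tau}(\mathcal{S}^N)^{1/2}$, together with $\|\Lambda_\tau\|\lesssim e^{-\mu_1\tau}$; every error term contains at least one such decaying factor, so the final inequality has kernel of the type $(1+\tau')e^{-\mu_0\tau'}$, which is integrable on $[0,\infty)$ and closes after absorbing the $\kappa\mathcal{S}^N(\tau)$ terms. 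You need to replace your claimed final inequality with one of the form
\begin{equation}
\mathcal{S}^N(\tau)+\mathcal{B}^N[\mathbf{V}](\tau)\lesssim \varepsilon+\kappa\,\mathcal{S}^N(\tau)+\int_0^\tau (1+\tau')e^{-\mu_0\tau'}\,\mathcal{S}^N(\tau')\,d\tau',
\end{equation}
obtained by substituting the curl bounds \eqref{E:CURL1MAINTHEOREMPROOFGAMMALEQ5OVER3}--\eqref{E:CURL2MAINTHEOREMPROOFGAMMALEQ5OVER3} into the energy bound \eqref{E:ENERGYMAINTHEOREMPROOFGAMMALEQ5OVER3}; only then does Gronwall give a bound uniform in $\tau$ and improve the a priori assumptions. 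A secondary, smaller point: your coercivity step is only needed for $\gamma>\tfrac53$ (for $\gamma\le\tfrac53$ one has $b(\gamma)=0$ and there is no missing weight), and it runs in the opposite direction from what you wrote — it controls $\uptheta$-norms by $\sup_\tau\mu^2\|\mathbf{V}\|^2$ plus data, rather than absorbing a weight into the top order.
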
   
\begin{proof}  
With our a priori assumptions, our high-order quantities for our local solution from Theorem \ref{T:LWPGAMMALEQ5OVER3} will be shown to satisfy the curl and energy inequalities (\ref{E:CURL1MAINTHEOREMPROOFGAMMALEQ5OVER3})-(\ref{E:ENERGYMAINTHEOREMPROOFGAMMALEQ5OVER3}) stated below in our curl estimates and energy estimate Propositions.  Then via a similar continuity argument to that presented in \cite{1610.01666} we can firstly show that our a priori assumptions are in fact improved thus justifying making them originally. For instance, by the fundamental theorem of calculus and our modified weighted Sobolev-Hardy inequality Lemma \ref{L:EMBEDDING}
\begin{equation}
\|D\uptheta\|_{W^{1,\infty}} = \|\int_0^\tau D{\bf V}\|_{W^{1,\infty}} \le \int_0^\tau e^{-\mu_0\tau'} \mathcal{S}^N(\tau')\,d\tau' \lesssim \varepsilon <\frac16, \ \ \tau\in[0,\mathcal{T})
\end{equation}
for $\varepsilon>0$ small enough and $\mathcal{T} \geq T$ is from the continuity argument. Note the continuity argument will give that $\mathcal{T}=\infty$. Similar arguments apply to the remaining a priori assumptions. Secondly from this continuity argument we can deduce we have a global-in-time solution to the initial value problem (\ref{E:THETAGAMMALEQ5OVER3})-(\ref{E:THETAICGAMMALEQ5OVER3}).
\end{proof}                  
Therefore the rest of the treatment will be devoted to proving the following curl and energy estimate Propositions respectively which will establish the curl and energy inequalities needed for the proof of Theorem \ref{T:MAINTHEOREMGAMMALEQ5OVER3}.
\begin{proposition*}[Curl Estimates]
Suppose $\gamma > 1$. Let $(\uptheta, {\bf V}):\Omega \rightarrow \mathbb R^3\times \mathbb R^3$ be a unique local solution to (\ref{E:THETAGAMMALEQ5OVER3})-(\ref{E:THETAICGAMMALEQ5OVER3}) on $[0,T]$ with $T>0$ fixed and assume $(\uptheta, {\bf V})$ satisfies the a priori assumptions (\ref{E:APRIORI}). Fix $N\geq 2\lceil \alpha \rceil +12$. Let $k \geq N+1$ in (\ref{E:PHIDEMAND}). Then for all $\tau \in [0,T]$, the following inequalities hold for some $0<\kappa\ll 1$
\begin{align}
&\mathcal{B}^N[{\bf V}](\tau) \lesssim 
\begin{cases}
 e^{-2\mu_0\tau}\left(\mathcal{S}^N(0)+\mathcal{B}^N[\mathbf{V}](0)\right)+ e^{-2\mu_0\tau}\mathcal{S}^N(\tau)   & \text{if } \ 1<\gamma<\frac53 \\ 
e^{-2\mu_0\tau}\left(\mathcal{S}^N(0)+\mathcal{B}^N[\mathbf{V}](0)\right)+ (1+\tau^2)e^{-2\mu_0\tau}\mathcal{S}^N(\tau) & \text{if } \ \gamma \geq \frac53
\end{cases},
\label{E:CURL1MAINTHEOREMPROOFGAMMALEQ5OVER3}\\ 
&
\mathcal{B}^N[\uptheta](\tau) \lesssim \mathcal{S}^N(0)+\mathcal{B}^N[\mathbf{V}](0) + \kappa \mathcal{S}^N(\tau) + \int_0^\tau e^{-\mu_0\tau'}  \mathcal{S}^N(\tau')\,d\tau'.
\label{E:CURL2MAINTHEOREMPROOFGAMMALEQ5OVER3}
\end{align}
\end{proposition*}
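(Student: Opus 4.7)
The strategy is to apply the modified curl operator $\text{Curl}_{\Lambda\mathscr{A}}$ from (\ref{E:CURLCROSSPRODMATRICES}) to the perturbation equation (\ref{E:THETAGAMMALEQ5OVER3}), since this operator annihilates every $\Lambda\mathscr{A}$-gradient up to cross-product corrections of the form $[\Lambda\mathscr{A}\nabla f\times \mathbf{F}]$. First, I would rewrite the pressure divergence in (\ref{E:THETAGAMMALEQ5OVER3}) using Piola's identity $([a_\eta]_j^k)_{,k}=0$ together with the $y$-independence of $\Lambda$, so that it takes the form $\Lambda_{ij}[\mathscr{A}]_j^k\cdot(\text{scalar})_{,k}$ modulo terms that factor through $\mathscr{A}-\textbf{Id}$, $\mathscr{J}-\textbf{Id}$, and the $\bar S$, $w$ gradients. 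Applying $\text{Curl}_{\Lambda\mathscr{A}}$ then yields a schematic evolution equation
\begin{equation*}
\partial_\tau\!\bigl(\mu^{d(\gamma)}[\text{Curl}_{\Lambda\mathscr{A}}\mathbf{V}]^i_j\bigr) = \mathcal{R}^i_j,
\end{equation*}
where $\mathcal{R}$ collects: (i) the commutator of $\text{Curl}_{\Lambda\mathscr{A}}$ with $\partial_\tau$, computed via (\ref{E:AJDIFFERENTIATIONFORMULAE}); (ii) the genuinely nonisentropic cross product $[\Lambda\mathscr{A}\nabla\bar S\times(\cdots)]$; (iii) the $w$-gradient cross product $[\Lambda\mathscr{A}\nabla w\times(\cdots)]$; and (iv) error terms from $\Gamma^*$ and the matrix $\Lambda$ itself. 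Absorbing the friction $\frac{\mu_\tau}{\mu}$ into the integrating factor $\mu^{d(\gamma)}$ is essential to capture the exponential decay.

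I would then commute the identity above with $X_r^a\slashed\partial^\beta$ for $a+|\beta|\leq N$ in the boundary region (where these vector fields are tangent to $\partial\Omega$ and preserve the weight $w$) and with $\partial^\nu$ for $|\nu|\leq N$ in the interior, test against the weighted $L^2$ inner product with weight $w^{a+\alpha+1}\psi e^{\bar S}$ (respectively $w^{\alpha+1}(1-\psi)e^{\bar S}$), and integrate in time. Lemma \ref{L:AASYMPTOTICS} gives $\mu(\tau)\sim e^{\mu_1\tau}$, so combined with (\ref{E:MU1MU0DEFNGAMMALEQ5OVER3}) this produces the exponential front factor $e^{-2\mu_0\tau}$ after dividing by $\mu^{d(\gamma)}$. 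The right-hand side $\mathcal{R}$ is controlled by $\mathcal{S}^N(\tau)$ using Corollary \ref{C:ENTROPYREGULARITYCOROLLARY} to bound the derivatives of $\bar S$ uniformly on $[0,1]$, the a priori bounds (\ref{E:APRIORI}) on $\mathscr{A}-\textbf{Id}$, $\mathscr{J}-\textbf{Id}$, and $D\uptheta$, and the weighted Sobolev embedding (Lemma \ref{L:EMBEDDING}). For $\gamma\geq \tfrac53$, the top-order quantities in $\mathcal{S}^N$ and $\mathcal{B}^N$ carry the weaker weight $\mu^{b(\gamma)}=\mu^{5-3\gamma}$, and tracking this weight through the time integration yields the polynomial factor $(1+\tau^2)$ in (\ref{E:CURL1MAINTHEOREMPROOFGAMMALEQ5OVER3}). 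For (\ref{E:CURL2MAINTHEOREMPROOFGAMMALEQ5OVER3}), I would use $\uptheta(\tau)=\uptheta_0+\int_0^\tau\mathbf{V}(\tau')\,d\tau'$ and pull $\text{Curl}_{\Lambda\mathscr{A}}$ inside the integral; the commutators arising from $\partial_\tau$ hitting $\Lambda(\tau)$ or $\mathscr{A}(\tau,y)$ contribute the small term $\kappa\,\mathcal{S}^N(\tau)$ by a priori smallness, while the principal contribution is the time integral $\int_0^\tau e^{-\mu_0\tau'}\mathcal{S}^N(\tau')\,d\tau'$ coming from the already established bound on $\mathcal{B}^N[\mathbf{V}]$.

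\textbf{Main obstacle.} The decisive new difficulty with respect to the isentropic analysis of \cite{1610.01666} is the nonisentropic cross product $[\Lambda\mathscr{A}\nabla\bar S\times(\cdots)]$. At the top derivative order this forces up to $N+1$ spatial derivatives to land on $\bar S$; the assumption $k\geq N+1$ in (\ref{E:PHIDEMAND}) together with Corollary \ref{C:ENTROPYREGULARITYCOROLLARY} guarantees that these derivatives are bounded uniformly on $[0,1]$ up to and including the vacuum boundary, and that $e^{\bar S}$ itself is bounded below by a positive constant. This regularity is precisely the algebraic, nonlinear structure of the pressure highlighted in the abstract, and its use here is what allows the top-order $\bar S$-cross product to close at the level of $\mathcal{S}^N$ without loss of derivatives.
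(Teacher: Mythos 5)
Your outline captures the general shape of the argument (apply the modified curl, track entropy cross products, commute with $X_r^a\slashed\partial^\beta$, integrate in time using the asymptotics of $\mu$), but it is missing the two structural ingredients without which the nonisentropic curl estimate does not close. First, you propose to apply $\text{Curl}_{\Lambda\mathscr{A}}$ directly to (\ref{E:THETAGAMMALEQ5OVER3}) and collect residual cross products with $\nabla\bar S$ \emph{and} $\nabla w$. The pressure term $w^{-\alpha}\bigl(w^{1+\alpha}e^{\bar S}\Lambda_{ij}\mathscr{A}_j^k\mathscr{J}^{-1/\alpha}\bigr)_{,k}$ is \emph{not} of the form $\Lambda\nabla_\eta(\text{scalar})$: after Piola one is left with $\alpha\, w\, e^{\bar S}\Lambda\nabla_\eta(\mathscr{J}^{-1/\alpha})$-type pieces, and their curl produces $\nabla\mathscr{J}^{-1/\alpha}$, i.e.\ second derivatives of $\uptheta$; at order $a+|\beta|=N$ this is $N+2$ derivatives of $\uptheta$, which neither $\mathcal{S}^N$ nor $\mathcal{B}^N$ controls — a genuine loss of derivatives. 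The paper's fix is to multiply the equation by $e^{-\bar S/\gamma}$ first, which by the exact algebraic identity
\begin{equation*}
e^{-\frac{\bar S}{\gamma}}\,w^{-\alpha}\bigl(w^{1+\alpha}e^{\bar S}\mathscr{J}^{-\frac1\alpha}\bigr)_{,k}=(\alpha+1)\bigl(w\,e^{\frac{\bar S}{\alpha+1}}\mathscr{J}^{-\frac1\alpha}\bigr)_{,k}
\end{equation*}
turns the whole pressure term into $(\alpha+1)\Lambda\nabla_\eta\bigl(we^{\bar S/(\alpha+1)}\mathscr{J}^{-1/\alpha}\bigr)$, which $\text{Curl}_{\Lambda\mathscr{A}}$ annihilates exactly — no $\nabla w$ cross product and no derivative loss survive. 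The price is that the other terms acquire factors $e^{-\bar S/\gamma}$, and undoing this (multiplying back by $e^{\bar S/\gamma}$) is what generates the cross products $\Lambda\mathscr{A}\nabla\bar S\times(\cdots)$ with only \emph{one} derivative on $\bar S$. This is the "algebraic nonlinear structure of the pressure" and it is the step your plan does not perform.

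Second, after the renormalization the term $\delta e^{-\bar S/\gamma}\Lambda\eta$ contributes $\tfrac{\delta}{\gamma\mu}\int_0^\tau\mu^{4-3\gamma}\,\Lambda\mathscr{A}\nabla\bar S\times(\Lambda\eta)\,d\tau'$, and since $\eta=y+\uptheta$ with $y=O(1)$ this is not a priori small; boundedness of the derivatives of $\bar S$ (your "main obstacle" paragraph) does nothing here. The resolution is the exact cancellation (\ref{E:CURLLASTTERMIDENTITYGAMMALEQ5OVER3}): because $\bar S$ is radial, $\Lambda_{js}\bar S_{,s}\Lambda_{ik}y^k-\Lambda_{is}\bar S_{,s}\Lambda_{jk}y^k=0$, so only $\uptheta$- and $D\uptheta$-dependent pieces survive and are then bounded in Lemma \ref{L:CURLLASTTERMBOUNDS}. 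Two smaller points: your integrating factor is off — dividing (\ref{E:THETAGAMMALEQ5OVER3}) by $\mu^{3\gamma-3}$ leaves the friction coefficient $\tfrac{\mu_\tau}{\mu}$, so the correct identity is $\partial_\tau\bigl(\mu\,\text{Curl}_{\Lambda\mathscr{A}}(e^{-\bar S/\gamma}\mathbf{V})\bigr)=\cdots$, not $\partial_\tau(\mu^{d(\gamma)}\cdots)$, and the $e^{-2\mu_0\tau}$ factors come from pairing the resulting $\mu^{-1}$ with the $\mu^{d(\gamma)}$ weights already present in $\mathcal{S}^N$ via (\ref{E:EXPSQRTSNBOUNDGAMMALEQ5OVER3}); and the $(1+\tau^2)$ for $\gamma\geq\tfrac53$ arises from the borderline integral $e^{-\mu_1\tau}\int_0^\tau e^{(\mu_1-\mu_0)\tau'}d\tau'\sim\tau e^{-\mu_0\tau}$ when $\mu_0=\mu_1$, not from the $\mu^{b(\gamma)}$ weight per se.
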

\begin{proposition*}[Energy Estimate]
Suppose $\gamma > 1$. Let $(\uptheta, {\bf V}):\Omega \rightarrow \mathbb R^3\times \mathbb R^3$ be a unique local solution to (\ref{E:THETAGAMMALEQ5OVER3})-(\ref{E:THETAICGAMMALEQ5OVER3}) on $[0,T]$ with $T>0$ fixed and assume $(\uptheta, {\bf V})$ satisfies the a priori assumptions (\ref{E:APRIORI}). Fix $N\geq 2\lceil \alpha \rceil +12$. Let $k \geq N+1$ in (\ref{E:PHIDEMAND}). Then for all $\tau \in [0,T]$, we have the following inequality for some $0<\kappa\ll 1$
\begin{align}
& \mathcal S^N(\tau) \lesssim  \mathcal{S}^N(0)  +\mathcal{B}^N[\uptheta](\tau) + \int_0^\tau  (\mathcal{S}^N(\tau'))^{\frac12} (\mathcal{B}^N[{\bf V}](\tau'))^{\frac12}\,d\tau'  \notag \\
& \qquad \qquad \qquad \qquad \qquad \qquad  + \kappa\mathcal{S}^N(\tau) + \int_0^\tau e^{-\mu_0\tau'} \mathcal{S}^N(\tau') d\tau'. \label{E:ENERGYMAINTHEOREMPROOFGAMMALEQ5OVER3}
\end{align}
\end{proposition*}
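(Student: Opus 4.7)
The plan is to run a high-order weighted energy estimate directly on the perturbation equation (\ref{E:THETAGAMMALEQ5OVER3}). Since $\mathcal S^N$ splits into a boundary-adapted piece (with weight $\psi e^{\bar S}$ and tangential operators $\mathcal D := X_r^a \slashed\partial^\beta$) and an interior piece (with weight $(1-\psi)e^{\bar S}$ and full derivatives $\partial^\nu$), I carry out the same scheme in both regions. In the boundary region, for each $a+|\beta|\leq N$, I apply $\mathcal D$ to (\ref{E:THETAGAMMALEQ5OVER3}), pair the result with $\mathcal D \mathbf V_i$ weighted by $w^{a}\psi e^{\bar S}$, and integrate over $\Omega$. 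No boundary terms appear when integrating the pressure divergence by parts, because $w^{1+\alpha}$ vanishes on $\partial\Omega$ and both $X_r$ and $\slashed\partial_{ji}$ are tangential there.

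Next I extract the energy identity. The inertial piece $w^\alpha \mu^{3\gamma-3}\partial_{\tau\tau}\uptheta_i$ produces $\frac{1}{2}\frac{d}{d\tau}\bigl[\mu^{3\gamma-3}\|\mathcal D\mathbf V\|^2_{a+\alpha,\psi e^{\bar S}}\bigr]$ plus a favorable damping contribution from the $\mu_\tau/\mu$ term, whose limit $\mu_1>0$ comes from Lemma~\ref{L:AASYMPTOTICS}. The restoring piece $\delta w^\alpha \Lambda_{i\ell}\uptheta_\ell$ yields $\frac{1}{2}\frac{d}{d\tau}\|\mathcal D\uptheta\|^2_{a+\alpha,\psi e^{\bar S}}$ up to an error driven by $\Lambda_\tau$, which decays like $(1+t)^{3-3\gamma}$ by Lemma~\ref{L:AASYMPTOTICS}. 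For the pressure divergence I Taylor-expand using (\ref{E:AJDIFFERENTIATIONFORMULAE}) to write $\mathscr A^k_j\mathscr J^{-1/\alpha}-\delta^k_j = -\uptheta^k{}_{,j} - \frac{1}{\alpha}(\mathrm{div}\,\uptheta)\,\delta^k_j + \mathcal R[\uptheta]$ with $\mathcal R$ a quadratic remainder, integrate by parts once, and group with $\Lambda_{ij}$; this produces the spatial energy $\mu^{b(\gamma)}\|\mathrm{div}_\eta \mathcal D\uptheta\|^2_{a+\alpha+1,\psi e^{\bar S}}$ together with a symmetric gradient contribution. A Hodge-type rearrangement $\|\nabla_\eta \mathcal D\uptheta\|^2 \lesssim \|\mathrm{div}_\eta \mathcal D\uptheta\|^2 + \|\mathrm{Curl}_{\Lambda\mathscr A}\mathcal D\uptheta\|^2$ then converts gradient control into divergence control plus the $\mathcal B^N[\uptheta](\tau)$ contribution on the right of (\ref{E:ENERGYMAINTHEOREMPROOFGAMMALEQ5OVER3}).

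The remaining work is to bound the commutator and nonlinear errors, which come from (i) commutators of $\mathcal D$ with $\mathscr A$, $\mathscr J^{-1/\alpha}$, $\nabla_\eta$, $\mathrm{div}_\eta$, (ii) derivative hits on the coefficients $\Lambda,\Gamma^*,e^{\bar S}$, and (iii) the quadratic remainder $\mathcal R[\uptheta]$. By the a priori smallness (\ref{E:APRIORI}) and the uniform bounds on $e^{\bar S}$ from Corollary~\ref{C:ENTROPYREGULARITYCOROLLARY}, all commutators carry one factor that is strictly sub-top order; after Cauchy-Schwarz with the natural time weights they are absorbed either as $\kappa\mathcal S^N(\tau)$ (on the left) or as $\int_0^\tau e^{-\mu_0\tau'}\mathcal S^N(\tau')\,d\tau'$, once one uses $\mu^{-d(\gamma)/2}\lesssim e^{-\mu_0\tau}$ for any factor $\mu$ that sits in an unfavorable position. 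The truly delicate error is the cross contribution in which a top-order spatial derivative of $\uptheta$ meets a curl of $\mathbf V$ that is not controlled by $\mathcal S^N$; Cauchy-Schwarz against the $\mathcal B^N[\mathbf V]$ weight produces exactly the mixed integrand $(\mathcal S^N)^{1/2}(\mathcal B^N[\mathbf V])^{1/2}$. After summing over $a+|\beta|\le N$ (and the analogous $\partial^\nu$ sum in the interior), integrating the resulting differential inequality in $\tau$, and using the fundamental theorem of calculus $\uptheta(\tau)=\uptheta_0+\int_0^\tau \mathbf V\,d\tau'$ to recover non-top-order norms of $\uptheta$ from those of $\mathbf V$, the inequality (\ref{E:ENERGYMAINTHEOREMPROOFGAMMALEQ5OVER3}) follows.

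The hardest step will be the time-weight bookkeeping: every commutator and every integration by parts must be arranged so that the exponents $d(\gamma)$ and $b(\gamma)$ on $\mu$ match the normalization in $\mathcal S^N$, and so that any residual prefactor acquires either a $\kappa$ smallness or an integrable $e^{-\mu_0\tau'}$ factor. This is delicate precisely because $b(\gamma)=0$ for $\gamma\le\frac53$ but $b(\gamma)=5-3\gamma<0$ for $\gamma>\frac53$, forcing the Shkoller-Sideris-type time-dependent weights on the top order to be handled carefully lest top-order norms decay too slowly to close the bootstrap.
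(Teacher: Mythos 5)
Your overall scheme (differentiate \eqref{E:THETAGAMMALEQ5OVER3} with $X_r^a\slashed\partial^\beta$, test against the time derivative of the differentiated unknown, produce $\mathcal B^N[\uptheta]$ and the mixed integral from the curl, absorb commutators via \eqref{E:APRIORI} and the embedding) matches the paper's strategy, but two of your specific steps have genuine gaps. First, you test against $X_r^a\slashed\partial^\beta\mathbf V_i$ with no matrix weight, whereas the paper tests against $w^{\alpha+a}\Lambda^{-1}_{im}\partial_\tau X_r^a\slashed\partial^\beta\uptheta^m$. The factor $\Lambda^{-1}$ is not cosmetic: it is what makes the divergence pairing collapse to $\text{div}_\eta(X_r^a\slashed\partial^\beta\uptheta)\,\text{div}_\eta(\partial_\tau X_r^a\slashed\partial^\beta\uptheta)$ and the gradient pairing to $\Lambda_{\ell j}[\nabla_\eta X_r^a\slashed\partial^\beta\uptheta]^i_j\Lambda^{-1}_{im}[\nabla_\eta\partial_\tau X_r^a\slashed\partial^\beta\uptheta]^m_\ell$, which Lemma~\ref{L:KEYLEMMA} turns into the perfect $\tau$-derivative of the sign-definite quantity $\sum d_id_j^{-1}((\mathscr M_{a,\beta})^j_i)^2+\tfrac1\alpha(\text{div}_\eta X_r^a\slashed\partial^\beta\uptheta)^2$ in $\mathcal E^N$. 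Without $\Lambda^{-1}$ the divergence term becomes $\text{div}_\eta(X_r^a\slashed\partial^\beta\uptheta)\cdot\text{Tr}(\Lambda\nabla_\eta\partial_\tau X_r^a\slashed\partial^\beta\uptheta)$, which is neither a perfect derivative nor controllable, since $\nabla_\eta X_r^a\slashed\partial^\beta\mathbf V$ at $a+|\beta|=N$ carries $N+1$ derivatives of $\mathbf V$ and is not in $\mathcal S^N$. Relatedly, your ``Hodge-type rearrangement'' $\|\nabla_\eta X_r^a\slashed\partial^\beta\uptheta\|^2\lesssim\|\text{div}_\eta X_r^a\slashed\partial^\beta\uptheta\|^2+\|\text{Curl}_{\Lambda\mathscr A}X_r^a\slashed\partial^\beta\uptheta\|^2$ in these degenerate weighted spaces is nowhere established and is not needed: the full gradient energy comes out directly once the quadratic form is split pointwise into its symmetric part (a perfect derivative by Lemma~\ref{L:KEYLEMMA}) and its antisymmetric part $\text{Curl}_{\Lambda\mathscr A}X_r^a\slashed\partial^\beta\uptheta$. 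The latter must then be integrated by parts \emph{in $\tau$}: the boundary term gives $\mathcal B^N[\uptheta](\tau)+\kappa\mathcal S^N(\tau)$ and the bulk term gives $\int_0^\tau(\mathcal S^N)^{1/2}(\mathcal B^N[\mathbf V])^{1/2}$. You correctly anticipate the mixed integrand but omit the $\tau$-integration by parts that is the only way to avoid pairing a top-order gradient of $\mathbf V$ against something merely in $L^2$.

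Second, Taylor-expanding $\mathscr A^k_j\mathscr J^{-1/\alpha}-\delta^k_j$ and relegating the quadratic remainder $\mathcal R[\uptheta]$ to errors does not close at top order: after $N$ derivatives the remainder contains products of the schematic form $(DX_r^a\slashed\partial^\beta\uptheta)(D\uptheta)$ paired against $DX_r^a\slashed\partial^\beta\mathbf V$, and again the last factor is not controlled by $\mathcal S^N$. The paper instead keeps the exact nonlinear structure, applies the identity \eqref{E:HIGHORDERDERVIATIVEAJGAMMALEQ5OVER3} so that the only top-order object multiplying $\partial_\tau(X_r^a\slashed\partial^\beta\uptheta^m)_{,k}$ is the curl/gradient/divergence triple above, and pushes everything else into the commutator $\mathcal C^{a,\beta,k}_j$. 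Moreover, the commutation of $X_r^a\slashed\partial^\beta$ with the weighted divergence $\frac1{w^q}(w^{1+q}e^{\bar S}\,\cdot)_{,k}$ is done via Lemma~\ref{L:COMM}, whose whole point is that every potentially top-order $D\uptheta$ in the remainder $\mathcal R^{a,\beta}_i$ comes multiplied by an extra factor of $w$ (see \eqref{E:FAVORABLEREMAINDERFORMGAMMALEQ5OVER3}), matching the weight hierarchy $w^{a+\alpha}$ vs.\ $w^{a+\alpha+1}$ in $\mathcal S^N$; your bookkeeping tracks only the derivative count and the time weights, not this spatial-weight matching, and the estimate \eqref{E:HIGHORDERRBOUNDGAMMALEQ5OVER3} fails without it. Finally, note that the proposition covers all $\gamma>1$: for $\gamma>\tfrac53$ the equation must first be renormalized to \eqref{E:THETAGAMMAGREATER5OVER3}, the dissipation changes sign structure, and the lower-order $\uptheta$-norms must be recovered through the coercivity Lemma~\ref{L:COERCIVITY} rather than appearing directly in the energy; your single-pass argument does not address this case.
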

We henceforth assume we are working with a unique local solution $(\uptheta, {\bf V}):\Omega \rightarrow \mathbb R^3\times \mathbb R^3$ to (\ref{E:THETAGAMMALEQ5OVER3})-(\ref{E:THETAICGAMMALEQ5OVER3}) such that $\mathcal{S}^N(\uptheta, {\bf V}) +\mathcal{B}^N[{\bf V}] <\infty$ on $[0,T]$ with $T>0$ fixed: Theorem \ref{T:LWPGAMMALEQ5OVER3} ensures the existence of such a solution, and furthermore we assume this local solution satisfies the a priori assumptions (\ref{E:APRIORI}).

To prove the above key results, we apply weighted energy estimates developed in \cite{1610.01666} which enable us to handle exponentially growing-in-time coefficients and the vacuum boundary. The exponentially growing time weights capture the stabilizing effect of the expanding background affine motion and were used crucially in \cite{1610.01666} to close the estimates. As seen in our high order quantities, only spatial derivative operators are used so as to keep intact the exponential structure of time weights which function as stabilizers allowing us to close estimates. Further, the increase in spatial weight $w$ in accordance with an increase in radial derivatives seen in $\mathcal{S}^N$ and $\mathcal{B}^N$ will be essential in closing energy estimates by avoiding potentially dangerous negative powers of $w$ near the boundary.

Among others, in the nonisentropic setting, we have to contend with the presence of $e^{\bar S}$ when proceeding with our estimates. The regularity of $e^{\bar S}$ given by Corollary \ref{C:ENTROPYREGULARITYCOROLLARY} will be important for us, used throughout our analysis. However this will only be useful at the stage of isolated derivatives of $e^{\bar S}$, and it will be seen both in the curl estimates and energy estimates that we will need to be careful when taking derivatives of expressions involving $e^{\bar S}$.

In the curl estimates we almost immediately have to deal with the presence of $e^{\bar S}$ as the usual derivation of the curl does not reveal a desired structure. We will derive the curl equation carefully by exploiting the algebraic nonlinear structure of the pressure in \eqref{E:SEOS} such that it is decoupled from the main equation and that it still exhibits a good structure without loss of derivatives. The fact that $e^{\bar S}$ is a radial function from our affine development will be crucial in obtaining favorable terms with respect to our high order quantities and weight structure. In the energy estimates, new commutator formulae are obtained for differentiating the pressure term which are carefully acquired to match the weight structure of our high order norm.

Furthermore to establish the results for all $\gamma > 1$, the proofs of the curl and energy inequalities will differ for $\gamma \in (1,\frac53]$ and $\gamma>\frac53$. This is because we need to eliminate the anti-damping effect encountered in \cite{1610.01666} for $\gamma>\frac53$. Hence we will need to consider different high-order quantities for $\gamma>\frac53$, which we first saw in Section \ref{S:HOQGAMMALEQ5OVER3}. Moreover, in this setting directly from the new equation structure we consider, many of our terms will contain time weights with negative powers. Thus to control such terms without this decay, we apply a coercivity estimate technique which is not used in the $\gamma \in (1,\frac53]$ case. This technique employs the fundamental theorem of calculus to express $\uptheta$ in terms of $\mathbf{V}$ and initial data with the coercivity estimates given in Lemma \ref{L:COERCIVITY}.
            
Due to this difference in methodologies for different $\gamma$, in Sections \ref{S:CURLGAMMALEQ5OVER3}-\ref{S:ENERGYGAMMALEQ5OVER3} we give the analysis for $\gamma \in (1,\frac 53]$. In Section \ref{S:GAMMAGREATER5OVER3} we give the analysis for $\gamma > \frac 53$. Thus henceforth we fix $\gamma \in (1,\frac 53]$ in Sections \ref{S:CURLGAMMALEQ5OVER3}-\ref{S:ENERGYGAMMALEQ5OVER3} and $\gamma > \frac53$ in Section \ref{S:GAMMAGREATER5OVER3}, with the corresponding implications for $\mathcal{S}^N$ and $\mathcal{B}^N$ given in Section \ref{S:HOQGAMMALEQ5OVER3}.
    
\section{Curl Estimates}\label{S:CURLGAMMALEQ5OVER3}
To control the modified curl in the nonisentropic setting, we crucially have to contend with the presence of $e^{\bar{S}}$ in our formulation. There is no natural decoupling to exploit because of this term: instead we need to artificially decouple the system through multiplication by an appropriate power of $e^{\bar{S}}$. To then analyze the modified curl we are led to introduce novel cross product terms: first recall the cross product matrix introduced in (\ref{E:CURLCROSSPRODMATRICES})
\begin{equation}
[\Lambda\mathscr{A}\nabla f \times \mathbf{F}]^i_j :=\Lambda_{jm}\mathscr{A}^s_m f_{,s} \mathbf{F}^i- \Lambda_{im}\mathscr{A}^s_m f_{,s} \mathbf{F}^j.
\end{equation} 
The related cross product commutator will also be needed
\begin{equation} 
[\partial_\tau,\Lambda \mathscr{A} \nabla f \times ] \mathbf{F}^i_j := \partial_\tau (\Lambda_{jm}\mathscr{A}^s_m)f,_s \mathbf{F}^i - \partial_\tau(\Lambda_{im} \mathscr{A}^s_m)f,_s\mathbf{F}^j.
\end{equation} 
These two cross product quantities defined above are unique to the nonisentropic setting. Finally before we derive desirable forms for our curl matrices we introduce a term that will let us commute the time derivative outside of the curl operator
\begin{equation}\label{E:CURLCOMMUTATOR}
[\partial_\tau, \text{Curl}_{\Lambda\mathscr{A}}] \mathbf{F}^i_j := \partial_\tau \left(\Lambda_{jm}\mathscr{A}^s_m\right) \mathbf{F},_s^i - \partial_\tau \left(\Lambda_{im}\mathscr{A}^s_m\right) \mathbf{F},_s^j. 
\end{equation}

We first derive the equations for $\text{Curl}_{\Lambda\mathscr{A}}{\bf V}$ and $\text{Curl}_{\Lambda\mathscr{A}}{\bf \uptheta}$.

\begin{lemma} 
Suppose $\gamma \in (1,\frac{5}{3}]$. Let $(\uptheta, {\bf V}):\Omega \rightarrow \mathbb R^3\times \mathbb R^3$ be a unique local solution to (\ref{E:THETAGAMMALEQ5OVER3})-(\ref{E:THETAICGAMMALEQ5OVER3}) on $[0,T]$ with $T>0$ fixed. Then for all $\tau \in [0,T]$ the curl matrices $\text{\em Curl}_{\Lambda\mathscr{A}}{\bf V}$ and $\text{\em Curl}_{\Lambda\mathscr{A}}{\bf \uptheta}$ can be written in the following desirable forms
\begin{align}\label{E:FINALCURLVEQNGAMMALEQ5OVER3}
\text{\em Curl}_{\Lambda\mathscr{A}}{\bf V}  &= \tfrac{1}{\gamma}\Lambda\mathscr{A} \nabla (\bar{S}) \times \mathbf{V} \notag \\
& +  \frac{\mu(0) \text{\em Curl}_{\Lambda \mathscr{A}} ({\bf V}(0))}{\mu} - \frac{\mu(0) \Lambda\mathscr{A} \nabla (\bar{S}) \times \mathbf{V}(0)}{\gamma \mu} \notag \\
& + \frac{1}{\mu}\int_0^\tau \mu [\partial_\tau, \text{\em Curl}_{\Lambda\mathscr{A}}] {\bf V} d\tau' -\frac{1}{\gamma \mu}\int_0^\tau \mu [\partial_\tau,\Lambda \mathscr{A} \nabla ( \bar{S}) \times ] \mathbf{V} d\tau ' \notag \\
& - \frac{2}{\mu} \int_0^\tau \mu \, \text{\em Curl}_{\Lambda\mathscr{A}}(\Gamma^\ast{\bf V}) d\tau' + \frac{2}{\gamma \mu} \int_0^\tau \mu \, \Lambda\mathscr{A} \nabla (\bar{S}) \times (\Gamma^\ast \mathbf{V}) d\tau ' \notag \\
&+\frac{\delta}{\gamma \mu}\int_0^\tau \mu^{4-3 \gamma}  \Lambda \nabla(\bar{S}) \times  \Lambda \uptheta d \tau '-\frac{\delta}{\gamma \mu}\int_0^\tau \mu^{4-3 \gamma} \Lambda \mathscr{A}[D \uptheta]\nabla (\bar{S}) \times \Lambda \eta d \tau ',
\end{align}  
and          
\begin{align}\label{E:FINALCURLTHETAEQNGAMMALEQ5OVER3}
&\text{\em Curl}_{\Lambda\mathscr{A}}{\bf \uptheta}  = \tfrac{1}{\gamma}\Lambda\mathscr{A} \nabla (\bar{S}) \times \uptheta \notag \\
&+\text{\em Curl}_{\Lambda \mathscr{A}}([\uptheta(0)]) - \tfrac{1}{\gamma}\Lambda\mathscr{A} \nabla (\bar{S}) \times \uptheta(0) \notag \\
&+\mu(0) \text{\em Curl}_{\Lambda \mathscr{A}} ({\bf V}(0)) \int_0^\tau \frac{1}{\mu(\tau')} d \tau' - \frac{\mu(0) \Lambda\mathscr{A} \nabla (\bar{S}) \times \mathbf{V}(0)}{\gamma} \int_0^\tau \frac{1}{\mu(\tau')} d \tau' \notag \\
&+ \int_0^\tau  [\partial_\tau, \text{\em Curl}_{\Lambda\mathscr{A}}] {\uptheta} d\tau' -\frac{1}{\gamma}\int_0^\tau  [\partial_\tau,\Lambda \mathscr{A} \nabla ( \bar{S}) \times ] \uptheta d\tau ' \notag \\
&+\int_0^\tau \frac{1}{\mu(\tau')} \int_0^{\tau'} \mu(\tau'') [\partial_\tau, \text{\em Curl}_{\Lambda\mathscr{A}}] {\bf V} d\tau'' d \tau' -\frac{1}{\gamma} \int_0^{\tau} \frac{1}{\mu(\tau')} \int_0^{\tau'} \mu(\tau '') [\partial_\tau,\Lambda \mathscr{A} \nabla ( \bar{S}) \times ] \mathbf{V} d\tau '' d \tau ' \notag \\
& - \int_0^\tau \frac{2}{\mu(\tau')} \int_0^{\tau '} \mu(\tau'') \, \text{\em Curl}_{\Lambda\mathscr{A}}(\Gamma^\ast{\bf V}) d\tau'' d \tau' + \frac{1}{\gamma}\int_0^{\tau} \frac{2}{\mu(\tau')} \int_0^{\tau '} \mu(\tau'') \, \Lambda\mathscr{A} \nabla (\bar{S}) \times (\Gamma^\ast \mathbf{V}) d\tau '' d \tau ' \notag \\
& +\frac{\delta}{\gamma} \int_0^\tau \frac{2}{\mu(\tau')} \int_0^{\tau'} \mu(\tau'')^{4-3 \gamma}  \Lambda \nabla(\bar{S}) \times  \Lambda \uptheta d \tau '' d \tau ' \notag \\
&-\frac{\delta}{\gamma} \int_0^\tau \frac{1}{\mu(\tau')}\int_0^{\tau'} \mu(\tau '')^{4-3 \gamma} \Lambda \mathscr{A}[D \uptheta]\nabla (\bar{S}) \times \Lambda \eta d \tau '' d \tau '.
\end{align}
\end{lemma}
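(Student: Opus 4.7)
The plan is to derive the identity for $\text{Curl}_{\Lambda\mathscr{A}}\mathbf V$ by antisymmetrizing the momentum equation~\eqref{E:THETAGAMMALEQ5OVER3} and then to obtain the identity for $\text{Curl}_{\Lambda\mathscr{A}}\uptheta$ by a further time integration. The central algebraic subtlety, unique to the nonisentropic setting, is that the pressure term is not an exact $\Lambda\mathscr{A}$-gradient because of the factor $e^{\bar S}$; rather, it must be decomposed into a $\Lambda\mathscr{A}$-gradient part plus a remainder proportional to $\nabla\bar S$, with only the remainder surviving the antisymmetrization.

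The key observation that fixes the structure on the left-hand side of~\eqref{E:FINALCURLVEQNGAMMALEQ5OVER3} is the product-rule identity
\begin{equation*}
\text{Curl}_{\Lambda\mathscr{A}}\mathbf V - \tfrac{1}{\gamma}\Lambda\mathscr{A}\nabla\bar S\times\mathbf V = e^{\bar S/\gamma}\,\text{Curl}_{\Lambda\mathscr{A}}\bigl(e^{-\bar S/\gamma}\mathbf V\bigr),
\end{equation*}
which follows from~\eqref{E:CURLCROSSPRODMATRICES} together with $\nabla e^{-\bar S/\gamma}=-\tfrac{1}{\gamma}e^{-\bar S/\gamma}\nabla\bar S$. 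The weight $e^{\bar S/\gamma}$ is precisely what decouples the equation of state via $p=\rho^\gamma e^{\bar S}=(\rho e^{\bar S/\gamma})^\gamma$, and it is this algebraic fact that forces the coefficient $\tfrac{1}{\gamma}$ in front of the cross product. To exploit it, I first divide~\eqref{E:THETAGAMMALEQ5OVER3} by $w^\alpha\mu^{3\gamma-3}$ and then rewrite the pressure using Piola's identity $(\mathscr{J}\mathscr{A}^k_j)_{,k}=0$ to shift the $k$-derivative off of $\mathscr{A}$, together with~\eqref{E:AFFREQ2} to absorb the $y$-linear piece. A second exact cancellation at this stage is $\text{Curl}_{\Lambda\mathscr{A}}(\delta\Lambda\eta)=0$, which follows from $\mathscr{A}^s_m\eta^\ell_{,s}=\delta^\ell_m$.

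Next, apply $\text{Curl}_{\Lambda\mathscr{A}}-\tfrac{1}{\gamma}\Lambda\mathscr{A}\nabla\bar S\times$ to the $\mathbf V$-equation. The pure $\Lambda\mathscr{A}$-gradient part of the pressure is annihilated by this operator, leaving behind commutators, the $\Gamma^\ast\mathbf V$ contribution in both curl and cross-product form, and the terms $\Lambda\nabla\bar S\times\Lambda\uptheta$ and $\Lambda\mathscr{A}[D\uptheta]\nabla\bar S\times\Lambda\eta$ arising from how the $\delta\Lambda\uptheta$ piece interacts with the $\mathscr{A}-\text{Id}$ correction after the decomposition. Using~\eqref{E:CURLCOMMUTATOR} to move $\partial_\tau$ outside $\text{Curl}_{\Lambda\mathscr{A}}$, the analogous commutator for $\Lambda\mathscr{A}\nabla\bar S\times$, and the identity $\partial_\tau\mathbf V+\tfrac{\mu_\tau}{\mu}\mathbf V=\tfrac{1}{\mu}\partial_\tau(\mu\mathbf V)$, I arrive at a relation of the form
\begin{equation*}
\partial_\tau\!\left[\mu\bigl(\text{Curl}_{\Lambda\mathscr{A}}\mathbf V-\tfrac{1}{\gamma}\Lambda\mathscr{A}\nabla\bar S\times\mathbf V\bigr)\right] = \mu\,\mathcal R,
\end{equation*}
where $\mathcal R$ collects exactly the commutator, $\Gamma^\ast\mathbf V$, and $\nabla\bar S\times\uptheta$-type terms indicated. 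Integrating in $\tau$ from $0$ to $\tau$ and dividing by $\mu(\tau)$ produces~\eqref{E:FINALCURLVEQNGAMMALEQ5OVER3}, with the initial-data contributions appearing with the prefactor $\mu(0)/\mu(\tau)$ and the integrals carrying the time weight $\mu(\tau')/\mu(\tau)$ (and $\mu(\tau')^{4-3\gamma}/\mu(\tau)$ on the $\delta\uptheta$-terms once $1/\mu^{3\gamma-3}$ is absorbed).

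For the $\uptheta$-identity~\eqref{E:FINALCURLTHETAEQNGAMMALEQ5OVER3}, I write $\uptheta(\tau)=\uptheta(0)+\int_0^\tau\mathbf V(\tau')\,d\tau'$ and apply the operator $\text{Curl}_{\Lambda\mathscr{A}}-\tfrac{1}{\gamma}\Lambda\mathscr{A}\nabla\bar S\times$ to both sides. Commuting this operator inside the time integral produces exactly the commutator terms on lines four and five of~\eqref{E:FINALCURLTHETAEQNGAMMALEQ5OVER3}, and then substituting~\eqref{E:FINALCURLVEQNGAMMALEQ5OVER3} under the remaining integrals generates the iterated-integral structure. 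The main obstacle I anticipate is the first step: decomposing the entropy-weighted pressure into its $\Lambda\mathscr{A}$-gradient piece and its $\nabla\bar S$-correction piece in exactly the form dictated by the product-rule identity above, so that the coefficient $\tfrac{1}{\gamma}$ emerges naturally and no spurious higher-order derivatives of $\bar S$ are introduced. In the isentropic case this step collapses trivially, but here it requires a careful interplay between Piola's identity, the affine relation~\eqref{E:AFFREQ2}, and the algebraic structure of the ideal-gas equation of state.
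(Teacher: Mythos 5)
Your proposal is correct and matches the paper's proof essentially step for step: the paper likewise conjugates by $e^{-\bar S/\gamma}$ so that the pressure becomes an exact $\Lambda\nabla_\eta$-gradient annihilated by $\text{Curl}_{\Lambda\mathscr{A}}$, rewrites the velocity terms as $\tfrac1\mu\partial_\tau\big(\mu\,\text{Curl}_{\Lambda\mathscr{A}}(e^{-\bar S/\gamma}{\bf V})\big)$ plus commutators, integrates in $\tau$ (twice for $\uptheta$), and then multiplies back by $e^{\bar S/\gamma}$ using exactly your product-rule identity. The one load-bearing detail your sketch leaves implicit is that converting the surviving term $\Lambda\mathscr{A}\nabla(\bar S)\times(\Lambda\eta)$ into $\Lambda\nabla(\bar S)\times\Lambda\uptheta-\Lambda\mathscr{A}[D\uptheta]\nabla(\bar S)\times\Lambda\eta$ requires, besides $\mathscr{A}-\mathrm{Id}=-\mathscr{A}[D\uptheta]$, the fact that $\bar S$ is radial, so that $\Lambda_{js}\bar S,_s\Lambda_{ik}y^k$ is symmetric in $(i,j)$ and hence $\Lambda\nabla(\bar S)\times\Lambda y=0$; without this the first term would retain a non-decaying $\Lambda\nabla(\bar S)\times\Lambda y$ contribution and the identity would not hold in the stated form.
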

\begin{proof}
First divide (\ref{E:POSTMUINTROEQN}) by $w^{\alpha}$ and use $\mathbf{V}=\uptheta_\tau=\eta_\tau$ to obtain 
\begin{equation}\label{E:CURLPREMULTIPLYGAMMALEQ5OVER3}
\mu^{3 \gamma-3} \left (\mathbf{V}_ \tau +  \frac{\mu_{\tau}}{\mu} \mathbf{V}  + 2 \Gamma^* \mathbf{V} \right) + \delta \Lambda \eta + w^{-\alpha} \Lambda \mathscr{A}^{\top} \left(\nabla(w^{1+\alpha} e^{\bar{S}}\mathscr{J}^{-\frac{1}{\alpha}}) +\alpha w^{1+\alpha} e^{\bar{S}}\nabla(\mathscr{J}^{-\frac{1}{\alpha}})\right)=0,
\end{equation}
where we have split the gradient term and moved away from coordinates at this stage.
Next, multiply (\ref{E:CURLPREMULTIPLYGAMMALEQ5OVER3}) by $e^{-\frac{\bar{S}}{\gamma}}$ to obtain a $\Lambda \nabla_{\eta} f$ term
\begin{equation}\label{E:CURLPOSTMULTIPLYGAMMALEQ5OVER3}
\mu^{3 \gamma-3} e^{-\frac{\bar{S}}{\gamma}} \left(\mathbf{V}_ \tau +  \frac{\mu_{\tau}}{\mu} \mathbf{V}  + 2 \Gamma^* \mathbf{V} \right) + \delta \Lambda e^{-\frac{\bar{S}}{\gamma}} \eta + (\alpha+1) \Lambda \nabla_\eta \left( w e^{\left(\frac{1}{\alpha+1}\right)\bar{S}}\mathscr{J}^{-\frac{1}{\alpha}} \right)=0.
\end{equation}
Since $\text{Curl}_{\Lambda \mathscr{A}} (\Lambda \nabla_\eta f )=0$ apply $\text{Curl}_{\Lambda \mathscr{A}}$ to (\ref{E:CURLPOSTMULTIPLYGAMMALEQ5OVER3})
\begin{align}
&\mu^{3 \gamma-3} \left(\text{Curl}_{\Lambda \mathscr{A}} \left(e^{-\frac{\bar{S}}{\gamma}} \mathbf{V}_ \tau\right) +  \frac{\mu_{\tau}}{\mu} \text{Curl}_{\Lambda \mathscr{A}} \left(e^{-\frac{\bar{S}}{\gamma}} \mathbf{V} \right)  + 2 \text{Curl}_{\Lambda \mathscr{A}} \left(e^{-\frac{\bar{S}}{\gamma}} \Gamma^* \mathbf{V} \right) \right) \notag \\
&\qquad \qquad \qquad \qquad + \delta \text{Curl}_{\Lambda \mathscr{A}} \left( \Lambda e^{-\frac{\bar{S}}{\gamma}} \eta\right) =0.
\end{align}
Then
\begin{equation}
\text{Curl}_{\Lambda \mathscr{A}} \left(e^{-\frac{\bar{S}}{\gamma}} \mathbf{V}_ \tau\right) +  \frac{\mu_{\tau}}{\mu} \text{Curl}_{\Lambda \mathscr{A}} \left(e^{-\frac{\bar{S}}{\gamma}} \mathbf{V} \right)  + 2 \text{Curl}_{\Lambda \mathscr{A}} \left(e^{-\frac{\bar{S}}{\gamma}} \Gamma^* \mathbf{V} \right) + \mu^{3-3 \gamma} \delta \, \text{Curl}_{\Lambda \mathscr{A}} \left( \Lambda e^{-\frac{\bar{S}}{\gamma}} \eta\right) =0.
\end{equation}
Reformulate this as
\begin{align}\label{E:CURLPREINTGAMMALEQ5OVER3}
&\partial_\tau \left(\mu \, \text{Curl}_{\Lambda\mathscr{A}}\left(e^{-\frac{\bar{S}}{\gamma}}{\bf V}\right)   \right) = \mu [\partial_\tau, \text{Curl}_{\Lambda\mathscr{A}}] \left(e^{-\frac{\bar{S}}{\gamma}}{\bf V}\right)  - 2 \mu \, \text{Curl}_{\Lambda\mathscr{A}}\left(e^{-\frac{\bar{S}}{\gamma}}\Gamma^\ast{\bf V}\right) \notag \\
&\qquad \qquad \qquad \qquad -\mu^{4-3 \gamma} \delta \, \text{Curl}_{\Lambda \mathscr{A}} \left( \Lambda e^{-\frac{\bar{S}}{\gamma}} \eta\right),
\end{align}
where we have used
\begin{equation}
\text{Curl}_{\Lambda \mathscr{A}} \left(e^{-\frac{\bar{S}}{\gamma}} \mathbf{V}_ \tau\right)=\partial_\tau \left( \, \text{Curl}_{\Lambda\mathscr{A}}\left(e^{-\frac{\bar{S}}{\gamma}}{\bf V}\right)   \right)-[\partial_\tau, \text{Curl}_{\Lambda\mathscr{A}}] \left(e^{-\frac{\bar{S}}{\gamma}}{\bf V}\right),
\end{equation}
Integrate (\ref{E:CURLPREINTGAMMALEQ5OVER3}) from $0$ to $\tau$ 
\begin{align}\label{E:CURLEXPMULTVGAMMALEQ5OVER3}
\text{Curl}_{\Lambda\mathscr{A}}\left(e^{-\frac{\bar{S}}{\gamma}}{\bf V}\right)  &= \frac{\mu(0) \text{Curl}_{\Lambda \mathscr{A}} \left(e^{-\frac{\bar{S}}{\gamma}}[{\bf V}(0)]\right)}{\mu}+\frac{1}{\mu}\int_0^\tau \mu [\partial_\tau, \text{Curl}_{\Lambda\mathscr{A}}] \left(e^{-\frac{\bar{S}}{\gamma}}{\bf V}\right) d\tau' \notag \\
&\qquad - \frac{2}{\mu} \int_0^\tau \mu \, \text{Curl}_{\Lambda\mathscr{A}}\left(e^{-\frac{\bar{S}}{\gamma}}\Gamma^\ast{\bf V}\right) d\tau'-\frac{\delta}{\mu}\int_0^\tau \mu^{4-3 \gamma} \text{Curl}_{\Lambda \mathscr{A}} \left( \Lambda e^{-\frac{\bar{S}}{\gamma}} \eta\right) d \tau'.
\end{align}
Via (\ref{E:CURLCOMMUTATOR}) we obtain
\begin{align}
\partial_\tau \text{Curl}_{\Lambda\mathscr{A}}\left(e^{-\frac{\bar{S}}{\gamma}}{\bf \uptheta}[\tau]\right)  &= \frac{\mu(0) \text{Curl}_{\Lambda \mathscr{A}} \left(e^{-\frac{\bar{S}}{\gamma}}[{\bf V}(0)]\right)}{\mu}+[\partial_\tau,\text{Curl}_{\Lambda \mathscr{A}}]\left(e^{-\frac{\bar{S}}{\gamma}} \uptheta\right) \notag \\
&+\frac{1}{\mu}\int_0^\tau \mu [\partial_\tau, \text{Curl}_{\Lambda\mathscr{A}}] \left(e^{-\frac{\bar{S}}{\gamma}}{\bf V}\right) d\tau' - \frac{2}{\mu} \int_0^\tau \mu \, \text{Curl}_{\Lambda\mathscr{A}}\left(e^{-\frac{\bar{S}}{\gamma}}\Gamma^\ast{\bf V}\right) d\tau' \notag \\
&-\frac{\delta}{\mu}\int_0^\tau \mu^{4-3 \gamma} \text{Curl}_{\Lambda \mathscr{A}} \left( \Lambda e^{-\frac{\bar{S}}{\gamma}} \eta\right) d \tau'.
\end{align}
Again integrating from $0$ to $\tau$ 
\begin{align}
&\text{Curl}_{\Lambda\mathscr{A}}\left(e^{-\frac{\bar{S}}{\gamma}}{\bf \uptheta}[\tau]\right) =\text{Curl}_{\Lambda \mathscr{A}}\left( e^{-\frac{\bar{S}}{\gamma}}{\bf \uptheta}[0] \right)+\mu(0) \text{Curl}_{\Lambda \mathscr{A}} \left(e^{-\frac{\bar{S}}{\gamma}}[{\bf V}(0)]\right) \int_0^\tau \frac{1}{\mu(\tau ')} d \tau ' \notag \\
&\quad \, +\int_0^\tau [\partial_\tau,\text{Curl}_{\Lambda \mathscr{A}}]\left(e^{-\frac{\bar{S}}{\gamma}} \uptheta[\tau '] \right) d \tau ' +\int_0^\tau\frac{1}{\mu(\tau ')}  \int_0^{\tau '} \mu [\partial_\tau, \text{Curl}_{\Lambda\mathscr{A}}] \left(e^{-\frac{\bar{S}}{\gamma}}{\bf V}[\tau ''] \right) d \tau '' d\tau' \notag \\
& \quad \, -\int_0^\tau \frac{2}{\mu(\tau ')} \int_0^{\tau '} \mu(\tau '') \, \text{Curl}_{\Lambda\mathscr{A}}\left(e^{-\frac{\bar{S}}{\gamma}}\Gamma^\ast{\bf V}[\tau '']\right) d\tau '' d \tau ' \\
&\quad \,-\int_0^\tau \frac{\delta}{\mu(\tau ')}\int_0^{\tau '} \mu(\tau '')^{4-3 \gamma} \text{Curl}_{\Lambda \mathscr{A}} \left( \Lambda e^{-\frac{\bar{S}}{\gamma}} \eta [ \tau '']\right) d \tau '' d \tau '. \label{E:CURLTHETAPREAPPROACHGAMMALEQ5OVER3}
\end{align}
Now note
\begin{equation}\label{E:CURLDISTGAMMALEQ5OVER3}
\text{Curl}_{\Lambda\mathscr{A}}\left(e^{-\frac{\bar{S}}{\gamma}}{\bf F}\right)=e^{-\frac{\bar{S}}{\gamma}}\text{Curl}_{\Lambda\mathscr{A}}{\bf F}+\Lambda\mathscr{A} \nabla (e^{-\frac{\bar{S}}{\gamma}}) \times \mathbf{F},
\end{equation}
where we recall the definitions introduced in (\ref{E:CURLCROSSPRODMATRICES}). Also via (\ref{E:CURLCOMMUTATOR}) we have
\begin{equation}\label{E:CURLCOMMUTATORDISTGAMMALEQ5OVER3}
[\partial_\tau, \text{Curl}_{\Lambda\mathscr{A}}] \left(e^{-\frac{\bar{S}}{\gamma}} \mathbf{F} \right)= e^{-\frac{\bar{S}}{\gamma}} [\partial_\tau,\text{Curl}_{\Lambda\mathscr{A}}] \mathbf{F} + [\partial_\tau,\Lambda \mathscr{A} \nabla ( e^{-\frac{\bar{S}}{\gamma}}) \times ] \mathbf{F},
\end{equation}   
Then using 
\begin{equation}
\text{Curl}_{\Lambda \mathscr{A}}(\Lambda \eta) = 0,
\end{equation}
we can rewrite (\ref{E:CURLEXPMULTVGAMMALEQ5OVER3}) as
\begin{align}\label{E:CURLVPOSTCURLEXPAND1GAMMALEQ5OVER3}
e^{-\frac{\bar{S}}{\gamma}} \text{Curl}_{\Lambda\mathscr{A}}{\bf V}  &= -\Lambda\mathscr{A} \nabla (e^{-\frac{\bar{S}}{\gamma}}) \times \mathbf{V} \notag \\
&\qquad + e^{-\frac{\bar{S}}{\gamma}} \frac{\mu(0) \text{Curl}_{\Lambda \mathscr{A}} ({\bf V}(0))}{\mu} + \frac{\mu(0) \Lambda\mathscr{A} \nabla (e^{-\frac{\bar{S}}{\gamma}}) \times \mathbf{V}(0)}{\mu} \notag \\
&\qquad  + \frac{1}{\mu}\int_0^\tau \mu e^{-\frac{\bar{S}}{\gamma}} [\partial_\tau, \text{Curl}_{\Lambda\mathscr{A}}] {\bf V} d\tau' +\frac{1}{\mu}\int_0^\tau \mu [\partial_\tau,\Lambda \mathscr{A} \nabla ( e^{-\frac{\bar{S}}{\gamma}}) \times ] \mathbf{V} d\tau' \notag \\
&\qquad - \frac{2}{\mu} \int_0^\tau \mu \, e^{-\frac{\bar{S}}{\gamma}} \text{Curl}_{\Lambda\mathscr{A}}(\Gamma^\ast{\bf V}) d\tau' - \frac{2}{\mu} \int_0^\tau \mu \, \Lambda\mathscr{A} \nabla (e^{-\frac{\bar{S}}{\gamma}}) \times (\Gamma^\ast \mathbf{V}) d\tau' \notag \\
&\qquad -\frac{\delta}{\mu}\int_0^\tau \mu^{4-3 \gamma} \Lambda\mathscr{A} \nabla (e^{-\frac{\bar{S}}{\gamma}}) \times (\Lambda \eta) d \tau'.
\end{align}
Now multiply (\ref{E:CURLVPOSTCURLEXPAND1GAMMALEQ5OVER3}) by $e^{\frac{\bar{S}}{\gamma}}$ and note
\begin{equation}
\left( e^{-\frac{\bar{S}}{\gamma}}\right),_s = -\tfrac{1}{\gamma} (\bar{S}),_s e^{-\frac{\bar{S}}{\gamma}},
\end{equation}
to obtain
\begin{align}\label{E:CURLVPOSTCURLEXPAND2GAMMALEQ5OVER3}
\text{Curl}_{\Lambda\mathscr{A}}{\bf V}  &= \tfrac{1}{\gamma}\Lambda\mathscr{A} \nabla (\bar{S}) \times \mathbf{V} \notag \\
&\qquad +  \frac{\mu(0) \text{Curl}_{\Lambda \mathscr{A}} ({\bf V}(0))}{\mu} - \frac{\mu(0) \Lambda\mathscr{A} \nabla (\bar{S}) \times \mathbf{V}(0)}{\gamma \mu} \notag \\
&\qquad  + \frac{1}{\mu}\int_0^\tau \mu [\partial_\tau, \text{Curl}_{\Lambda\mathscr{A}}] {\bf V} d\tau' -\frac{1}{\gamma \mu}\int_0^\tau \mu [\partial_\tau,\Lambda \mathscr{A} \nabla ( \bar{S}) \times ] \mathbf{V} d\tau ' \notag \\
&\qquad - \frac{2}{\mu} \int_0^\tau \mu \, \text{Curl}_{\Lambda\mathscr{A}}(\Gamma^\ast{\bf V}) d\tau' + \frac{2}{\gamma \mu} \int_0^\tau \mu \, \Lambda\mathscr{A} \nabla (\bar{S}) \times (\Gamma^\ast \mathbf{V}) d\tau ' \notag \\
&\qquad +\frac{\delta}{\gamma \mu}\int_0^\tau \mu^{4-3 \gamma} \Lambda\mathscr{A} \nabla (\bar{S}) \times (\Lambda \eta) d \tau '.
\end{align}
Now since $\eta=O(1)$ for our estimates, it is not a priori clear if the last term of (\ref{E:CURLVPOSTCURLEXPAND2GAMMALEQ5OVER3}) can be controlled. To this end, first note $\mathscr{A}=[D \eta]^{-1}$ and $\eta=y+\uptheta$. We then have
\begin{equation}\label{E:CURLADELTAIDENTITYPROOF}
\mathscr{A}^k_j-\delta^k_j = \mathscr{A}^k_l  \delta^l_j  -\mathscr{A}^k_l [D\eta]^l_j  = \mathscr{A}^k_l (\delta^l_j - [D  y]^l_j - [D\uptheta]^l_j) = -\mathscr{A}^k_l  [D\uptheta]^l_j,
\end{equation}
Second notice that, since $\bar{S}$ is a radial function from (\ref{E:BARSDEMAND}) and hence $\bar{S},_s=(y_s /|y|)\bar{S}'(|y|)$,
\begin{equation}
\Lambda_{js}(\bar{S}),_s\Lambda_{ik}y^k - \Lambda_{is} (\bar{S}),_s \Lambda_{jk} y^k=\frac{\bar{S}'(|y|)}{|y|}\left( \Lambda_{js} y^s \Lambda_{ik} y^k - \Lambda_{is} y^s \Lambda_{jk} y^k \right) =0. 
\end{equation} 
Then
\begin{align}\label{E:CURLLASTTERMIDENTITYGAMMALEQ5OVER3}
&[\Lambda\mathscr{A} \nabla (\bar{S}) \times (\Lambda \eta)]_j^i = \Lambda_{jm}(\delta_m^s - \mathscr{A}_\ell^s [D \uptheta]_m^\ell)(\bar{S}),_s \Lambda_{ik} \eta^k - \Lambda_{im}(\delta_m^s - \mathscr{A}_\ell^s [D \uptheta]_m^\ell)(\bar{S}),_s \Lambda_{jk} \eta^k \notag \\
&= \Lambda_{js}(\bar{S}),_s\Lambda_{ik}\uptheta^k - \Lambda_{is} (\bar{S}),_s \Lambda_{jk} \uptheta^k+\Lambda_{js}(\bar{S}),_s\Lambda_{ik}y^k - \Lambda_{is} (\bar{S}),_s \Lambda_{jk} y^k \notag \\
&\qquad \qquad \qquad -(\Lambda_{jm}\mathscr{A}_\ell^s [D \uptheta]_m^\ell(\bar{S}),_s \Lambda_{ik} \eta^k - \Lambda_{im} \mathscr{A}_\ell^s [D \uptheta]_m^\ell(\bar{S}),_s \Lambda_{jk} \eta^k) \notag \\
&=(\Lambda_{js}(\bar{S}),_s\Lambda_{ik}\uptheta^k - \Lambda_{is} (\bar{S}),_s \Lambda_{jk} \uptheta^k) -(\Lambda_{jm}\mathscr{A}_\ell^s [D \uptheta]_m^\ell(\bar{S}),_s \Lambda_{ik} \eta^k - \Lambda_{im} \mathscr{A}_\ell^s [D \uptheta]_m^\ell(\bar{S}),_s \Lambda_{jk} \eta^k) \notag \\
&:=[\Lambda \nabla(\bar{S}) \times  \Lambda \uptheta]_j^i- [\Lambda \mathscr{A}[D \uptheta]\nabla (\bar{S}) \times \Lambda \eta]_j^i.
\end{align}
Hence using (\ref{E:CURLLASTTERMIDENTITYGAMMALEQ5OVER3}), we rewrite (\ref{E:CURLVPOSTCURLEXPAND2GAMMALEQ5OVER3}) and obtain our desirable form for $\text{Curl}_{\Lambda\mathscr{A}}{\bf V}$, (\ref{E:FINALCURLVEQNGAMMALEQ5OVER3}). 
Following the same approach as for obtaining (\ref{E:FINALCURLVEQNGAMMALEQ5OVER3}), notably using (\ref{E:CURLDISTGAMMALEQ5OVER3}), (\ref{E:CURLCOMMUTATORDISTGAMMALEQ5OVER3}), multiplying by $e^{\frac{\bar{S}}{\gamma}}$ and finally using (\ref{E:CURLLASTTERMIDENTITYGAMMALEQ5OVER3}), from (\ref{E:CURLTHETAPREAPPROACHGAMMALEQ5OVER3}) we obtain our desirable form for $\text{Curl}_{\Lambda\mathscr{A}}{\bf \uptheta}$, (\ref{E:FINALCURLTHETAEQNGAMMALEQ5OVER3}).
\end{proof} 
Next we prove some preliminary bounds for several of the terms unique to our nonisentropic setting. These bounds on terms arising from the derivation above will be used in our main curl estimate. 
\begin{lemma}\label{L:CURLLASTTERMBOUNDS}
Suppose $\gamma \in (1,\frac{5}{3}]$. Let $(\uptheta, {\bf V}):\Omega \rightarrow \mathbb R^3\times \mathbb R^3$ be a unique local solution to (\ref{E:THETAGAMMALEQ5OVER3})-(\ref{E:THETAICGAMMALEQ5OVER3}) on $[0,T]$ with $T>0$ fixed and assume $(\uptheta, {\bf V})$ satisfies the a priori assumptions (\ref{E:APRIORI}).  Fix $N\geq 2\lceil \alpha \rceil +12$. Let $k \geq N+1$ in (\ref{E:PHIDEMAND}). Then for all $\tau \in [0,T]$, the following bounds hold for $a+|\beta| \leq N$, $|\nu| \leq N$   
\begin{align} 
& \|\frac{\delta}{\gamma \mu}\int_0^\tau \mu^{4-3 \gamma} X_r^a \slashed\partial^\beta (\Lambda \nabla(\bar{S}) \times  \Lambda \uptheta) d \tau ' \|^2_{\alpha+a+1,\psi e^{\bar{S}}} \notag \\
& \ \ \ \ \ + \|\frac{\delta}{\gamma \mu}\int_0^\tau \mu^{4-3 \gamma} \partial^\nu (\Lambda \nabla(\bar{S}) \times  \Lambda \uptheta) d \tau ' \|^2_{1+\alpha,(1-\psi) e^{\bar{S}}} \lesssim e^{-2\mu_0 \tau} \mathcal{S}^N(\tau), \label{E:CURLLASTTERMBOUND1} \\
&\|\frac{\delta}{\gamma \mu}\int_0^\tau \mu^{4-3 \gamma} X_r^a \slashed\partial^\beta (\Lambda \mathscr{A}[D \uptheta]\nabla (\bar{S}) \times \Lambda \eta) d \tau ' \|^2_{\alpha+a+1,\psi e^{\bar{S}}} \notag \\
& \ \ \ \ \ + \|\frac{\delta}{\gamma \mu}\int_0^\tau \mu^{4-3 \gamma} \partial^\nu  (\Lambda \mathscr{A}[D \uptheta]\nabla (\bar{S}) \times \Lambda \eta) d \tau ' \|^2_{1+\alpha,(1-\psi) e^{\bar{S}}} \lesssim e^{-2\mu_0 \tau} \mathcal{S}^N(\tau), \label{E:CURLLASTTERMBOUND2} \\
& \| \frac{\delta}{\gamma} \int_0^\tau \frac{2}{\mu} \int_0^{\tau'} \mu^{4-3 \gamma} X_r^a \slashed\partial^\beta (\Lambda \nabla(\bar{S}) \times  \Lambda \uptheta ) d \tau '' d \tau ' \|_{\alpha+a+1,\psi e^{\bar{S}}}^2 \notag \\
&  \ \ \ \ \ + \| \frac{\delta}{\gamma} \int_0^\tau \frac{2}{\mu} \int_0^{\tau'} \mu^{4-3 \gamma} \partial^\nu (\Lambda \nabla(\bar{S}) \times  \Lambda \uptheta) d \tau '' d \tau ' \|_{1+\alpha,(1-\psi) e^{\bar{S}}}^2 \lesssim \int_0^\tau e^{-\mu_0 \tau'} \mathcal{S}^N(\tau') d \tau' ,\label{E:CURLLASTTERMBOUND3} \\
& \| \frac{\delta}{\gamma} \int_0^\tau \frac{1}{\mu}\int_0^{\tau'} \mu^{4-3 \gamma} X_r^a \slashed\partial^\beta ( \Lambda \mathscr{A}[D \uptheta]\nabla (\bar{S}) \times \Lambda \eta ) d \tau '' d \tau ' \|_{\alpha+a+1,\psi e^{\bar{S}}}^2 \notag \\ 
& \ \ \ \ \ +\| \frac{\delta}{\gamma} \int_0^\tau \frac{1}{\mu}\int_0^{\tau'} \mu^{4-3 \gamma} \partial^\nu (\Lambda \mathscr{A}[D \uptheta]\nabla (\bar{S}) \times \Lambda \eta) d \tau '' d \tau ' \|_{1+\alpha,(1-\psi) e^{\bar{S}}}^2 \lesssim \int_0^\tau e^{-\mu_0 \tau'} \mathcal{S}^N(\tau') d \tau'. \label{E:CURLLASTTERMBOUND4} 
\end{align}
\end{lemma}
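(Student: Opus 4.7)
My plan is to treat all four estimates with a common two-step strategy: reduce each to a spatial bound on the cross-product integrands by Minkowski's integral inequality, and then integrate the resulting $\mu$-prefactors in time. Monotonicity of $\mathcal{S}^N$ in $\tau$ will allow repeated replacement of integrand norms by $(\mathcal{S}^N(\tau'))^{1/2}$.

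For the spatial bound I would distribute $X_r^a\slashed\partial^\beta$ (in the $\psi$-region) or $\partial^\nu$ (in the $(1-\psi)$-region) across the factors of the integrand using Leibniz. The matrix $\Lambda(\tau)$ is time-only and uniformly bounded thanks to Lemma \ref{L:AASYMPTOTICS}, and since $k\geq N+1$ Corollary \ref{C:ENTROPYREGULARITYCOROLLARY} gives pointwise bounds on $\nabla\bar{S}$ and all its derivatives up to order $N$; also $\bar S$ is radial, so the tangential operators $\slashed\partial$ act benignly and $X_r$ commutes favorably. The top-order term (all derivatives landing on $\uptheta$) then satisfies
\begin{equation*}
\|X_r^a\slashed\partial^\beta(\Lambda\nabla\bar S\times\Lambda\uptheta)\|_{a+\alpha+1,\psi e^{\bar S}} \lesssim \|X_r^a\slashed\partial^\beta \uptheta\|_{a+\alpha,\psi e^{\bar S}} \lesssim (\mathcal{S}^N(\tau'))^{1/2},
\end{equation*}
where I use $w\leq C$ to absorb the mismatch between $w^{a+\alpha+1}$ and $w^{a+\alpha}$; the $(1-\psi)e^{\bar S}$ estimate is analogous. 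For \eqref{E:CURLLASTTERMBOUND2} and \eqref{E:CURLLASTTERMBOUND4} I would split $\eta=y+\uptheta$: the $y$-piece combines with the $[D\uptheta]$ factor to give a single $\uptheta$-derivative estimable as above (using \eqref{E:CURLADELTAIDENTITYPROOF} to handle $\mathscr{A}-\mathbf{Id}$ contributions), while the $\uptheta$-piece carries an additional small factor absorbed via the a priori bound $\|D\uptheta\|_{W^{1,\infty}}<\tfrac13$ together with a Moser-type product inequality.

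For the time-integration I would use $\mu(\tau)\sim e^\tau$ (which follows from $\det A\sim 1+t^3$ together with $d\tau/dt=\mu^{-1}$) and $\mu_0=(3\gamma-3)/2\in(0,1]$ for $\gamma\in(1,\tfrac53]$. A case split on the sign of $4-3\gamma$ yields the uniform bound
\begin{equation*}
\frac{1}{\mu(\tau)}\int_0^\tau \mu^{4-3\gamma}(\tau')\,d\tau' \lesssim e^{-\mu_0\tau},\qquad \tau\geq 0,
\end{equation*}
so that combining with the spatial bound gives $\lesssim e^{-\mu_0\tau}(\mathcal{S}^N(\tau))^{1/2}$; squaring produces \eqref{E:CURLLASTTERMBOUND1}--\eqref{E:CURLLASTTERMBOUND2}. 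For the double time-integrals \eqref{E:CURLLASTTERMBOUND3}--\eqref{E:CURLLASTTERMBOUND4}, set $h(\tau'):=\tfrac{1}{\mu(\tau')}\int_0^{\tau'}\mu^{4-3\gamma}(\tau'')(\mathcal{S}^N(\tau''))^{1/2}d\tau''$; the preceding analysis gives $h(\tau')\lesssim e^{-\mu_0\tau'}(\mathcal{S}^N(\tau'))^{1/2}$, and the Cauchy--Schwarz inequality produces
\begin{equation*}
\Big(\int_0^\tau h(\tau')\,d\tau'\Big)^2 \leq \Big(\int_0^\tau e^{-\mu_0\tau'}\,d\tau'\Big)\Big(\int_0^\tau e^{-\mu_0\tau'}\mathcal{S}^N(\tau')\,d\tau'\Big) \lesssim \int_0^\tau e^{-\mu_0\tau'}\mathcal{S}^N(\tau')\,d\tau',
\end{equation*}
which is exactly the claimed bound.

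The main obstacle is the weight bookkeeping in \eqref{E:CURLLASTTERMBOUND2} and \eqref{E:CURLLASTTERMBOUND4}: the integrand $\Lambda\mathscr{A}[D\uptheta]\nabla\bar S\times\Lambda\eta$ carries the position vector $\eta=y+\uptheta$, so after distributing high-order derivatives one must track the $y$-contribution (which is nonzero at the boundary) separately from the $\uptheta$-contribution. The inequality $w\leq C$ is crucial to accommodate the elevated weight $w^{a+\alpha+1}$ (one higher than the weight for $\uptheta$ itself in $\mathcal{S}^N$), and the a priori smallness built into \eqref{E:APRIORI} is what closes the product estimate without sacrificing decay.
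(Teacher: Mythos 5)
Your argument is correct, and for \eqref{E:CURLLASTTERMBOUND1}--\eqref{E:CURLLASTTERMBOUND2} it coincides in substance with the paper's: the Leibniz expansion into the schematic terms $\Lambda\Lambda(X_r^a\slashed\partial^\beta D\bar S)\uptheta$ and $\Lambda\Lambda(D\bar S)(X_r^a\slashed\partial^\beta\uptheta)$, the use of Corollary \ref{C:ENTROPYREGULARITYCOROLLARY} with Lemma \ref{L:DERIVATIVERADIAL} for the entropy factors, the absorption of the extra power of $w$, and the bound $\tfrac1\mu\int_0^\tau\mu^{4-3\gamma}\,d\tau'\lesssim e^{-\mu_0\tau}$ via the identity $(3-3\gamma)\mu_1=-2\mu_0$ with the $\gamma=\tfrac43$ logarithmic case absorbed by $\mu_0=\tfrac12\mu_1$ are all exactly the paper's steps (the paper calls the squared prefactor $U(\tau)$). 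Where you genuinely diverge is in \eqref{E:CURLLASTTERMBOUND3}--\eqref{E:CURLLASTTERMBOUND4}: the paper keeps the spatial $L^2$ norm outside, applies Fubini and a weighted Cauchy--Schwarz in time with a split $\mu^{-2}=\mu^{-2(1-\lambda)}\mu^{-2\lambda}$, and must then verify $\mu^{-2\lambda}\bigl|\int_0^{\tau'}\mu^{4-3\gamma}\bigr|^2\lesssim e^{-\mu_0\tau'}$ through a delicate $\gamma$-dependent choice of $\lambda$ (namely $\lambda=\tfrac12$, $\lambda\in[1-\tfrac{9\gamma-9}{4},1)$, or $\lambda=\tfrac34$). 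You instead apply Minkowski's integral inequality first, use monotonicity of $\mathcal S^N$ to reduce the inner integral to $(\mathcal S^N(\tau'))^{1/2}\cdot\tfrac1{\mu(\tau')}\int_0^{\tau'}\mu^{4-3\gamma}$, reuse the single-integral decay, and finish with an ordinary Cauchy--Schwarz $\bigl(\int e^{-\mu_0\tau'}(\mathcal S^N)^{1/2}\bigr)^2\le\int e^{-\mu_0\tau'}\cdot\int e^{-\mu_0\tau'}\mathcal S^N$. This is more elementary, avoids the interpolation exponent entirely, and yields the same right-hand side; the paper's version buys nothing extra here beyond keeping the estimate in a form parallel to its other double-integral bounds. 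Two cosmetic points: $\mu\sim e^{\mu_1\tau}$ with $\mu_1$ determined by the affine motion (not necessarily $1$), so you should carry $\mu_1$ rather than normalizing it away, and your "case split on the sign of $4-3\gamma$" must explicitly include the borderline $\gamma=\tfrac43$ where the time integral grows linearly; neither affects the validity of the argument.
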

\begin{proof}
\textit{Proof of} (\ref{E:CURLLASTTERMBOUND1})-(\ref{E:CURLLASTTERMBOUND2}). By the definition of $\Lambda \nabla (\bar{S}) \times \Lambda \uptheta$ introduced in (\ref{E:CURLLASTTERMIDENTITYGAMMALEQ5OVER3}), we have
\begin{equation}
X_r^a \slashed\partial^\beta[(\Lambda \nabla(\bar{S}) \times  \Lambda \uptheta)]_j^i=X_r^a \slashed\partial^\beta(\Lambda_{js} \bar{S},_s \Lambda_{ik} \uptheta^k - \Lambda_{is} \bar{S},_s \Lambda_{jk} \uptheta^k).
\end{equation}
We restrict our focus to the left term only
\begin{align}\label{E:CURLRESTRICTLEFTGAMMALEQ5OVER3}
X_r^a \slashed\partial^\beta (\Lambda_{js} \bar{S},_s \Lambda_{ik} \uptheta^k) &= \Lambda_{js} \Lambda_{ik} ((X_r^a \slashed\partial^\beta \bar{S},_s) \uptheta^k + \bar{S},_s (X_r^a \slashed\partial^\beta \uptheta^k)) \notag \\
&\qquad + \sum_{1\le a'+|\beta'|\le N-1} C_{a',\beta'} (\Lambda_{js} (X_r^{a'} \slashed\partial^{\beta'} \bar{S},_s) \Lambda_{ik} (X_r^{a-a'} \slashed\partial^{\beta-\beta'} \uptheta^k)).
\end{align}
Schematically consider the first two terms on the right hand side of (\ref{E:CURLRESTRICTLEFTGAMMALEQ5OVER3})
\begin{equation}
\underbrace{\Lambda \Lambda (X_r^a \slashed\partial^\beta (D \bar{S}))\uptheta}_{=:C_1}+\underbrace{\Lambda\Lambda(D\bar{S})(X_r^a \slashed\partial^\beta \uptheta)}_{=:C_2}
\end{equation}
Notice that for $C_1$, since $X_r^a \slashed\partial^\beta (D \bar{S})$ is bounded by Corollary \ref{C:ENTROPYREGULARITYCOROLLARY} and Lemma \ref{L:DERIVATIVERADIAL},
\begin{align}\label{E:C1BOUNDFIRSTGAMMALEQ5OVER3}  
\| \frac{\delta}{\gamma \mu}\int_0^\tau \mu^{4-3 \gamma}  C_1 d \tau ' \|_{\alpha+a+1,\psi e^{\bar{S}}}^2 &= \int_\Omega \psi w^{1+\alpha +a} e^{\bar{S}} \frac{\delta^2}{\gamma^2 \mu^2} \Big| \int_0^\tau \mu^{4-3\gamma} \Lambda \Lambda (X_r^a \slashed\partial^\beta (D \bar{S})) \uptheta d \tau' \Big|^2 d y \notag \\
&\lesssim \frac{1}{\mu^2} \Big| \int_0^\tau \mu^{4-3 \gamma} d \tau' \Big|^2 \sup_{0 \leq \tau \leq \tau'} \|  \uptheta \|^2_{\alpha,\psi e^{\bar{S}}} \notag \\
&\lesssim U(\tau) \mathcal{S}^N(\tau),
\end{align}  
where we also have used (\ref{E:WDEMAND}) and (\ref{E:LAMBDABOUNDSGAMMALEQ5OVER3}), and we define $U(\tau)=\frac{1}{\mu^2} | \int_0^\tau \mu^{4-3 \gamma} d \tau'|^2$. Now for $U$, using (\ref{E:EXPMU1MUINEQGAMMALEQ5OVER3}) and (\ref{E:MU0MU1INEQGAMMALEQ5OVER3}), we have
\begin{align}\label{E:UBOUNDFIRSTGAMMALEQ5OVER3}
U &\lesssim e^{-2 \mu_1 \tau} \Big| \int_0^\tau e^{(4-3 \gamma)\mu_1 \tau'} d \tau' \Big|^2 \notag \\
&\lesssim \begin{cases}
e^{-2 \mu_1\tau}e^{2(4-3\gamma)\mu_1\tau} & \text{ if } \  1<\gamma<\frac43 \\
e^{-2 \mu_1\tau}\tau^2 & \text{ if } \ \gamma =\frac43 \\
e^{-2 \mu_1\tau}(e^{2(4-3\gamma)\mu_1\tau}+1) & \text{ if } \  \frac43<\gamma \leq \frac53
\end{cases}.
\end{align}   
By the definition of $\mu_1$ (\ref{E:MU1MU0DEFNGAMMALEQ5OVER3}), we have $-2\mu_1 + 2 (4-3\gamma)\mu_1=-4\mu_0$ and also, $\mu_1 \geq \mu_0$. Furthermore, for $\gamma=\frac43$, $\mu_0=\frac12 \mu_1$, and so in this case
\begin{equation} 
\tau^2 e^{-2 \mu_1\tau}=\tau^2 e^{-\mu_1 \tau} e^{-2 \mu_0 \tau} \lesssim e^{-2 \mu_0 \tau}.
\end{equation}
Therefore by (\ref{E:UBOUNDFIRSTGAMMALEQ5OVER3})
\begin{equation}\label{E:UBOUNDSECONDGAMMALEQ5OVER3}
U \lesssim e^{-2\mu_0 \tau}.
\end{equation}
Hence finally for $C_1$ by (\ref{E:C1BOUNDFIRSTGAMMALEQ5OVER3}), we have 
\begin{equation}\label{E:C1BOUNDSECONDGAMMALEQ5OVER3}
\| \frac{\delta}{\gamma \mu}\int_0^\tau \mu^{4-3 \gamma}  C_1 d \tau ' \|_{\alpha+a+1,\psi e^{\bar{S}}}^2 \lesssim e^{-2\mu_0 \tau} S^N(\tau).
\end{equation}
For $C_2$, by employing Fubini's theorem to interchange the spatial and time integrals,
\begin{align}
\| \frac{\delta}{\gamma \mu}\int_0^\tau \mu^{4-3 \gamma}  C_1 d \tau ' \|_{\alpha+a+1,\psi e^{\bar{S}}}^2 &= \int_\Omega \psi w^{1+\alpha +a} e^{\bar{S}} \frac{\delta^2}{\gamma^2 \mu^2} \Big| \int_0^\tau \mu^{4-3\gamma} \Lambda\Lambda(D\bar{S})(X_r^a \slashed\partial^\beta \uptheta) d \tau' \Big|^2 d y \notag \\
&\lesssim U(\tau) \sup_{0 \leq \tau \leq \tau'} \| X_r^a \slashed\partial^\beta \uptheta \|^2_{a+\alpha,\psi e^{\bar{S}}} \notag \\
&\lesssim e^{-2\mu_0 \tau} \mathcal{S}^N(\tau) \label{E:C2BOUNDGAMMALEQ5OVER3},
\end{align}
where we have again used Corollary \ref{C:ENTROPYREGULARITYCOROLLARY} and Lemma \ref{L:DERIVATIVERADIAL} to bound the entropy term. We have also used our bound for $U$ (\ref{E:UBOUNDSECONDGAMMALEQ5OVER3}) as well as (\ref{E:WDEMAND}) and (\ref{E:LAMBDABOUNDSGAMMALEQ5OVER3}). The low order commutator terms on the right hand side of  (\ref{E:CURLRESTRICTLEFTGAMMALEQ5OVER3}) can be estimated in the same way as $C_1$ and $C_2$ 
\begin{equation}\label{E:LOWORDERSAMEWAYASC1C2GAMMALEQ5OVER3}
\|  \frac{\delta}{\gamma \mu}\int_0^\tau \mu^{4-3 \gamma} C_{a',\beta'} (\Lambda_{js} (X_r^{a'} \slashed\partial^{\beta'} \bar{S},_s) \Lambda_{ik} (X_r^{a-a'} \slashed\partial^{\beta-\beta'} \uptheta^k)) d \tau ' \|_{1+\alpha+a,\psi e^{\bar{S}}}^2 \lesssim e^{-2\mu_0 \tau} \mathcal{S}^N(\tau),
\end{equation}
for $a'+|\beta'|\le N-1$. Hence by (\ref{E:C1BOUNDSECONDGAMMALEQ5OVER3}), (\ref{E:C2BOUNDGAMMALEQ5OVER3}) and (\ref{E:LOWORDERSAMEWAYASC1C2GAMMALEQ5OVER3}), and an analogous argument for the $\|\cdot\|_{1+\alpha,(1-\psi)e^{\bar{S}}}$ norms, we obtain (\ref{E:CURLLASTTERMBOUND1}).
By an analogous argument additionally using the a priori assumptions (\ref{E:APRIORI}) in the case and also Lemma \ref{L:EMBEDDING} for the low order commutator terms, we get (\ref{E:CURLLASTTERMBOUND2}). \\

\textit{Proof of} (\ref{E:CURLLASTTERMBOUND3})-(\ref{E:CURLLASTTERMBOUND4}). With $0 < \lambda < 1$ fixed to be specified later and using the Cauchy-Schwarz inequality in conjunction with Fubini's theorem to take advantage of the fact that time integrals of negative powers $\mu$ are bounded by (\ref{E:EXPMU1MUINEQGAMMALEQ5OVER3})
\begin{align}
& \| \frac{\delta}{\gamma} \int_0^\tau \frac{2}{\mu(\tau')} \int_0^{\tau'} \mu(\tau'')^{4-3 \gamma} (X_r^a \slashed\partial^\beta \Lambda \nabla(\bar{S}) \times  \Lambda \uptheta) d \tau '' d \tau ' \|_{\alpha+a+1,\psi e^{\bar{S}}}^2  \notag \\
& \lesssim \int_\Omega  \int_0^\tau \frac{1}{(\mu(\tau'))^{2(1-\lambda)}} d \tau' \int_0^\tau \frac{1}{\mu(\tau')^{2 \lambda}}   \left(\int_0^{\tau'} \mu(\tau'')^{4-3\gamma} (X_r^a \slashed\partial^\beta \Lambda \nabla(\bar{S}) \times  \Lambda \uptheta) d \tau''\right)^2 d \tau' w^{1+\alpha +a} \psi e^{\bar{S}} dy \notag \\
& \lesssim \int_0^\tau \frac{1}{\mu(\tau')^{2 \lambda}} \| \int_0^{\tau'} \mu(\tau'')^{4-3\gamma} (X_r^a \slashed\partial^\beta \Lambda \nabla(\bar{S}) \times  \Lambda \uptheta) d \tau'' \|^2_{1+\alpha+a,\psi e^{\bar{S}}} d \tau ' \notag \\
& \lesssim \int_0^\tau e^{-\mu_0 \tau'} \mathcal{S}^N(\tau') d \tau',
\end{align}
where we conclude the final bound by a similar argument to that giving (\ref{E:CURLLASTTERMBOUND1}) if we can show
\begin{equation}
\frac{1}{\mu(\tau')^{2 \lambda}} \Big| \int_0^{\tau'} \mu(\tau'')^{4-3\gamma} d \tau'' \Big|^2 \lesssim e^{-\mu_0 \tau}.
\end{equation}
Now if $\gamma\neq\frac43$
\begin{align}
\frac{1}{\mu(\tau')^{2 \lambda}} \Big| \int_0^{\tau'} \mu(\tau'')^{4-3\gamma} d \tau'' \Big|^2 & \lesssim e^{-2 \lambda \mu_1 \tau'} \Big|  \int_0^{\tau'} e^{(4-3\gamma)\mu_1 \tau''} d \tau'' \Big|^2  \notag \\
& \lesssim \begin{cases}
e^{-2 \lambda \mu_1\tau'}e^{2(4-3\gamma)\mu_1\tau'} & \text{ if } \  1<\gamma<\frac43 \\
e^{-2 \lambda \mu_1\tau'}(e^{2(4-3\gamma)\mu_1\tau'}+1) & \text{ if } \  \frac43<\gamma \leq \frac53
\end{cases} \notag  \\ 
& = \begin{cases}
e^{(8-6\gamma-2\lambda)\mu_1\tau'} & \text{ if } \  1<\gamma<\frac43 \\
e^{(8-6\gamma-2\lambda)\mu_1\tau'}+e^{-2 \lambda \mu_1\tau'} & \text{ if } \  \frac43<\gamma \leq \frac53
\end{cases}.
\end{align}
Here, specify $\lambda$ as follows
\begin{align}
\lambda=\begin{cases}
\frac{1}{2} & \text{ if } \  \frac{11}{9}\leq\gamma<\frac43 \ \text{ or } \ \frac43<\gamma\leq \frac53 \quad (i)\\
r \; \text{ where } r \in [1-\frac{9 \gamma-9}{4},1) & \text{ if } \ 1<\gamma <\frac{11}{9} \quad (ii)
\end{cases}
\end{align}
In case $(i)$
\begin{equation}
(8-6\gamma-2\lambda)\mu_1=\frac{2}{3\gamma-3}\mu_0-4\mu_0 \leq -\mu_0,
\end{equation}
since $\frac{2}{3 \gamma -3} \leq 3$ for $\frac{11}{9} \leq \gamma$. Also in case $(i)$, $-2 \lambda \mu_1 =-\mu_1 \leq -\mu_0$. In case $(ii)$
\begin{equation}
(8-6\gamma-2\lambda)\mu_1=-4\mu_0 + \frac{2(2-2 \lambda)}{3\gamma-3} \mu_0 \leq -\mu_0
\end{equation}
since $\frac{2(2-2 \lambda)}{3\gamma-3} \leq 3$ for $\lambda \geq 1-\frac{9 \gamma-9}{4}$. Finally for $\gamma=\frac43$, set $\lambda=\frac34$, and then
\begin{align}
\frac{1}{\mu(\tau')^{2 \lambda}} \Big| \int_0^{\tau'} \mu(\tau'')^{4-3\gamma} d \tau'' \Big|^2 &\lesssim e^{-\frac32 \mu_1 \tau'} \Big|  \int_0^{\tau'} e^{(4-3\gamma)\mu_1 \tau''} d \tau'' \Big|^2 \notag \\
&\lesssim e^{-\frac32 \mu_1 \tau'} (\tau')^2 = e^{-\mu_0 \tau'} (\tau')^2 e^{-\mu_1 \tau'} \lesssim e^{-\mu_0 \tau'}.
\end{align}
In all cases, we have the same result and hence
\begin{equation}
\frac{1}{\mu(\tau')^{2 \lambda}} \Big| \int_0^{\tau'} \mu(\tau'')^{4-3\gamma} d \tau'' \Big|^2 \lesssim e^{-\mu_0 \tau}.
\end{equation}
So with an analogous argument for the $\|\cdot\|_{1+\alpha,(1+\psi)e^{\bar{S}}}$ norms, we have (\ref{E:CURLLASTTERMBOUND3}). By an analogous argument, with (\ref{E:CURLLASTTERMBOUND2}) replacing (\ref{E:CURLLASTTERMBOUND1}), we can get (\ref{E:CURLLASTTERMBOUND4}). 
\end{proof}
Before proving our main curl estimate result, we also prove bounds on commutator terms that will  immediately arise when applying our derivative operators.  
\begin{lemma}\label{L:COMMUTATORBOUNDS} 
Suppose $\gamma \in (1,\frac{5}{3}]$. Let $(\uptheta, {\bf V}):\Omega \rightarrow \mathbb R^3\times \mathbb R^3$ be a unique local solution to (\ref{E:THETAGAMMALEQ5OVER3})-(\ref{E:THETAICGAMMALEQ5OVER3}) on $[0,T]$ with $T>0$ fixed and assume $(\uptheta, {\bf V})$ satisfies the a priori assumptions (\ref{E:APRIORI}). Fix $N\geq 2\lceil \alpha \rceil +12$. Let $k \geq N+1$ in (\ref{E:PHIDEMAND}). Then for all $\tau \in [0,T]$, the following bounds hold for some $0<\kappa\ll 1$,
\begin{align} 
&\sum_{1\leq a+|\beta|\le N} \Big\|\left[X_r^a \slashed\partial^\beta,\text{{\em Curl}}_{\Lambda\mathscr{A}}\right]{\bf V}(\tau)\Big\|_{1+\alpha+a,\psi e^{\bar{S}}}^2
+ \sum_{|\nu|\le N} \Big\|\left[\partial^\nu,\text{{\em Curl}}_{\Lambda\mathscr{A}}\right]{\bf V}(\tau)\Big\|_{1+\alpha,(1-\psi)e^{\bar{S}}}^2 \notag  \\
& \ \ \ \ \  \lesssim e^{-2\mu_0\tau}\mathcal{S}^N(\tau)\left(1+P(\mathcal{S}^N(\tau))\right), \label{E:COMMUTATORBOUND1}  \\
&\sum_{1 \leq a+|\beta|\le N} \Big\|\left[X_r^a \slashed\partial^\beta,\text{{\em Curl}}_{\Lambda\mathscr{A}}\right]\uptheta(\tau) \Big\|_{1+\alpha+a,\psi e^{\bar{S}}}^2
+ \sum_{|\nu|\le N} \Big\|\left[\partial^\nu,\text{{\em Curl}}_{\Lambda\mathscr{A}}\right]\uptheta(\tau)\Big\|_{1+\alpha,(1-\psi)e^{\bar{S}}}^2 \notag  \\
& \ \ \ \ \  \lesssim \mathcal{S}^N(0)+ \kappa\mathcal{S}^N(\tau) +(1+P(\mathcal{S}^N(\tau)))\int_0^\tau e^{-\mu_0\tau'} \mathcal{S}^N(\tau')\,d\tau', \label{E:COMMUTATORBOUND2}
\end{align}
with $P$ a polynomial of degree at least 1.
\end{lemma}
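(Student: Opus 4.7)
The plan is to expand each commutator by Leibniz's rule, unravel every derivative of $\mathscr{A}$ via the identity $\partial \mathscr{A}^k_i = -\mathscr{A}^k_\ell \partial \eta^\ell,_s \mathscr{A}^s_i$ from~(\ref{E:AJDIFFERENTIATIONFORMULAE}) into a multilinear expression in $\mathscr{A}$ and spatial derivatives of $\uptheta$, and then estimate the resulting products in the weighted $L^2$ norms. Since $\Lambda$ is a pure function of $\tau$ whose size is bounded by powers of $\mu$ that the time-weight structure absorbs, and since by Corollary~\ref{C:ENTROPYREGULARITYCOROLLARY} the factor $e^{\bar S}$ is smooth and bounded above and below, these two ingredients play only a passive role. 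The key decay mechanism is the $\mu^{d(\gamma)}$ time weight in front of $\mathbf{V}$ in $\mathcal{S}^N$: extracting $\|\mathbf{V}\|^2 \lesssim \mu^{-d(\gamma)}\mathcal{S}^N$ and using $\mu \sim e^{\mu_1\tau}$ together with the definition $\mu_0 = d(\gamma)\mu_1/2$ produces the decay factor $e^{-2\mu_0\tau}$.

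For estimate~(\ref{E:COMMUTATORBOUND1}), a typical term in the expanded commutator has the schematic form $\Lambda \cdot (X_r^{a_1}\slashed\partial^{\beta_1}\mathscr{A}) \cdot (X_r^{a_2}\slashed\partial^{\beta_2} D\mathbf{V})$ with $a_1+|\beta_1| \geq 1$ and total derivative count $a+|\beta|$. If both the $\mathscr{A}$- and $\mathbf{V}$-factors are below top order, I place the one with fewer derivatives in $L^\infty$ via the weighted Sobolev embedding Lemma~\ref{L:EMBEDDING} and keep the other in the matching top-order weighted $L^2$ space; the a priori assumption~(\ref{E:APRIORI}) controls the undifferentiated $\mathscr{A}$ factors, and iterating~(\ref{E:AJDIFFERENTIATIONFORMULAE}) to expand $X_r^{a_1}\slashed\partial^{\beta_1}\mathscr{A}$ produces the multilinear structure responsible for the polynomial $P(\mathcal{S}^N)$. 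When either factor is at top order $N$, its companion carries at most one derivative and is absorbed in $L^\infty$ through the same embedding. Because at least one $\mathbf{V}$-factor always remains in $L^2$, the $\mu^{-d(\gamma)}$ extraction yields the claimed $e^{-2\mu_0\tau}\mathcal{S}^N(1+P(\mathcal{S}^N))$ after squaring. The parallel bound in the interior $(1-\psi)$ region is handled identically using the plain derivatives $\partial^\nu$ with the weight $w^{\alpha+1}$.

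For estimate~(\ref{E:COMMUTATORBOUND2}) the companion factor is $\uptheta$ instead of $\mathbf{V}$, so the $\mu^{d(\gamma)}$ weight does not directly supply decay. I therefore apply the fundamental theorem of calculus $\uptheta(\tau,y) = \uptheta(0,y) + \int_0^\tau \mathbf{V}(\tau',y)\,d\tau'$ to every appearance of $\uptheta$, including those hidden inside $\mathscr{A}$, using $\partial_\tau \mathscr{A}^k_i = -\mathscr{A}^k_\ell \mathbf{V}^\ell,_s \mathscr{A}^s_i$ from~(\ref{E:AJDIFFERENTIATIONFORMULAE}). The initial-value pieces contribute $\mathcal{S}^N(0)$, while the time-integrated pieces reduce to integrands estimated exactly as in~(\ref{E:COMMUTATORBOUND1}), producing $(1+P(\mathcal{S}^N))\int_0^\tau e^{-\mu_0\tau'}\mathcal{S}^N(\tau')\,d\tau'$. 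The small-constant term $\kappa\mathcal{S}^N(\tau)$ arises from those particular top-order contributions where one derivative count saturates at $N$ and cannot be split without a loss of derivatives; here the a priori smallness~(\ref{E:APRIORI}) of $\mathcal{S}^N$ furnishes $\kappa$, and the companion top-order $\uptheta$-factor is kept in its native weighted $L^2$ norm.

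The main obstacle is the bookkeeping of spatial weights in the top-order terms where all $a+|\beta|$ derivatives fall on $\mathscr{A}$: iterating~(\ref{E:AJDIFFERENTIATIONFORMULAE}) produces a factor $X_r^a\slashed\partial^\beta D\uptheta$, and I must verify that the ambient weight $w^{1+\alpha+a}$ in the target norm $\|\cdot\|_{1+\alpha+a,\psi e^{\bar S}}$ coincides with the top-order weight permitted by $\mathcal{S}^N$, so that no dangerous negative power of $w=(1-|y|)\phi$ appears near $\partial\Omega$. The matching between the $\psi$-localized mixed tangential/radial piece and the $(1-\psi)$-localized full-derivative piece of $\mathcal{S}^N$ requires separate treatment on each region; cutoff derivatives only generate harmless interior contributions that are absorbed into the lower-order embedding step.
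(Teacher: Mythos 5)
Your overall strategy --- Leibniz expansion of the commutator, unravelling derivatives of $\mathscr{A}$ through $\partial\mathscr{A}^k_i=-\mathscr{A}^k_\ell\partial\eta^\ell,_s\mathscr{A}^s_i$, an $L^\infty$--$L^2$ splitting via Lemma \ref{L:EMBEDDING}, and extraction of the decay $e^{-2\mu_0\tau}$ from the $\mu^{d(\gamma)}$ weight on $\mathbf{V}$ --- is the paper's. Two points need repair. First, your schematic ``typical term'' $\Lambda\,(X_r^{a_1}\slashed\partial^{\beta_1}\mathscr{A})\,(X_r^{a_2}\slashed\partial^{\beta_2}D\mathbf{V})$ with $a_1+|\beta_1|\geq 1$ does not exhaust the commutator: the piece in which no derivative lands on $\mathscr{A}$ leaves the residual $\mathscr{A}^s_m[X_r^a\slashed\partial^\beta,\partial_s]\mathbf{V}^k$, because $X_r^a\slashed\partial^\beta$ does not commute with $\partial_s$. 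This term is controlled only after one shows that $[X_r^a\slashed\partial^\beta,\partial_s]$ reduces to a combination of operators $X_r^{a'}\slashed\partial^{\beta'}$ of total order at most $a+|\beta|$ with coefficients smooth away from the origin (the paper's formula (\ref{E:DIFFCOMMUTATOR}), built from Lemma \ref{L:DERIVATIVEIDENTITIES} and the decomposition (\ref{E:DERIVATIVEDECOMP})); without this reduction one cannot confirm that the residual loses neither a derivative nor a power of $w$. Your remark about cutoff derivatives does not address this term.

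Second, for (\ref{E:COMMUTATORBOUND2}) the proposal to apply the fundamental theorem of calculus ``to every appearance of $\uptheta$, including those hidden inside $\mathscr{A}$'' fails at top order: writing $X_r^a\slashed\partial^\beta\mathscr{A}(\tau)=X_r^a\slashed\partial^\beta\mathscr{A}(0)+\int_0^\tau X_r^a\slashed\partial^\beta\bigl(-\mathscr{A}\,D\mathbf{V}\,\mathscr{A}\bigr)\,d\tau'$ with $a+|\beta|=N$ produces $N+1$ derivatives of $\mathbf{V}$, which $\mathcal{S}^N$ does not control (it contains $\nabla_\eta$-norms of $\uptheta$ at order $N$, but for $\mathbf{V}$ only the norms $\|X_r^a\slashed\partial^\beta\mathbf{V}\|_{a+\alpha,\psi e^{\bar S}}$ up to order $N$ and no gradient norms). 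You partially retract this by keeping the top-order $\uptheta$-factor in its native norm, but you then invoke the a priori bound $\mathcal{S}^N<\tfrac13$ to supply the small constant $\kappa\ll1$, which it does not. The paper's route avoids both difficulties: keep $\|X_r^a\slashed\partial^\beta\mathscr{A}\|_{1+\alpha+a,\psi e^{\bar S}}\lesssim(\mathcal{S}^N)^{1/2}$ as it stands, and apply the fundamental theorem of calculus only to the low-order companion $D\uptheta$ measured in $L^\infty$, so that the whole contribution lands in $\mathcal{S}^N(\tau)\bigl(\mathcal{S}^N(0)+\int_0^\tau e^{-\mu_0\tau'}\mathcal{S}^N(\tau')\,d\tau'\bigr)$ and is absorbed, using $\mathcal{S}^N<\tfrac13$, into the first and third terms of (\ref{E:COMMUTATORBOUND2}) rather than into $\kappa\mathcal{S}^N(\tau)$.
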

\begin{proof}
\textit{Proof of} (\ref{E:COMMUTATORBOUND1}). Note
\begin{align}
&\left[X_r^a \slashed\partial^\beta,\text{{Curl}}_{\Lambda\mathscr{A}}\right]{\bf V}^k_i
 =  X_r^a \slashed\partial^\beta\left(\text{{Curl}}_{\Lambda\mathscr{A}}{\bf V}^k_i\right)-\text{{Curl}}_{\Lambda\mathscr{A}}(X_r^a \slashed\partial^\beta {\bf V})^k_i  \notag \\
& =X_r^a \slashed\partial^\beta \left(\Lambda_{im}\mathscr{A}^s_m{\bf V}^k,_s- \Lambda_{km}\mathscr{A}^s_m{\bf V}^i,_s\right)- \left(\Lambda_{im}\mathscr{A}^s_m(X_r^a \slashed\partial^\beta {\bf V})^k,_s- \Lambda_{km}\mathscr{A}^s_m(X_r^a \slashed\partial^\beta {\bf V})^i,_s \right)   \notag\\
&  =\Lambda_{km}\left(\mathscr{A}^s_m(X_r^a \slashed\partial^\beta {\bf V})^i,_s  - X_r^a \slashed\partial^\beta(\mathscr{A}^s_m{\bf V})^i,_s)\right)- \Lambda_{im}\left(\mathscr{A}^s_m(X_r^a \slashed\partial^\beta {\bf V})^k,_s - X_r^a \slashed\partial^\beta(\mathscr{A}^s_m{\bf V}^k,_s)\right).  \label{E:VCOMM}
\end{align}
Since the two terms in the last line above are estimated similarly noting derivative count, we restrict our focus to the second term only. Via the Leibniz rule
\begin{align}
&\mathscr{A}^s_m(X_r^a\slashed\partial^\beta {\bf V})^k,_s - X_r^a\slashed\partial^\beta(\mathscr{A}^s_m{\bf V}^k,_s)  = - \underbrace{ (X_r^a\slashed\partial^\beta\mathscr{A}^s_m) {\bf V}^k,_s}_{=:A} \notag \\
 & - \sum_{1\le a'+|\beta'|\le N-1\atop a'\le a, \beta'\le\beta}X_r^{a'}\slashed\partial^{\beta'}\mathscr{A}^s_m X_r^{a-a'} \slashed\partial^{\beta-\beta'}{\bf V}^k,_s - 
\underbrace{\mathscr{A}^s_m [X_r^a\slashed\partial^\beta,\partial_s]{\bf V}^k}_{=:B} \label{E:CURLCOMM1} 
\end{align} 
Now for $A$ first note if $|\beta|=0$ then
\begin{align}
X_r^a\slashed\partial^\beta \mathscr{A}^s_m  = X_r^a\mathscr{A}^s_m &= - \mathscr{A}^s_i(X_r^a \uptheta^s),_k\mathscr{A}^k_m- \mathscr{A}^s_i[X_r^{a-1},\partial_m]X_r\uptheta^i\mathscr{A}^k_i \notag \\
&-\sum_{1\le a'\le a } c_{a'}X_r^{a'}(\mathscr{A}^s_i\mathscr{A}^k_m)X_r^{a-a'}\left(X_r\uptheta^i\right),_k 
 +X_r^{a-1}\left(\mathscr{A}^s_i[\partial_k,X_r]\uptheta^i\mathscr{A}^k_m \right), \label{E:TOPORDER1}
\end{align}
and if $|\beta| > 0$ then for some $e_\ell$ given by $(e_\ell)^\ell = 1, (e_\ell)^i = 0 \text{ for } i\neq\ell$ and $\ell_1,\ell_2 \in \{1,2,3\}, \ell_1 \neq \ell_2$ 
\begin{align}
X_r^a\slashed\partial^\beta\mathscr{A}^s_m & =- \mathscr{A}^s_i(X_r^a\slashed\partial^\beta \uptheta^i),_k
\mathscr{A}^k_m - \mathscr{A}^s_i[X_r^a\slashed\partial^{\beta-e_\ell},\partial_k]\slashed\partial_{\ell_1\ell_2}\uptheta^i\mathscr{A}^k_m \notag \\
&  \ \ \ \ -\sum_{0<a'+|\beta'|\le a+|\beta| \atop \beta'\le\beta-e_{\ell} }c_{a'\beta'}X_r^{a'}\slashed\partial^{\beta'}(\mathscr{A}^s_i\mathscr{A}^k_m)X_r^{a-a'}\slashed\partial^{\beta-e_{\ell}-\beta'}\left(\slashed\partial_{\ell_1\ell_2} \uptheta^i\right),_k \notag \\
&  \ \ \ \ -X_r^a\slashed\partial^{\beta-e_\ell}\left( \mathscr{A}^s_i[\partial_k,\slashed\partial_{\ell_1\ell_2}]\uptheta^i\mathscr{A}^k_m \right), \label{E:TOPORDER2}
\end{align}
where we have used the formulae
\begin{align}
X_r\mathscr{A}^s_m  & = -\mathscr{A}^s_i X_r\partial_k \eta^i \mathscr{A}^k_m =  -\mathscr{A}^s_i X_r\left(\partial_k \uptheta^i+\delta^i_k\right)\mathscr{A}^k_m \notag \\
& = -\mathscr{A}^s_i (X_r\uptheta^i),_k\mathscr{A}^k_m  + \mathscr{A}^s_i [\partial_k,X_r]\uptheta^i\mathscr{A}^k_m. \label{E:COMMFORMULA1} \\
\slashed\partial_{\ell_1 \ell_2} \mathscr{A}^s_m & =  -\mathscr{A}^s_i(\slashed\partial_{\ell_1 \ell_2}\uptheta^i),_k\mathscr{A}^k_m  +\mathscr{A}^s_i[\partial_k,\slashed\partial_{\ell_1 \ell_2}]\uptheta^i\mathscr{A}^k_m. \label{E:COMMFORMULA2}
\end{align}
Without loss of generality, suppose $|\beta| > 0$ because using (\ref{E:TOPORDER1}), the $|\beta|=0$ case follows analogously. Using (\ref{E:TOPORDER2})
\begin{align}
&A  = - \underbrace{\mathscr{A}^k_j(X_r^a\slashed\partial^\beta \uptheta^j),_m\mathscr{A}^m_i{\bf V}^k,_s}_{=:A_1} -  \mathscr{A}^k_i[X_r^a\slashed\partial^{\beta-e_\ell},\partial_m]\slashed\partial_{\ell_1 \ell_2}\uptheta^s\mathscr{A}^m_i \notag \\
& -{\bf V}^k,_s\Big(\sum_{0<a'+|\beta'|\le a+|\beta| \atop \beta'\le\beta-e_\ell }c_{a'\beta'}X_r^{a'}\slashed\partial^{\beta'}(\mathscr{A}^k_j\mathscr{A}^m_i)X_r^{a-a'}\slashed\partial^{\beta-e_\ell-\beta'}\left(\slashed\partial_{\ell_1 \ell_2} \uptheta^j\right),_m +X_r^a\slashed\partial^{\beta-e_\ell}\left( \mathscr{A}^k_j[\partial_m,\slashed\partial_{\ell_1 \ell_2} ]\uptheta^j\mathscr{A}^m_i \right) \Big). \label{E:ATERM}
\end{align}
By Lemma \ref{L:EMBEDDING}
\begin{align}\label{E:COMMUTATORBOUNDA1ESTIMATEGAMMALEQ5OVER3}
\|A_1\|_{1+\alpha+a,\psi e^{\bar{S}}}^2& \lesssim \|D{\bf V}\|_{L^\infty(\Omega)}^2\|\nabla_\eta X_r^a \slashed\partial^\beta \uptheta\|_{1+\alpha+a,\psi e^{\bar{S}} }^2 \notag \\
& \lesssim e^{-2\mu_0\tau} (\mathcal{S}^N(\tau))^2\left(1+P(\mathcal{S}^N(\tau))\right).
\end{align} 
Similar arguments using Lemma \ref{L:EMBEDDING} and additionally (\ref{E:DIFFCOMMUTATOR}) below let us estimate the remaining terms on the right hand side of (\ref{E:ATERM}). Next considering $B$, we apply our derivative commutator formulae (\ref{E:DERIVATIVECOMMUTATOR}) and at each step using (\ref{E:DERIVATIVEDECOMP}) to rewrite $\partial_s$ in terms of $X_r$ and $\slashed\partial_{js}$ in conjunction with our simple forms for differentiating $\frac{y_{j}}{r}$ given by Lemma \ref{L:DERIVATIVERADIAL}, we reduce $[X_r^a\slashed\partial^\beta,\partial_s]$ to the following desirable form
\begin{equation}\label{E:DIFFCOMMUTATOR}
[X_r^a\slashed\partial^\beta,\partial_s] = \sum_{j=0}^{a+|\beta|}\sum_{a'+|\beta'|=j \atop a'\leq a+1, |\beta'| \leq |\beta|+1} C^s_{a',\beta',j}X_r^{a'}\slashed\partial^{\beta'}.
\end{equation}
where $C^s_{a',\beta',j}$ are smooth coefficients on $B_1(\mathbf{0})$ away from the origin. For $B$ we note $a+|\beta| \leq N$ in (\ref{E:DIFFCOMMUTATOR}). Then this desriable formula (\ref{E:DIFFCOMMUTATOR}) and Lemma \ref{L:EMBEDDING} gives us
\begin{equation}\label{E:COMMUTATORBOUNDBESTIMATEGAMMALEQ5OVER3}
\|B\|_{1+\alpha+a,\psi e^{\bar{S}}}^2 \lesssim e^{-2\mu_0\tau} \mathcal{S}^N(\tau).
\end{equation}
Hence by the above estimates and an analogous argument for the $\|\cdot\|_{1+\alpha,(1-\psi)e^{\bar{S}}}$ norms, we obtain (\ref{E:COMMUTATORBOUND1}). \\ 

\textit{Proof of} (\ref{E:COMMUTATORBOUND2}). Via (\ref{E:VCOMM}) with $\uptheta$, we estimate
\begin{align}
&\mathscr{A}^s_m(X_r^a\slashed\partial^\beta  \uptheta)^k,_s - X_r^a\slashed\partial^\beta (\mathscr{A}^s_m\uptheta^k,_s)  = - \underbrace{ (X_r^a\slashed\partial^\beta \mathscr{A}^s_m) \uptheta^k,_s}_{=:A_\uptheta} \notag \\
 & - \sum_{1\le a'+|\beta'|\le N-1\atop a'\le a, \beta'\le\beta}X_r^{a'}\slashed\partial^{\beta'}\mathscr{A}^s_m X_r^{a-a'}\slashed\partial^{\beta-\beta'}\uptheta^k,_s - \mathscr{A}^s_m [X_r^a\slashed\partial^\beta ,\partial_s]\uptheta^k. \label{E:CURLCOMM2}
\end{align}
By estimating $A_\uptheta$, using the fact that $\| X_r^a\slashed\partial^\beta \mathscr{A}\|_{1+\alpha+a,\psi e^{\bar{S}}}^2 \lesssim \mathcal{S}^N(\tau)$ which was proven above in the estimation of the term $A$: most importantly note (\ref{E:ATERM})-(\ref{E:COMMUTATORBOUNDA1ESTIMATEGAMMALEQ5OVER3}), we give the sketch for the proof
\begin{align}
\|A_\uptheta\|_{1+\alpha+a,\psi e^{\bar{S}}}^2 & \lesssim \|X_r^a\slashed\partial^\beta \mathscr{A}\|_{1+\alpha+a,\psi e^{\bar{S}}}^2 \| D\uptheta\|_{L^\infty(\Omega)}^2 \notag \\
& \lesssim  \| X_r^a\slashed\partial^\beta \mathscr{A}\|_{1+\alpha+a,\psi e^{\bar{S}}}^2 \left(\|D\uptheta_0\|_{L^\infty(\Omega)} + \int_0^\tau \| D{\bf V}\|_{L^\infty(\Omega)}\right)^2 \notag \\
& \lesssim \mathcal{S}^N(\tau) \left(\mathcal{S}^N(0) + \left(\int_0^\tau e^{-\mu_0\tau'}\sqrt{\mathcal{S}^N(\tau')}\,d\tau'\right)^2\right) \notag \\
&\lesssim  \mathcal{S}^N(\tau) \left(\mathcal{S}^N(0) + \int_0^\tau e^{-\mu_0\tau'}\mathcal{S}^N(\tau')\,d\tau'\right), \label{E:COMMUTATORBOUNDATHETAESTIMATEGAMMALEQ5OVER3}
\end{align}
where we have applied Lemma \ref{L:EMBEDDING}. The remaining terms in (\ref{E:CURLCOMM2}) are estimated in a similar way.
\end{proof} 
We are now ready to prove our key curl estimate result which will be used crucially in the proof of our main result Theorem \ref{T:MAINTHEOREMGAMMALEQ5OVER3}. 
\begin{proposition}\label{P:CURLBOUNDS}
Suppose $\gamma \in (1,\frac{5}{3}]$. Let $(\uptheta, {\bf V}):\Omega \rightarrow \mathbb R^3\times \mathbb R^3$ be a unique local solution to (\ref{E:THETAGAMMALEQ5OVER3})-(\ref{E:THETAICGAMMALEQ5OVER3}) on $[0,T]$ with $T>0$ fixed and assume $(\uptheta, {\bf V})$ satisfies the a priori assumptions (\ref{E:APRIORI}). Fix $N\geq 2\lceil \alpha \rceil +12$. Let $k \geq N+1$ in (\ref{E:PHIDEMAND}). Then for all $\tau \in [0,T]$, the following inequalities hold for some $0<\kappa\ll 1$
\begin{align}
&\mathcal{B}^N[{\bf V}](\tau) \lesssim 
\begin{cases}
 e^{-2\mu_0\tau}\left(\mathcal{S}^N(0)+\mathcal{B}^N[\mathbf{V}](0)\right)+ e^{-2\mu_0\tau}\mathcal{S}^N(\tau)   & \text{if } \ 1<\gamma<\frac53 \\ 
e^{-2\mu_0\tau}\left(\mathcal{S}^N(0)+\mathcal{B}^N[\mathbf{V}](0)\right)+ (1+\tau^2)e^{-2\mu_0\tau}\mathcal{S}^N(\tau) & \text{if } \ \gamma =\frac53
\end{cases},
\label{E:CURLVBOUND} \\ 
&
\mathcal{B}^N[\uptheta](\tau) \lesssim \mathcal{S}^N(0)+\mathcal{B}^N[\mathbf{V}](0) + \kappa \mathcal{S}^N(\tau) + \int_0^\tau e^{-\mu_0\tau'}  \mathcal{S}^N(\tau')\,d\tau'. \label{E:CURLTHETABOUND}
\end{align}
\end{proposition}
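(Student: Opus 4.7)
The plan is to apply the derivative operators $X_r^a\slashed\partial^\beta$ (for the $\psi$-weighted interior piece) and $\partial^\nu$ (for the $(1-\psi)$-weighted piece) directly to the identities (\ref{E:FINALCURLVEQNGAMMALEQ5OVER3}) and (\ref{E:FINALCURLTHETAEQNGAMMALEQ5OVER3}), take the appropriate weighted $L^2$ norm, and sum over all indices up to $N$. The derivation has already packaged every non-trivial contribution either as (i) an explicit prefactor $\mu(0)/\mu(\tau)$ times an initial quantity, (ii) a single or double time integral containing a definite power of $\mu$, or (iii) the algebraic cross-product $\tfrac{1}{\gamma}\Lambda\mathscr{A}\nabla(\bar S)\times(\mathbf{V}\text{ or }\uptheta)$, so a term-by-term estimate will close the two inequalities. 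The structural inputs are: $\mu(0)/\mu(\tau)\lesssim e^{-\mu_1\tau}$; the $\mu^{4-3\gamma}$-integrals in the curl formulas are controlled by Lemma \ref{L:CURLLASTTERMBOUNDS}; commutators $[X_r^a\slashed\partial^\beta,\text{Curl}_{\Lambda\mathscr{A}}]$ and their $\partial^\nu$ analogues are controlled by Lemma \ref{L:COMMUTATORBOUNDS}; and derivatives of $\bar S$ are bounded by Corollary \ref{C:ENTROPYREGULARITYCOROLLARY}.

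For (\ref{E:CURLVBOUND}), after commuting derivatives through $\text{Curl}_{\Lambda\mathscr{A}}$ and moving the commutators to the right-hand side via (\ref{E:COMMUTATORBOUND1}), the algebraic term $\tfrac{1}{\gamma}\Lambda\mathscr{A}\nabla\bar S\times\mathbf{V}$ is estimated exactly as the $C_2$ term was in Lemma \ref{L:CURLLASTTERMBOUNDS}: the $\mathbf{V}$ factor contributes $\mu^{-d(\gamma)}\|\mu^{d(\gamma)/2} X_r^a\slashed\partial^\beta\mathbf{V}\|^2\lesssim e^{-2\mu_0\tau}\mathcal{S}^N(\tau)$. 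The two initial-data pieces pick up $\mu(0)^2/\mu(\tau)^2\lesssim e^{-2\mu_0\tau}$ multiplying $\mathcal{B}^N[\mathbf{V}](0)+\mathcal{S}^N(0)$. The single time integrals $\tfrac{1}{\mu}\int_0^\tau \mu[\cdots]\,d\tau'$ are handled by Cauchy–Schwarz in $\tau'$, using $\mu(\tau')/\mu(\tau)\le 1$ together with the fact that $\|\Gamma^*\|\lesssim\mu^{-1}$ forces decay in the $\Gamma^*\mathbf{V}$ integrands, while the two $\mu^{4-3\gamma}$ integrals are absorbed by (\ref{E:CURLLASTTERMBOUND1})–(\ref{E:CURLLASTTERMBOUND2}). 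Summing over $a+|\beta|\le N-1$ without the $\mu^{b(\gamma)}$ weight and over $a+|\beta|=N$ with the $\mu^{b(\gamma)}$ weight, then doing the analogous $\partial^\nu$ sum away from the boundary, yields (\ref{E:CURLVBOUND}). The split between $1<\gamma<\tfrac53$ and $\gamma=\tfrac53$ enters only through $b(\gamma)$: at $\gamma=\tfrac53$ the top-order weight is trivial while the $\mu^{4-3\gamma}=\mu^{-1}$ time integral is borderline, producing the extra $(1+\tau^2)$ factor.

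For (\ref{E:CURLTHETABOUND}) the same scheme is applied to (\ref{E:FINALCURLTHETAEQNGAMMALEQ5OVER3}), but now there is one additional layer of time integration. The algebraic piece $\tfrac{1}{\gamma}\Lambda\mathscr{A}\nabla\bar S\times\uptheta$ is controlled by the $\uptheta$ part of $\mathcal{S}^N(\tau)$ (with no $\mu$ weight needed, which is why this contributes the $\kappa\mathcal{S}^N(\tau)$ term after using Lemma \ref{L:COMMUTATORBOUNDS}). The genuine initial-data pieces $\text{Curl}_{\Lambda\mathscr{A}}\uptheta(0)-\tfrac1\gamma\Lambda\mathscr{A}\nabla\bar S\times\uptheta(0)$ are comparable to $\mathcal{B}^N[\mathbf{V}](0)+\mathcal{S}^N(0)$ up to the small discrepancy between $\mathscr{A}(\tau)$ and $\mathscr{A}(0)$, which by the a priori bound (\ref{E:APRIORI}) contributes $\lesssim \kappa\mathcal{S}^N(\tau)$. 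The $\int_0^\tau 1/\mu\,d\tau'$ factor multiplying the $\mu(0)$ initial-value terms is bounded uniformly in $\tau$. Finally every double integral $\int_0^\tau \mu(\tau')^{-1}\int_0^{\tau'}\mu(\tau'')[\cdots]\,d\tau''\,d\tau'$ is reduced by Cauchy–Schwarz and Fubini (exactly as in the proof of (\ref{E:CURLLASTTERMBOUND3})) to a bound of the form $\int_0^\tau e^{-\mu_0\tau'}\mathcal{S}^N(\tau')\,d\tau'$; the two $\mu^{4-3\gamma}$-double-integrals invoke (\ref{E:CURLLASTTERMBOUND3})–(\ref{E:CURLLASTTERMBOUND4}) directly.

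The main obstacle I expect is bookkeeping at the top order $a+|\beta|=N$: one must track whether the $\mu^{b(\gamma)}$ weight in $\mathcal{B}^N$ matches the decay coming from $\mu^{4-3\gamma}$ inside the curl formula, and when $\gamma=\tfrac53$ the borderline $\mu^{-1}$ time integral produces only logarithmic or polynomial decay, which is precisely the source of the $(1+\tau^2)e^{-2\mu_0\tau}$ factor in (\ref{E:CURLVBOUND}). A secondary subtlety is organizing the nonlinear commutator output from Lemma \ref{L:COMMUTATORBOUNDS} so that the quadratic $\uptheta$-contribution $\|A_\uptheta\|$ lands against the integral $\int_0^\tau e^{-\mu_0\tau'}\mathcal{S}^N(\tau')\,d\tau'$ rather than a bare $\mathcal{S}^N(\tau)$; this is where the smallness parameter $\kappa$ from the a priori assumptions (\ref{E:APRIORI}) is used to absorb the leftover $\mathcal{S}^N(\tau)$ piece into the left-hand side.
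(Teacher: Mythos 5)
Your overall architecture matches the paper's: apply $X_r^a\slashed\partial^\beta$ (resp.\ $\partial^\nu$) to (\ref{E:FINALCURLVEQNGAMMALEQ5OVER3}) and (\ref{E:FINALCURLTHETAEQNGAMMALEQ5OVER3}), dispatch the $\mu^{4-3\gamma}$ integrals to Lemma \ref{L:CURLLASTTERMBOUNDS}, the derivative commutators to Lemma \ref{L:COMMUTATORBOUNDS}, and the algebraic cross-product and initial-data terms by direct weighted estimates. However, there is a genuine gap in your treatment of the time integrals $\tfrac{1}{\mu}\int_0^\tau\mu\,[\partial_\tau,\text{Curl}_{\Lambda\mathscr{A}}]\mathbf{V}\,d\tau'$ and $\tfrac{2}{\mu}\int_0^\tau\mu\,\text{Curl}_{\Lambda\mathscr{A}}(\Gamma^\ast\mathbf{V})\,d\tau'$, which you propose to handle ``by Cauchy--Schwarz in $\tau'$.'' After applying $X_r^a\slashed\partial^\beta$ with $a+|\beta|=N$, the integrands contain terms schematically of the form $\Lambda\,D\mathbf{V}\,X_r^a\slashed\partial^\beta D\mathbf{V}$ and $\partial_\tau\Lambda\,D\eta\,X_r^a\slashed\partial^\beta D\mathbf{V}$, i.e.\ $N+1$ derivatives of $\mathbf{V}$. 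Neither $\mathcal{S}^N$ nor $\mathcal{B}^N[\mathbf{V}]$ controls $N+1$ derivatives of $\mathbf{V}$ (the top-order gradient terms in $\mathcal{S}^N$ involve $\uptheta$, not $\mathbf{V}$), so no pointwise-in-time Cauchy--Schwarz can close this. The paper's fix is to integrate by parts in $\tau$, trading $X_r^a\slashed\partial^\beta D\mathbf{V}$ for $X_r^a\slashed\partial^\beta D\uptheta$ (which \emph{is} controlled) at the cost of boundary terms and of differentiating the remaining factors in $\tau$; see the treatment of the terms $D_2$ and $D_3$ in (\ref{tau2}) and (\ref{E:CRITICAL}).

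Relatedly, you misattribute the $(1+\tau^2)$ loss at $\gamma=\tfrac53$. The $\mu^{4-3\gamma}$ integrals of Lemma \ref{L:CURLLASTTERMBOUNDS} yield $e^{-2\mu_0\tau}\mathcal{S}^N(\tau)$ uniformly for all $\gamma\in(1,\tfrac53]$ (see (\ref{E:CURLLASTTERMBOUND1})--(\ref{E:CURLLASTTERMBOUND2})), and the weight $\mu^{b(\gamma)}$ is trivial on the whole range $\gamma\le\tfrac53$, not only at the endpoint. The actual source of $(1+\tau^2)$ is the integration-by-parts step just described: after converting to $X_r^a\slashed\partial^\beta D\uptheta$ one is left with a factor $L=e^{-\mu_1\tau}\int_0^\tau e^{(\mu_1-\mu_0)\tau'}\,d\tau'$, which is $\lesssim e^{-\mu_0\tau}$ when $\mu_0<\mu_1$ ($\gamma<\tfrac53$) but equals $\tau e^{-\mu_0\tau}$ when $\mu_0=\mu_1$ ($\gamma=\tfrac53$); squaring gives the $(1+\tau^2)e^{-2\mu_0\tau}$ in (\ref{E:CURLVBOUND}). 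Once you insert the time integration by parts for these terms (and its analogue inside the double integrals for (\ref{E:CURLTHETABOUND}), where the Cauchy--Schwarz/Fubini reduction you describe is then correct), the rest of your argument goes through as in the paper.
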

\begin{proof} \textit{Proof of} (\ref{E:CURLVBOUND}). 
Apply $X_r^a \slashed\partial^\beta$ to (\ref{E:FINALCURLVEQNGAMMALEQ5OVER3})
\begin{align}\label{E:POSTDERVCURLVEQNGAMMALEQ5OVER3}
&\text{Curl}_{\Lambda\mathscr{A}}{X_r^a \slashed\partial^\beta \bf V}  = - [X_r^a \slashed\partial^\beta,\text{Curl}_{\Lambda \mathscr{A}}] \mathbf{V}+ \tfrac{1}{\gamma} X_r^a \slashed\partial^\beta (\Lambda\mathscr{A} \nabla (\bar{S}) \times \mathbf{V}) \notag \\  
& +  \frac{\mu(0) X_r^a \slashed\partial^\beta \text{Curl}_{\Lambda \mathscr{A}} ({\bf V}(0))}{\mu} - \frac{\mu(0) X_r^a \slashed\partial^\beta (\Lambda\mathscr{A} \nabla (\bar{S}) \times \mathbf{V}(0))}{\gamma \mu} \notag \\
&  + \frac{1}{\mu}\int_0^\tau \mu X_r^a \slashed\partial^\beta [\partial_\tau, \text{Curl}_{\Lambda\mathscr{A}}] {\bf V} d\tau' -\frac{1}{\gamma \mu}\int_0^\tau \mu X_r^a \slashed\partial^\beta [\partial_\tau,\Lambda \mathscr{A} \nabla ( \bar{S}) \times ] \mathbf{V} d\tau ' \notag \\
&- \frac{2}{\mu} \int_0^\tau \mu \, X_r^a \slashed\partial^\beta \text{Curl}_{\Lambda\mathscr{A}}(\Gamma^\ast{\bf V}) d\tau' + \frac{2}{\gamma \mu} \int_0^\tau \mu \, X_r^a \slashed\partial^\beta (\Lambda\mathscr{A} \nabla (\bar{S}) \times (\Gamma^\ast \mathbf{V})) d\tau ' \notag \\
& +\frac{\delta}{\gamma \mu}\int_0^\tau \mu^{4-3 \gamma} X_r^a \slashed\partial^\beta (\Lambda \nabla(\bar{S}) \times  \Lambda \uptheta) d \tau '-\frac{\delta}{\gamma \mu}\int_0^\tau \mu^{4-3 \gamma} X_r^a \slashed\partial^\beta (\Lambda \mathscr{A}[D \uptheta]\nabla (\bar{S}) \times \Lambda \eta) d \tau '.
\end{align}
The bound on the first term on the right hand side of (\ref{E:POSTDERVCURLVEQNGAMMALEQ5OVER3}) follows from Lemma \ref{L:COMMUTATORBOUNDS}. The second term is similar to the first term but lower order and hence is also bounded. For the third term we have
\begin{equation}\label{E:CURLEST0}
\left\|\frac{\mu(0)X^a_r\slashed\partial^\beta\text{Curl}_{\Lambda\mathscr{A}}{\bf V} (0)}{\mu} \right\|_{1+\alpha+a,\psi e^{\bar{S}}}^2 \lesssim e^{-2\mu_1\tau} (\mathcal{S}^N(0)+\mathcal{B}^N[\mathbf{V}](0)),
\end{equation}
where we apply (\ref{E:EXPMU1MUINEQGAMMALEQ5OVER3}). The fourth term is similar to the third term but lower order and hence is also bounded. We now consider the fifth term on the right hand side of (\ref{E:POSTDERVCURLVEQNGAMMALEQ5OVER3})
\begin{align}
X_r^a\slashed\partial^\beta[\partial_\tau, \text{Curl}_{\Lambda\mathscr{A}}] {\bf V}_j^k  =X_r^a\slashed\partial^\beta \left(\partial_\tau \left(\Lambda_{jm}\mathscr{A}^s_m\right) {\bf V},_s^k - \partial_\tau \left(\Lambda_{km}\mathscr{A}^s_m\right) {\bf V},_s^j \right).
\end{align}
As the other tern can be estimated in the same way, we restrict our focus to the first term only
\begin{align}
&X_r^a\slashed\partial^\beta \left(\partial_\tau \left(\Lambda_{jm}\mathscr{A}^s_m\right) {\bf V},_s^k\right) \notag \\
& = X_r^a\slashed\partial^\beta\left[\left(\partial_\tau\Lambda_{jm}\mathscr{A}^s_m + \Lambda_{jm}\partial_\tau \mathscr{A}^s_m \right){\bf V},_s^k\right] \notag \\
& = \partial_\tau\Lambda_{jm} \left(X_r^a\slashed\partial^\beta\mathscr{A}^s_m {\bf V},_s^k +\mathscr{A}^s_mX_r^a\slashed\partial^\beta{\bf V},_s^k\right) +  \Lambda_{jm}\left(X_r^a\slashed\partial^\beta\partial_\tau\mathscr{A}^s_m {\bf V},_s^k +\partial_\tau\mathscr{A}^s_mX_r^a\slashed\partial^\beta{\bf V},_s^k\right) \notag \\
& \ \ \ \ + \sum_{1\le a'+|\beta'|\le N-1} C_{a',\beta'}\left(\partial_\tau\Lambda_{jm}X_r^{a'}\slashed\partial^{\beta'}\mathscr{A}^s_m +\Lambda_{jm} X_r^{a'}\slashed\partial^{\beta'}\partial_\tau\mathscr{A}^s_m\right) 
 X_r^{a-a'}\slashed\partial^{\beta-\beta'} {\bf V}^k,_s.
\label{E:CURLERROREXPANSION}
\end{align}
Schematically consider the first two terms on the right hand side of (\ref{E:CURLERROREXPANSION})
\begin{equation}\label{E:CURLERROR1}
\underbrace{\partial_\tau\Lambda \,X_r^a\slashed\partial^\beta D\uptheta D{\bf V}}_{=:D_1} + \underbrace{\partial_\tau\Lambda  D\eta \, X_r^a\slashed\partial^\beta D{\bf V}}_{=:D_2} + \underbrace{\Lambda D{\bf V} \, X_r^a\slashed\partial^\beta D{\bf V}}_{=:D_3} 
\end{equation}
For $D_1$, using the exponential bounds on $\mu$ and $\Lambda$ given by (\ref{E:EXPMU1MUINEQGAMMALEQ5OVER3}) and (\ref{E:LAMBDABOUNDSGAMMALEQ5OVER3}) respectively
\begin{align}
& \int_\Omega \psi w^{1+\alpha + a}e^{\bar{S}} \frac1{\mu^{2}}\Big|\int_0^\tau \mu \partial_\tau\Lambda \,X_r^a\slashed\partial^\beta D\uptheta D{\bf V} \,d\tau'\Big|^2\,dy \notag \\
& \lesssim e^{-2\mu_1\tau} \int_\Omega \psi w^{1+\alpha + a} e^{\bar{S}} \Big|\int_0^\tau \left|e^{\mu_1\tau'} \partial_\tau\Lambda \,X_r^a\slashed\partial^\beta D\uptheta D{\bf V} \right|\,d\tau'\Big|^2\,dy  \notag \\
& \lesssim e^{-2\mu_1\tau} \sup_{0\le\tau'\le\tau}\int_\Omega\psi w^{1+\alpha + a} e^{\bar{S}} |X_r^a\slashed\partial^\beta D\uptheta|^2 \Big|\int_0^\tau| e^{\mu_1\tau'}\left|\partial_\tau\Lambda D{\bf V} \right|\,d\tau'\Big|^2 \,dy \notag \\
& \lesssim  e^{-2\mu_1\tau}  \sup_{0\le\tau'\le\tau}\left( \|X_r^a\slashed\partial^\beta D\uptheta\|_{1+\alpha+a,\psi e^{\bar{S}}}^2\right) \Big| \int_0^\tau e^{\mu_1\tau'}\left|\partial_\tau\Lambda\right| e^{-\mu_0\tau'}  (\mathcal{S}^N(\tau'))^{\frac12} d\tau'  \Big|^2  \notag \\
& \lesssim e^{-2\mu_1\tau} ( \mathcal{S}^N(\tau))^2, \label{E:CURLEST1}
\end{align}
where we applied Lemma \ref{L:EMBEDDING}. For $D_2$, one must first integrate by parts in $\tau$
\begin{align}
\frac1\mu\int_0^\tau \mu \partial_\tau\Lambda  D\eta \, X_r^a\slashed\partial^\beta D{\bf V} d\tau'
= & \frac1\mu\left(\mu \partial_\tau\Lambda  D\eta \, X_r^a\slashed\partial^\beta D \uptheta\right)\big|^\tau_0 \label{tau2}\\ 
&  - \frac1\mu\int_0^\tau \mu(\frac{\mu_\tau}\mu\partial_\tau\Lambda D\eta+\partial_{\tau\tau}\Lambda  D\eta+\partial_\tau\Lambda D{\bf V}) \, X_r^a\slashed\partial^\beta D \uptheta\,d\tau'. \notag
\end{align}
By the same argument as for $D_1$
\begin{align}
\|\frac1{\mu}\int_0^\tau\mu\partial_\tau\Lambda  D\eta \, X_r^a\slashed\partial^\beta D{\bf V} d\tau' \|_{1+\alpha+a,\psi e^{\bar{S}}}^2  \lesssim e^{-2\mu_1\tau}\mathcal{S}^N(0) + \left((1+\tau^2) e^{-2\mu_1\tau} + Q(\tau)\right)  \mathcal{S}^N(\tau),
\label{E:NOTCRITICAL}
\end{align}
with
\begin{equation}
Q(\tau)=
\begin{cases}
e^{-4\mu_0\tau} &\text{if } \ 1<\gamma<\frac43  \\
\tau^2e^{-4\mu_0\tau} &\text{if } \ \gamma=\frac43\\
e^{-2\mu_1\tau} &\text{if } \ \frac43<\gamma\le\frac53
\end{cases}. 
\end{equation}
For $D_3$, we also first integrate by parts in $\tau$
\begin{align}
\frac1\mu\int_0^\tau \mu \Lambda  D{\bf V} \, X_r^a\slashed\partial^\beta D{\bf V} d\tau'
= & \frac1\mu \left[\mu \Lambda  D{\bf V} \, X_r^a\slashed\partial^\beta D \uptheta\right]^\tau_0 \notag \\ 
&  - \frac1\mu\int_0^\tau \mu(\frac{\mu_\tau}\mu\Lambda D{\bf V}+\partial_{\tau}\Lambda  D{\bf V}+\Lambda D{\bf V}_\tau) \, X_r^a\slashed\partial^\beta D \uptheta\,d\tau' \label{E:CRITICAL}.
\end{align}
The first term on the right-hand side of (\ref{E:CRITICAL}) is bounded by
\begin{equation}
\mu^{-1} \mathcal{S}^N(0) + \|D{\bf V}\|_{L^\infty(\Omega)}(\mathcal{S}^N(\tau))^{\frac12} \lesssim e^{-\mu_1\tau} \mathcal{S}^N(0)+ e^{-\mu_0\tau}\mathcal{S}^N(\tau),
\end{equation}
where we applied Lemma \ref{L:EMBEDDING}. For the second term using the expontential boundedness of $\mu$ (\ref{E:EXPMU1MUINEQGAMMALEQ5OVER3})
\begin{align}
\Big|\frac1\mu\int_0^\tau \mu_\tau\Lambda D{\bf V}  X_r^a\slashed\partial^\beta D \uptheta\,d\tau'\Big| & \lesssim \sup_{0\le\tau'\le\tau}| X_r^a\slashed\partial^\beta D \uptheta| e^{-\mu_1\tau}\int_0^\tau e^{\mu_1\tau'}\|D{\bf V}\|_{L^\infty(\Omega)}\,d\tau'  \notag \\
 & \lesssim  \sup_{0\le\tau'\le\tau}| X_r^a\slashed\partial^\beta D \uptheta|(\mathcal{S}^N(\tau))^{\frac12}\underbrace{ e^{-\mu_1\tau}\int_0^\tau e^{(\mu_1-\mu_0)\tau'} d\tau' }_{=:L}.
\end{align}
Notice that after integration and using $\mu_0 \leq \mu_1$
\begin{equation}
L \lesssim 
\begin{cases}
e^{-\mu_0\tau} & \text{ if } \  1<\gamma<\frac53 \\
\tau e^{-\mu_0\tau} & \text{ if } \ \gamma =\frac53
\end{cases}.
\end{equation}
Hence 
\begin{align}
\|\frac1\mu\int_0^\tau \mu_\tau\Lambda D\uptheta  X_r^a\slashed\partial^\beta D \uptheta\,d\tau'\|_{1+\alpha+a,\psi e^{\bar{S}}}^2\lesssim 
\begin{cases}
e^{-2\mu_0\tau} (\mathcal{S}^N(\tau))^2 & \text{ if } \ 1<\gamma<\frac53 \\
\tau^2 e^{-2\mu_0\tau} (\mathcal{S}^N(\tau))^2 & \text{ if } \ \gamma =\frac53
\end{cases}.
\end{align}
The remaining terms on the right-hand side of (\ref{E:CRITICAL}) are estimated in a similar way except when considering the last term which includes $D{\bf V}_\tau$: there we use (\ref{E:THETAGAMMALEQ5OVER3}) to rewrite $D{\bf V}_\tau$, and then an analogous proof completes the argument. Therefore
\begin{align}\label{E:CURLEST3}
\|\frac1{\mu}\int_0^\tau \mu(\tau') D_3(\tau')\,d\tau'\|_{1+\alpha+a,\psi e^{\bar{S}}}^2 \lesssim
\begin{cases}
 e^{-2\mu_0\tau} \mathcal{S}^N(\tau)   & \text{ if } \ 1<\gamma<\frac53 \\  
(1+ \tau^2) e^{-2\mu_0\tau} \mathcal{S}^N(\tau) & \text{ if } \ \gamma =\frac53
\end{cases}.
\end{align} 
Finally using a similar approach
\begin{align}
& \Big\|\sum_{1\le a'+|\beta'|\le N-1} C_{a',\beta'}\left(\partial_\tau\Lambda_{jm}X_r^{a'}\slashed\partial^{\beta'}\mathscr{A}^s_m +\Lambda_{jm} X_r^{a'}\slashed\partial^{\beta'}\partial_\tau\mathscr{A}^s_m\right) 
 X_r^{a-a'}\slashed\partial^{\beta-\beta'} {\bf V}^k,_s\Big\|_{1+\alpha+a,\psi e^{\bar{S}}}^2 \notag \\
 &  \lesssim e^{-2\mu_0\tau} \mathcal{S}^N(\tau)\left(1+P(\mathcal{S}^N(\tau))\right) \notag \\
 & \lesssim e^{-2\mu_0\tau} \mathcal{S}^N(\tau).
\end{align}
where we have used Lemma \ref{L:EMBEDDING} and (\ref{E:APRIORI}). Thus we have
\begin{align}\label{E:CURLEST4}
\|\frac{1}{\mu} \int_0^\tau \mu X_r^a\slashed\partial^\beta[\partial_\tau, \text{Curl}_{\Lambda\mathscr{A}}] {\bf V}d\tau'\|_{1+\alpha+a,\psi e^{\bar{S}}}^2 \lesssim  
\begin{cases}
 e^{-2\mu_0\tau} \mathcal{S}^N(\tau)   & \text{ if } \ 1<\gamma<\frac53 \\  
(1+ \tau^2) e^{-2\mu_0\tau} \mathcal{S}^N(\tau) & \text{ if } \ \gamma =\frac53
\end{cases}. 
\end{align}
The sixth term on the right hand side of (\ref{E:POSTDERVCURLVEQNGAMMALEQ5OVER3}) is similar to the fifth term but lower order and hence is also bounded. Proceeding in an analogous way to the fifth term estimate, we have for the seventh term on the right hand side of (\ref{E:POSTDERVCURLVEQNGAMMALEQ5OVER3})
\begin{align}\label{E:CURLEST5}
\|\frac{2}{\mu}\int_0^\tau \mu X^a_r \slashed\partial^\beta\text{Curl}_{\Lambda\mathscr{A}}\left(\Gamma^\ast{\bf V}\right)d\tau'\|_{1+\alpha+a,\psi e^{\bar{S}}}^2 \lesssim 
\begin{cases}
 e^{-2\mu_0\tau} \mathcal{S}^N(\tau)   & \text{ if } \ 1<\gamma<\frac53 \\  
(1+ \tau^2) e^{-2\mu_0\tau} \mathcal{S}^N(\tau) & \text{ if } \ \gamma =\frac53
\end{cases}.
\end{align}
The eighth term is similar to the seventh term but lower order and hence is also bounded. Bounds on the last two terms on the right hand side of (\ref{E:POSTDERVCURLVEQNGAMMALEQ5OVER3}) follow from Lemma \ref{L:CURLLASTTERMBOUNDS}. Combining the above analysis, and an analogous argument for the $\|\cdot\|_{1+\alpha,(1-\psi)e^{\bar{S}}}$ norms, we obtain (\ref{E:CURLVBOUND}). \\

\textit{Proof of} (\ref{E:CURLTHETABOUND}). Apply $X_r^a \slashed\partial^\beta$ to (\ref{E:FINALCURLTHETAEQNGAMMALEQ5OVER3})
\begin{align}\label{E:POSTDERVCURLTHETAEQNGAMMALEQ5OVER3}
&\text{Curl}_{\Lambda\mathscr{A}} X_r^a \slashed\partial^\beta \uptheta  = -[X_r^a \slashed\partial^\beta,\text{Curl}_{\Lambda \mathscr{A}}]\uptheta + \tfrac{1}{\gamma} X_r^a \slashed\partial^\beta \Lambda\mathscr{A} \nabla (\bar{S}) \times \uptheta \notag \\
&+X_r^a \slashed\partial^\beta \text{Curl}_{\Lambda \mathscr{A}}([\uptheta(0)]) - \tfrac{1}{\gamma} X_r^a \slashed\partial^\beta (\Lambda\mathscr{A} \nabla (\bar{S}) \times \uptheta)(0) \notag \\ 
& +\mu(0) X_r^a \slashed\partial^\beta \text{Curl}_{\Lambda \mathscr{A}} ({\bf V}(0)) \int_0^\tau \frac{1}{\mu(\tau')} d \tau' - \frac{\mu(0) X_r^a \slashed\partial^\beta \Lambda\mathscr{A} \nabla (\bar{S}) \times \mathbf{V}(0)}{\gamma} \int_0^\tau \frac{1}{\mu(\tau')} d \tau' \notag \\
&  + \int_0^\tau  X_r^a \slashed\partial^\beta [\partial_\tau, \text{Curl}_{\Lambda\mathscr{A}}] {\uptheta} d\tau' -\frac{1}{\gamma}\int_0^\tau  X_r^a \slashed\partial^\beta [\partial_\tau,\Lambda \mathscr{A} \nabla ( \bar{S}) \times ] \uptheta d\tau ' \notag \\
& +\int_0^\tau \frac{1}{\mu(\tau')} \int_0^{\tau'} \mu(\tau'') X_r^a \slashed\partial^\beta [\partial_\tau, \text{Curl}_{\Lambda\mathscr{A}}] {\bf V} d\tau'' d \tau' -\frac{1}{\gamma} \int_0^{\tau} \frac{1}{\mu(\tau')} \int_0^{\tau'} \mu(\tau '') X_r^a \slashed\partial^\beta [\partial_\tau,\Lambda \mathscr{A} \nabla ( \bar{S}) \times ] \mathbf{V} d\tau '' d \tau ' \notag \\
& - \int_0^\tau \frac{2}{\mu(\tau')} \int_0^{\tau '} \mu(\tau'') \, X_r^a \slashed\partial^\beta \text{Curl}_{\Lambda\mathscr{A}}(\Gamma^\ast{\bf V}) d\tau'' d \tau' + \frac{1}{\gamma}\int_0^{\tau} \frac{2}{\mu(\tau')} \int_0^{\tau '} \mu(\tau'') \, X_r^a \slashed\partial^\beta \Lambda\mathscr{A} \nabla (\bar{S}) \times (\Gamma^\ast \mathbf{V}) d\tau '' d \tau ' \notag \\
& +\frac{\delta}{\gamma} \int_0^\tau \frac{2}{\mu(\tau')} \int_0^{\tau'} \mu(\tau'')^{4-3 \gamma} X_r^a \slashed\partial^\beta \Lambda \nabla(\bar{S}) \times  \Lambda \uptheta d \tau '' d \tau ' \notag \\
& -\frac{\delta}{\gamma} \int_0^\tau \frac{1}{\mu(\tau')}\int_0^{\tau'} \mu(\tau '')^{4-3 \gamma} X_r^a \slashed\partial^\beta \Lambda \mathscr{A}[D \uptheta]\nabla (\bar{S}) \times \Lambda \eta d \tau '' d \tau '.
\end{align}  
The bound on the first term on the right hand side of (\ref{E:POSTDERVCURLTHETAEQNGAMMALEQ5OVER3}) follows from Lemma \ref{L:COMMUTATORBOUNDS}. The second term is similar to the first term but lower order and hence is also bounded.

The third term is bounded by $\mathcal{S}^N(0)$. Noting the fact that $\| X_r^a\slashed\partial^\beta \mathscr{A}\|_{1+\alpha+a,\psi e^{\bar{S}}}^2 \lesssim \mathcal{S}^N$ established in the proof of Lemma \ref{L:COMMUTATORBOUNDS}, the fourth term is similar to the third term but lower order and hence is also bounded in the same way as the third term.

The fifth term is bounded by $\mathcal{S}^N(0)+\mathcal{B}^N[{\bf V}](0)$ using (\ref{E:EXPMU1MUINEQGAMMALEQ5OVER3}). Using again the bound on $X_r^a\slashed\partial^\beta \mathscr{A}$, the sixth term is similar to the fifth term but lower order and hence is also bounded in the same way as the third term.
 
For the seventh term estimate, a similar argument to that used to prove the bound (\ref{E:CURLEST4}) gives us
\begin{equation} 
\| X_r^a \slashed\partial^\beta [\partial_\tau, \text{Curl}_{\Lambda\mathscr{A}}] {\uptheta}\|  \lesssim e^{-\mu_0 \tau} (\mathcal{S}^N(\tau))^{\tfrac12}.
\end{equation}
Then via the Cauchy-Schwarz inequality and Fubini's Theorem, we have for the seventh term
\begin{equation} 
\Big\| \int_0^\tau  X_r^a \slashed\partial^\beta [\partial_\tau, \text{Curl}_{\Lambda\mathscr{A}}] {\uptheta} d\tau'  \Big\|_{1+\alpha+a,\psi e^{\bar{S}}}^2 \lesssim \int_0^\tau e^{-\mu_0\tau} \mathcal{S}^N(\tau')\,d\tau'.
\end{equation}   
The eighth term is similar to the seventh term but lower order and hence is also bounded.

For the ninth term 
\begin{align}
&\Big\|\int_0^\tau \frac{1}{\mu(\tau')} \int_0^{\tau'} \mu(\tau'') X_r^a \slashed\partial^\beta[\partial_\tau, \text{Curl}_{\Lambda\mathscr{A}}] {\bf V}d\tau''\,d\tau' \Big\|_{1+\alpha+a,\psi e^{\bar{S}}}^2 \notag \\
& \lesssim \int_\Omega\left[\int_0^\tau \frac{(1+\tau')^2}{\mu(\tau')}\,d\tau'  \int_0^\tau \frac1{(1+\tau')^2\mu(\tau')  }\left(\int_0^{\tau'} \mu(\tau'') X_r^a \slashed\partial^\beta [\partial_\tau, \text{Curl}_{\Lambda\mathscr{A}}] {\bf V}d\tau''\right)^2\,d\tau'\right] 
 w^{1+\alpha+a} e^{\bar{S}} \psi\,dx\notag \\
& \lesssim \int_0^\tau \frac1{(1+\tau')^2\mu(\tau')} \Big\|\int_0^{\tau'} \mu(\tau'') X_r^a \slashed\partial^\beta[\partial_\tau, \text{Curl}_{\Lambda\mathscr{A}}] {\bf V}d\tau''\Big\|_{1+\alpha+a,\psi e^{\bar{S}} }^2\,d\tau' \notag \\
& \lesssim \int_0^\tau e^{-\mu_0\tau} \mathcal{S}^N(\tau')\,d\tau', \notag 
\end{align} 
where the final bound follows from similar arguments to those giving the bound (\ref{E:CURLEST4}). The tenth term is similar to the ninth term but lower order and hence is also bounded. Similarly to the ninth term proof but via (\ref{E:CURLEST5}), for the eleventh term we have
\begin{align}
& \Big\|\int_0^\tau \frac{2}{\mu(\tau')}\int_0^{\tau'} \mu(\tau'') X_r^a \slashed\partial^\beta\text{Curl}_{\Lambda\mathscr{A}}\left(\Gamma^\ast{\bf V}\right)d\tau''\,d\tau' \Big\|_{1+\alpha+a,\psi e^{\bar{S}}}^2 \lesssim \int_0^\tau e^{-\mu_0\tau'} \mathcal{S}^N(\tau')\,d\tau'.  \notag
\end{align}
The twelfth term is similar to the eleventh term but lower order and hence is also bounded. Bounds on the last two terms on the right hand side of (\ref{E:POSTDERVCURLTHETAEQNGAMMALEQ5OVER3}) follow from Lemma \ref{L:CURLLASTTERMBOUNDS}. Combining the above analysis, and an analogous argument for the $\|\cdot\|_{1+\alpha,(1-\psi)e^{\bar{S}}}$ norms, we obtain (\ref{E:CURLTHETABOUND}).
\end{proof}

\section{Energy Estimates}\label{S:ENERGYGAMMALEQ5OVER3}  
Before proving our main energy inequality, we first introduce the two energy based high order quantities which arise directly from the problem. Begin by diagonalizing then positive symmetric matrix $\Lambda=(\det A)^{\frac{2}{3}} A^{-1} A^{-\top} \in \text{SL}(3)$ as follows   
\begin{equation}\label{E:LAMBDADECOMP}
\Lambda=P^{\top}QP, \quad P \in \text{SO(3)}, \quad Q=\text{diag}(d_1,d_2,d_3), \quad d_i > 0 \text{ eigenvalues of } \Lambda,
\end{equation}
and then define
\begin{equation}\label{E:MATHSCRMMATHSCRN}
\mathscr M_{a,\beta} : =  P\:\nabla_\eta X_r^a \slashed\partial^\beta\uptheta\:P^{\top}  \text{   and   }  \mathscr N_\nu : =  P\:\nabla_\eta \partial^\nu\uptheta\:P^{\top}.
\end{equation} 
Denoting the usual dot product on $\mathbb{R}^3$ by $\langle \cdot , \cdot \rangle$, introduce the high-order energy functional
\begin{align}
& \mathcal{E}^N(\uptheta,\mathbf{V})(\tau)=\mathcal{E}^N(\tau) \notag \\ 
&=\frac12\sum_{a+|\beta|\le N}
\int_\Omega\psi\Big[\mu^{3\gamma-3} \left\langle\Lambda^{-1}X_r^a\slashed\partial^\beta\, \mathbf{V},\,X_r^a\slashed\partial^\beta \mathbf{V}\right\rangle+\delta\left\langle\Lambda^{-1}X_r^a\slashed\partial^\beta \uptheta,\,X_r^a\slashed\partial^\beta\uptheta\right\rangle\Big] w^{a+\alpha} \notag \\
& \ \ \ \  \ \ \ \ \  \ \ \ \ \  \ \ \ \ \  \ \ \ \ \  +\psi\mathscr{J}^{-\frac1\alpha}\Big[\sum_{i,j=1}^3 d_id_j^{-1}\left((\mathscr{M}_{a,\beta})^j_{i}\right)^2+\tfrac1\alpha\left(\text{div}_\eta X_r^a\slashed\partial^\beta \uptheta\right)^2\Big] w^{a+\alpha+1}e^{\bar{S}} \,dy \notag \\
&\ \ \ \  +\frac12\sum_{|\nu|\le N}
\int_\Omega (1-\psi)\Big[\mu^{3\gamma-3} \left\langle \Lambda^{-1} \partial^\nu\mathbf{V},\,\partial^\nu \mathbf{V}\right\rangle +\delta \left\langle\Lambda^{-1}\partial^\nu\mathbf{\uptheta},\,\partial^\nu\mathbf{\uptheta}\right\rangle\Big]  w^\alpha
\notag \\
& \ \ \ \  \ \ \ \ \ \ \ \ \ \ \ \ \ \ \ \ \ \ \ \ +(1-\psi)\mathscr{J}^{-\frac1\alpha}\Big[ \sum_{i,j=1}^3 d_id_j^{-1}\left((\mathscr{N}_{\nu})^j_{i}\right)^2 + \tfrac1\alpha\left(\text{div}_\eta\partial^\nu \uptheta\right)^2\Big] w^{1+\alpha}e^{\bar{S}} \,dy,   \label{E:EDEFGAMMALEQ5OVER3}
\end{align}
and the dissipation functional
\begin{align}
\mathcal D^N (\mathbf{V}) = \mathcal D^N(\tau) =\frac{5-3\gamma}{2}\mu^{3\gamma-3}\frac{\mu_\tau}{\mu} &\int_\Omega  \Big[\psi \sum_{a+|\beta|\le N} \left\langle\Lambda^{-1}X_r^a\slashed\partial^\beta\mathbf{V},\,X_r^a\slashed\partial^\beta\mathbf{V}\right\rangle  w^{a+\alpha} \notag
\\
& \quad+ (1-\psi)\sum_{|\nu|\le N}\left\langle\Lambda^{-1}\partial^\nu\mathbf{V},\,\partial^\nu\mathbf{V}\right\rangle  w^{\alpha} \Big]\,dy.
\end{align}
We are requiring $\gamma \leq \frac{5}{3}$ in this formulation, which gives $\mathcal{D}^N (\mathbf{V}) \geq 0$. Next, we give key identities which will be used in our estimates. First from Lemma 4.3 \cite{1610.01666} we have the following modified energy identity:
\begin{lemma} \label{L:KEYLEMMA}
Recalling the matrix quantities introduced in (\ref{E:LAMBDADECOMP}) and (\ref{E:MATHSCRMMATHSCRN}), the following identities hold
\begin{align}
 \Lambda_{\ell j}(\nabla_\eta X_r^a \slashed\partial^\beta\uptheta)^i_j\Lambda^{-1}_{im}(\nabla_\eta X_r^a \slashed\partial^\beta\uptheta)^m_{\ell,\tau}  & =  \frac12\frac{d}{d\tau}\left(\sum_{i,j=1}^3 d_id_j^{-1}(\mathscr M_{a,\beta})^j_{i})^2\right) + \mathcal T_{a,\beta}, \label{E:KEYONE}\\
  \Lambda_{\ell j}(\nabla_\eta\partial^\nu\uptheta)^i_j\Lambda^{-1}_{im}(\nabla_\eta\partial^\nu\uptheta)^m_{\ell,\tau}  & =  \frac12\frac{d}{d\tau}\left(\sum_{i,j=1}^3 d_id_j^{-1}(\mathscr N_\nu)^j_{i})^2\right) + \mathcal T_{\nu}, \label{E:KEYTWO}
\end{align}
where the error terms $\mathcal T_{a,\beta}$ and $\mathcal T_{\nu}$ are given as follows
\begin{align}
\mathcal T_{a,\beta} & = -  \frac12\sum_{i,j=1}^3\frac d{d\tau}\left(d_id_j^{-1}\right)(\left(\mathscr M_{a,\beta}\right)^j_i)^2 
 - \text{{\em Tr}}\left(Q\mathscr M_{a,\beta}Q^{-1}\left(\partial_\tau P P^\top \mathscr M_{a,\beta}^\top + \mathscr M_{a,\beta}^\top P\partial_\tau P^\top\right)\right), \label{E:TABETA}\\
\mathcal T_{\nu} & = -  \frac12\sum_{i,j=1}^3\frac d{d\tau}\left(d_id_j^{-1}\right)(\left(\mathscr N_\nu\right)^j_i)^2 
 -  \text{{\em Tr}}\left(Q\mathscr N_\nu Q^{-1}\left(\partial_\tau P P^\top \mathscr N_\nu^\top + \mathscr N_\nu^\top P\partial_\tau P^\top\right)\right).
\end{align}
\end{lemma}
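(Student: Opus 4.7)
The statement is flagged by the authors as Lemma 4.3 from \cite{1610.01666}, so the plan is really to reproduce the algebraic calculation behind it and verify it transfers verbatim to the present setting (it should, since the identities are purely algebraic manipulations of a symmetric positive matrix $\Lambda\in \mathrm{SL}(3)$ and a generic matrix-valued expression, with no use of the equations of motion).

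The plan is as follows. Writing $N:=\nabla_\eta X_r^a\slashed\partial^\beta\uptheta$ as a matrix with entries $N^i_j$, I first observe that the LHS of~\eqref{E:KEYONE} is a trace:
\begin{equation}
\Lambda_{\ell j} N^i_j \Lambda^{-1}_{im} (N^m_\ell)_\tau
=\operatorname{Tr}\!\left(\Lambda\, N^\top\, \Lambda^{-1}\, N_\tau\right).
\end{equation}
Using $\Lambda=P^\top Q P$ together with $\mathscr{M}_{a,\beta}=PNP^\top$ (so that $N=P^\top\mathscr M_{a,\beta}P$), this trace becomes
\begin{equation}
\operatorname{Tr}\!\left(Q\,\mathscr M_{a,\beta}^\top\, Q^{-1}\,P N_\tau P^\top\right).
\end{equation}

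Next I would differentiate $\mathscr M_{a,\beta}=PNP^\top$ in $\tau$, obtaining $\mathscr M_{a,\beta,\tau} = P_\tau N P^\top + P N_\tau P^\top + P N P_\tau^\top$, i.e.
\begin{equation}
PN_\tau P^\top = \mathscr M_{a,\beta,\tau} - R^\top \mathscr M_{a,\beta} - \mathscr M_{a,\beta} R,
\end{equation}
where $R:=P_\tau P^\top = \partial_\tau P\, P^\top$ is antisymmetric (so $R^\top = P\,\partial_\tau P^\top$) by differentiating $PP^\top=\mathrm{Id}$. The trace therefore splits as
\begin{equation}
\operatorname{Tr}\!\left(Q\mathscr M_{a,\beta}^\top Q^{-1}\mathscr M_{a,\beta,\tau}\right)
-\operatorname{Tr}\!\left(Q\mathscr M_{a,\beta}^\top Q^{-1}(R^\top\mathscr M_{a,\beta}+\mathscr M_{a,\beta}R)\right).
\end{equation}

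Since $Q,Q^{-1}$ are diagonal, the first trace expands as $\sum_{i,j} d_i d_j^{-1}(\mathscr M_{a,\beta})^j_i\,(\mathscr M_{a,\beta,\tau})^j_i$. Adding and subtracting $\tfrac12\sum_{i,j}\partial_\tau(d_id_j^{-1})((\mathscr M_{a,\beta})^j_i)^2$ turns this into the desired $\tfrac12\frac{d}{d\tau}\sum_{i,j}d_id_j^{-1}((\mathscr M_{a,\beta})^j_i)^2$ together with the first term of $\mathcal T_{a,\beta}$. The second trace is cyclically rearranged (using $R=\partial_\tau P\,P^\top$, $R^\top=P\,\partial_\tau P^\top$) to exactly the second term of $\mathcal T_{a,\beta}$, which gives~\eqref{E:KEYONE}. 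Identity~\eqref{E:KEYTWO} follows by exactly the same calculation with $X_r^a\slashed\partial^\beta$ replaced by $\partial^\nu$ and $\mathscr M_{a,\beta}$ replaced by $\mathscr N_\nu$.

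No real obstacle is anticipated: the computation is purely linear-algebraic and is the verbatim analogue of the one in \cite{1610.01666} (the entropy factor $e^{\bar S}$ and the high-order weights in~\eqref{E:EDEFGAMMALEQ5OVER3} play no role here since the identity involves only $\Lambda$, $\mathscr M_{a,\beta}$, $\mathscr N_\nu$ and their $\tau$-derivatives). The only minor bookkeeping point is to keep the placement of the transpose on $\mathscr M_{a,\beta}$ consistent with the index convention $(\mathscr M_{a,\beta})^j_i$ used in the paper's summation for $\mathcal T_{a,\beta}$; once this is fixed, the two identities follow immediately.
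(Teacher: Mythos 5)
Your reconstruction follows the only natural route (rewrite the left side as $\operatorname{Tr}(\Lambda N^\top\Lambda^{-1}N_\tau)$, conjugate by $P$, apply the product rule and the antisymmetry of $P_\tau P^\top$), which is exactly the argument behind Lemma 4.3 of \cite{1610.01666}; the paper itself simply cites that lemma and gives no proof, so there is nothing different to compare against. One small slip: since $P_\tau NP^\top=(P_\tau P^\top)(PNP^\top)$ and $PNP_\tau^\top=(PNP^\top)(PP_\tau^\top)$, the correct identity is $PN_\tau P^\top=\mathscr M_{a,\beta,\tau}-R\,\mathscr M_{a,\beta}-\mathscr M_{a,\beta}R^\top$ with $R=\partial_\tau P\,P^\top$, not $-R^\top\mathscr M_{a,\beta}-\mathscr M_{a,\beta}R$; by antisymmetry of $R$ this only flips the sign of the rotational part of $\mathcal T_{a,\beta}$, which is immaterial here since $\mathcal T_{a,\beta}$ is only ever estimated in absolute value, but it is worth fixing alongside the transpose bookkeeping you already flagged.
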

Next we have two important commutation results which will be used crucially when differentiating the nonlinear pressure term in our equation. 
\begin{lemma}\label{L:COMM}
For $q \in \mathbb R_+$, ${\bf T}:\Omega\to\mathbb M^{3\times3}$ and any $i,j,\ell\in\{1,2,3\}$ the following identities hold
\begin{align}
X_r\left[ \frac{1}{ w^q} ( w^{1+q}  e^{\bar{S}} {\bf T}^k_i),_k  \right] & =  \frac{1}{ w^{1+q}} ( w^{2+q} e^{\bar{S}} X_r {\bf T}^k_i),_k + \mathcal C_i^{q+1}[{\bf T}], \label{E:COMMXR}\\ 
\slashed\partial_{j\ell}\left[ \frac{1}{ w^q} ( w^{1+q} e^{\bar{S}} {\bf T}^k_i),_k  \right] &=  \frac{1}{ w^{q}} ( w^{1+q} e^{\bar{S}} \slashed\partial_{j\ell}{\bf T}^k_i),_k + \mathcal C_{ij\ell}^{q}[{\bf T}], \label{E:COMMSLASHEDPARTIAL} 
\end{align}
where $ \mathcal C_i^{q+1}[{\bf T}]$ is
\begin{align}\label{E:COMMXRC}
\mathcal C_i^{q+1}[{\bf T}] &= (X_r(we^{\bar{S}})-we^{\bar{S}})\frac{y_j}{r^2}\slashed\partial_{jk}{\bf T}_i^k+w(X_r(e^{\bar{S}})-e^{\bar{S}})\frac{y_k}{r^2}X_r {\bf T}_i^k \notag \\
&+\left[(1+q) X_r (w,_k e^{\bar{S}})+ X_r((e^{\bar{S}}),_kw)\right]{\bf T}_i^k,
\end{align}
and $\mathcal C_{ij\ell}^{q}[{\bf T}]$ is
\begin{align}
&\mathcal C_{ij\ell}^{q}[{\bf T}]=  we^{\bar{S}}\left(y_m\left[\frac{\delta_{k \ell} \slashed\partial_{m j} - \delta_{k j} \slashed \partial_{m \ell}}{r^2}\right]+\left[\frac{\delta_{k \ell}y_j - \delta_{k j} y_{\ell}}{r^2}\right]X_r\right){\bf T}^k_i \notag \\
&\qquad \qquad \qquad +(1+q)(\slashed \partial_{j\ell} w,_k)e^{\bar{S}}{\bf T}^k_i+w \slashed\partial_{j\ell} (e^{\bar{S}}),_k {\bf T}_i^k \ \ \text{for } j,\ell=1,2,3. \label{E:COMMSLASHEDPARTIALC}
\end{align}
\end{lemma}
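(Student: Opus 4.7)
The plan is to prove both identities by direct calculation: I would expand the divergence on each side via the Leibniz rule, apply the respective derivation ($X_r$ or $\slashed\partial_{j\ell}$), compute the resulting commutators with $\partial_k$, and then rewrite residual $\partial_k$-terms through the decomposition
\begin{equation}
\partial_k = \tfrac{y_k}{r^2}X_r + \tfrac{y_m}{r^2}\slashed\partial_{mk},
\end{equation}
which is immediate from $y_m \slashed\partial_{mk} = r^2\partial_k - y_k X_r$. The structural input that makes everything align with the stated formulas is that both $w$ (by (\ref{E:WDEMAND})) and $e^{\bar S}$ (by (\ref{E:BARSDEMAND}) and Corollary \ref{C:ENTROPYREGULARITYCOROLLARY}) are radial, which forces $\slashed\partial_{j\ell}w = \slashed\partial_{j\ell}(e^{\bar S}) = 0$ and yields $w_{,k}=\tfrac{y_k}{r^2}X_r(w)$, $(e^{\bar S})_{,k} = \tfrac{y_k}{r^2}X_r(e^{\bar S})$.

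For (\ref{E:COMMXR}), I would first distribute the divergence on both sides as
\begin{equation}
\tfrac{1}{w^q}(w^{1+q}e^{\bar S}\mathbf{T}_i^k)_{,k} = (1+q)w_{,k}e^{\bar S}\mathbf{T}_i^k + w(e^{\bar S})_{,k}\mathbf{T}_i^k + we^{\bar S}\mathbf{T}_i^k{,}_k,
\end{equation}
and analogously for the right-hand side with $(q,\mathbf{T})\mapsto(q+1,X_r\mathbf{T})$. Applying $X_r$ to the left-hand expansion by the Leibniz rule and using $[X_r,\partial_k]=-\partial_k$ (hence $X_r(\mathbf{T}_i^k{,}_k) = (X_r\mathbf{T}_i^k)_{,k} - \mathbf{T}_i^k{,}_k$), the difference with the right-hand side collapses to
\begin{equation}
(1+q)X_r(w_{,k}e^{\bar S})\mathbf{T}_i^k + X_r((e^{\bar S})_{,k}w)\mathbf{T}_i^k - w_{,k}e^{\bar S}X_r\mathbf{T}_i^k + [X_r(we^{\bar S}) - we^{\bar S}]\mathbf{T}_i^k{,}_k.
\end{equation}
The last piece is split via the decomposition of $\partial_k$; its $X_r$-component produces, after invoking the radial identity $w_{,k}=\tfrac{y_k}{r^2}X_r(w)$, exactly $w_{,k}e^{\bar S}X_r\mathbf{T}_i^k + w[X_r(e^{\bar S}) - e^{\bar S}]\tfrac{y_k}{r^2}X_r\mathbf{T}_i^k$. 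The first contribution absorbs the spurious $-w_{,k}e^{\bar S}X_r\mathbf{T}_i^k$ term above, and the remaining pieces combine to $\mathcal C_i^{q+1}[\mathbf{T}]$ as stated in (\ref{E:COMMXRC}).

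For (\ref{E:COMMSLASHEDPARTIAL}), the radial structure makes the bookkeeping cleaner: applying $\slashed\partial_{j\ell}$ to the Leibniz expansion, the only non-radial factors that can be hit are $w_{,k}$, $(e^{\bar S})_{,k}$ and $\mathbf{T}_i^k$. A direct computation from the definitions of $\slashed\partial_{j\ell}$ and $\partial_k$ gives the commutator
\begin{equation}
[\slashed\partial_{j\ell},\partial_k]\mathbf{T}_i^k = -\delta_{kj}\partial_\ell\mathbf{T}_i^k + \delta_{k\ell}\partial_j\mathbf{T}_i^k,
\end{equation}
and substituting $\partial_j,\partial_\ell$ through the $(X_r,\slashed\partial)$ decomposition produces precisely the bracketed $we^{\bar S}[\ldots]\mathbf{T}_i^k$ contribution of $\mathcal C_{ij\ell}^q[\mathbf{T}]$ in (\ref{E:COMMSLASHEDPARTIALC}); the remaining terms $(1+q)(\slashed\partial_{j\ell}w_{,k})e^{\bar S}\mathbf{T}_i^k$ and $w\,\slashed\partial_{j\ell}(e^{\bar S})_{,k}\mathbf{T}_i^k$ are what survives from differentiating the coefficients.

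The main obstacle I anticipate is the bookkeeping in (\ref{E:COMMXR}): a spurious $w_{,k}e^{\bar S}X_r\mathbf{T}_i^k$ term inevitably appears after the Leibniz step and would spoil the stated form of $\mathcal C_i^{q+1}$ unless one precisely exploits the radial identity for $w_{,k}$ to convert a portion of $[X_r(we^{\bar S}) - we^{\bar S}]\tfrac{y_k}{r^2}X_r\mathbf{T}_i^k$ back into $w_{,k}e^{\bar S}X_r\mathbf{T}_i^k$. Every other manipulation is mechanical; this one delicate cancellation is what makes the error term take the compact form given in (\ref{E:COMMXRC}).
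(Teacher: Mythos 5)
Your proposal is correct and follows essentially the same route as the paper's proof: Leibniz expansion, the commutators $[X_r,\partial_k]=-\partial_k$ and $[\slashed\partial_{j\ell},\partial_k]=\delta_{k\ell}\partial_j-\delta_{kj}\partial_\ell$, the decomposition $\partial_k=\tfrac{y_k}{r^2}X_r+\tfrac{y_m}{r^2}\slashed\partial_{mk}$, and the radial identity $w,_k=\tfrac{y_k}{r^2}X_r w$ to cancel the spurious $w,_k e^{\bar S}X_r\mathbf{T}_i^k$ term. You have correctly isolated the one delicate cancellation the paper itself relies on, so nothing further is needed.
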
 
\begin{proof}
Compute
\begin{align*}
&X_r\left[ \frac{1}{ w^q} ( w^{1+q} e^{\bar{S}} {\bf T}^k_i),_k  \right] = X_r \left[  w e^{\bar{S}} {\bf T}^k_i,_k + (1+q)  w,_k e^{\bar{S}} {\bf T}^k_i + (e^{\bar{S}}),_k w T_i^k\right] \\
&=w e^{\bar{S}} X_r {\bf T}^k_i,_k + X_r(w e^{\bar{S}}){\bf T}^k_i,_k + (1+q)w,_ke^{\bar{S}}X_r{\bf T}_i^k + (1+q){\bf T}_i^k X_r (w,_k e^{\bar{S}}) \\
&  \qquad \qquad \qquad + (e^{\bar{S}}),_k w X_r{\bf T}_i^k + {\bf T}_i^k X_r((e^{\bar{S}}),_kw) \\
&=we^{\bar{S}}(X_r {\bf T}^k_i),_k + w e^{\bar{S}}[X_r,\partial_k]{\bf T}_i^k + (2+q)w,_k e^{\bar{S}}X_r {\bf T}_i^k-w,_ke^{\bar{S}}X_r {\bf T}_i^k \\
&\qquad \qquad \qquad + (e^{\bar{S}}),_k w X_r {\bf T}_i^k+X_r(we^{\bar{S}}){\bf T}^k_i,_k+(1+q){\bf T}_i^k X_r (w,_k e^{\bar{S}})+{\bf T}_i^k X_r((e^{\bar{S}}),_kw) \\
&= \frac{1}{ w^{1+q}} ( w^{2+q} e^{\bar{S}} X_r {\bf T}^k_i),_k + w e^{\bar{S}}[X_r,\partial_k]{\bf T}_i^k -w,_ke^{\bar{S}}X_r {\bf T}_i^k +X_r(we^{\bar{S}}){\bf T}^k_i,_k\\
& \qquad \qquad \qquad +(1+q){\bf T}_i^k X_r (w,_k e^{\bar{S}})+{\bf T}_i^k X_r((e^{\bar{S}}),_kw).
\end{align*}
We have now obtained favorable terms except for the following three terms which we rewrite using our commutator identity (\ref{E:DERIVATIVECOMMUTATOR}), $[X_r,\partial_k]= -\partial_k$ and (\ref{E:DERIVATIVEDECOMP}) to express $\partial_k$ in terms of $X_r$ and $\slashed\partial_{jk}$,
\begin{align*}
&w e^{\bar{S}}[X_r,\partial_k]{\bf T}_i^k-w,_ke^{\bar{S}}X_r {\bf T}_i^k+X_r(we^{\bar{S}}){\bf T}^k_i,_k = (X_r(we^{\bar{S}})-we^{\bar{S}}){\bf T}^k_i,_k-w,_ke^{\bar{S}}X_r {\bf T}_i^k \\
&=(X_r(we^{\bar{S}})-we^{\bar{S}}) \left(\frac{y_j}{r^2}\slashed\partial_{jk}{\bf T}_i^k+\frac{y_k}{r^2}X_r {\bf T}_i^k\right)-w,_ke^{\bar{S}}X_r {\bf T}_i^k \\
&=(X_r(we^{\bar{S}})-we^{\bar{S}})\frac{y_j}{r^2}\slashed\partial_{jk}{\bf T}_i^k+w(X_r(e^{\bar{S}})-e^{\bar{S}})\frac{y_k}{r^2}X_r {\bf T}_i^k,
\end{align*}
where to obtain the last line we have used the fact that $w$ is a radial function, say $f(|y|)$, by (\ref{E:WDEMAND}) and hence $w,_k=\frac{y_k}{r}f'(|y|)$ which implies
\begin{align*}
(X_r w)\frac{y_k}{r^2} - w,_k &=y_{\ell}\frac{y_\ell}{r}f'(|y|)\frac{y_k}{r^2}-\frac{y_k}{r}f'(|y|) \\
&=0.
\end{align*} 
Combining the above calculations we obtain (\ref{E:COMMXR}). Now since $w$ and $\bar{S}$ are radial functions by (\ref{E:BARSDEMAND}) and (\ref{E:WDEMAND}), $\slashed\partial_{j\ell} w = \slashed\partial_{j\ell} (e^{\bar{S}})=0$ and so we have
\begin{align*}
&\slashed\partial_{j\ell}\left[ \frac{1}{ w^q} ( w^{1+q} e^{\bar{S}} {\bf T}^k_i),_k  \right]=\slashed\partial_{j\ell} \left[  w e^{\bar{S}} {\bf T}^k_i,_k + (1+q)  w,_k e^{\bar{S}} {\bf T}^k_i + (e^{\bar{S}}),_k w {\bf T}_i^k\right] \\
&=we^{\bar{S}}\slashed\partial_{j\ell} {\bf T}^k_i,_k + (1+q)w,_ke^{\bar{S}}\slashed\partial_{j\ell}{\bf T}^k_i + (1+q)(\slashed\partial_{j\ell} w,_k)e^{\bar{S}}{\bf T}^k_i \\
&\qquad \qquad \qquad +w(e^{\bar{S}}),_k \slashed\partial_{j\ell} {\bf T}^k_i,_k + w \slashed\partial_{j\ell} (e^{\bar{S}}),_k {\bf T}_i^k \\
&=we^{\bar{S}}(\slashed\partial_{j\ell} {\bf T}^k_i),_k + w e^{\bar{S}} [\slashed\partial_{j\ell},\partial_k]{\bf T}^k_i+ (1+q)w,_ke^{\bar{S}}\slashed\partial_{j\ell}{\bf T}^k_i + (1+q)(\slashed\partial_{j\ell} w,_k)e^{\bar{S}}{\bf T}^k_i\\
& \qquad \qquad \qquad +w(e^{\bar{S}}),_k \slashed\partial_{j\ell} {\bf T}^k_i,_k + w\slashed\partial_{j\ell} (e^{\bar{S}}),_k {\bf T}_i^k \\
&=\frac{1}{w^q} ( w^{1+q} e^{\bar{S}} \slashed\partial_{j\ell}{\bf T}^k_i ),_k +w e^{\bar{S}} (\delta_{k\ell }\partial_j-\delta_{kj}\partial_\ell){\bf T}^k_i+(1+q)(\slashed
\partial_{j\ell} w,_k)e^{\bar{S}}{\bf T}^k_i+w \slashed\partial_{j\ell} (e^{\bar{S}}),_k {\bf T}_i^k \\
&=\frac{1}{w^q} ( w^{1+q} e^{\bar{S}} \slashed\partial_{j\ell}{\bf T}^k_i ),_k+we^{\bar{S}}\left(y_m\left[\frac{\delta_{k \ell} \slashed\partial_{m j} - \delta_{k j} \slashed \partial_{m \ell}}{r^2}\right]+\left[\frac{\delta_{k \ell}y_j - \delta_{k j} y_{\ell}}{r^2}\right]X_r\right){\bf T}^k_i\\
&\qquad \qquad \qquad +(1+q)(\slashed \partial_{j\ell} w,_k)e^{\bar{S}}{\bf T}^k_i+w \slashed\partial_{j\ell} (e^{\bar{S}}),_k {\bf T}_i^k,
\end{align*}
where $[\slashed\partial_{j\ell},\partial_k]{\bf T}^k_i$ has been rewritten using (\ref{E:DERIVATIVECOMMUTATOR}) and (\ref{E:DERIVATIVEDECOMP}).
\end{proof}
In the next Lemma, we give some useful results concerning our quantities $\mathscr{A}$, $\mathscr{J}$ and $\Lambda$ and also our derivative operators,
\begin{lemma}\label{L:USEFULIDENTITIESPREENERGYINEQUALITY}       
For $\mathscr{A}$, $\mathscr{J}$ and $\Lambda$, the following identities hold
\begin{align}
\mathscr{A}^k_j\mathscr{J}^{-\frac1\alpha} - \delta^k_j &= ( \mathscr{A}^k_j-\delta^k_j) \mathscr{J}^{-\frac1\alpha} + \delta^k_j (\mathscr{J}^{-\frac1\alpha} -1),  \label{E:AJIDENTITYENERGY} \\  
\mathscr{A}^k_j-\delta^k_j &= -\mathscr{A}^k_l  [D\uptheta]^l_j,  \label{E:AIDENTITYENERGY}\\
\Lambda_{ij}&=\Lambda_{ip}(\mathscr{A}^j_p +  \mathscr{A}^j_l\uptheta^l,_p). \label{E:LAMBDAIDENTITYENERGY}
\end{align}  
For any derivative operator $L \in \{\slashed\partial_{ij},X_r\}_{i,j=1,2,3}$
\begin{align}      
L(\mathscr{A}_j^k \mathscr{J}^{-\frac{1}{\alpha}}) &=- \mathscr{J}^{-\frac1\alpha}\mathscr{A}^k_\ell\mathscr{A}^s_j (L\uptheta^\ell),_s  -\tfrac1\alpha \mathscr{J}^{-\frac1\alpha}\mathscr{A}^k_j\mathscr{A}^s_\ell (L\uptheta^\ell),_s \notag \\
&-  \mathscr{J}^{-\frac1\alpha}\mathscr{A}^k_\ell\mathscr{A}^s_j[L,\partial_s]\uptheta^\ell -\tfrac1\alpha \mathscr{J}^{-\frac1\alpha}\mathscr{A}^k_j\mathscr{A}^s_\ell [L,\partial_s]\uptheta^\ell. \label{E:HIGHORDERDERVIATIVEAJGAMMALEQ5OVER3} 
\end{align}
\end{lemma}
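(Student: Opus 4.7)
My plan is to treat the four identities in order, each being a short algebraic manipulation building on the basic relations $\mathscr{A}[D\eta] = \text{{\bf Id}}$, $\eta = y+\uptheta$, and Jacobi's formula for $\log\mathscr{J}$.

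\medskip

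For \eqref{E:AJIDENTITYENERGY} I would just add and subtract $\delta^k_j \mathscr{J}^{-\frac1\alpha}$:
\[
\mathscr{A}^k_j\mathscr{J}^{-\frac1\alpha} - \delta^k_j = (\mathscr{A}^k_j - \delta^k_j)\mathscr{J}^{-\frac1\alpha} + \delta^k_j \mathscr{J}^{-\frac1\alpha} - \delta^k_j,
\]
which gives the claim on inspection. Identity \eqref{E:AIDENTITYENERGY} is exactly the calculation already performed in \eqref{E:CURLADELTAIDENTITYPROOF}: write $\mathscr{A}^k_j = \mathscr{A}^k_l [D\eta]^l_l [D\eta]^{-1}\cdots$, more concretely use $\mathscr{A}^k_l[D\eta]^l_j = \delta^k_j$ together with $[D\eta]^l_j = \delta^l_j + \uptheta^l,_j$ to get $\mathscr{A}^k_j = \delta^k_j - \mathscr{A}^k_l [D\uptheta]^l_j$. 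The same relation rearranged reads $\mathscr{A}^j_p + \mathscr{A}^j_l \uptheta^l,_p = \delta^j_p$; multiplying both sides by $\Lambda_{ip}$ and using symmetry of $\Lambda$ (so that $\Lambda_{ip}\delta^j_p = \Lambda_{ij}$) yields \eqref{E:LAMBDAIDENTITYENERGY}.

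\medskip

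For \eqref{E:HIGHORDERDERVIATIVEAJGAMMALEQ5OVER3}, the main point is that the formulae \eqref{E:AJDIFFERENTIATIONFORMULAE} are stated only for $\partial_\tau$ and $\partial_i$, whereas here $L$ is either $X_r$ or $\slashed\partial_{ij}$. I would apply $L$ to the identities $\mathscr{A}^k_l[D\eta]^l_j = \delta^k_j$ and $L\log\mathscr{J} = \mathscr{A}^s_l L[D\eta]^l_s$ (Jacobi) and then rewrite $L[D\eta]^l_s = L(\uptheta^l,_s) = \partial_s(L\uptheta^l) + [L,\partial_s]\uptheta^l$ using the commutator. This produces
\[
L\mathscr{A}^k_j = -\mathscr{A}^k_\ell \mathscr{A}^s_j\bigl((L\uptheta^\ell),_s + [L,\partial_s]\uptheta^\ell\bigr),\qquad L\mathscr{J}^{-\frac1\alpha} = -\tfrac1\alpha \mathscr{J}^{-\frac1\alpha}\mathscr{A}^s_\ell\bigl((L\uptheta^\ell),_s + [L,\partial_s]\uptheta^\ell\bigr).
\]
Applying Leibniz to $L(\mathscr{A}^k_j \mathscr{J}^{-\frac1\alpha})$ and collecting terms gives the claimed expression.

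\medskip

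There is no real obstacle: the only thing to be careful about is the commutator term $[L,\partial_s]\uptheta^\ell$, which cannot be dropped because $L \in \{X_r,\slashed\partial_{ij}\}$ does not commute with plain partial derivatives $\partial_s$. Keeping this term explicit is precisely what makes the formula directly usable for higher-order derivative estimates via \eqref{E:DIFFCOMMUTATOR} later on.
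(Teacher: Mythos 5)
Your proposal is correct and follows essentially the same route as the paper: \eqref{E:AJIDENTITYENERGY} by adding and subtracting $\delta^k_j\mathscr{J}^{-\frac1\alpha}$, \eqref{E:AIDENTITYENERGY} and \eqref{E:LAMBDAIDENTITYENERGY} from $\mathscr{A}^k_l[D\eta]^l_j=\delta^k_j$ with $\eta=y+\uptheta$, and \eqref{E:HIGHORDERDERVIATIVEAJGAMMALEQ5OVER3} by applying the differentiation formulae \eqref{E:AJDIFFERENTIATIONFORMULAE} (extended to $L\in\{X_r,\slashed\partial_{ij}\}$) together with $L(\uptheta^\ell,_s)=(L\uptheta^\ell),_s+[L,\partial_s]\uptheta^\ell$. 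The only cosmetic remark is that $\Lambda_{ip}\delta^j_p=\Lambda_{ij}$ is pure index contraction and does not require the symmetry of $\Lambda$.
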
  
\begin{proof} 
First (\ref{E:AJIDENTITYENERGY}) is straightforward to verify by expanding the right-hand side. Second, (\ref{E:AIDENTITYENERGY}) was proven in Section \ref{S:CURLGAMMALEQ5OVER3}, (\ref{E:CURLADELTAIDENTITYPROOF}). Next, (\ref{E:LAMBDAIDENTITYENERGY}) is proven in the following calculation where we recall $A=[D \eta]^{-1}$ and use $\eta=y+\uptheta$,
\begin{equation}   
\Lambda_{ij}=\Lambda_{ip}\delta^j_p = \Lambda_{ip} \mathscr{A}^j_l\eta^l,_p=\Lambda_{ip}(\mathscr{A}^j_p +  \mathscr{A}^j_l\uptheta^l,_p).
\end{equation}
Lastly, using our differentiation formulae for $\mathscr{A}$ and $\mathscr{J}$ (\ref{E:AJDIFFERENTIATIONFORMULAE}) which hold when generalized to $X_r$ and $\slashed\partial$, and also again the formula $\eta=y+\uptheta$,
\begin{align}     
&L(\mathscr{A}_j^k \mathscr{J}^{-\frac{1}{\alpha}}) = - \mathscr{J}^{-\frac1\alpha}\mathscr{A}^k_\ell\mathscr{A}^s_j L(\eta^\ell,_s) -\tfrac1\alpha \mathscr{J}^{-\frac1\alpha}\mathscr{A}^k_j\mathscr{A}^s_\ell L(\eta^\ell,_s) \notag \\         
&= - \mathscr{J}^{-\frac1\alpha}\mathscr{A}^k_\ell\mathscr{A}^s_j L(y^\ell,_s+\uptheta^\ell,_s) -\tfrac1\alpha \mathscr{J}^{-\frac1\alpha}\mathscr{A}^k_j\mathscr{A}^s_\ell L(y^\ell,_s+\uptheta^\ell,_s) \notag \\
&=- \mathscr{J}^{-\frac1\alpha}\mathscr{A}^k_\ell\mathscr{A}^s_j L(\uptheta^\ell,_s) -\tfrac1\alpha \mathscr{J}^{-\frac1\alpha}\mathscr{A}^k_j\mathscr{A}^s_\ell L(\uptheta^\ell,_s) \notag \\
&=- \mathscr{J}^{-\frac1\alpha}\mathscr{A}^k_\ell\mathscr{A}^s_j (L\uptheta^\ell),_s  -\tfrac1\alpha \mathscr{J}^{-\frac1\alpha}\mathscr{A}^k_j\mathscr{A}^s_\ell (L\uptheta^\ell),_s \notag \\
&-  \mathscr{J}^{-\frac1\alpha}\mathscr{A}^k_\ell\mathscr{A}^s_j[L,\partial_s]\uptheta^\ell -\tfrac1\alpha \mathscr{J}^{-\frac1\alpha}\mathscr{A}^k_j\mathscr{A}^s_\ell [L,\partial_s]\uptheta^\ell.
\end{align}      
\end{proof}
Finally, before we prove our main energy inequality, it is worth formally stating the equivalence of our high order norm $\mathcal{S}^N$ and high order energy functional $\mathcal{E}^N$.
\begin{lemma}\label{L:NORMENERGY} 
Let $(\uptheta, {\bf V}):\Omega \rightarrow \mathbb R^3\times \mathbb R^3$ be a unique local solution to (\ref{E:THETAGAMMALEQ5OVER3})-(\ref{E:THETAICGAMMALEQ5OVER3}) on $[0,T]$ with $T>0$ fixed and assume $(\uptheta, {\bf V})$ satisfies the a priori assumptions (\ref{E:APRIORI}).  Fix $N\geq 2\lceil \alpha \rceil +12$. Let $k \geq N+1$ in (\ref{E:PHIDEMAND}). Then there are constants $C_1,C_2>0$ so that 
\begin{align}  
C_1\mathcal{S}^N (\tau) \le \sup_{0\le\tau'\le\tau}\mathcal{E}^N(\tau') \le C_2\mathcal{S}^N(\tau). 
\end{align} 
\end{lemma}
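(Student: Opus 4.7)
The plan is to obtain the equivalence term-by-term by matching each summand in $\mathcal{S}^N$ with its counterpart in $\mathcal{E}^N$ and showing that all ``dressing" factors (the coefficients $\Lambda^{-1}$, $\mathscr{J}^{-\frac1\alpha}$, $e^{\bar{S}}$, $d_id_j^{-1}$, etc.) are uniformly bounded above and below by positive constants. Since we are in the regime $\gamma\in(1,\tfrac53]$, the $\gamma$-dependent exponents reduce to $d(\gamma)=3\gamma-3$ and $b(\gamma)=0$, which makes the time weights in $\mathcal{S}^N$ and $\mathcal{E}^N$ coincide exactly ($\mu^{d(\gamma)}$ on velocity terms, no weight on $\uptheta$ terms, and $\mu^{b(\gamma)}=1$ on top-order gradient/divergence terms).

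First I would assemble the pointwise uniform bounds: (i) by Lemma~\ref{L:AASYMPTOTICS} combined with $\Lambda=(\det A)^{\frac23}A^{-1}A^{-\top}\in\mathrm{SL}(3)$, the eigenvalues $d_1,d_2,d_3$ of $\Lambda$ are bounded from above and away from zero uniformly in $\tau$, which gives $c|\mathbf{x}|^2 \le \langle \Lambda^{-1}\mathbf{x},\mathbf{x}\rangle \le C|\mathbf{x}|^2$ and $c\le d_id_j^{-1}\le C$ for all $i,j$; (ii) by Corollary~\ref{C:ENTROPYREGULARITYCOROLLARY}, $0<c\le e^{\bar{S}}\le C$; (iii) by the a priori assumption~\eqref{E:APRIORI} on $\|\mathscr{J}-\textbf{Id}\|_{W^{1,\infty}}$, we have $c\le \mathscr{J}^{-\frac1\alpha}\le C$; (iv) since $P\in\mathrm{SO}(3)$ preserves the Frobenius norm, $\sum_{i,j}((\mathscr{M}_{a,\beta})^j_i)^2 = |\nabla_\eta X_r^a\slashed\partial^\beta\uptheta|^2$, and likewise for $\mathscr{N}_\nu$.

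Next, combining these bounds termwise: the $\mathbf{V}$--quadratic piece in $\mathcal{E}^N$ is comparable to $\mu^{3\gamma-3}\|X_r^a\slashed\partial^\beta\mathbf{V}\|^2_{a+\alpha,\psi e^{\bar{S}}}$, the $\uptheta$--quadratic piece to $\|X_r^a\slashed\partial^\beta\uptheta\|^2_{a+\alpha,\psi e^{\bar{S}}}$, and the $\mathscr{J}^{-\frac1\alpha}$-weighted terms control both $\|\nabla_\eta X_r^a\slashed\partial^\beta\uptheta\|^2_{a+\alpha+1,\psi e^{\bar{S}}}$ and $\|\mathrm{div}_\eta X_r^a\slashed\partial^\beta\uptheta\|^2_{a+\alpha+1,\psi e^{\bar{S}}}$ (the latter because $\tfrac1\alpha>0$, and the former because the $d_id_j^{-1}$ factors are strictly positive and bounded). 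The corresponding interior contributions with $(1-\psi)$ and $\partial^\nu$ derivatives are handled identically, giving $\mathcal{E}^N(\tau)\lesssim \mathcal{S}^N(\tau)$ and $\mathcal{S}^N(\tau)\lesssim \sup_{0\le \tau'\le\tau}\mathcal{E}^N(\tau')$.

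The only subtlety is the appearance of the $\sup$ on the middle quantity: each individual norm in $\mathcal{S}^N$ is already a supremum in time, whereas $\mathcal{E}^N$ as written in~\eqref{E:EDEFGAMMALEQ5OVER3} is evaluated at a single $\tau$. Thus the upper bound for $\mathcal{S}^N$ requires the supremum over $[0,\tau]$, which is accounted for in the statement. The main (minor) obstacle is bookkeeping: one must check that the $w^{a+\alpha}$ and $w^{a+\alpha+1}$ weight structures in the $\psi$-cut and $(1-\psi)$-cut pieces of $\mathcal{E}^N$ line up precisely with those in $\mathcal{S}^N$; once this is verified, the rest is a direct comparison using the uniform bounds above.
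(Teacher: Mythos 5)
Your proposal is correct and follows essentially the same route as the paper's (much terser) proof: a termwise comparison using the uniform two-sided bounds on $\Lambda$, its eigenvalues $d_i$, and $\langle\Lambda^{-1}\cdot,\cdot\rangle$ from Lemma \ref{L:USEFULTAULEMMAGAMMALEQ5OVER3}, the bounds $0<c\le e^{\bar S}\le C$ from Corollary \ref{C:ENTROPYREGULARITYCOROLLARY}, and the a priori control of $\mathscr{J}^{-\frac1\alpha}$, together with the observation that conjugation by $P\in\mathrm{SO}(3)$ preserves the Frobenius norm. Your additional bookkeeping (that $d(\gamma)=3\gamma-3$, $b(\gamma)=0$ in this regime, and that the $\sup$ over $[0,\tau]$ is needed only for the lower bound on $\sup\mathcal{E}^N$) is accurate and fills in details the paper leaves implicit.
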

\begin{proof}
Recall the quantities introduced in Section \ref{S:HOQGAMMALEQ5OVER3}: notably the definitions $\mathcal{S}^N$ (\ref{E:SNNORMGAMMALEQ5OVER3}) and $\mathcal{E}^N$ (\ref{E:EDEFGAMMALEQ5OVER3}), and for $\mathcal{E}^N$ the associated decomposition of $\Lambda$ (\ref{E:LAMBDADECOMP}) and definition of the conjugates $\mathscr{M}_{a,\beta}$, $\mathscr{N}_{\nu}$ (\ref{E:MATHSCRMMATHSCRN}). Then the equivalence of $\mathcal{S}^N$ and $\mathcal{E}^N$ is a straightforward application of Lemma \ref{L:USEFULTAULEMMAGAMMALEQ5OVER3} to give bounds on $\Lambda$ and associated matrix quantities, and importantly for the nonisentropic setting, we also use the positive lower and upper bounds of $e^{\bar{S}}$ given by Corollary \ref{C:ENTROPYREGULARITYCOROLLARY} to conclude the result.
\end{proof}
We are now ready to prove our central energy inequality which will be essential in the proof of our main result Theorem \ref{T:MAINTHEOREMGAMMALEQ5OVER3}.     
\begin{proposition}\label{P:ENERGYESTIMATEGAMMALEQ5OVER3}
Suppose $\gamma \in (1,\frac{5}{3}]$. Let $(\uptheta, {\bf V}):\Omega \rightarrow \mathbb R^3\times \mathbb R^3$ be a unique local solution to (\ref{E:THETAGAMMALEQ5OVER3})-(\ref{E:THETAICGAMMALEQ5OVER3}) on $[0,T]$ with $T>0$ fixed and assume $(\uptheta, {\bf V})$ satisfies the a priori assumptions (\ref{E:APRIORI}). Fix $N\geq 2\lceil \alpha \rceil +12$. Let $k \geq N+1$ in (\ref{E:PHIDEMAND}). Then for all $\tau \in [0,T]$, we have the following inequality for some $0<\kappa\ll 1$
\begin{align}
& \mathcal E^N(\tau) +\int_0^\tau \mathcal D^N(\tau')\,d\tau' \lesssim  \mathcal{S}^N(0)  +\mathcal{B}^N[\uptheta](\tau) + \int_0^\tau  (\mathcal{S}^N(\tau'))^{\frac12} (\mathcal{B}^N[{\bf V}](\tau'))^{\frac12}\,d\tau'  \notag \\
& \qquad \qquad \qquad \qquad \qquad \qquad  + \kappa\mathcal{S}^N(\tau) + \int_0^\tau e^{-\mu_0\tau'} \mathcal{S}^N(\tau') d\tau'. \label{E:ENERGYMAINGAMMALEQ5OVER3}
\end{align}
\end{proposition}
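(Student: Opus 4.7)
The plan is to produce a high-order energy identity by differentiating the perturbation equation~\eqref{E:THETAGAMMALEQ5OVER3}, pairing with an appropriately weighted time derivative of the same, and tracking how the various algebraic pieces assemble into $\tfrac{d}{d\tau}\mathcal{E}^N$, $\mathcal{D}^N$, modified curl contributions (which will be absorbed into $\mathcal{B}^N[\uptheta]$), and harmless remainders. Concretely, for each multi-index with $a+|\beta|\le N$ apply $X_r^a\slashed\partial^\beta$ to~\eqref{E:THETAGAMMALEQ5OVER3}, multiply by $X_r^a\slashed\partial^\beta\mathbf{V}$, integrate against $\psi w^{a+\alpha}$ over $\Omega$, and do the analogous computation with $\partial^\nu$ and weight $(1-\psi)w^\alpha$ for $|\nu|\le N$. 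The $\partial_{\tau\tau}\uptheta$ term together with the $\delta w^\alpha\Lambda_{i\ell}\uptheta_\ell$ term will produce the kinetic and potential pieces in $\mathcal{E}^N(\tau)$ after pulling $\tfrac12\tfrac{d}{d\tau}$ outside; the $\tfrac{\mu_\tau}{\mu}$ friction term, together with the time derivative of the prefactor $\mu^{3\gamma-3}$, combines into $\mathcal{D}^N(\tau)$ after using $\mu^{3\gamma-3}\tfrac{d}{d\tau}=\tfrac{d}{d\tau}\mu^{3\gamma-3}-(3\gamma-3)\mu^{3\gamma-3}\tfrac{\mu_\tau}{\mu}$, and the sign requirement $\gamma\le\tfrac53$ is precisely what makes the coefficient $\tfrac{5-3\gamma}{2}$ nonnegative.

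The crucial step is the pressure term $(w^{1+\alpha}e^{\bar{S}}\Lambda_{ij}([\mathscr{A}]_j^k\mathscr{J}^{-1/\alpha}-\delta_j^k))_{,k}$. After the derivative $X_r^a\slashed\partial^\beta$ is applied, I would use the commutator identities~\eqref{E:COMMXR}--\eqref{E:COMMSLASHEDPARTIAL} of Lemma~\ref{L:COMM} $(a+|\beta|)$-times to rewrite the leading piece as $w^{-(a+\alpha)}(w^{a+\alpha+1}e^{\bar{S}}\Lambda_{ij}X_r^a\slashed\partial^\beta([\mathscr{A}]_j^k\mathscr{J}^{-1/\alpha}-\delta_j^k))_{,k}$ plus commutator remainders $\mathcal{C}$ carrying one fewer derivative. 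Integrating by parts in $y_k$ against $X_r^a\slashed\partial^\beta\mathbf{V}^i$, the boundary term vanishes thanks to the $w^{a+\alpha+1}$ weight, and what remains is a bulk term of the shape $\Lambda_{ij}\,X_r^a\slashed\partial^\beta([\mathscr{A}]_j^k\mathscr{J}^{-1/\alpha}-\delta_j^k)\,(X_r^a\slashed\partial^\beta\mathbf{V}^i)_{,k}$. Using the top-order expansion~\eqref{E:HIGHORDERDERVIATIVEAJGAMMALEQ5OVER3} (coupled with~\eqref{E:AJIDENTITYENERGY}--\eqref{E:LAMBDAIDENTITYENERGY}) I would split this into the bilinear form $\Lambda_{\ell j}(\nabla_\eta X_r^a\slashed\partial^\beta\uptheta)^i_j \Lambda^{-1}_{im}(\nabla_\eta X_r^a\slashed\partial^\beta\mathbf{V})^m_\ell$ plus a trace part proportional to $\tfrac1\alpha\mathrm{div}_\eta$. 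Applying the identities~\eqref{E:KEYONE}--\eqref{E:KEYTWO} of Lemma~\ref{L:KEYLEMMA} turns the bilinear and divergence expressions into $\tfrac12\tfrac{d}{d\tau}$ of the second line of~\eqref{E:EDEFGAMMALEQ5OVER3} modulo the error terms $\mathcal{T}_{a,\beta}$, $\mathcal{T}_\nu$, plus $\mathscr{J}^{-1/\alpha}$-variation terms of lower order.

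Next I would time-integrate the resulting identity from $0$ to $\tau$. The integrated $\mathcal{T}_{a,\beta}$, $\mathcal{T}_\nu$ terms come with factors of $\partial_\tau P$ and $\partial_\tau(d_id_j^{-1})$, which decay like $\mu^{-1}\mu_\tau$ or faster; combined with the a priori bounds and Lemma~\ref{L:EMBEDDING} they contribute the $\int_0^\tau e^{-\mu_0\tau'}\mathcal{S}^N(\tau')\,d\tau'$ remainder. The $2\Gamma^*\mathbf{V}$ coupling is antisymmetric, so paired against $\mathbf{V}$ it vanishes at leading order, and what survives after commutation with $X_r^a\slashed\partial^\beta$ has the structure of a curl acting on $\mathbf{V}$; this is controlled by $\int_0^\tau(\mathcal{S}^N)^{1/2}(\mathcal{B}^N[\mathbf{V}])^{1/2}\,d\tau'$ via Cauchy--Schwarz, exactly as in~\cite{1610.01666}. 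To convert the $\nabla_\eta X_r^a\slashed\partial^\beta\uptheta$ content into the symmetric (gradient/divergence) piece recorded in $\mathcal{S}^N$, I would use the standard Hodge-type decomposition: the antisymmetric part is precisely the modified curl $\text{Curl}_{\Lambda\mathscr{A}}X_r^a\slashed\partial^\beta\uptheta$, whose $L^2$ control is $\mathcal{B}^N[\uptheta](\tau)$, and this is where the $\mathcal{B}^N[\uptheta](\tau)$ term on the right-hand side of~\eqref{E:ENERGYMAINGAMMALEQ5OVER3} enters. Finally, the commutator remainders from Lemma~\ref{L:COMM} involving derivatives of $e^{\bar{S}}$ and $w$ are bounded by Corollary~\ref{C:ENTROPYREGULARITYCOROLLARY} and the a priori assumptions~\eqref{E:APRIORI}, producing a $\kappa\mathcal{S}^N(\tau)$ absorbable piece after an elementary weighted Hardy interpolation.

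The principal difficulty will be the simultaneous management of three conflicting requirements in the pressure analysis: (i) keeping the spatial weight $w^{a+\alpha+1}$ intact so that no negative power of $w$ ever appears near the boundary, which forces the particular pairing $(a,\alpha)$ in the norm $\mathcal{S}^N$; (ii) commuting $X_r$ and $\slashed\partial$ past the nonlinearity $\mathscr{A}_j^k\mathscr{J}^{-1/\alpha}$ without creating top-order $\uptheta$ terms unpaired with a $\mathbf{V}$ test function (resolved by carefully applying~\eqref{E:HIGHORDERDERVIATIVEAJGAMMALEQ5OVER3} and Lemma~\ref{L:KEYLEMMA} in the order above, before integrating by parts); and (iii) handling the $e^{\bar{S}}$ factor, which unlike the isentropic case cannot be absorbed into $w$ and whose derivatives generate genuine new remainders tracked through Lemma~\ref{L:COMM}. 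Once these are in hand, the asserted inequality~\eqref{E:ENERGYMAINGAMMALEQ5OVER3} follows by summing over all $(a,\beta)$ and $\nu$ with $a+|\beta|,|\nu|\le N$ and invoking the equivalence $\mathcal{E}^N\sim\mathcal{S}^N$ from Lemma~\ref{L:NORMENERGY}.
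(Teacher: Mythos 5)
Your overall architecture matches the paper's: differentiate, test against (a $\Lambda^{-1}$-weighted) velocity, use Lemma \ref{L:COMM} to commute past the pressure while preserving the weight $w^{a+\alpha+1}$, expand $X_r^a\slashed\partial^\beta(\mathscr{A}^k_j\mathscr{J}^{-1/\alpha})$ via \eqref{E:HIGHORDERDERVIATIVEAJGAMMALEQ5OVER3}, split into curl, gradient and divergence parts, and apply Lemma \ref{L:KEYLEMMA}. However, there is a genuine gap in how you close the curl part of the pressure term, and a related misattribution of the $\int_0^\tau(\mathcal{S}^N)^{1/2}(\mathcal{B}^N[{\bf V}])^{1/2}\,d\tau'$ term. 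After the spatial integration by parts, the curl piece appears as
\begin{equation*}
\int_0^\tau\int_\Omega \psi\, w^{1+\alpha+a}e^{\bar S}\mathscr{J}^{-\frac1\alpha}\mathscr{A}^k_\ell\Lambda^{-1}_{im}\,[\text{Curl}_{\Lambda\mathscr{A}}X^a_r\slashed\partial^\beta\uptheta]^\ell_i\,\partial_\tau\bigl(X^a_r\slashed\partial^\beta\uptheta^m\bigr),_k\,dy\,d\tau',
\end{equation*}
i.e.\ the curl of $\uptheta$ is paired with a \emph{top-order gradient of} ${\bf V}$, which is not controlled by $\mathcal{S}^N$ (the norm only contains $\|\nabla_\eta X_r^a\slashed\partial^\beta\uptheta\|$ at order $N$ and undifferentiated $\|X_r^a\slashed\partial^\beta{\bf V}\|$). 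You cannot simply say this contribution "is $\mathcal{B}^N[\uptheta](\tau)$". The paper's resolution is an integration by parts in $\tau$: the temporal boundary term is estimated by Young's inequality as $\mathcal{S}^N(0)+\kappa\mathcal{S}^N(\tau)+\mathcal{B}^N[\uptheta](\tau)$ (this is where $\mathcal{B}^N[\uptheta]$ and the $\kappa\mathcal{S}^N$ term actually enter), while the bulk term carries $\text{Curl}_{\Lambda\mathscr{A}}X_r^a\slashed\partial^\beta{\bf V}$ against $\nabla X_r^a\slashed\partial^\beta\uptheta$ and is precisely what produces $\int_0^\tau(\mathcal{S}^N)^{1/2}(\mathcal{B}^N[{\bf V}])^{1/2}\,d\tau'$ by Cauchy--Schwarz. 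Without this step your estimate does not close at top order.

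Relatedly, your claim that the $2\Gamma^\ast{\bf V}$ coupling "has the structure of a curl acting on ${\bf V}$" and is controlled by $\mathcal{B}^N[{\bf V}]$ is not correct. In the paper that term is combined with the $\partial_\tau\Lambda^{-1}$ error from differentiating the kinetic energy: writing $\partial_\tau\Lambda^{-1}-4\Lambda^{-1}\Gamma^\ast=-\partial_\tau\Lambda^{-1}+2(O^\top_\tau O-O^\top O_\tau)$, the second summand is antisymmetric and drops out of the quadratic form in ${\bf V}$, and the surviving $\partial_\tau\Lambda^{-1}$ piece decays like $e^{-\mu_1\tau}$ by \eqref{E:LAMBDABOUNDSGAMMALEQ5OVER3}, so it lands in $\int_0^\tau e^{-\mu_0\tau'}\mathcal{S}^N(\tau')\,d\tau'$ — it never touches $\mathcal{B}^N[{\bf V}]$. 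Finally, a smaller point: your multiplier must be $w^{\alpha+a}\Lambda^{-1}_{im}\partial_\tau X_r^a\slashed\partial^\beta\uptheta^m$ rather than the bare velocity, both to match the definition \eqref{E:EDEFGAMMALEQ5OVER3} of $\mathcal{E}^N$ and to generate the $\Lambda_{\ell j}(\cdot)\Lambda^{-1}_{im}(\cdot)$ pairing required by Lemma \ref{L:KEYLEMMA}; your later formulas implicitly assume this, so make it explicit.
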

\begin{proof}
\textbf{Zeroth order estimate.} Multiplying (\ref{E:THETAGAMMALEQ5OVER3}) by $\Lambda_{im}^{-1}\partial_\tau\uptheta^m$ and integrating over $\Omega$
\begin{align}\label{E:ZEROORDERPOSTIPGAMMALEQ5OVER3}
\int_\Omega w^\alpha ( \mu^{3 \gamma -3} \partial_{\tau \tau} \uptheta_i &+  \mu^{3 \gamma -4} \mu_\tau \partial_\tau \uptheta_i  + 2 \mu^{3 \gamma -3} \Gamma^*_{ij} \partial_{\tau} \uptheta_j + \delta w^\alpha \Lambda_{i \ell} \uptheta_\ell )\Lambda_{im}^{-1}\partial_\tau\uptheta^m \, dy \notag \\
&+\int_\Omega \left(w^{1+\alpha} e^{\bar{S}} \Lambda_{ij} \left( \mathscr{A}_j^k \mathscr{J}^{-\frac{1}{\alpha}}-\delta_j^k\right)\right),_k  \Lambda_{im}^{-1} \partial_\tau \uptheta^m \, dy=0.
\end{align}
Recognizing the perfect time derivative structure of the first integral
\begin{align}\label{E:ZEROORDERPOSTIPINT1GAMMALEQ5OVER3}
&\frac12\frac{d}{d\tau}\left(\mu^{3\gamma-3} \int_\Omega w^\alpha  \langle  \Lambda^{-1}\partial_\tau\uptheta, \partial_\tau\uptheta \rangle \, dy  +\delta \int_\Omega w^\alpha |\uptheta |^2 dy\right)+\frac{5-3\gamma}{2}\mu^{3\gamma-4}\mu_\tau \int_\Omega  w^\alpha \langle \Lambda^{-1}\partial_\tau\uptheta, \partial_\tau\uptheta \rangle \, dy \notag \\
&\qquad \qquad -\frac{\mu^{3\gamma-3}}{2}\int_\Omega  w^\alpha \langle \partial_\tau\Lambda^{-1}\partial_\tau\uptheta, \partial_\tau \uptheta \rangle \, dy + 2\mu^{3\gamma-3} \int_\Omega  w^\alpha \langle \Lambda^{-1}\partial_\tau\uptheta, \Gamma^\ast \partial_\tau \uptheta \rangle  \, dy.
\end{align}
After integrating from $0$ to $\tau$, we see the first three terms in (\ref{E:ZEROORDERPOSTIPINT1GAMMALEQ5OVER3}) will contribute to the left hand side of (\ref{E:ENERGYMAINGAMMALEQ5OVER3}), and also to $\mathcal{S}^N(0)$ on the right hand side of (\ref{E:ENERGYMAINGAMMALEQ5OVER3}) by the fundamental theorem of calculus and the equivalence of our norm and energy functional, given by Lemma \ref{L:NORMENERGY}. For the last two terms in (\ref{E:ZEROORDERPOSTIPINT1GAMMALEQ5OVER3}), before time integration, we rewrite them as
\begin{equation}\label{E:ZEROORDERPOSTIPINT1LAST2GAMMALEQOVER3} 
-\frac{\mu^{3\gamma-3}}{2}\int_\Omega  w^\alpha \langle [\partial_\tau\Lambda^{-1}-4\Lambda^{-1}\Gamma^\ast] \partial_\tau\uptheta, \partial_\tau \uptheta \rangle \, dy.
\end{equation} 
Now with $A=\mu O$, $\mu=(\det A)^{\frac{1}{3}}, O \in \text{SL}(3)$, we have
\begin{equation} 
\partial_\tau \Lambda^{-1}-4 \Lambda^{-1}\Gamma^\ast = \partial_\tau(O^\top O) - 4 O^\top OO^{-1}O_\tau = - \partial_\tau\Lambda^{-1}  + 2(O^\top_\tau O - O^\top O_\tau).
\end{equation}
Since $O^\top_\tau O - O^\top O_\tau$ is anti-symmetric, we can reduce (\ref{E:ZEROORDERPOSTIPINT1LAST2GAMMALEQOVER3}) to
\begin{equation}
\frac{\mu^{3\gamma-3}}{2}\int_\Omega  w^\alpha \langle \partial_\tau\Lambda^{-1}\partial_\tau\uptheta, \partial_\tau \uptheta \rangle \, dy,
\end{equation}
which is then bounded by $e^{-\mu_0 \tau}\mathcal{S}^N(\tau)$ via $\partial_\tau\Lambda^{-1}=-\Lambda^{-1}(\partial_\tau \Lambda)\Lambda^{-1}$ and (\ref{E:LAMBDABOUNDSGAMMALEQ5OVER3}). So the last two terms in (\ref{E:ZEROORDERPOSTIPINT1GAMMALEQ5OVER3}) contribute to $\int_0^\tau e^{-\mu_0\tau'} \mathcal{S}^N(\tau') d\tau'$ in (\ref{E:ENERGYMAINGAMMALEQ5OVER3}) after time integration. Returning to the second integral in (\ref{E:ZEROORDERPOSTIPGAMMALEQ5OVER3}), we integrate by parts and apply the identities (\ref{E:AJIDENTITYENERGY})-(\ref{E:LAMBDAIDENTITYENERGY}) proven in Lemma \ref{L:USEFULIDENTITIESPREENERGYINEQUALITY} to obtain
\begin{align}
&-\int_\Omega w^{1+\alpha} e^{\bar{S}} \Lambda_{ij} \left( \mathscr{A}_j^k \mathscr{J}^{-\frac{1}{\alpha}}-\delta_j^k\right)  \Lambda_{im}^{-1} \partial_\tau \uptheta^m,_k \, dy \notag \\
&= \int_\Omega  w^{1+\alpha} e^{\bar{S}} \mathscr{J}^{-\frac1\alpha} \Lambda_{ij} \mathscr{A}^k_l \uptheta^l,_j  \Lambda_{im}^{-1}\partial_\tau \uptheta^m,_k   dy - \int_\Omega  w^{1+\alpha} e^{\bar{S}} (\mathscr{J}^{-\frac1\alpha} -1) \partial_\tau \uptheta^k,_k  dy \notag \\
&= \int_\Omega  w^{1+\alpha} e^{\bar{S}} \mathscr{J}^{-\frac1\alpha}  \Lambda_{ip} \mathscr{A}^j_p  \uptheta^\ell,_j  \Lambda_{im}^{-1} \mathscr{A}^k_\ell \partial_\tau \uptheta^m,_k   dy  \notag \\
& + \int_\Omega  w^{1+\alpha} e^{\bar{S}} \mathscr{J}^{-\frac1\alpha} \Lambda_{ip} \mathscr{A}^j_l\uptheta^l,_p \mathscr{A}^k_\ell \uptheta^\ell,_j  \Lambda_{im}^{-1}\partial_\tau \uptheta^m,_k  dy - \int_\Omega  w^{1+\alpha} e^{\bar{S}} (\mathscr{J}^{-\frac1\alpha} -1) \partial_\tau \uptheta^k,_k  dy \notag \\
&= \int_\Omega  w^{1+\alpha}e^{\bar{S}}  \mathscr{J}^{-\frac1\alpha} \Lambda_{\ell j}[\nabla_\eta\uptheta]^i_j  \Lambda_{im}^{-1} [\nabla_\eta \partial_\tau \uptheta ]^m_\ell dy +\int_\Omega  w^{1+\alpha}e^{\bar{S}}   \mathscr{J}^{-\frac1\alpha} [\text{Curl}_{\Lambda\mathscr{A}} \uptheta]^\ell_i  \Lambda_{im}^{-1} [\nabla_\eta \partial_\tau \uptheta ]^m_\ell dy \notag \\
&\qquad \qquad + \int_\Omega  w^{1+\alpha} e^{\bar{S}} \mathscr{J}^{-\frac1\alpha}  \mathscr{A}^j_l\uptheta^l,_m \mathscr{A}^k_\ell \uptheta^\ell,_j \partial_\tau \uptheta^m,_k   dy-\int_\Omega  w^{1+\alpha} e^{\bar{S}} (\mathscr{J}^{-\frac1\alpha} -1) \partial_\tau \uptheta^k,_k  dy \notag \\
&=:(i)+(ii)+(iii)+(iv).
\end{align}
By Lemma \ref{L:KEYLEMMA} 
\begin{align}
&(i)=\frac{1}{2}\frac{d}{d\tau} \int_\Omega  w^{1+\alpha}e^{\bar{S}} \mathscr{J}^{-\frac1\alpha}\sum_{i,j=1}^3 d_i d_j^{-1}(({\mathscr M}_{0,0})^j_i)^2 dy  + \int_\Omega  w^{1+\alpha} e^{\bar{S}} \mathscr{J}^{-\frac1\alpha} \mathcal T_{0,0} dy \notag \\
&+ \frac{1}{2\alpha}\int_\Omega  w^{1+\alpha}e^{\bar{S}} \mathscr{J}^{-\frac1\alpha-1}\mathscr{J}_\tau \sum_{i,j=1}^3 d_i d_j^{-1}(({\mathscr M}_{0,0})^j_i)^2 dy \notag \\
&+ \int_\Omega w^{1+\alpha}e^{\bar{S}}\mathscr{J}^{-\frac{1}{\alpha}}\Lambda_{\ell j} [\nabla_\eta \uptheta]_i^j \Lambda_{im}^{-1} [\nabla_\eta \uptheta]_p^m [\nabla_\eta \partial_{\tau} \uptheta]_\ell^p dy, \label{E:ZEROORDERIEXPGAMMALEQ5OVER3}
\end{align}
with the last term above arising from commuting $\partial_\tau$ and $\nabla_\eta$ which is needed for Lemma \ref{L:KEYLEMMA}. Now after time integration, the first term in (\ref{E:ZEROORDERIEXPGAMMALEQ5OVER3}) contributes to $\mathcal{E}^N(\tau)$ in (\ref{E:ENERGYMAINGAMMALEQ5OVER3}), and also to $\mathcal{S}^N(0)$ by the fundamental theorem of calculus and Lemma \ref{L:NORMENERGY}.

Before time integration, the second term in (\ref{E:ZEROORDERIEXPGAMMALEQ5OVER3}) is bounded by $e^{-\mu_0 \tau} \mathcal{S}^N(\tau)$ due to the desirable form of $\mathcal T_{0,0}$ (\ref{E:TABETA}). In particular $\mathcal T_{0,0}$ contains $\tau$ derivatives of $P$ and $d_i$ which are exponentially bounded using (\ref{E:EIGENVALUEPLUSPBOUNDGAMMALEQ5OVER3}). Also before time integration, the third term in (\ref{E:ZEROORDERIEXPGAMMALEQ5OVER3}) is bounded by $e^{-\mu_0 \tau} \mathcal{S}^N(\tau)$ due to Lemma \ref{L:EMBEDDING} which we use to bound the $D \uptheta_\tau$ term arising from $\mathscr{J}_\tau$. Hence the second and third terms contribute to $\int_0^\tau e^{-\mu_0\tau'} \mathcal{S}^N(\tau') d\tau'$ in (\ref{E:ENERGYMAINGAMMALEQ5OVER3}) after time integration.

For the last term in (\ref{E:ZEROORDERIEXPGAMMALEQ5OVER3}), note schematically we can write, 
$$\mathscr{J}^{-\frac{1}{\alpha}}\Lambda_{\ell j} [\nabla_\eta \uptheta]_i^j \Lambda_{im}^{-1} [\nabla_\eta \uptheta]_p^m [\nabla_\eta \partial_{\tau} \uptheta]_\ell^p \text{ as } \mathscr{J}^{-\frac{1}{\alpha}}\Lambda \mathscr{A} D \uptheta \Lambda^{-1} \mathscr{A} D \uptheta \mathscr{A} D \uptheta_\tau.$$
Then using the a priori assumptions (\ref{E:APRIORI}) to bound $\mathscr{J}^{-\frac{1}{\alpha}}$ and $\mathscr{A}$, the boundedness of $\Lambda$ and $\Lambda^{-1}$ (\ref{E:LAMBDABOUNDSGAMMALEQ5OVER3}), and the exponential bound on $D \uptheta_\tau$ from Lemma \ref{L:EMBEDDING}, we obtain
\begin{align}
\int_0^\tau \int_\Omega w^{1+\alpha}e^{\bar{S}} \mathscr{J}^{-\frac{1}{\alpha}}\Lambda \mathscr{A} D \uptheta \Lambda^{-1} \mathscr{A} D \uptheta \mathscr{A} D \uptheta_\tau dy d \tau' & \lesssim \int_0^\tau e^{-\mu_0\tau'} (\mathcal{S}^N(\tau'))^{\tfrac32} d \tau' \notag \\
& \lesssim \int_0^\tau e^{-\mu_0\tau'} \mathcal{S}^N(\tau') d \tau',
\end{align}
where we have also used the a priori assumption $\mathcal{S}^N(\tau) < \frac{1}{3}$.

Now $(iii)$ is similar to the last term in (\ref{E:ZEROORDERIEXPGAMMALEQ5OVER3}) and so an analogous argument used to bound that term above will give us that $(iii)$ also contributes to $\int_0^\tau e^{-\mu_0\tau'} \mathcal{S}^N(\tau') d\tau'$. We consider $(ii)$. Schematically
\begin{equation}\label{E:IISCHEMATIC}
[\text{Curl}_{\Lambda\mathscr{A}} \uptheta] \mathscr{A} D \uptheta_\tau = \frac{1}{2} \frac{d}{d \tau}([\text{Curl}_{\Lambda\mathscr{A}} \uptheta]^2) - (\partial_\tau \Lambda) \mathscr{A} D \uptheta \Lambda \mathscr{A} D \uptheta - \Lambda (\partial_\tau \mathscr{A}) D \uptheta \Lambda  \mathscr{A} D \uptheta.
\end{equation}
Then using the boundedness of $\mathscr{J}^{-\frac1\alpha}$ and $\Lambda^{-1}$ we have that after time integration, the first term on the right hand side of (\ref{E:IISCHEMATIC}) will lead to $(ii)$ contributing to $\mathcal{B}^N[\uptheta]$ in (\ref{E:ENERGYMAINGAMMALEQ5OVER3}). Now using the exponential boundedness of $\partial_\tau \Lambda$ and applying Lemma \ref{L:EMBEDDING} to $\partial_\tau \mathscr{A}$, an analogous argument to that used to bound the last term in (\ref{E:ZEROORDERIEXPGAMMALEQ5OVER3}) will give us that the last two terms on the right hand side of (\ref{E:IISCHEMATIC}) will lead to contributions to $\int_0^\tau e^{-\mu_0\tau'} \mathcal{S}^N(\tau') d\tau'$ after time integration.

For $(iv)$, write
 \begin{equation}
 1-\mathscr{J}^{-\frac1\alpha}=\frac{1}{\alpha}\text{Tr}[D\uptheta ] + O(|D\uptheta|^2) \text{ using } \mathscr{J}=\det [D \eta ] = \det [\textbf{Id} + D\uptheta ] = 1 + \text{Tr}[D \uptheta ] + O(|D\uptheta|^2),
 \end{equation}
 to bound $(iv)$ by $e^{-\mu_0 \tau} \mathcal{S}^N(\tau)$ where we again apply Lemma \ref{L:EMBEDDING}.
 
To complete the zeroth order estimate, we obtain the full expression for $\mathcal{E}^N$ in (\ref{E:ENERGYMAINGAMMALEQ5OVER3}) by adding the following formula 
 \begin{equation}\label{E:ZEROORDERFULLENGAMMALEQ5OVER3}
 \frac{1}{2}\frac{d}{d\tau} \frac1\alpha \int_\Omega  w^{1+\alpha} e^{\bar{S}} \mathscr{J}^{-\frac1\alpha} |\text{div}_\eta \uptheta |^2 dy =  \frac{1}{2\alpha} \int_\Omega  w^{1+\alpha} e^{\bar{S}} \left( \partial_\tau (\mathscr{J}^{-\frac1\alpha} )|\text{div}_\eta \uptheta |^2
 + \mathscr{J}^{-\frac1\alpha} \partial_\tau(|\text{div}_\eta \uptheta |^2) \right) dy,
\end{equation}
with the right-hand side in turn bounded by $e^{-\mu_0 \tau} \mathcal{S}^N(\tau)$ by Lemma \ref{L:EMBEDDING}. Hence after time integration, the left-hand side of (\ref{E:ZEROORDERFULLENGAMMALEQ5OVER3}) completes $\mathcal{E}^N$ in (\ref{E:ENERGYMAINGAMMALEQ5OVER3}), contributes to $\mathcal{S}^N(0)$ by the fundamental theorem of calculus and the right-hand side of (\ref{E:ZEROORDERFULLENGAMMALEQ5OVER3}) contributes to $\int_0^\tau e^{-\mu_0\tau'} \mathcal{S}^N(\tau') d\tau'$.
\\ \\
\textbf{High order estimates.} Fix $(a,\beta)$ with $a+|\beta| \geq 1$. First, rearrange (\ref{E:THETAGAMMALEQ5OVER3}) as     
\begin{equation}\label{E:HIGHORDERREARRANGEGAMMALEQ5OVER3}
\mu^{3 \gamma -4} (\mu \partial_{\tau \tau} \uptheta_i + \mu_\tau \partial_\tau \uptheta_i  + 2 \mu \Gamma^*_{ij} \partial_{\tau} \uptheta_j) + \delta \Lambda_{i \ell} \uptheta_\ell + \frac{1}{w^\alpha}\left(w^{1+\alpha} e^{\bar{S}} \Lambda_{ij} \left(\mathscr{A}_j^k \mathscr{J}^{-\frac{1}{\alpha}}-\delta_j^k\right)\right) ,_k = 0.
\end{equation}
Apply $X_r^a \slashed\partial^{\beta}$ to (\ref{E:HIGHORDERREARRANGEGAMMALEQ5OVER3}) and multiply by $\psi$
\begin{align}
\psi \mu^{3 \gamma -4} (\mu \partial_{\tau \tau} X_r^a \slashed\partial^{\beta} \uptheta_i &+ \mu_\tau \partial_\tau X_r^a \slashed\partial^{\beta} \uptheta_i  + 2 \mu \Gamma^*_{ij} \partial_{\tau} X_r^a \slashed\partial^{\beta} \uptheta_j) + \delta\psi  \Lambda_{i \ell} X_r^a \slashed\partial^{\beta} \uptheta_\ell \notag \\
&+ \psi \frac{1}{w^{\alpha+a}}\left(w^{1+\alpha+a} e^{\bar{S}} \Lambda_{ij} X_r^a \slashed\partial^{\beta} \left(\mathscr{A}_j^k \mathscr{J}^{-\frac{1}{\alpha}}-\delta_j^k\right)\right) ,_k = -\psi \mathcal{R}_i^{a,\beta},  \label{E:HIGHORDERDERIVATIVEPSIGAMMALEQO5OVER3}
\end{align}
where we have used Lemma \ref{L:COMM} to compute $X^a_r \slashed\partial^\beta \left[\frac{1}{w^\alpha}\left(w^{1+\alpha} e^{\bar{S}} \Lambda_{ij} \left(\mathscr{A}_j^k \mathscr{J}^{-\frac{1}{\alpha}}-\delta_j^k\right)\right) ,_k\right]$ with $\mathcal{R}_i^{a,\beta}$ defined as the lower order terms arising from multiple applications of (\ref{E:COMMXRC}) and (\ref{E:COMMSLASHEDPARTIAL}) in Lemma \ref{L:COMM}, and applying our derivative operators to the resultant expressions from (\ref{E:COMMXRC}) and (\ref{E:COMMSLASHEDPARTIAL}).

More specifically, using the favorable expressions $\mathcal{C}^{q+1}_i$ and $\mathcal{C}^{q}_{i j \ell}$, (\ref{E:COMMXRC}) and (\ref{E:COMMSLASHEDPARTIALC}) respectively, derived in Lemma \ref{L:COMM}, the regularity of our entropy term given by Corollary \ref{C:ENTROPYREGULARITYCOROLLARY}, the differentiation formulae for $\mathscr{A}$ and $\mathscr{J}$ (\ref{E:AJDIFFERENTIATIONFORMULAE}), and also the decomposition of the spatial derivative into $X_r$ and $\slashed\partial$ (\ref{E:DERIVATIVEDECOMP}), we have the following favorable schematic form for $\mathcal{R}_i^{a,\beta}$ 
\begin{align}
\mathcal{R}_i^{a,\beta} &=  \sum_{\ell=0}^{a+|\beta|} \left( \left[ \sum_{a'+|\beta'|=\ell \atop a'\leq a, |\beta'| \leq |\beta|+1} c^1_{a',\beta',\ell} X_r^{a'}\slashed\partial^{\beta'} \uptheta + c^2_{a',\beta',\ell} w \, X_r^{a'}\slashed\partial^{\beta'} D \uptheta  \right] \right.  \notag \\  
&\times  \left. \prod_{a''+|\beta''| \leq N-\max(\ell,N-\ell)} c_{a'',\beta'',\ell} X_r^{a'' }\slashed\partial^{\beta''} \uptheta  +w \left[\prod_{a'''+|\beta'''| \leq N-\max(\ell,N-\ell)} c_{a''',\beta''',\ell} X_r^{a'''}\slashed\partial^{\beta'''} D \uptheta \right] \right), \label{E:FAVORABLEREMAINDERFORMGAMMALEQ5OVER3}  
\end{align}
for all $i\in \{1,2,3\}$, where $c^1_{a',\beta',\ell}$, $c^2_{a',\beta',\ell}$, $c_{a'',\beta'',\ell}$ and $c_{a''',\beta''',\ell}$ are bounded coefficients on $B_1(\mathbf{0})$ away from the origin. Note here for simplicity we are including the terms $\mathscr{A}$ and $\mathscr{J}$ without derivatives in our bounded coefficients since they are bounded by our a priori assumptions (\ref{E:APRIORI}). Since $\Lambda$ is also bounded by (\ref{E:LAMBDABOUNDSGAMMALEQ5OVER3}), we in addition include $\Lambda$ in our smooth coefficients.

The important feature of the above expression is that any time we have a potentially top order $D \uptheta$ term, we have a weight $w$ multiplying such terms: this is seen in the second summation in the expression. This is a direct result of the commutator forms (\ref{E:COMMXRC})-(\ref{E:COMMSLASHEDPARTIALC}) we carefully obtained in Lemma \ref{L:COMM} to have this property. Then with the expression (\ref{E:FAVORABLEREMAINDERFORMGAMMALEQ5OVER3}), Lemma \ref{L:EMBEDDING} and the desirable form of $[X_r^a\slashed\partial^\beta,\partial_s]$ (\ref{E:DIFFCOMMUTATOR}) derived in the curl estimates,
\begin{equation}\label{E:HIGHORDERRBOUNDGAMMALEQ5OVER3}
\|\mathcal{R}^{a,\beta}_i\|^2_{a+\alpha,\psi e^{\bar{S}}} \lesssim (1+P(\mathcal{S}^N))\mathcal{S}^N,
\end{equation}       
with $P$ a polynomial of degree at least 1.

Multiply (\ref{E:HIGHORDERDERIVATIVEPSIGAMMALEQO5OVER3}) by $w^{\alpha+a}\Lambda^{-1}_{im}\partial_\tau X^a_r \slashed\partial^\beta \uptheta^m$ and integrate over $\Omega$
\begin{align}
&\frac12\frac{d}{d\tau}\left(\mu^{3\gamma-3} \int_\Omega \psi \, w^{\alpha+a} \langle\Lambda^{-1}X^a_r \slashed\partial^\beta{\bf V},\,X^a_r \slashed\partial^\beta{\bf V}\rangle \,dy+\delta \int_\Omega\psi  w^{\alpha+a}\, |X^a_r \slashed\partial^\beta\uptheta |^2 
\,dy \right)  \notag \\
&+\frac{5-3\gamma}{2}\mu^{3\gamma-3} \frac{\mu_\tau}{\mu}\int_\Omega \psi  w^{\alpha+a}\,\langle \Lambda^{-1}X^a_r \slashed\partial^\beta{\bf V},\,X^a_r \slashed\partial^\beta{\bf V}\rangle dy \notag \\
& - \frac12\mu^{3\gamma-3}\int_\Omega
\psi  w^{\alpha+a}\, \left\langle \left[\partial_\tau\Lambda^{-1}-4 \Lambda^{-1}\Gamma^*\right]X^a_r \slashed\partial^\beta{\bf V},X^a_r \slashed\partial^\beta{\bf V}\right\rangle dy\notag\\
& + \int_\Omega  \psi \left( w^{1+\alpha+a} e^{\bar{S}} \Lambda_{ij} X^a_r \slashed\partial^\beta \left(\mathscr{A}^k_j\mathscr{J}^{-\frac1\alpha}-\delta^k_j\right)\right),_k \Lambda^{-1}_{im} \partial_\tau X^a_r \slashed\partial^\beta \uptheta^m \,dy  \notag \\
&=-\int_\Omega \psi  w^{\alpha+a}\mathcal R^i_{a,\beta} \Lambda^{-1}_{im} X^a_r \slashed\partial^\beta \uptheta^m_\tau \,dy. \label{E:HIGHORDERPOSTIPGAMMALEQ5OVER3}
\end{align} 
By the same reasoning as the zeroth order case, the first four integrals on the left hand side of (\ref{E:HIGHORDERPOSTIPGAMMALEQ5OVER3}) contribute to the energy inequality (\ref{E:ENERGYMAINGAMMALEQ5OVER3}) in the same way. Also, after time integration, the right hand side of (\ref{E:HIGHORDERPOSTIPGAMMALEQ5OVER3}) in addition contributes to $\int_0^\tau e^{-\mu_0\tau'} \mathcal{S}^N(\tau') d\tau'$ using (\ref{E:HIGHORDERRBOUNDGAMMALEQ5OVER3}) and (\ref{E:APRIORI}).

Now compute the last integral on the left hand side of (\ref{E:HIGHORDERPOSTIPGAMMALEQ5OVER3}) using the derivative formula for $\mathscr{A}_j^k \mathscr{J}^{-\frac{1}{\alpha}}$ (\ref{E:HIGHORDERDERVIATIVEAJGAMMALEQ5OVER3}) and integration by parts
\begin{align}
& \int_\Omega  \psi \left( w^{1+\alpha+a} e^{\bar{S}} \Lambda_{ij} X^a_r \slashed\partial^\beta \left(\mathscr{A}^k_j\mathscr{J}^{-\frac1\alpha}-\delta^k_j\right)\right),_k \Lambda^{-1}_{im} \partial_\tau X^a_r \slashed\partial^\beta \uptheta^m \,dy \notag \\
& = \int_\Omega  \psi \left( w^{1+\alpha+a} e^{\bar{S}} \Lambda_{ij} \left(- \mathscr{J}^{-\frac1\alpha}\mathscr{A}^k_\ell\mathscr{A}^s_j X_r^a \slashed\partial^\beta \uptheta^\ell,_s -\tfrac1\alpha \mathscr{J}^{-\frac1\alpha}\mathscr{A}^k_j\mathscr{A}^s_\ell X_r^a \slashed\partial^\beta \uptheta^\ell,_s  \right. \right. \notag \\ 
&\left. \left. \qquad \qquad \qquad + \mathcal C^{a,\beta,k}_j(\uptheta)\right) \right),_k \Lambda^{-1}_{im} \partial_\tau X^a_r \slashed\partial^\beta \uptheta^m \,dy \notag \\
&= -\int_\Omega  \psi \left( w^{1+\alpha+a} e^{\bar{S}} \left( \mathscr{J}^{-\frac1\alpha}\mathscr{A}^k_\ell \left(\Lambda_{ij}\mathscr{A}^s_jX^a_r\slashed\partial^\beta\uptheta^\ell,_s - \Lambda_{\ell j}\mathscr{A}^s_jX^a_r\slashed\partial^\beta\uptheta^i,_s\right)  \right. \right. \notag \\
& \left. \left. + \mathscr{J}^{-\frac1\alpha}\mathscr{A}^k_\ell\Lambda_{\ell j}\mathscr{A}^s_jX^a_r\slashed\partial^\beta\uptheta^i,_s + \tfrac1\alpha\mathscr{J}^{-\frac1\alpha}\Lambda_{ij}\mathscr{A}^k_j\mathscr{A}^s_\ell  X^a_r \slashed\partial^\beta \uptheta^\ell,_s \right) \right),_k \Lambda^{-1}_{im} \partial_\tau X^a_r \slashed\partial^\beta \uptheta^m \,dy \notag\\
& \qquad \qquad \qquad+\int_\Omega \psi \Lambda_{ij} \left(  w^{1+\alpha+a} e^{\bar{S}} \mathcal C^{a,\beta,k}_j \right),_k \Lambda^{-1}_{im}\partial_\tau X^a_r \slashed\partial^\beta \uptheta^m \,dy \notag \\
&=\int_\Omega  \psi w^{1+\alpha+a} e^{\bar{S}} \mathscr{J}^{-\frac1\alpha} \left(\mathscr{A}^k_\ell  [\text{Curl}_{\Lambda\mathscr{A}}X^a_r \slashed\partial^\beta\uptheta]^\ell_i + \mathscr{A}^k_\ell \Lambda_{\ell j}[\nabla_\eta X^a_r \slashed\partial^\beta\uptheta]^i_j + \tfrac1\alpha \Lambda_{ij}\mathscr{A}^k_j \text{div}_\eta (X^a_r \slashed\partial^\beta \uptheta)\right) \notag \\
&\quad \quad \Lambda^{-1}_{im}\partial_\tau \left(X^a_r \slashed\partial^\beta \uptheta^m\right),_k \,dy \notag \\
&+\int_\Omega \psi,_k w^{1+\alpha+a} e^{\bar{S}} \mathscr{J}^{-\frac1\alpha} \left(\mathscr{A}^k_\ell  [\text{Curl}_{\Lambda\mathscr{A}}X^a_r \slashed\partial^\beta\uptheta]^\ell_i + \mathscr{A}^k_\ell \Lambda_{\ell j}[\nabla_\eta X^a_r \slashed\partial^\beta\uptheta]^i_j \notag \right.\\
&\left.+ \tfrac1\alpha \Lambda_{ij}\mathscr{A}^k_j \text{div}_\eta (X^a_r \slashed\partial^\beta \uptheta)\right) \Lambda^{-1}_{im}\partial_\tau X^a_r \slashed\partial^\beta \uptheta^m \, dy +\int_\Omega \psi \Lambda_{ij} \left(  w^{1+\alpha+a} e^{\bar{S}} \mathcal C^{a,\beta,k}_j \right),_k \Lambda^{-1}_{im}\partial_\tau X^a_r \slashed\partial^\beta \uptheta^m \,dy \notag \\
&=:I_1+I_2+I_3,
\end{align}
where we define $\mathcal C^{a,\beta,k}_j(\uptheta)$ to be the lower order terms arising from successive iterations of (\ref{E:HIGHORDERDERVIATIVEAJGAMMALEQ5OVER3}) in Lemma \ref{L:USEFULIDENTITIESPREENERGYINEQUALITY}, and differentiating the result of (\ref{E:HIGHORDERDERVIATIVEAJGAMMALEQ5OVER3}). In particular, using the desirable form of $[X_r^a\slashed\partial^\beta,\partial_s]$ (\ref{E:DIFFCOMMUTATOR}) obtained in the curl estimates, the Leibniz rule, and also the decomposition of the spatial derivative into $X_r$ and $\slashed\partial$ (\ref{E:DERIVATIVEDECOMP}), we have the following desirable schematic form for $\mathcal C^{a,\beta,k}_j(\uptheta)$    
\begin{equation}\label{E:LOWERORDERCGAMMALEQ5OVER3}
\mathcal{C}^{a,\beta,k}_j(\uptheta) =  \sum_{\ell=\tfrac N2}^{a+|\beta|} \left( \left[ \sum_{a'+|\beta'|=\ell \atop a'\leq a+1, |\beta'| \leq |\beta|+1} \bar{c}_{a',\beta',\ell}X_r^{a'}\slashed\partial^{\beta'} \uptheta \right]  \prod_{a''+|\beta''| \leq N-\ell } \bar{c}_{a'',\beta'',\ell} X_r^{a''}\slashed\partial^{\beta''} \uptheta \right),
\end{equation}  
for every $j,k \in \{1,2,3\}$, where $\bar{c}_{a',\beta',\ell}$ and $\bar{c}_{a'',\beta'',\ell}$ are bounded coefficients on $B_1(\mathbf{0})$ away from the origin. Note here for simplicity we are including the terms $\mathscr{A}$ and $\mathscr{J}$ without derivatives in our bounded coefficients since they are bounded by our a priori assumptions (\ref{E:APRIORI}). With this formula (\ref{E:LOWERORDERCGAMMALEQ5OVER3}), we can see using Lemma \ref{L:EMBEDDING} to handle the lower order terms in the product $\prod_{a''+|\beta''| \leq \tfrac N2}$ that
\begin{equation}\label{E:HIGHORDERDERVIATIVECOMMUTATORBOUND1GAMMALEQ5OVER3}
\|\mathcal C^{a,\beta,k}_j(\uptheta) \|_{\alpha+a,\psi e^{\bar{S}}}^2 \lesssim \mathcal S^N(\tau).
\end{equation}
Furthermore, again using the desirable form of $[X_r^a\slashed\partial^\beta,\partial_s]$ (\ref{E:DIFFCOMMUTATOR}) and the decomposition of the spatial derivative into $X_r$ and $\slashed\partial$ (\ref{E:DERIVATIVEDECOMP}), we can obtain
\begin{equation}\label{E:HIGHORDERDERVIATIVECOMMUTATORBOUND2GAMMALEQ5OVER3}
\|\mathcal D \mathcal{C}^{a,\beta,k}_j(\uptheta) \|_{1+\alpha+a,\psi e^{\bar{S}}}^2 \lesssim \mathcal S^N(\tau).
\end{equation}
\underline{Estimation of $I_3$:} 
\begin{align}
&\Big|\int_0^\tau I_3 \, d\tau' \Big| = \Big|\int_0^\tau \int_\Omega \psi \left(  w^{1+\alpha+a} e^{\bar{S}} \mathcal C^{a,\beta,k}_j \right),_k \Lambda^{-1}_{im}\partial_\tau X^a_r \slashed\partial^\beta \uptheta^m \,dyd\tau'\Big| \notag \\
& \lesssim \int_0^\tau \|X^a_r \slashed\partial^\beta{\bf V}^m\|_{\alpha+a,\psi e^{\bar{S}}} \|\mathcal C^{a,\beta,k}_j(\uptheta) \|_{\alpha+a,\psi e^{\bar{S}}} + \|X^a_r \slashed\partial^\beta{\bf V}^m\|_{1+\alpha+a,\psi e^{\bar{S}}} \|D\mathcal C^{a,\beta,k}_j \|_{1+\alpha+a,\psi e^{\bar{S}}}\,d\tau' \notag \\
& \lesssim \int_0^\tau e^{-\mu_0\tau'} \mathcal{S}^N(\tau') d\tau',
\end{align}
where we have used (\ref{E:LAMBDABOUNDSGAMMALEQ5OVER3}), (\ref{E:WDEMAND}), Lemma \ref{L:ENTROPYREGULARITY} and (\ref{E:HIGHORDERDERVIATIVECOMMUTATORBOUND1GAMMALEQ5OVER3})-(\ref{E:HIGHORDERDERVIATIVECOMMUTATORBOUND2GAMMALEQ5OVER3}). \\ \\
\underline{Estimation of $I_2$:} 
\begin{align}
\Big|\int_0^\tau I_2 \, d\tau' \Big| &= \Big| \int_0^\tau \int_{B_{\frac34}({\bf 0})\setminus B_{\frac14}({\bf 0})} \psi,_k w^{1+\alpha+a} e^{\bar{S}} \mathscr{J}^{-\frac1\alpha} \left(\mathscr{A}^k_\ell  [\text{Curl}_{\Lambda\mathscr{A}}X^a_r \slashed\partial^\beta\uptheta]^\ell_i \right. \notag \\ 
&   \left. + \mathscr{A}^k_\ell \Lambda_{\ell j}[\nabla_\eta X^a_r \slashed\partial^\beta\uptheta]^i_j+ \tfrac1\alpha \Lambda_{ij}\mathscr{A}^k_j  \text{div}_\eta (X^a_r \slashed\partial^\beta \uptheta) \right) \Lambda^{-1}_{im}\partial_\tau X^a_r \slashed\partial^\beta \uptheta^m \,dyd\tau' \Big|  \notag \\
&\lesssim \int_0^\tau\|D\psi\|_{L^\infty(\Omega)} \Big(   \|X_r^a\slashed\partial^\beta{\bf V}^m\|_{\alpha+a,\psi e^{\bar{S}}} \|\nabla_\eta X_r^a \slashed\partial^\beta \uptheta\|_{1+\alpha+a,\psi e^{\bar{S}} } \notag \\ 
&\qquad \qquad \qquad  + \sum_{|\nu|\le a+|\beta|}\|\partial^\nu{\bf V}\|_{\alpha,(1-\psi)e^{\bar{S}}} \sum_{|\nu|\le a+|\beta|}\|\nabla_\eta\partial^\nu\uptheta\|_{1+\alpha,(1-\psi)e^{\bar{S}}}\Big)\,d\tau' \notag \\
& \lesssim \int_0^\tau e^{-\mu_0\tau'} \mathcal{S}^N(\tau') d\tau',
\end{align}
where we have used that $\psi,_k$ has support in $B_{\frac34}({\bf 0})\setminus B_{\frac14}({\bf 0})$ and the fact that we can write $X_r^a \slashed\partial^\beta$ in terms of $\partial^\nu$ in $B_{\frac34}({\bf 0})\setminus B_{\frac14}({\bf 0})$. \\ \\
\underline{Estimation of $I_1$:} 
\begin{align}
&\int_0^\tau I_1 \, d\tau' =  \int_0^\tau \int_\Omega  \psi w^{1+\alpha+a} e^{\bar{S}} \mathscr{J}^{-\frac1\alpha} \mathscr{A}^k_\ell [\text{Curl}_{\Lambda\mathscr{A}}X^a_r \slashed\partial^\beta\uptheta]^\ell_i \Lambda^{-1}_{im}\partial_\tau \left(X^a_r \slashed\partial^\beta \uptheta^m\right),_k \,dy d\tau'  \notag \\
&+ \int_0^\tau  \int_\Omega  \psi w^{1+\alpha+a} e^{\bar{S}} \mathscr{J}^{-\frac1\alpha} \left( \mathscr{A}^k_\ell \Lambda_{\ell j}[\nabla_\eta X^a_r \slashed\partial^\beta\uptheta]^i_j + \tfrac1\alpha \Lambda_{ij}\mathscr{A}^k_j \text{div}_\eta (X^a_r \slashed\partial^\beta \uptheta) \right) \Lambda_{im}^{-1}\partial_\tau \left(X^a_r \slashed\partial^\beta \uptheta^m\right),_k \,dy d\tau'  \notag \\
&= \int_0^\tau \int_\Omega  \psi w^{1+\alpha+a} e^{\bar{S}} \mathscr{J}^{-\frac1\alpha} \mathscr{A}^k_\ell \Lambda^{-1}_{im} [\text{Curl}_{\Lambda\mathscr{A}}X^a_r \slashed\partial^\beta\uptheta]^\ell_i \partial_\tau \left(X^a_r \slashed\partial^\beta \uptheta^m\right),_k \,dy d\tau'  \notag \\
&+ \int_0^\tau  \int_\Omega  \psi w^{1+\alpha+a} e^{\bar{S}} \mathscr{J}^{-\frac1\alpha} \left( \Lambda_{\ell j}[\nabla_\eta X^a_r \slashed\partial^\beta \uptheta]^i_j \Lambda^{-1}_{im}[\nabla_\eta\partial_\tau X^a_r \slashed\partial^\beta \uptheta ]_\ell^m+\tfrac1\alpha \text{div}_\eta (X^a_r \slashed\partial^\beta\uptheta) \text{div}_\eta (\partial_\tau X^a_r \slashed\partial^\beta \uptheta) \right) \,dy d\tau' \notag.
\end{align}
Integrating by parts in $\tau$ the curl term
\begin{align}
&\int_0^\tau \int_\Omega  \psi w^{1+\alpha+a} e^{\bar{S}} \mathscr{J}^{-\frac1\alpha} \mathscr{A}^k_\ell \Lambda^{-1}_{im} [\text{Curl}_{\Lambda\mathscr{A}}X^a_r \slashed\partial^\beta\uptheta]^\ell_i \partial_\tau \left(X^a_r \slashed\partial^\beta \uptheta^m\right),_k \,dy d\tau' \notag \\
& =  \int_\Omega \psi w^{1+\alpha+a} e^{\bar{S}} \mathscr{J}^{\frac1\alpha} \mathscr{A}^k_\ell\Lambda^{-1}_{im} [\text{Curl}_{\Lambda\mathscr{A}}X^a_r \slashed\partial^\beta\uptheta]^\ell_i \left(X^a_r \slashed\partial^\beta{\bf\uptheta}^m\right),_k \,dy\Big|^\tau_0 \notag \\
&  -\int_0^\tau \int_\Omega \psi w^{1+\alpha+a} e^{\bar{S}}   \mathscr{J}^{\frac1\alpha} \mathscr{A}^k_\ell\Lambda^{-1}_{im}
[\text{Curl}_{\Lambda\mathscr{A}}X^a_r \slashed\partial^\beta{\bf V}]^\ell_i \left(X^a_r \slashed\partial^\beta{\bf\uptheta}^m\right),_k \,dy\,d\tau' \notag\\
&  -\int_0^\tau \int_\Omega \psi w^{1+\alpha+a} e^{\bar{S}}   \partial_\tau\left(\mathscr{J}^{\frac1\alpha} \mathscr{A}^k_\ell\Lambda^{-1}_{im}\right)
[\text{Curl}_{\Lambda\mathscr{A}}X^a_r \slashed\partial^\beta{\bf \uptheta}]^\ell_i \left(X^a_r \slashed\partial^\beta{\bf\uptheta}^m\right),_k \,dy\,d\tau'  \notag \\
&  -\int_0^\tau \int_\Omega \psi w^{1+\alpha+a} e^{\bar{S}}   \mathscr{J}^{\frac1\alpha} \mathscr{A}^k_\ell\Lambda^{-1}_{im}
[\text{Curl}_{\Lambda_\tau\mathscr{A}}X^a_r \slashed\partial^\beta{\bf \uptheta}]^\ell_i \left(X^a_r \slashed\partial^\beta{\bf\uptheta}^m\right),_k \,dy\,d\tau' \notag \\
&- \int_0^\tau \int_\Omega \psi w^{1+\alpha+a} e^{\bar{S}}   \mathscr{J}^{\frac1\alpha} \mathscr{A}^k_\ell\Lambda^{-1}_{im}
[\text{Curl}_{\Lambda\mathscr{A}_\tau}X^a_r \slashed\partial^\beta{\bf \uptheta}]^\ell_i \left(X^a_r \slashed\partial^\beta{\bf\uptheta}^m\right),_k \,dy\,d\tau' \notag \\
&:=B_1+B_2+\mathcal{R}_1+\mathcal{R}_2+\mathcal{R}_3.
\end{align} 
Now rewriting the gradient and divergence terms
\begin{align}
&\int_0^\tau  \int_\Omega  \psi w^{1+\alpha+a} e^{\bar{S}} \mathscr{J}^{-\frac1\alpha} \left( \Lambda_{\ell j}[\nabla_\eta X^a_r \slashed\partial^\beta \uptheta]^i_j \Lambda^{-1}_{im}[\nabla_\eta\partial_\tau X^a_r \slashed\partial^\beta \uptheta ]_\ell^m+\tfrac1\alpha \text{div}_\eta (X^a_r \slashed\partial^\beta\uptheta) \text{div}_\eta (\partial_\tau X^a_r \slashed\partial^\beta \uptheta) \right) \,dy d\tau' \notag \\
&=\int_0^\tau \int_\Omega \psi  w^{1+\alpha+a} e^{\bar{S}}  \mathscr{J}^{-\frac1\alpha} \left( \Lambda_{\ell j}[\nabla_\eta X^a_r \slashed\partial^\beta \uptheta]^i_j \Lambda^{-1}_{im}\partial_\tau [\nabla_\eta X^a_r \slashed\partial^\beta \uptheta ]_\ell^m+\tfrac1\alpha\text{div}_\eta (X^a_r \slashed\partial^\beta \uptheta) \partial_\tau \text{div}_\eta ( X^a_r \slashed\partial^\beta \uptheta) \right)dy d\tau' \notag \\
&-\int_0^\tau \int_\Omega \mathscr{J}^{-\frac1\alpha} \left( \Lambda_{\ell j}[\nabla_\eta X^a_r \slashed\partial^\beta \uptheta]^i_j \Lambda^{-1}_{im}\partial_\tau\mathscr{A}^k_\ell \left(X^a_r \slashed\partial^\beta \uptheta^m\right),_k+ \tfrac1\alpha\text{div}_\eta (X^a_r \slashed\partial^\beta\uptheta)  \partial_\tau \mathscr{A}^k_j \left(X^a_r \slashed\partial^\beta \uptheta^j\right),_k  \right) dy d\tau' \notag \\
&=E_1+\mathcal{R}_4.
\end{align} 
For $B_1$
\begin{align}
B_1=&\int_\Omega \psi w^{1+\alpha+a} e^{\bar{S}} \mathscr{J}^{\frac1\alpha} \mathscr{A}^k_\ell\Lambda^{-1}_{im} \sum_{a+|\beta| \leq N} [\text{Curl}_{\Lambda\mathscr{A}}X^a_r \slashed\partial^\beta\uptheta]^\ell_i \left(X^a_r \slashed\partial^\beta{\bf\uptheta}^m\right),_k \,dy\Big|^\tau_0 \notag \\
&\lesssim \mathcal{S}^N(0) + \kappa \| \nabla_\eta X_r^a \slashed\partial^\beta \uptheta \|_{1+\alpha+a,\psi e^{\bar{S}}}^2 + \frac{1}{\kappa}\|\text{Curl}_{\Lambda\mathscr{A}}X^a_r \slashed\partial^\beta\uptheta\|^2_{1+\alpha+a,\psi e^{\bar{S}}} \notag \\
&\lesssim \mathcal{S}^N(0) + \kappa \mathcal{S}^N(\tau) + \mathcal{B}^N[\uptheta](\tau),
\end{align}
where we have introduced $\kappa$ through the Young inequality. For $B_2$
\begin{align}
|B_2|&=\Big| \int_0^\tau \int_\Omega \psi w^{1+\alpha+a} e^{\bar{S}}   \mathscr{J}^{\frac1\alpha} \mathscr{A}^k_\ell\Lambda^{-1}_{im}
[\text{Curl}_{\Lambda\mathscr{A}}X^a_r \slashed\partial^\beta{\bf V}]^\ell_i \left(X^a_r \slashed\partial^\beta{\bf\uptheta}^m\right),_k \,dx\,d\tau' \Big| \notag \\
&\lesssim \int_0^\tau  \| \nabla_\eta X_r^a \slashed\partial^\beta \uptheta \|_{1+\alpha+a,\psi e^{\bar{S}}} \|\text{Curl}_{\Lambda\mathscr{A}}X^a_r \slashed\partial^\beta\mathbf{V}\|_{1+\alpha+a,\psi e^{\bar{S}}} d\tau' \notag \\
&\lesssim \int_0^\tau (\mathcal{S}^N(\tau'))^{\frac12} (\mathcal{B}^N[{\bf V}](\tau'))^{\frac12}\,d\tau'.
\end{align}
For $E_1$
\begin{align}
E_1&=\frac12 \int_0^\tau \frac{d}{d\tau}\left\{\int_\Omega \psi  w^{1+\alpha+a} e^{\bar{S}}  \mathscr{J}^{-\frac1\alpha}\left(\sum_{i,j=1}^3 d_id_j^{-1}(\mathscr M_{a,\beta})^j_{i})^2 + \frac1\alpha\left(\text{div}_\eta X^a_r \slashed\partial^\beta \uptheta\right)^2\right)\,dy\right\}\,d\tau' \notag\\
&+ \int_0^\tau \int_\Omega \psi  w^{1+\alpha+a} e^{\bar{S}}   \mathscr{J}^{-\frac1\alpha} \mathcal T_{a,\beta} dy \,d\tau' \notag \\
& - \frac12 \int_0^\tau \int_\Omega \psi  w^{1+\alpha+a} e^{\bar{S}}  \partial_\tau\left(\mathscr{J}^{-\frac1\alpha}\right)\left(\sum_{i,j=1}^3 d_id_j^{-1}(\mathscr M_{a,\beta})^j_{i})^2 + \frac1\alpha\left(\text{div}_\eta X^a_r \slashed\partial^\beta \uptheta\right)^2\right)\,dy \,d\tau' \notag \\
&:=E_2 + \mathcal{R}_5 + \mathcal{R}_6,
\end{align}
where we have used Lemma \ref{L:KEYLEMMA}. Then $E_2$ contributes to $\mathcal{E}^N(\tau)$ in (\ref{E:ENERGYMAINGAMMALEQ5OVER3}), and also to $\mathcal{S}^N(0)$ by the fundamental theorem of calculus. Finally, by the Young inequality, Lemma \ref{L:USEFULTAULEMMAGAMMALEQ5OVER3}, a priori bounds (\ref{E:APRIORI}) and Lemma \ref{L:EMBEDDING} 
\begin{equation}
|\mathcal{R}_i| \lesssim \int_0^\tau e^{-\mu_0\tau'} \mathcal{S}^N(\tau') d\tau' \; \text{for} \; i=1,2,3,4,5,6.
\end{equation}
We have thus obtained (\ref{E:ENERGYMAINGAMMALEQ5OVER3}) except for $\partial^\nu$ terms in $\mathcal{E}^N(\tau), \mathcal{D}^N(\tau), \mathcal{S}^N(\tau)$ and $\mathcal{B}^N(\tau)$. To estimate such $\partial^\nu$ terms, we note that by (\ref{E:WDEMAND}), $w>0$ in $B_{\frac{3}{4}}(\mathbf{0})$ where the $1-\psi$ estimates will be obtained. Therefore a similar calculation leads to the $\partial^\nu$ contribution to (\ref{E:ENERGYMAINGAMMALEQ5OVER3}). Hence we have obtained the full energy inequality (\ref{E:ENERGYMAINGAMMALEQ5OVER3}).
\end{proof}

\section{Large Adiabatic Constant}\label{S:GAMMAGREATER5OVER3}  
We now prove the case for $\gamma > \frac53$ by modifying the above analysis. We use the strategy from \cite{shkoller2017global}, applied to our nonisentropic setting. The first step is to eliminate the anti-damping effect encountered in \cite{1610.01666}. We divide (\ref{E:THETAGAMMALEQ5OVER3}) by $\mu^{3\gamma-5}$ to obtain
\begin{equation}\label{E:THETAGAMMAGREATER5OVER3}
w^\alpha \mu^{2} \left(\partial_{\tau \tau} \uptheta_i + \frac{\mu_\tau}{\mu} \partial_\tau \uptheta_i  + 2 \Gamma^*_{ij} \partial_{\tau} \uptheta_j\right) + \delta \mu^{5-3\gamma}w^\alpha \Lambda_{i \ell} \uptheta_\ell +  \mu^{5-3\gamma} \left(w^{1+\alpha} e^{\bar{S}} \Lambda_{ij} \left( \mathscr{A}_j^k \mathscr{J}_\eta^{-\frac{1}{\alpha}}-\delta_j^k\right)\right) ,_k = 0.
\end{equation}
Then with $\gamma > \frac53$, $\mathcal{S}^N$ and $\mathcal{B}^N$ are defining according to the expressions given in Section \ref{S:HOQGAMMALEQ5OVER3}. Also as defined in Section \ref{S:MAINTHEOREM}, in this case we do not distinguish between $\mu_0$ and $\mu_1$
\begin{equation}\label{E:NODISTINGUISHMU0MU1GAMMAGREATER5OVER3}
\mu_0=\mu_1.
\end{equation}
Next, the energy based high order quantities which arise directly from the problem are modified as follows since we are considering (\ref{E:THETAGAMMAGREATER5OVER3}).      
\begin{align} 
& \tilde{\mathcal{E}}^N(\uptheta,\mathbf{V})(\tau)=\tilde{\mathcal{E}}^N(\tau) \notag \\ 
&=\frac12\sum_{a+|\beta|\le N} 
\int_\Omega\psi\Big[\mu^{2} \left\langle\Lambda^{-1}X_r^a\slashed\partial^\beta\, \mathbf{V},\,X_r^a\slashed\partial^\beta \mathbf{V}\right\rangle+\delta\mu^{5-3 \gamma}\left\langle\Lambda^{-1}X_r^a\slashed\partial^\beta \uptheta,\,X_r^a\slashed\partial^\beta\uptheta\right\rangle\Big] w^{a+\alpha} \notag \\
& \ \ \ \  \ \ \ \ \  \ \ \ \ \  \ \ \ \ \  \ \ \ \ \  +\mu^{5-3 \gamma}\psi\mathscr{J}^{-\frac1\alpha}\Big[\sum_{i,j=1}^3 d_id_j^{-1}\left((\mathscr{M}_{a,\beta})^j_{i}\right)^2+\tfrac1\alpha\left(\text{div}_\eta X_r^a\slashed\partial^\beta \uptheta\right)^2\Big] w^{a+\alpha+1}e^{\bar{S}} \,dy \notag \\
&\ \ \ \  +\frac12\sum_{|\nu|\le N}
\int_\Omega (1-\psi)\Big[\mu^{2} \left\langle \Lambda^{-1} \partial^\nu\mathbf{V},\,\partial^\nu \mathbf{V}\right\rangle +\delta \mu^{5-3 \gamma} \left\langle\Lambda^{-1}\partial^\nu\mathbf{\uptheta},\,\partial^\nu\mathbf{\uptheta}\right\rangle\Big]  w^\alpha   
\notag \\  
& \ \ \ \  \ \ \ \ \ \ \ \ \ \ \ \ \ \ \ \ \ \ \ \ +\mu^{5-3 \gamma}(1-\psi)\mathscr{J}^{-\frac1\alpha}\Big[ \sum_{i,j=1}^3 d_id_j^{-1}\left((\mathscr{N}_{\nu})^j_{i}\right)^2 + \tfrac1\alpha\left(\text{div}_\eta\partial^\nu \uptheta\right)^2\Big] w^{1+\alpha}e^{\bar{S}} \,dy, \label{E:EDEFGAMMAGREATER5OVER3}
\end{align}         
\begin{align}
&\tilde{\mathcal D}^N (\mathbf{\uptheta})=\frac{3\gamma-5}{2}\mu^{5-3\gamma}\frac{\mu_\tau}{\mu} \int_\Omega  \Big(\psi \sum_{a+|\beta|\le N}  |X_r^a\slashed\partial^\beta \uptheta|^2  w^{a+\alpha}  \notag \\
&+ \psi\mathscr{J}^{-\frac1\alpha}\Big[\sum_{i,j=1}^3 d_id_j^{-1}\left((\mathscr{M}_{a,\beta})^j_{i}\right)^2+\tfrac1\alpha\left(\text{div}_\eta X_r^a\slashed\partial^\beta \uptheta\right)^2\Big] w^{a+\alpha+1}e^{\bar{S}} \notag \\ 
&  + \sum_{|\nu|\le N}(1-\psi) |\partial^\nu \uptheta|^2 w^{\alpha}+(1-\psi) \mathscr{J}^{-\frac1\alpha}\Big[ \sum_{i,j=1}^3 d_id_j^{-1}\left((\mathscr{N}_{\nu})^j_{i}\right)^2 + \tfrac1\alpha\left(\text{div}_\eta\partial^\nu \uptheta\right)^2\Big] w^{1+\alpha}e^{\bar{S}} \Big)\,dy.
\end{align}
We are requiring $\gamma >\frac{5}{3}$ in this formulation, which gives $\tilde{\mathcal{D}}^N (\mathbf{\uptheta}) \geq 0$. Unique to the $\gamma > \frac{5}{3}$ case, we introduce a similar term to $\tilde{\mathcal{E}}^N$ which does not include top order quantities but will be controlled through our coercivity Lemma \ref{L:COERCIVITY} below
\begin{align}   
& \mathcal{C}^{N-1}(\uptheta,\mathbf{V})(\tau)=\mathcal{C}^{N-1}(\tau) \notag \\ 
&=\frac12\sum_{a+|\beta|\le N-1}              
\int_\Omega\psi \, \delta\left\langle\Lambda^{-1}X_r^a\slashed\partial^\beta \uptheta,\,X_r^a\slashed\partial^\beta\uptheta\right\rangle w^{a+\alpha} \notag \\ 
&   +\psi\mathscr{J}^{-\frac1\alpha}\Big[\sum_{i,j=1}^3 d_id_j^{-1}\left((\mathscr{M}_{a,\beta})^j_{i}\right)^2+\tfrac1\alpha\left(\text{div}_\eta X_r^a\slashed\partial^\beta \uptheta\right)^2\Big] w^{a+\alpha+1}e^{\bar{S}} \,dy \notag \\  
&\ \ \ \  +\frac12\sum_{|\nu|\le N-1}     
\int_\Omega (1-\psi)\, \delta \left\langle\Lambda^{-1}\partial^\nu\mathbf{\uptheta},\,\partial^\nu\mathbf{\uptheta}\right\rangle w^\alpha      
\notag \\  
&  +(1-\psi)\mathscr{J}^{-\frac1\alpha}\Big[ \sum_{i,j=1}^3 d_id_j^{-1}\left((\mathscr{N}_{\nu})^j_{i}\right)^2 + \tfrac1\alpha\left(\text{div}_\eta\partial^\nu \uptheta\right)^2\Big] w^{1+\alpha}e^{\bar{S}} \,dy. \label{E:CDEFGAMMAGREATER5OVER3}
\end{align}   

Before outlining the proofs of our main curl and energy inequalities for the large $\gamma > \frac53$ case, we give the most important and useful result in this setting which will allow us to overcome the time weights with negative powers which arise from the new equation structure (\ref{E:THETAGAMMAGREATER5OVER3}). 

\begin{lemma}[Coercivity Estimates]\label{L:COERCIVITY}
Suppose $\gamma > \frac{5}{3}$. Let $(\uptheta, {\bf V}):\Omega \rightarrow \mathbb R^3\times \mathbb R^3$ be a unique local solution to (\ref{E:THETAGAMMALEQ5OVER3})-(\ref{E:THETAICGAMMALEQ5OVER3}) on $[0,T]$ with $T>0$ fixed and assume $(\uptheta, {\bf V})$ satisfies the a priori assumptions (\ref{E:APRIORI}). 
Fix $N\geq 2\lceil \alpha \rceil +12$. Let $k \geq N+1$ in (\ref{E:PHIDEMAND}). Fix $(a,\beta)$ with $0 \leq a+|\beta| \leq N-1$ and $\nu$ with $0 \leq |\nu| \leq N-1$. Then for all $\tau \in [0,T]$, we have the following inequalities 
\begin{subequations} 
\begin{align} 
\| X^a_r \slashed\partial^\beta \uptheta\|^2_{a+\alpha,\psi e^{\bar{S}}} &\lesssim \sup_{0 \leq \tau \leq \tau'} 
\{ \mu^2 \| X^a_r \slashed\partial^\beta \mathbf{V}\|^2_{a+\alpha,\psi e^{\bar{S}}} \} + \| X^a_r \slashed\partial^\beta \uptheta (0)\|^2_{a+\alpha,\psi e^{\bar{S}}}, \label{E:COERCIVITY1} \\  
\| \partial^\nu \uptheta\|^2_{\alpha,(1-\psi)} &\lesssim \sup_{0 \leq \tau \leq \tau'} 
\{ \mu^2 \| \partial^\nu \mathbf{V}\|^2_{\alpha,(1-\psi)e^{\bar{S}} } \} + \| \partial^\nu \uptheta (0)\|^2_{\alpha,(1-\psi)e^{\bar{S}}}, \label{E:COERCIVITY2}
\end{align}
\end{subequations}
\begin{subequations}
\begin{align} 
\| \nabla_{\eta} X^a_r \slashed\partial^\beta \uptheta\|^2_{a+\alpha+1,\psi e^{\bar{S}}} &\lesssim \sup_{0 \leq \tau \leq \tau'}  \{ \sum_{a'+|\beta'| = a+|\beta|+1} \mu^2 \| X_r^{a'} \slashed\partial^{\beta'} \mathbf{V}\|^2_{a+\alpha+1,\psi e^{\bar{S}}} \} \notag \\
&\quad + \| \nabla_{\eta} X^a_r \slashed\partial^\beta \uptheta (0)\|^2_{a+\alpha+1,\psi e^{\bar{S}}}, \label{E:COERCIVITY3} \\ 
\| \nabla_{\eta} \partial^\nu \uptheta\|^2_{\alpha+1,\psi e^{\bar{S}}} &\lesssim \sup_{0 \leq \tau \leq \tau'}  \{ \sum_{|\nu'| = |\nu|+1} \mu^2 \| \partial^{\nu'} \mathbf{V}\|^2_{\alpha+1,\psi e^{\bar{S}}} \} + \| \nabla_{\eta} \partial^\nu \uptheta (0)\|^2_{\alpha+1,\psi e^{\bar{S}}}, \label{E:COERCIVITY4}
\end{align}    
\end{subequations}
\begin{subequations}
\begin{align}  
\| \text{\em div}_{\eta} X^a_r \slashed\partial^\beta \uptheta\|^2_{a+\alpha+1,\psi e^{\bar{S}}} &\lesssim \sup_{0 \leq \tau \leq \tau'}  \{ \sum_{a'+|\beta'| = a+|\beta|+1} \mu^2 \| \mathbf{V}\|^2_{a+\alpha+1,\psi e^{\bar{S}}} \} + \| \text{\em div}_{\eta} X^a_r \slashed\partial^\beta \uptheta (0)\|^2_{a+\alpha+1,\psi e^{\bar{S}}}, \label{E:COERCIVITY5} \\ 
\| \text{\em div}_{\eta} \partial^\nu \uptheta\|^2_{\alpha+1,\psi e^{\bar{S}}} &\lesssim \sup_{0 \leq \tau \leq \tau'}  \{ \sum_{|\nu'| = |\nu|+1} \mu^2 \| \partial^{\nu'} \mathbf{V}\|^2_{\alpha+1,\psi e^{\bar{S}}} \} + \| \text{\em div}_{\eta} \partial^\nu \uptheta (0)\|^2_{\alpha+1,\psi e^{\bar{S}}}. \label{E:COERCIVITY6}
\end{align}
\end{subequations}
\end{lemma}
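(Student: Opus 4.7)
The central idea is to combine the fundamental theorem of calculus in $\tau$ with the fact that $\mu(\tau)$ grows exponentially: from the normalization $d\tau/dt = (\det A)^{-1/3}$ together with $\det A(t)\sim 1+t^3$, one has $\mu(\tau)\sim e^{\mu_1\tau}$ and hence $\int_0^\tau \mu^{-1}(\tau')\,d\tau' \lesssim 1$ uniformly in $\tau$. This integrability against the reciprocal weight is what converts a top-order $L^\infty$ control of $\mu\|\mathbf V\|$ into a zeroth-order $L^\infty$ control of $\|\uptheta\|$.

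For \eqref{E:COERCIVITY1} and \eqref{E:COERCIVITY2}, since $X_r,\slashed\partial$ and $\partial$ commute with $\partial_\tau$, the fundamental theorem of calculus yields
\begin{equation*}
X_r^a\slashed\partial^\beta\uptheta(\tau) = X_r^a\slashed\partial^\beta\uptheta(0) + \int_0^\tau X_r^a\slashed\partial^\beta\mathbf V(\tau')\,d\tau'.
\end{equation*}
Applying the triangle inequality for the $\|\cdot\|_{a+\alpha,\psi e^{\bar S}}$ norm and inserting the factor $\mu^{-1}\mu$ inside the time integral,
\begin{equation*}
\|X_r^a\slashed\partial^\beta\uptheta\|_{a+\alpha,\psi e^{\bar S}} \le \|X_r^a\slashed\partial^\beta\uptheta(0)\|_{a+\alpha,\psi e^{\bar S}} + \Bigl(\sup_{0\le\tau'\le\tau}\mu\,\|X_r^a\slashed\partial^\beta\mathbf V\|_{a+\alpha,\psi e^{\bar S}}\Bigr)\int_0^\tau \mu^{-1}(\tau')\,d\tau'.
\end{equation*}
The last time integral is bounded by a universal constant, so squaring and applying $(x+y)^2\le 2x^2+2y^2$ gives \eqref{E:COERCIVITY1}. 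The proof of \eqref{E:COERCIVITY2} is identical with $\partial^\nu$ and weight $1-\psi$ in place of $X_r^a\slashed\partial^\beta$ and $\psi$.

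For the gradient and divergence estimates \eqref{E:COERCIVITY3}--\eqref{E:COERCIVITY6} we first commute $\partial_\tau$ through $\nabla_\eta$ (respectively $\text{div}_\eta$). Using the differentiation formula \eqref{E:AJDIFFERENTIATIONFORMULAE} for $\partial_\tau\mathscr A$, we find
\begin{equation*}
\partial_\tau\bigl[\nabla_\eta X_r^a\slashed\partial^\beta\uptheta\bigr]^i_r = \bigl[\nabla_\eta X_r^a\slashed\partial^\beta\mathbf V\bigr]^i_r - \mathscr A^s_m \mathbf V^m,_k\mathscr A^k_r (X_r^a\slashed\partial^\beta\uptheta^i),_s,
\end{equation*}
and analogously for $\text{div}_\eta$. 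Integrating in $\tau$ and taking the $\|\cdot\|_{a+\alpha+1,\psi e^{\bar S}}$ norm, the main $\nabla_\eta X_r^a\slashed\partial^\beta\mathbf V$ contribution is handled exactly as in the previous paragraph, after decomposing the bare partial $\partial_s$ inside $\nabla_\eta$ into $X_r$ and $\slashed\partial$ via \eqref{E:DERIVATIVEDECOMP}; this produces the sum $\sum_{a'+|\beta'|=a+|\beta|+1}$ appearing on the right-hand side. The error term is controlled by the a priori bounds \eqref{E:APRIORI} on $\mathscr A$ and $D\uptheta$ in $L^\infty$, together with the embedding Lemma \ref{L:EMBEDDING} which yields $\|D\mathbf V\|_{L^\infty}\lesssim \mu^{-1}(\mathcal S^N)^{1/2}$ for $\gamma>\frac53$; the extra factor $\mu^{-1}$ and the a priori smallness $\mathcal S^N<\frac13$ make this term absorbable into the right-hand side. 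Estimates \eqref{E:COERCIVITY5}--\eqref{E:COERCIVITY6} follow identically.

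The only mildly delicate point is the bookkeeping of the commutator error in paragraph three: one must check that after pulling the time integral through the norm via Minkowski, the coefficient $\int_0^\tau\mu^{-1}\,d\tau'$ again controls the error, which is where the a priori assumptions \eqref{E:APRIORI} enter critically; everything else is a direct application of the fundamental theorem of calculus and the exponential character of $\mu$.
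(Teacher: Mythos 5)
Your proof is correct and follows essentially the same route as the paper: the fundamental theorem of calculus in $\tau$, insertion of the factor $\mu^{-1}\mu$, the uniform bound $\int_0^\tau\mu^{-1}\,d\tau'\lesssim 1$ from the exponential growth of $\mu$, and then the decomposition \eqref{E:DERIVATIVEDECOMP} plus the a priori bounds \eqref{E:APRIORI} to pass from $\nabla_\eta$ to the operators $X_r^{a'}\slashed\partial^{\beta'}$. You are in fact slightly more explicit than the paper about the commutator $[\partial_\tau,\nabla_\eta]$, which the paper handles only implicitly; your absorption of that error via $\|D\mathbf V\|_{L^\infty}\lesssim \mu^{-1}(\mathcal S^N)^{1/2}$ and the smallness of $\mathcal S^N$ is a valid way to close it.
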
 
\begin{proof} 
\textit{Proof of} (\ref{E:COERCIVITY1})-(\ref{E:COERCIVITY2}). By the fundamental theorem of calculus, and the exponential boundedness of $\mu$ (\ref{E:EXPMU1MUINEQGAMMALEQ5OVER3}) and therefore time integrability of negative powers of $\mu$,
\begin{align}\label{E:COERCIVITYPROOF1A}
X^a_r \slashed\partial^\beta \uptheta = \int_0^\tau X^a_r \slashed\partial^\beta  \mathbf{V} d \tau' + X^a_r \slashed\partial^\beta  \uptheta (0) &= \int_0^\tau \mu^{-1} \mu X^a_r \slashed\partial^\beta \mathbf{V} d \tau' + X^a_r \slashed\partial^\beta \uptheta (0) \notag \\
& \lesssim \sup_{0 \leq \tau \leq \tau'} \{ \mu X^a_r \slashed\partial^\beta  \mathbf{V} \} + X^a_r \slashed\partial^\beta \uptheta (0).
\end{align}       
Therefore applying Cauchy's inequality ($ab \lesssim a^2 + b^2,$ $a,b \in \mathbb{R}$) 
\begin{equation}\label{E:COERCIVITYPROOF1B}
\| X^a_r \slashed\partial^\beta  \uptheta\|^2_{a+\alpha,\psi e^{\bar{S}}} \lesssim \sup_{0 \leq \tau \leq \tau'} \{\mu^2 \| X^a_r \slashed\partial^\beta \mathbf{V}\|^2_{a+\alpha,\psi e^{\bar{S}}} \} + \| X^a_r \slashed\partial^\beta \uptheta (0)\|^2_{a+\alpha,\psi e^{\bar{S}}}.    
\end{equation}         
This proves (\ref{E:COERCIVITY1}), and (\ref{E:COERCIVITY2}) is similar. \\ \\
\textit{Proof of} (\ref{E:COERCIVITY2})-(\ref{E:COERCIVITY3}). By a similar coercivity estimate to (\ref{E:COERCIVITYPROOF1A})-(\ref{E:COERCIVITYPROOF1B})
\begin{equation}\label{E:COERCIVITYPROOF3A}
\| \nabla_{\eta} X^a_r \slashed\partial^\beta \uptheta\|^2_{a+\alpha+1,\psi e^{\bar{S}}} \lesssim \sup_{0 \leq \tau \leq \tau'} \{ \mu^2 \| \nabla_{\eta} X^a_r \slashed\partial^\beta \mathbf{V}\|^2_{a+\alpha+1,\psi e^{\bar{S}}} \} + \| \nabla_{\eta} X^a_r \slashed\partial^\beta \uptheta (0)\|^2_{a+\alpha+1,\psi e^{\bar{S}}}. 
\end{equation}
Now using the decomposition of spatial derivatives into angular derivatives and radial derivative (\ref{E:DERIVATIVEDECOMP}), and also our a priori bounds (\ref{E:APRIORI}), we have
\begin{equation}\label{E:COERCIVITYPROOF3B}
\sup_{0 \leq \tau \leq \tau'} \mu^2 \| \nabla_{\eta} X^a_r \slashed\partial^\beta \mathbf{V}\|^2_{a+\alpha+1,\psi e^{\bar{S}}} \lesssim \sup_{0 \leq \tau \leq \tau'} \{  \sum_{a'+|\beta'| = a+|\beta|+1} \mu^2 \| X_r^{a'} \slashed\partial^{\beta'} \mathbf{V}\|^2_{a+\alpha+1,\psi e^{\bar{S}}} \} . 
\end{equation}      
Then (\ref{E:COERCIVITYPROOF3A})-(\ref{E:COERCIVITYPROOF3B}) imply (\ref{E:COERCIVITY3}). The proof of (\ref{E:COERCIVITY4}) is similar but simpler since we do not need the decomposition formula. \\ \\ 
\textit{Proof of} (\ref{E:COERCIVITY5})-(\ref{E:COERCIVITY6}). Finally the proofs of (\ref{E:COERCIVITY5})-(\ref{E:COERCIVITY6}) are similar to the proofs of (\ref{E:COERCIVITY3})-(\ref{E:COERCIVITY4}). 
\end{proof}  
\begin{remark}
Lemma \ref{L:COERCIVITY} allows us to control terms without time weights with negative powers using our norm $\mathcal{S}^N(\tau)$ and initial data $\mathcal{S}^N(0)$. In the $\gamma > \frac53$ case, such terms do not appear immediately from our equation because of the new structure (\ref{E:THETAGAMMAGREATER5OVER3}). Notably, Lemma \ref{L:COERCIVITY} will let us include the new quantity $\mathcal{C}^{N-1}$ in our energy inequality.

However we cannot use Lemma \ref{L:COERCIVITY} to control top order quantities since that would require control of $N+1$ derivatives of $\mathbf{V}$ which we do not have. This is why firstly we have the particular structure of $\mathcal{S}^N$ in this case where we separate top order terms from lower order terms, and secondly why we only include $N-1$ derivatives in $\mathcal{C}^{N-1}$.   
\end{remark}   
 
\subsection{Curl Estimates $\gamma > \frac53$} 
For the curl estimates for $\gamma > \frac53$ we note that we will start from the same equation (\ref{E:POSTMUINTROEQN}) as for the $\gamma \in (1,\frac53]$ case. So when controlling the curl quantities, we will artificially include the time weight with negative power $\mu^{5-3\gamma}$.  Therefore this will not pose an issue in most of the curl estimates since we either simply use the boundedness of $\mu^{5-3\gamma}$ if we do wish to include it in our high order quantities, or at the top order, include it in our high order quantities. The only time we will have to consider it more carefully is when controlling the $\text{Curl}_{\Lambda \mathscr{A}} \uptheta$ terms when we integrate this time weight. We will give the method for this situation in Lemma \ref{L:NEWCURLESTMETHODGAMMAGREATER5OVER3} below.

Otherwise, we note in the $\gamma > \frac53$ case that from the structure of $\mathcal{S}^N$ allowable by the coercivity Lemma \ref{L:COERCIVITY}, the fact that $\mu_0=\mu_1$ here, and our asymptotic behavior of $A$ which still holds by Lemma \ref{L:AASYMPTOTICS} because of our requirement that $\det A \sim 1 + t^3$, the methods of the curl estimates for $\gamma \in (1,\frac53]$ in Section \ref{S:CURLGAMMALEQ5OVER3} will hold for $\gamma > \frac53$ and we will obtain similar bounds on our curl quantities.
\begin{lemma}\label{L:NEWCURLESTMETHODGAMMAGREATER5OVER3}
Suppose $\gamma > \frac53$. Let $(\uptheta, {\bf V}):\Omega \rightarrow \mathbb R^3\times \mathbb R^3$ be a unique local solution to (\ref{E:THETAGAMMALEQ5OVER3})-(\ref{E:THETAICGAMMALEQ5OVER3}) on $[0,T]$ with $T>0$ fixed and assume $(\uptheta, {\bf V})$ satisfies the a priori assumptions (\ref{E:APRIORI}). Fix $N\geq 2\lceil \alpha \rceil +12$. Let $k \geq N+1$ in (\ref{E:PHIDEMAND}). Let $a+|\beta|=N$. Assume the following bound is known
\begin{equation}
 \Big\|\int_0^{\tau'} \mu(\tau'') X_r^a \slashed\partial^\beta[\partial_\tau, \text{\em Curl}_{\Lambda\mathscr{A}}] {\bf V}d\tau''\Big\|_{1+\alpha+a,\psi e^{\bar{S}} }^2\ \lesssim \mathcal{S}^N(\tau').
\end{equation}
Then
\begin{equation}
\mu^{5-3\gamma}\Big\|\int_0^\tau \frac{1}{\mu(\tau')} \int_0^{\tau'} \mu(\tau'') X_r^a \slashed\partial^\beta[\partial_\tau, \text{\em Curl}_{\Lambda\mathscr{A}}] {\bf V}d\tau''\,d\tau' \Big\|_{1+\alpha+a,\psi e^{\bar{S}}}^2 \lesssim \int_0^\tau e^{-\mu_0\tau} \mathcal{S}^N(\tau')\,d\tau'.
\end{equation}
\end{lemma}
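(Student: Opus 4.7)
The plan is to mirror the Cauchy--Schwarz/Fubini scheme that already handled \eqref{E:CURLLASTTERMBOUND3}--\eqref{E:CURLLASTTERMBOUND4}, but now using crucially that for $\gamma>\tfrac53$ the prefactor $\mu^{5-3\gamma}$ is uniformly bounded, and that the large-$\gamma$ identity $\mu_0=\mu_1$ (see~\eqref{E:NODISTINGUISHMU0MU1GAMMAGREATER5OVER3}) lets a single choice of splitting exponent handle the problem cleanly.

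Concretely, I would apply Cauchy--Schwarz in $\tau'$ with a splitting parameter $\lambda\in(0,1)$, pull out the $y$-independent factor $\int_0^\tau\mu^{-2(1-\lambda)}\,d\tau'$, and then commute the remaining time and spatial integrals by Fubini to arrive at
\begin{align*}
&\mu(\tau)^{5-3\gamma}\Big\|\int_0^\tau\frac{1}{\mu(\tau')}\int_0^{\tau'}\mu(\tau'')X_r^a\slashed\partial^\beta[\partial_\tau,\text{Curl}_{\Lambda\mathscr{A}}]{\bf V}\,d\tau''\,d\tau'\Big\|^2_{1+\alpha+a,\psi e^{\bar{S}}}\\
&\qquad \lesssim \mu(\tau)^{5-3\gamma}\Big(\int_0^\tau\mu(\tau')^{-2(1-\lambda)}\,d\tau'\Big)\int_0^\tau \mu(\tau')^{-2\lambda}\Big\|\int_0^{\tau'}\mu(\tau'')X_r^a\slashed\partial^\beta[\partial_\tau,\text{Curl}_{\Lambda\mathscr{A}}]{\bf V}\,d\tau''\Big\|^2_{1+\alpha+a,\psi e^{\bar{S}}}d\tau',
\end{align*}
and then invoke the stated hypothesis to replace the inner norm squared by $\mathcal{S}^N(\tau')$.

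The closing step is to fix $\lambda=\tfrac12$. Using the exponential bound $\mu\sim e^{\mu_1\tau}$ from~\eqref{E:EXPMU1MUINEQGAMMALEQ5OVER3}, the middle factor $\int_0^\tau\mu(\tau')^{-1}\,d\tau'$ is uniformly bounded in $\tau$; the hypothesis $\gamma>\tfrac53$ forces $\mu(\tau)^{5-3\gamma}\lesssim 1$; and the leftover weight $\mu(\tau')^{-2\lambda}=\mu(\tau')^{-1}\sim e^{-\mu_1\tau'}=e^{-\mu_0\tau'}$ supplies exactly the decay demanded on the right-hand side. Combining these three observations gives the stated inequality.

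The delicate balancing that the argument rests on is that $\lambda<1$ is forced in order that $\int_0^\tau\mu^{-2(1-\lambda)}\,d\tau'$ remains bounded in $\tau$, while simultaneously the product $\mu(\tau)^{5-3\gamma}\mu(\tau')^{-2\lambda}$ must decay at least like $e^{-\mu_0\tau'}$ uniformly in $\tau\ge\tau'$. The identity $\mu_0=\mu_1$ valid only in the large-$\gamma$ regime makes $\lambda=\tfrac12$ a natural and uniform choice that fulfills both constraints without the case analysis in $\gamma$ needed in Lemma~\ref{L:CURLLASTTERMBOUNDS}; this is the step I would expect to need the most care, since a careless use of Cauchy--Schwarz (for instance with $\lambda=1$) would lose a factor of $\tau$ and defeat the uniform-in-time bound.
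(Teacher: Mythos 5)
Your proposal is correct and follows essentially the same route as the paper: Cauchy--Schwarz in $\tau'$ splitting $\mu^{-1}$ into two half-powers (the paper inserts an additional, harmless $(1+\tau')^{\pm2}$ weight in its split), Fubini to pull the time integral outside the spatial norm, the uniform bound on $\int_0^\tau \mu^{-1}\,d\tau'$, the decay $\mu(\tau')^{-2\lambda}=\mu(\tau')^{-1}\lesssim e^{-\mu_1\tau'}=e^{-\mu_0\tau'}$, and the boundedness of the prefactor $\mu^{5-3\gamma}$ for $\gamma>\tfrac53$. The only cosmetic discrepancy is that the stated right-hand side has $e^{-\mu_0\tau}$ where both your argument and the paper's actually produce $e^{-\mu_0\tau'}$ inside the integral.
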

\begin{proof}
Applying the boundedness of $\mu^{5-3\gamma}$
\begin{align}
&\mu^{5-3\gamma}\Big\|\int_0^\tau \frac{1}{\mu(\tau')} \int_0^{\tau'} \mu(\tau'') X_r^a \slashed\partial^\beta[\partial_\tau, \text{Curl}_{\Lambda\mathscr{A}}] {\bf V}d\tau''\,d\tau' \Big\|_{1+\alpha+a,\psi e^{\bar{S}}}^2 \notag \\
& \lesssim \mu^{5-3\gamma} \int_\Omega\left[\int_0^\tau \frac{(1+\tau')^2}{\mu(\tau')}\,d\tau'  \int_0^\tau \frac1{(1+\tau')^2\mu(\tau')  }\left(\int_0^{\tau'} \mu(\tau'') X_r^a \slashed\partial^\beta [\partial_\tau, \text{Curl}_{\Lambda\mathscr{A}}] {\bf V}d\tau''\right)^2\,d\tau'\right] 
 w^{1+\alpha+a} e^{\bar{S}} \psi\,dy\notag \\
& \lesssim \mu^{5-3\gamma} \int_0^\tau \frac1{(1+\tau')^2\mu(\tau')} \Big\|\int_0^{\tau'} \mu(\tau'') X_r^a \slashed\partial^\beta[\partial_\tau, \text{Curl}_{\Lambda\mathscr{A}}] {\bf V}d\tau''\Big\|_{1+\alpha+a,\psi e^{\bar{S}} }^2\,d\tau' \notag \\
& \lesssim \sup_{0 \leq \tau ' \leq \tau}[\mu(\tau ')^{5-3\gamma}] \int_0^\tau \frac1{(1+\tau')^2\mu(\tau')} \Big\|\int_0^{\tau'} \mu(\tau'') X_r^a \slashed\partial^\beta[\partial_\tau, \text{Curl}_{\Lambda\mathscr{A}}] {\bf V}d\tau''\Big\|_{1+\alpha+a,\psi e^{\bar{S}} }^2\,d\tau' \notag \\
& = \sup_{0 \leq \tau ' \leq \tau}[\mu(\tau ')^{5-3\gamma}] \int_0^\tau \frac{\mu(\tau')^{5-3\gamma}}{\mu(\tau')^{5-3\gamma}(1+\tau')^2\mu(\tau')} \Big\|\int_0^{\tau'} \mu(\tau'') X_r^a \slashed\partial^\beta[\partial_\tau, \text{Curl}_{\Lambda\mathscr{A}}] {\bf V}d\tau''\Big\|_{1+\alpha+a,\psi e^{\bar{S}} }^2\,d\tau' \notag \\
& \lesssim \sup_{0 \leq \tau ' \leq \tau}[\mu(\tau ')^{5-3\gamma}]\sup_{0 \leq \tau ' \leq \tau}\left[\frac{1}{\mu(\tau')^{5-3\gamma}}\right] \int_0^\tau \frac{\mu(\tau')^{5-3\gamma}}{(1+\tau')^2\mu(\tau')} \Big\|\int_0^{\tau'} \mu(\tau'') X_r^a \slashed\partial^\beta[\partial_\tau, \text{Curl}_{\Lambda\mathscr{A}}] {\bf V}d\tau''\Big\|_{1+\alpha+a,\psi e^{\bar{S}} }^2\,d\tau' \notag \\
& = \int_0^\tau \frac{\mu(\tau')^{5-3\gamma}}{(1+\tau')^2\mu(\tau')} \Big\|\int_0^{\tau'} \mu(\tau'') X_r^a \slashed\partial^\beta[\partial_\tau, \text{Curl}_{\Lambda\mathscr{A}}] {\bf V}d\tau''\Big\|_{1+\alpha+a,\psi e^{\bar{S}} }^2\,d\tau' \notag \\
& \lesssim \int_0^\tau e^{-\mu_0\tau} \mathcal{S}^N(\tau')\,d\tau'.
\end{align}       
\end{proof}
Using the technique of Lemma \ref{L:NEWCURLESTMETHODGAMMAGREATER5OVER3} for similar terms and then otherwise an analogous argument to the $\gamma \in (1,\frac53]$ case as described above, we obtain our curl estimates in this case.
\begin{proposition}\label{P:CURLBOUNDSGAMMAGREATER5OVER3}
Suppose $\gamma > \frac53$. Let $(\uptheta, {\bf V}):\Omega \rightarrow \mathbb R^3\times \mathbb R^3$ be a unique local solution to (\ref{E:THETAGAMMALEQ5OVER3})-(\ref{E:THETAICGAMMALEQ5OVER3}) on $[0,T]$ with $T>0$ fixed and assume $(\uptheta, {\bf V})$ satisfies the a priori assumptions (\ref{E:APRIORI}). Fix $N\geq 2\lceil \alpha \rceil +12$. Let $k \geq N+1$ in (\ref{E:PHIDEMAND}). Then for all $\tau \in [0,T]$, the following inequalities hold for some $0<\kappa\ll 1$  
\begin{align}     
&\mathcal{B}^N[{\bf V}](\tau) \lesssim 
e^{-2\mu_0\tau}\left(\mathcal{S}^N(0)+\tilde{\mathcal{B}}^N(0)\right)+ (1+\tau^2)e^{-2\tilde{\mu}_0\tau}\mathcal{S}^N(\tau), \label{E:CURLVBOUNDGAMMAGREATER5OVER3} \\
& \tilde{\mathcal{B}}^N[\uptheta](\tau) \lesssim \mathcal{S}^N(0)+\mathcal{B}^N(0) + \kappa \mathcal{S}^N(\tau) + \int_0^\tau e^{-\mu_0\tau'}  \mathcal{S}^N(\tau')\,d\tau'. \label{E:CURLTHETABOUNDGAMMAGREATER5OVER3}
\end{align}  
\end{proposition}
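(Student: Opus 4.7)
The plan is to adapt the proof of Proposition~\ref{P:CURLBOUNDS} to the large adiabatic constant regime, exploiting three structural features of the $\gamma>\frac53$ setting: first, the derivation of the curl equations~\eqref{E:FINALCURLVEQNGAMMALEQ5OVER3} and~\eqref{E:FINALCURLTHETAEQNGAMMALEQ5OVER3} did not use the range of $\gamma$ and therefore remains valid; second, here we have $\mu_0=\mu_1$ so the case distinctions carried out for $\gamma\in(1,\frac53]$ via Lemma~\ref{L:AASYMPTOTICS} collapse; third, the norms $\mathcal{S}^N$ and $\mathcal{B}^N$ now carry an additional weight $\mu^{b(\gamma)}=\mu^{5-3\gamma}$ on the top-order ($a+|\beta|=N$ or $|\nu|=N$) components, and $\mathcal{S}^N$ no longer bounds the $N$-th derivative of $\uptheta$ without a time weight, so top-order terms and lower-order terms need to be separated and treated differently.

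First I would reprove the preliminary bounds of Lemma~\ref{L:CURLLASTTERMBOUNDS} and Lemma~\ref{L:COMMUTATORBOUNDS} in the present setting. For $a+|\beta|\le N-1$ the original arguments go through verbatim because the $\mathcal{S}^N$ norm still controls $\nabla_\eta X_r^a\slashed\partial^\beta\uptheta$ without the $\mu^{5-3\gamma}$ prefactor. For $a+|\beta|=N$, every time one encounters a factor of $D\uptheta$ at top order inside an $L^\infty$ application of Lemma~\ref{L:EMBEDDING} one must invoke the coercivity Lemma~\ref{L:COERCIVITY} to write $\|D\uptheta\|_{L^\infty}\lesssim \|DV\|_{L^\infty}$ plus initial data, converting a missing factor of $\mu^{-(5-3\gamma)/2}$ into an $e^{-\mu_0\tau'}\mathcal{S}^N$-integrable quantity. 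The commutator bound $\|X_r^a\slashed\partial^\beta\mathscr A\|^2_{1+\alpha+a,\psi e^{\bar S}}\lesssim \mathcal{S}^N(\tau)$ used in Lemma~\ref{L:COMMUTATORBOUNDS} then requires the $\mu^{b(\gamma)}$ prefactor on the right-hand side at top order, which is compatible with the claimed bound~\eqref{E:CURLVBOUNDGAMMAGREATER5OVER3} because the $(1+\tau^2)e^{-2\mu_0\tau}$ factor absorbs the $\mu^{b(\gamma)}$ discrepancy.

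Next, applying $X_r^a\slashed\partial^\beta$ to~\eqref{E:FINALCURLVEQNGAMMALEQ5OVER3} as in~\eqref{E:POSTDERVCURLVEQNGAMMALEQ5OVER3}, I would estimate each of the eleven terms exactly as in the proof of~\eqref{E:CURLVBOUND}. The initial-data term and the commutator term give $e^{-2\mu_1\tau}(\mathcal{S}^N(0)+\mathcal{B}^N[\mathbf V](0))$ directly via~\eqref{E:EXPMU1MUINEQGAMMALEQ5OVER3}. For the integral terms I break the analysis at $a+|\beta|\le N-1$ and $a+|\beta|=N$: in the first subcase the argument of Proposition~\ref{P:CURLBOUNDS} closes up to the $\mu_0=\mu_1$ identification, producing the $(1+\tau^2)e^{-2\mu_0\tau}\mathcal S^N(\tau)$ contribution in the borderline subcases $D_2$, $D_3$ and the sixth-seventh integrals exactly as in~\eqref{E:NOTCRITICAL} and~\eqref{E:CURLEST3}. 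In the second (top-order) subcase, each norm on the left carries a factor $\mu^{b(\gamma)}$; this extra factor is the key enabler, since it allows us to multiply the integrand by $\mu^{5-3\gamma}/\mu^{5-3\gamma}$ and invoke the splitting technique of Lemma~\ref{L:NEWCURLESTMETHODGAMMAGREATER5OVER3} to convert double time-integrals of $\mu$-powers into integrals of $e^{-\mu_0\tau'}\mathcal S^N(\tau')$. The last two integrals of~\eqref{E:POSTDERVCURLVEQNGAMMALEQ5OVER3} are handled by the analog of Lemma~\ref{L:CURLLASTTERMBOUNDS}.

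For~\eqref{E:CURLTHETABOUNDGAMMAGREATER5OVER3} the same scheme is applied to the $\uptheta$-curl equation~\eqref{E:FINALCURLTHETAEQNGAMMALEQ5OVER3}, after which each integral in $\tau'$ is estimated by Cauchy-Schwarz with $(1+\tau')^{-2}\mu(\tau')^{-1}$ as a normalizing factor as in the ninth-term estimate in the proof of Proposition~\ref{P:CURLBOUNDS}. The new difficulty is once again at top order, where the $\mu^{b(\gamma)}$ weight on the left combines with the Lemma~\ref{L:NEWCURLESTMETHODGAMMAGREATER5OVER3} trick to absorb the negative-power $\mu^{5-3\gamma}$ factors. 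I would also use Young's inequality in one place to introduce the small parameter $\kappa$, which is needed because $\text{Curl}_{\Lambda\mathscr A}\uptheta(0)$ can only be absorbed into $\mathcal B^N[\mathbf V](0)+\mathcal S^N(0)$ after a term of the form $\text{Curl}_{\Lambda\mathscr A}\uptheta(\tau)$ has been paired with $\kappa\mathcal S^N(\tau)$. The main obstacle throughout will be bookkeeping at top order: ensuring that every time-integral of the form $\int_0^\tau \mu^{4-3\gamma}$ or $\int_0^\tau \mu^{-1}\int_0^{\tau'}\mu^{4-3\gamma}$ is paired with the correct $\mu^{b(\gamma)}$ or $\mu^{d(\gamma)}=\mu^2$ prefactor so that the resulting exponential balance lands in the $(1+\tau^2)e^{-2\mu_0\tau}$ or $\int_0^\tau e^{-\mu_0\tau'}\mathcal S^N(\tau')\,d\tau'$ regime, with no residual undamped factor.
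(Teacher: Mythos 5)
Your proposal is correct and follows essentially the same route as the paper: reuse the curl equations \eqref{E:FINALCURLVEQNGAMMALEQ5OVER3}--\eqref{E:FINALCURLTHETAEQNGAMMALEQ5OVER3} verbatim, carry over the term-by-term estimates of Proposition \ref{P:CURLBOUNDS} using $\mu_0=\mu_1$ and the asymptotics of $A$, invoke the coercivity Lemma \ref{L:COERCIVITY} and the $\mu^{b(\gamma)}$ top-order weights to handle the undamped $\uptheta$-norms, and apply the splitting trick of Lemma \ref{L:NEWCURLESTMETHODGAMMAGREATER5OVER3} for the double time integrals carrying $\mu^{5-3\gamma}$. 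The paper's own argument is precisely this (stated more tersely), so no further comparison is needed.
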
        

\subsection{Energy Estimates $\gamma > \frac53$} 
Energy estimates for $\gamma > \frac53$ depend on the new equation structure (\ref{E:THETAGAMMAGREATER5OVER3}). This gives rise to the forms of $\tilde{\mathcal{E}}^N$ and $\tilde{\mathcal{D}}^N$. However with the coercivity Lemma \ref{L:COERCIVITY} and the norm-energy equivalence result in this case, term by term estimates will use the same techniques as in the $\gamma \in (1,\frac53]$ case since as in the curl estimates, we have the desirable asymptotic behavior of $A$ which still holds by Lemma \ref{L:AASYMPTOTICS} because of our requirement that $\det A \sim 1 + t^3$. Also we have $\mu_0=\mu_1$.

Furthermore, as in the curl estimates, when estimating terms which include the time weight $\mu^{5-3\gamma}$ we either simply use the boundedness of $\mu^{5-3\gamma}$ if we do wish to include it in our high order quantities, or at the top order, include it in our high order quantities. The only time we will have to consider it more carefully is when we integrate this time weight after integration by parts in $\tau$. We will give the details for the unique term that arises from this in the outline of our proof of the energy inequality below.
  
Therefore we give the norm-energy equivalence, and then outline the proof for the energy inequality in this case, placing emphasis on the new energy identity structure while omitting the details which are similar to the $\gamma \in (1,\frac53]$ case.               
\begin{lemma}\label{L:NORMENERGYGAMMAGREATER5OVER3} 
Let $(\uptheta, {\bf V}):\Omega \rightarrow \mathbb R^3\times \mathbb R^3$ be a unique local solution to (\ref{E:THETAGAMMALEQ5OVER3})-(\ref{E:THETAICGAMMALEQ5OVER3}) on $[0,T]$ with $T>0$ fixed and assume $(\uptheta, {\bf V})$ satisfies the a priori assumptions (\ref{E:APRIORI}).  Fix $N\geq 2\lceil \alpha \rceil +12$. Let $k \geq N+1$ in (\ref{E:PHIDEMAND}). Then there are constants $C_1,C_2>0$ so that  
\begin{align}   
C_1\mathcal{S}^N (\tau) \le \sup_{0\le\tau'\le\tau}\{\mathcal{E}^N(\tau')+\mathcal{C}^{N-1}(\tau')\} \le C_2(\mathcal{S}^N(\tau)+\mathcal{S}^N(0)).
\end{align} 
\end{lemma}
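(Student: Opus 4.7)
\smallskip

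\noindent \textbf{Proof proposal for Lemma \ref{L:NORMENERGYGAMMAGREATER5OVER3}.} The plan is to follow the template of Lemma \ref{L:NORMENERGY}, but to account for the key new feature in the $\gamma>\tfrac53$ regime: the potential-energy terms in $\tilde{\mathcal{E}}^N$ carry the weight $\mu^{5-3\gamma}$ which, since $\gamma>\tfrac53$ and $\mu\to\infty$ exponentially by Lemma~\ref{L:AASYMPTOTICS}, decays to zero. Consequently $\tilde{\mathcal{E}}^N$ alone cannot control the unweighted lower-order $\uptheta$, $\nabla_\eta\uptheta$, $\text{div}_\eta\uptheta$ terms in $\mathcal{S}^N$ at orders $a+|\beta|\le N-1$; this is precisely what $\mathcal{C}^{N-1}$ supplies. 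The only remaining terms in $\mathcal{S}^N$ that are not directly matched on either side are the top-order $\|X_r^a\slashed\partial^\beta\uptheta\|^2_{a+\alpha,\psi e^{\bar S}}$ (and the $(1-\psi)$ analogue) with $a+|\beta|=N$, which will be controlled via a fundamental-theorem-of-calculus argument in the spirit of Lemma~\ref{L:COERCIVITY}, at the cost of an $\mathcal{S}^N(0)$ contribution.

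\smallskip

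\noindent As a first step I would reduce each quadratic form appearing in $\tilde{\mathcal E}^N$ and $\mathcal{C}^{N-1}$ to a plain weighted $L^2$ norm of the form $\|\cdot\|_{k,\psi e^{\bar S}}^2$ or $\|\cdot\|_{k,(1-\psi)e^{\bar S}}^2$. For this, the positive two-sided bounds on $e^{\bar S}$ (Corollary~\ref{C:ENTROPYREGULARITYCOROLLARY}), the fact that the eigenvalues $d_i$ of $\Lambda$ are uniformly comparable to positive constants (from Lemma~\ref{L:USEFULTAULEMMAGAMMALEQ5OVER3} combined with the rotation structure $\Lambda=P^\top Q P$, $P\in\mathrm{SO}(3)$ in~\eqref{E:LAMBDADECOMP}), and the a priori pinch $|\mathscr{J}^{-1/\alpha}-1|<\tfrac13$ from~\eqref{E:APRIORI} together yield the equivalence of the $\mathscr M_{a,\beta}$-quadratic form with $\|\nabla_\eta X^a_r\slashed\partial^\beta \uptheta\|^2_{1+\alpha+a,\psi e^{\bar S}}$ and similarly for $\mathscr N_\nu$. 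This step is essentially routine and identical in spirit to the proof of Lemma~\ref{L:NORMENERGY}.

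\smallskip

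\noindent Next I would perform the term-by-term comparison. For the upper bound $\sup(\tilde{\mathcal E}^N+\mathcal{C}^{N-1})\le C_2(\mathcal{S}^N(\tau)+\mathcal{S}^N(0))$: the kinetic term $\mu^2\|\mathbf V\|^2$ is identical to the $d(\gamma)=2$ contribution to $\mathcal{S}^N$; the weighted potential terms $\mu^{5-3\gamma}\|\cdot\|^2$ at orders $a+|\beta|\le N-1$ are bounded by the unweighted counterparts in $\mathcal{S}^N$ using $\mu^{5-3\gamma}\le \mu(0)^{5-3\gamma}<\infty$ (valid since $\mu$ is nondecreasing and the exponent is negative); at top order $a+|\beta|=N$ the weight $\mu^{5-3\gamma}$ coincides with $b(\gamma)$, so these are directly contained in $\mathcal{S}^N$; and every summand in $\mathcal{C}^{N-1}$ is an unweighted norm that sits verbatim inside $\mathcal{S}^N$. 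For the lower bound $C_1\mathcal{S}^N(\tau)\le \sup(\tilde{\mathcal E}^N+\mathcal{C}^{N-1})$ the same matching works on kinetic terms, lower-order potential terms (via $\mathcal{C}^{N-1}$) and top-order weighted nabla/div terms (via $\tilde{\mathcal E}^N$).

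\smallskip

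\noindent The one genuinely nontrivial point, and the main obstacle, is the top-order unweighted term $\|X^a_r\slashed\partial^\beta\uptheta\|^2_{a+\alpha,\psi e^{\bar S}}$ with $a+|\beta|=N$ in the lower bound, which falls outside the range of the coercivity Lemma~\ref{L:COERCIVITY}. For this I would extend the FTC argument~\eqref{E:COERCIVITYPROOF1A} to top order: writing
\begin{equation*}
X^a_r\slashed\partial^\beta\uptheta(\tau)=X^a_r\slashed\partial^\beta\uptheta(0)+\int_0^\tau \mu^{-1}(\tau')\,\mu(\tau')\,X^a_r\slashed\partial^\beta\mathbf V(\tau')\,d\tau',
\end{equation*}
applying Minkowski in $L^2(\Omega,\psi w^{a+\alpha}e^{\bar S}dy)$ and pulling the sup out, and using that $\int_0^\infty \mu^{-1}(\tau')\,d\tau'<\infty$ by the exponential growth of $\mu$ (consequence of Lemma~\ref{L:AASYMPTOTICS} and the change of variables $d\tau/dt=\mu^{-1}$), one gets
\begin{equation*}
\|X^a_r\slashed\partial^\beta\uptheta(\tau)\|^2_{a+\alpha,\psi e^{\bar S}}\lesssim \|X^a_r\slashed\partial^\beta\uptheta(0)\|^2_{a+\alpha,\psi e^{\bar S}}+\sup_{0\le\tau'\le\tau}\mu^2(\tau')\|X^a_r\slashed\partial^\beta\mathbf V(\tau')\|^2_{a+\alpha,\psi e^{\bar S}}.
\end{equation*}
The first term is absorbed into $\mathcal S^N(0)$ (or equivalently into $\tilde{\mathcal E}^N(0)$ up to the finite constant $\mu(0)^{-(5-3\gamma)}$), and the second is bounded by $\sup\tilde{\mathcal E}^N$. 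An identical argument in the $(1-\psi)$ region closes the estimate. Collecting all contributions gives both sides of the stated equivalence.
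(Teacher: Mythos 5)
Your proposal is correct and follows essentially the same route as the paper: the term-by-term matching of Lemma \ref{L:NORMENERGY} (using Corollary \ref{C:ENTROPYREGULARITYCOROLLARY}, the bounds of Lemma \ref{L:USEFULTAULEMMAGAMMALEQ5OVER3}, and the a priori assumptions \eqref{E:APRIORI}) combined with the fundamental-theorem-of-calculus coercivity mechanism of Lemma \ref{L:COERCIVITY}. You also correctly isolate the one point the paper's one-line proof glosses over --- the top-order unweighted $\|X_r^a\slashed\partial^\beta\uptheta\|^2_{a+\alpha,\psi e^{\bar S}}$ with $a+|\beta|=N$ lies outside the stated range of Lemma \ref{L:COERCIVITY} --- and your observation that the non-gradient FTC estimate loses no derivative of $\mathbf V$ and therefore extends verbatim to order $N$, with the initial-data contribution recovered from $\tilde{\mathcal E}^N(0)$ through the fixed positive constant $\mu(0)^{5-3\gamma}$, closes the lower bound exactly as intended.
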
 
\begin{proof} The proof follows from an analogous argument to the proof of Lemma \ref{L:NORMENERGY} from the $\gamma \in (1,\frac 53]$ case, in conjunction with Lemma \ref{L:COERCIVITY} to control terms without time weights with negative powers which are included in $\mathcal{C}^{N-1}(\tau')$ by $\mathcal{S}^N(\tau)+\mathcal{S}^N(0)$.
\end{proof}     
We are now ready to give the energy inequality in this case.  
\begin{proposition}\label{P:ENERGYESTIMATEGAMMAGREATER5OVER3}
Suppose $\gamma > 1$. Let $(\uptheta, {\bf V}):\Omega \rightarrow \mathbb R^3\times \mathbb R^3$ be a unique local solution to (\ref{E:THETAGAMMALEQ5OVER3})-(\ref{E:THETAICGAMMALEQ5OVER3}) on $[0,T]$ with $T>0$ fixed and assume $(\uptheta, {\bf V})$ satisfies the a priori assumptions (\ref{E:APRIORI}). Fix $N\geq 2\lceil \alpha \rceil +12$. Let $k \geq N+1$ in (\ref{E:PHIDEMAND}). Then for all $\tau \in [0,T]$, we have the following inequality for some $0<\kappa\ll 1$, 
\begin{align} 
& \tilde{\mathcal E}^N(\tau)+\mathcal{C}^{N-1}(\tau)+\int_0^\tau \tilde{\mathcal D}^N(\tau')\,d\tau' \lesssim  \mathcal{S}^N(0)  +\mathcal{B}^N[\uptheta](\tau) + \int_0^\tau  (\mathcal{S}^N(\tau'))^{\frac12} (\mathcal{B}^N[{\bf V}](\tau'))^{\frac12}\,d\tau'  \notag \\ 
& \qquad \qquad \qquad \qquad \qquad \qquad  + \kappa\mathcal{S}^N(\tau) + \int_0^\tau e^{-\mu_0\tau'} \mathcal{S}^N(\tau') d\tau'. \label{E:ENERGYMAINGAMMAGREATER5OVER3}
\end{align} 
\end{proposition}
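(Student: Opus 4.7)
The plan is to mirror the proof of Proposition \ref{P:ENERGYESTIMATEGAMMALEQ5OVER3} for the $\gamma\in(1,\tfrac53]$ case, but starting from the rescaled equation (\ref{E:THETAGAMMAGREATER5OVER3}) and exploiting the coercivity Lemma \ref{L:COERCIVITY} to capture the $\mathcal{C}^{N-1}$ contribution. The zeroth-order estimate proceeds by multiplying (\ref{E:THETAGAMMAGREATER5OVER3}) by $\Lambda^{-1}_{im}\partial_\tau\uptheta^m$ and integrating over $\Omega$. The key structural observation is that $\partial_\tau(\mu^{5-3\gamma}) = (5-3\gamma)\mu^{5-3\gamma}(\mu_\tau/\mu)$ now has a \emph{negative} sign (since $\gamma>\tfrac53$), so the time derivative of the $\delta\mu^{5-3\gamma}\|\uptheta\|^2$ term contributes, after moving to the other side, exactly the nonnegative dissipation $\tilde{\mathcal{D}}^N$. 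This replaces the anti-damping role played by $\mu^{3\gamma-3}(\mu_\tau/\mu)\|\mathbf{V}\|^2$ in the previous regime.

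At zeroth order, after the pressure term is integrated by parts, Lemma \ref{L:USEFULIDENTITIESPREENERGYINEQUALITY} decomposes the resulting expression into a modified-gradient part and a curl part exactly as before, but now multiplied by the extra factor $\mu^{5-3\gamma}$. Lemma \ref{L:KEYLEMMA} again produces the perfect time derivative of $\mu^{5-3\gamma}\mathscr{J}^{-1/\alpha}\sum d_id_j^{-1}(\mathscr{M}_{0,0})^j_i)^2$ modulo error terms $\mathcal{T}_{0,0}$; when $\partial_\tau$ hits $\mu^{5-3\gamma}$ itself it produces an additional negative-sign contribution that likewise feeds the divergence and gradient parts of $\tilde{\mathcal{D}}^N$. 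All remaining error terms are absorbed into $\int_0^\tau e^{-\mu_0\tau'}\mathcal{S}^N(\tau')\,d\tau'$ via the exponential decay of $\partial_\tau\Lambda$, $\partial_\tau\mathscr{A}$, $D\mathbf{V}$ (Lemma \ref{L:EMBEDDING}) and the boundedness of $\mu^{5-3\gamma}$.

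The high-order step applies $X_r^a\slashed\partial^\beta$ to (\ref{E:THETAGAMMAGREATER5OVER3}), multiplies by $w^{\alpha+a}\psi\Lambda^{-1}_{im}\partial_\tau X_r^a\slashed\partial^\beta\uptheta^m$, and repeats the zeroth-order manipulation. The commutator Lemma \ref{L:COMM} produces the same remainder $\mathcal{R}_i^{a,\beta}$ with the favorable schematic structure (\ref{E:FAVORABLEREMAINDERFORMGAMMALEQ5OVER3}), and the identity (\ref{E:HIGHORDERDERVIATIVEAJGAMMALEQ5OVER3}) again isolates the curl piece from the gradient-divergence piece. The curl piece, after integration by parts in $\tau$, generates the boundary term $B_1$ controlled by $\mathcal{S}^N(0)+\kappa\mathcal{S}^N(\tau)+\mathcal{B}^N[\uptheta](\tau)$ and the time integral $B_2$ controlled by $\int_0^\tau(\mathcal{S}^N)^{1/2}(\mathcal{B}^N[\mathbf{V}])^{1/2}\,d\tau'$, exactly as in Proposition \ref{P:ENERGYESTIMATEGAMMALEQ5OVER3}. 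Summing over $a+|\beta|\leq N$ and over $|\nu|\leq N$ yields bounds on $\tilde{\mathcal{E}}^N(\tau)+\int_0^\tau\tilde{\mathcal{D}}^N(\tau')\,d\tau'$.

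The main new obstacle is recovering the $\mathcal{C}^{N-1}$ contribution to the left-hand side, since $\tilde{\mathcal{E}}^N$ only controls $\uptheta$ at top order (weighted by the decaying factor $\mu^{5-3\gamma}$), which is insufficient to close the estimate if we want sub-top-order $\uptheta$ norms without the $\mu^{5-3\gamma}$ weight. Here the coercivity Lemma \ref{L:COERCIVITY} is decisive: each sub-top-order $\uptheta$ norm of the form in $\mathcal{C}^{N-1}$ is bounded by $\sup_{0\le\tau'\le\tau}\mu^2$ times a $\mathbf{V}$ norm of the same or one higher order, plus the initial datum, and those $\mu^2\|\mathbf{V}\|^2$ quantities are precisely the terms already controlled by $\tilde{\mathcal{E}}^N$. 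Applying Lemma \ref{L:COERCIVITY} to each summand in $\mathcal{C}^{N-1}$ and absorbing the initial-data contribution into $\mathcal{S}^N(0)$ completes the bound on $\tilde{\mathcal{E}}^N(\tau)+\mathcal{C}^{N-1}(\tau)+\int_0^\tau\tilde{\mathcal{D}}^N(\tau')\,d\tau'$. The only subtlety is when integration by parts in $\tau$ moves a factor of $\mu^{5-3\gamma}$ outside the time integral (for instance, in the curl boundary term $B_1$ produced by the pressure): one then uses the trick in Lemma \ref{L:NEWCURLESTMETHODGAMMAGREATER5OVER3} of multiplying and dividing by $\mu^{5-3\gamma}$ inside the $\tau'$-integral so that the external factor cancels, leaving an integrable negative power of $\mu$ and recovering the desired $\int_0^\tau e^{-\mu_0\tau'}\mathcal{S}^N(\tau')\,d\tau'$ bound.
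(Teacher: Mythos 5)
Your proposal is correct and follows essentially the same route as the paper: the zeroth- and high-order estimates mirror the $\gamma\in(1,\tfrac53]$ argument applied to \eqref{E:THETAGAMMAGREATER5OVER3}, with the sign of $\partial_\tau(\mu^{5-3\gamma})$ producing the $\uptheta$-based dissipation $\tilde{\mathcal D}^N$ (both from the $\delta\mu^{5-3\gamma}|\uptheta|^2$ term and from Lemma \ref{L:KEYLEMMA} applied to the pressure piece), the extra term generated when $\partial_\tau$ hits $\mu^{5-3\gamma}$ in the curl integration by parts handled by redistributing $\mu^{(5-3\gamma)/2}$ and Young's inequality, and the $\mathcal{C}^{N-1}$ contribution recovered exactly as you describe via the coercivity Lemma \ref{L:COERCIVITY} together with the norm--energy equivalence.
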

\textit{Outline of Proof.}
\textbf{Zeroth order estimate.} Multiplying (\ref{E:THETAGAMMAGREATER5OVER3}) by $\Lambda_{im}^{-1}\partial_\tau\uptheta^m$ and integrating over $\Omega$
\begin{align}\label{E:ZEROORDERPOSTIPGAMMAGREATER5OVER3}
\int_\Omega w^\alpha ( \mu^{2} \partial_{\tau \tau} \uptheta_i &+  \mu_\tau \mu \partial_\tau \uptheta_i  + 2 \mu^2 \Gamma^*_{ij} \partial_{\tau} \uptheta_j + \delta \mu^{5-3\gamma} w^\alpha \Lambda_{i \ell} \uptheta_\ell )\Lambda_{im}^{-1}\partial_\tau\uptheta^m \, dy \notag \\
&+\int_\Omega \mu^{5-3\gamma} \left(w^{1+\alpha} e^{\bar{S}} \Lambda_{ij} \left( \mathscr{A}_j^k \mathscr{J}^{-\frac{1}{\alpha}}-\delta_j^k\right)\right),_k  \Lambda_{im}^{-1} \partial_\tau \uptheta^m \, dy=0.
\end{align}
Recognizing the perfect time derivative structure of the first integral
\begin{align}\label{E:ZEROORDERPOSTIPINT1GAMMAGREATER5OVER3}
\frac12\frac{d}{d\tau}\left(\mu^{2} \int_\Omega  w^\alpha  \langle \Lambda^{-1}\partial_\tau\uptheta, \partial_\tau\uptheta \rangle \, dy  +\delta \mu^{5-3\gamma} \int_\Omega w^\alpha |\uptheta |^2 dy\right)+\frac{3\gamma-5}{2}\mu^{4-3\gamma}\mu_\tau \int_\Omega  w^\alpha |\uptheta |^2 dy \notag \\
-\frac{\mu^{2}}{2}\int_\Omega  w^\alpha \langle \partial_\tau\Lambda^{-1}\partial_\tau\uptheta, \partial_\tau \uptheta \rangle \, dy + 2\mu^{2} \int_\Omega  w^\alpha \langle \Lambda^{-1}\partial_\tau\uptheta, \Gamma^\ast \partial_\tau \uptheta \rangle  \, dy.   
\end{align}
Returning to the second integral in (\ref{E:ZEROORDERPOSTIPGAMMAGREATER5OVER3}), we integrate by parts to obtain
\begin{align}\label{E:ZEROORDERPOSTIBPGAMMAGREATER5OVER3}
&-\mu^{5-3\gamma}\int_\Omega w^{1+\alpha} e^{\bar{S}} \Lambda_{ij} \left( \mathscr{A}_j^k \mathscr{J}^{-\frac{1}{\alpha}}-\delta_j^k\right)  \Lambda_{im}^{-1} \partial_\tau \uptheta^m,_k \, dy \notag \\
&= \mu^{5-3\gamma} \int_\Omega  w^{1+\alpha} e^{\bar{S}} \mathscr{J}^{-\frac1\alpha} \Lambda_{ij} \mathscr{A}^k_l \uptheta^l,_j  \Lambda_{im}^{-1}\partial_\tau \uptheta^m,_k   dy - \mu^{5-3\gamma} \int_\Omega  w^{1+\alpha} e^{\bar{S}} (\mathscr{J}^{-\frac1\alpha} -1) \partial_\tau \uptheta^k,_k  dy \notag \\
&= \mu^{5-3\gamma} \int_\Omega  w^{1+\alpha} e^{\bar{S}} \mathscr{J}^{-\frac1\alpha}  \Lambda_{ip} \mathscr{A}^j_p  \uptheta^\ell,_j  \Lambda_{im}^{-1} \mathscr{A}^k_\ell \partial_\tau \uptheta^m,_k   dy  \notag \\
&  + \mu^{5-3\gamma} \int_\Omega  w^{1+\alpha} e^{\bar{S}} \mathscr{J}^{-\frac1\alpha} \Lambda_{ip} \mathscr{A}^j_l\uptheta^l,_p \mathscr{A}^k_\ell \uptheta^\ell,_j  \Lambda_{im}^{-1}\partial_\tau \uptheta^m,_k dy - \mu^{5-3\gamma} \int_\Omega  w^{1+\alpha} e^{\bar{S}} (\mathscr{J}^{-\frac1\alpha} -1) \partial_\tau \uptheta^k,_k  dy \notag \\
&= \mu^{5-3\gamma} \int_\Omega  w^{1+\alpha}e^{\bar{S}}  \mathscr{J}^{-\frac1\alpha} \Lambda_{\ell j}[\nabla_\eta\uptheta]^i_j  \Lambda_{im}^{-1} [\nabla_\eta \partial_\tau \uptheta ]^m_\ell dy +\mu^{5-3\gamma} \int_\Omega  w^{1+\alpha}e^{\bar{S}}   \mathscr{J}^{-\frac1\alpha} [\text{Curl}_{\Lambda\mathscr{A}} \uptheta]^\ell_i  \Lambda_{im}^{-1} [\nabla_\eta \partial_\tau \uptheta ]^m_\ell dy \notag \\
&+ \mu^{5-3\gamma} \int_\Omega  w^{1+\alpha} e^{\bar{S}} \mathscr{J}^{-\frac1\alpha}  \mathscr{A}^j_l\uptheta^l,_m \mathscr{A}^k_\ell \uptheta^\ell,_j \partial_\tau \uptheta^m,_k   dy- \mu^{5-3\gamma} \int_\Omega  w^{1+\alpha} e^{\bar{S}} (\mathscr{J}^{-\frac1\alpha} -1) \partial_\tau \uptheta^k,_k  dy \notag \\
&=:(i)+(ii)+(iii)+(iv),
\end{align}
By Lemma \ref{L:KEYLEMMA}  
\begin{align}
&(i)=\frac{1}{2}\frac{d}{d\tau} \mu^{5-3\gamma} \int_\Omega  w^{1+\alpha}e^{\bar{S}} \mathscr{J}^{-\frac1\alpha}\sum_{i,j=1}^3 d_i d_j^{-1}(({\mathscr M}_{0,0})^j_i)^2 dy \notag \\
&\;+ \frac{3\gamma-5}{2} \mu^{5-3\gamma}\frac{\mu_\tau}{\mu} \int_\Omega  w^{1+\alpha}e^{\bar{S}} \mathscr{J}^{-\frac1\alpha}\sum_{i,j=1}^3 d_i d_j^{-1}(({\mathscr M}_{0,0})^j_i)^2 dy + \mu^{5-3\gamma} \int_\Omega  w^{1+\alpha} e^{\bar{S}} \mathscr{J}^{-\frac1\alpha} \mathcal T_{0,0} dy \notag \\
&\;+ \frac{\mu^{5-3\gamma}}{2\alpha}\int_\Omega  w^{1+\alpha}e^{\bar{S}} \mathscr{J}^{-\frac1\alpha-1}\mathscr{J}_\tau \sum_{i,j=1}^3 d_i d_j^{-1}(({\mathscr M}_{0,0})^j_i)^2 dy \notag \\ 
&\;+ \mu^{5-3\gamma} \int_\Omega w^{1+\alpha}e^{\bar{S}}\mathscr{J}^{-\frac{1}{\alpha}}\Lambda_{\ell j} [\nabla_\eta \uptheta]_i^j \Lambda_{im}^{-1} [\nabla_\eta \uptheta]_p^m [\nabla_\eta \partial_{\tau} \uptheta]_\ell^p dy. \label{E:ZEROORDERIEXPGAMMAGREATER5OVER3} 
\end{align}  
Now after time integration, the second term in (\ref{E:ZEROORDERIEXPGAMMAGREATER5OVER3}) contributes to $\int_0^\tau \tilde{\mathcal D}^N(\tau')\,d\tau'$ in (\ref{E:ENERGYMAINGAMMAGREATER5OVER3}). \\ \\
\textbf{High order estimate} Fix $(a,\beta)$ with $a+|\beta| \geq 1$. First, rearrange (\ref{E:THETAGAMMAGREATER5OVER3}) as     
\begin{equation}\label{E:HIGHORDERREARRANGEGAMMAGREATER5OVER3}
\mu^{2} \left(\partial_{\tau \tau} \uptheta_i + \frac{\mu_\tau}{\mu} \partial_\tau \uptheta_i  + 2 \Gamma^*_{ij} \partial_{\tau} \uptheta_j\right) + \delta \mu^{5-3\gamma} \Lambda_{i \ell} \uptheta_\ell +  \mu^{5-3\gamma} \frac{1}{w^\alpha}\left(w^{1+\alpha} e^{\bar{S}} \Lambda_{ij} \left( \mathscr{A}_j^k \mathscr{J}_\eta^{-\frac{1}{\alpha}}-\delta_j^k\right)\right),_k = 0.
\end{equation}        
Apply $X_r^a \slashed\partial^{\beta}$ to (\ref{E:HIGHORDERREARRANGEGAMMAGREATER5OVER3}) and multiply by $\psi$
\begin{align}
&\psi \mu^{2} \left( \partial_{\tau \tau} X_r^a \slashed\partial^{\beta} \uptheta_i + \frac{\mu_\tau}{\mu} \partial_\tau X_r^a \slashed\partial^{\beta} \uptheta_i  + 2 \Gamma^*_{ij} \partial_{\tau} X_r^a \slashed\partial^{\beta} \uptheta_j \right) + \delta\psi \mu^{5-3\gamma} \Lambda_{i \ell} X_r^a \slashed\partial^{\beta} \uptheta_\ell \notag \\ 
&\qquad + \psi \mu^{5-3\gamma} \frac{1}{w^{\alpha+a}}\left(w^{1+\alpha+a} e^{\bar{S}} \Lambda_{ij} X_r^a \slashed\partial^{\beta} \left(\mathscr{A}_j^k \mathscr{J}^{-\frac{1}{\alpha}}-\delta_j^k\right)\right) ,_k = -\psi \mu^{5-3\gamma} \mathcal{R}_i^{a,\beta}. \label{E:HIGHORDERDERIVATIVEPSIGAMMAGREATER5OVER3}
\end{align}
Multiply (\ref{E:HIGHORDERDERIVATIVEPSIGAMMAGREATER5OVER3}) by $w^{\alpha+a}\Lambda^{-1}_{im}\partial_\tau X^a_r \slashed\partial^\beta \uptheta^m$ and integrate over $\Omega$
\begin{align}
&\frac12\frac{d}{d\tau}\left(\mu^2 \int_\Omega \psi \, w^{\alpha+a} \langle\Lambda^{-1}X^a_r \slashed\partial^\beta{\bf V},\,X^a_r \slashed\partial^\beta{\bf V}\rangle \,dy+\delta \mu^{5-3\gamma} \int_\Omega\psi  w^{\alpha+a}\, |X^a_r \slashed\partial^\beta\uptheta |^2 
\,dy \right)  \notag \\
&+\frac{3\gamma-5}{2}\mu^{5-3\gamma} \frac{\mu_\tau}{\mu}\int_\Omega \psi  w^{\alpha+a} |X^a_r \slashed\partial^\beta\uptheta |^2 dy \notag \\
& - \frac{\mu^{2}}{2}\int_\Omega
\psi  w^{\alpha+a}\, \left\langle\left[\partial_\tau\Lambda^{-1}-4 \Lambda^{-1}\Gamma^*\right] X^a_r \slashed\partial^\beta{\bf V},X^a_r \slashed\partial^\beta{\bf V}\right\rangle dy \notag \\
& + \mu^{5-3\gamma} \int_\Omega  \psi \left( w^{1+\alpha+a} e^{\bar{S}} \Lambda_{ij} X^a_r \slashed\partial^\beta \left(\mathscr{A}^k_j\mathscr{J}^{-\frac1\alpha}-\delta^k_j\right)\right),_k \Lambda^{-1}_{im} \partial_\tau X^a_r \slashed\partial^\beta \uptheta^m \,dy  \notag \\
&=- \mu^{5-3\gamma} \int_\Omega \psi  w^{\alpha+a}\mathcal R^i_{a,\beta} \Lambda^{-1}_{im} X^a_r \slashed\partial^\beta \uptheta^m_\tau \,dy.\label{E:HIGHORDERPOSTIPGAMMAGREATER5OVER3}
\end{align} 
Now compute the last integral on the left hand side of (\ref{E:HIGHORDERPOSTIPGAMMAGREATER5OVER3}) using integration by parts
\begin{align}
& \mu^{5-3\gamma} \int_\Omega  \psi \left( w^{1+\alpha+a} e^{\bar{S}} \Lambda_{ij} X^a_r \slashed\partial^\beta \left(\mathscr{A}^k_j\mathscr{J}^{-\frac1\alpha}-\delta^k_j\right)\right),_k \Lambda^{-1}_{im} \partial_\tau X^a_r \slashed\partial^\beta \uptheta^m \,dy \notag \\
& = \mu^{5-3\gamma} \int_\Omega  \psi \left( w^{1+\alpha+a} e^{\bar{S}} \Lambda_{ij} \left(- \mathscr{J}^{-\frac1\alpha}\mathscr{A}^k_\ell\mathscr{A}^s_j X_r^a \slashed\partial^\beta \uptheta^\ell,_s -\tfrac1\alpha \mathscr{J}^{-\frac1\alpha}\mathscr{A}^k_j\mathscr{A}^s_\ell \slashed\partial^\beta \uptheta^\ell,_s \right. \right. \notag \\
&\qquad \qquad \qquad \left. \left. + \mathcal C^{a,\beta,k}_j(\uptheta)\right) \right),_k \Lambda^{-1}_{im} \partial_\tau X^a_r \slashed\partial^\beta \uptheta^m \,dy \notag \\
&= -\mu^{5-3\gamma} \int_\Omega  \psi \left( w^{1+\alpha+a} e^{\bar{S}} \left( \mathscr{J}^{-\frac1\alpha}\mathscr{A}^k_\ell \left(\Lambda_{ij}\mathscr{A}^s_jX^a_r\slashed\partial^\beta\uptheta^\ell,_s - \Lambda_{\ell j}\mathscr{A}^s_jX^a_r\slashed\partial^\beta\uptheta^i,_s\right) \right. \right.\notag \\
&\left. \left. + \mathscr{J}^{-\frac1\alpha}\mathscr{A}^k_\ell\Lambda_{\ell j}\mathscr{A}^s_jX^a_r\slashed\partial^\beta\uptheta^i,_s + \tfrac1\alpha\mathscr{J}^{-\frac1\alpha}\Lambda_{ij}\mathscr{A}^k_j\mathscr{A}^s_\ell  X^a_r \slashed\partial^\beta \uptheta^\ell,_s \right) \right),_k \Lambda^{-1}_{im} \partial_\tau X^a_r \slashed\partial^\beta \uptheta^m \,dy \notag \\
&\qquad \qquad  +\mu^{5-3\gamma} \int_\Omega \psi \Lambda_{ij} \left(  w^{1+\alpha+a} e^{\bar{S}} \mathcal C^{a,\beta,k}_j \right),_k \Lambda^{-1}_{im}\partial_\tau X^a_r \slashed\partial^\beta \uptheta^m \,dy \notag \\
&=\mu^{5-3\gamma} \int_\Omega  \psi w^{1+\alpha+a} e^{\bar{S}} \mathscr{J}^{-\frac1\alpha} \left(\mathscr{A}^k_\ell  [\text{Curl}_{\Lambda\mathscr{A}}X^a_r \slashed\partial^\beta\uptheta]^\ell_i + \mathscr{A}^k_\ell \Lambda_{\ell j}[\nabla_\eta X^a_r \slashed\partial^\beta\uptheta]^i_j \right. \notag \\
&\qquad \qquad  \qquad + \left. \tfrac1\alpha \Lambda_{ij}\mathscr{A}^k_j \text{div}_\eta (X^a_r \slashed\partial^\beta \uptheta)\right) \Lambda^{-1}_{im}\partial_\tau \left(X^a_r \slashed\partial^\beta \uptheta^m\right),_k \,dy \notag \\
&+ \mu^{5-3\gamma} \int_\Omega \psi,_k w^{1+\alpha+a} e^{\bar{S}} \mathscr{J}^{-\frac1\alpha} \left(\mathscr{A}^k_\ell  [\text{Curl}_{\Lambda\mathscr{A}}X^a_r \slashed\partial^\beta\uptheta]^\ell_i + \mathscr{A}^k_\ell \Lambda_{\ell j}[\nabla_\eta X^a_r \slashed\partial^\beta\uptheta]^i_j \right. \notag \\ 
& \left. \qquad \qquad \qquad + \tfrac1\alpha \Lambda_{ij}\mathscr{A}^k_j \text{div}_\eta (X^a_r \slashed\partial^\beta \uptheta)\right) \Lambda^{-1}_{im}\partial_\tau X^a_r \slashed\partial^\beta \uptheta^m \, dy \notag \\
&+ \mu^{5-3\gamma} \int_\Omega \psi \Lambda_{ij} \left(  w^{1+\alpha+a} e^{\bar{S}} \mathcal C^{a,\beta,k}_j \right),_k \Lambda^{-1}_{im}\partial_\tau X^a_r \slashed\partial^\beta \uptheta^m \,dy \notag \\
&=:I_1+I_2+I_3. \label{E:HIGHORDERLASTINTEGRALIBPGAMMAGREATER5OVER3}
\end{align} 
\underline{Estimation of $I_1$:}  
\begin{align}
&\int_0^\tau I_1 \, d\tau' =  \int_0^\tau \mu^{5-3\gamma} \int_\Omega  \psi w^{1+\alpha+a} e^{\bar{S}} \mathscr{J}^{-\frac1\alpha} \mathscr{A}^k_\ell [\text{Curl}_{\Lambda\mathscr{A}}X^a_r \slashed\partial^\beta\uptheta]^\ell_i \Lambda^{-1}_{im}\partial_\tau \left(X^a_r \slashed\partial^\beta \uptheta^m\right),_k \,dy d\tau'  \notag \\
&+ \int_0^\tau \mu^{5-3\gamma} \int_\Omega  \psi w^{1+\alpha+a} e^{\bar{S}} \mathscr{J}^{-\frac1\alpha} \left( \mathscr{A}^k_\ell \Lambda_{\ell j}[\nabla_\eta X^a_r \slashed\partial^\beta\uptheta]^i_j + \tfrac1\alpha \Lambda_{ij}\mathscr{A}^k_j \text{div}_\eta (X^a_r \slashed\partial^\beta \uptheta) \right) \Lambda_{im}^{-1}\partial_\tau \left(X^a_r \slashed\partial^\beta \uptheta^m\right),_k \,dy d\tau'  \notag \\
&= \int_0^\tau \mu^{5-3\gamma} \int_\Omega  \psi w^{1+\alpha+a} e^{\bar{S}} \mathscr{J}^{-\frac1\alpha} \mathscr{A}^k_\ell \Lambda^{-1}_{im} [\text{Curl}_{\Lambda\mathscr{A}}X^a_r \slashed\partial^\beta\uptheta]^\ell_i \partial_\tau \left(X^a_r \slashed\partial^\beta \uptheta^m\right),_k \,dy d\tau'  \notag \\
&+ \int_0^\tau  \mu^{5-3\gamma} \int_\Omega  \psi w^{1+\alpha+a} e^{\bar{S}} \mathscr{J}^{-\frac1\alpha} \left( \Lambda_{\ell j}[\nabla_\eta X^a_r \slashed\partial^\beta \uptheta]^i_j \Lambda^{-1}_{im}[\nabla_\eta\partial_\tau X^a_r \slashed\partial^\beta \uptheta ]_\ell^m+\tfrac1\alpha \text{div}_\eta (X^a_r \slashed\partial^\beta\uptheta) \text{div}_\eta (\partial_\tau X^a_r \slashed\partial^\beta \uptheta) \right) \,dy d\tau' \notag \\ 
& =  \mu^{5-3\gamma} \int_\Omega \psi w^{1+\alpha+a} e^{\bar{S}} \mathscr{J}^{\frac1\alpha} \mathscr{A}^k_\ell\Lambda^{-1}_{im} [\text{Curl}_{\Lambda\mathscr{A}}X^a_r \slashed\partial^\beta\uptheta]^\ell_i \left(X^a_r \slashed\partial^\beta{\bf\uptheta}^m\right),_k \,dx\Big|^\tau_0 \notag \\
&  -\int_0^\tau \mu^{5-3\gamma} \int_\Omega \psi w^{1+\alpha+a} e^{\bar{S}}   \mathscr{J}^{\frac1\alpha} \mathscr{A}^k_\ell\Lambda^{-1}_{im}
[\text{Curl}_{\Lambda\mathscr{A}}X^a_r \slashed\partial^\beta{\bf V}]^\ell_i \left(X^a_r \slashed\partial^\beta{\bf\uptheta}^m\right),_k \,dx\,d\tau' \notag\\
&+(3\gamma-5) \int_0^\tau \mu^{4-3\gamma} \mu_{\tau}  \int_\Omega \psi w^{1+\alpha+a} e^{\bar{S}}   \mathscr{J}^{\frac1\alpha} \mathscr{A}^k_\ell\Lambda^{-1}_{im}
[\text{Curl}_{\Lambda\mathscr{A}}X^a_r \slashed\partial^\beta{\bf \uptheta}]^\ell_i \left(X^a_r \slashed\partial^\beta{\bf\uptheta}^m\right),_k \,dx\,d\tau' \notag \\
&  -\int_0^\tau \mu^{5-3\gamma} \int_\Omega \psi w^{1+\alpha+a} e^{\bar{S}}   \partial_\tau\left(\mathscr{J}^{\frac1\alpha} \mathscr{A}^k_\ell\Lambda^{-1}_{im}\right)
[\text{Curl}_{\Lambda\mathscr{A}}X^a_r \slashed\partial^\beta{\bf \uptheta}]^\ell_i \left(X^a_r \slashed\partial^\beta{\bf\uptheta}^m\right),_k \,dx\,d\tau'  \notag \\
&  -\int_0^\tau \mu^{5-3\gamma} \int_\Omega \psi w^{1+\alpha+a} e^{\bar{S}}   \mathscr{J}^{\frac1\alpha} \mathscr{A}^k_\ell\Lambda^{-1}_{im}
[\text{Curl}_{\Lambda_\tau\mathscr{A}}X^a_r \slashed\partial^\beta{\bf \uptheta}]^\ell_i \left(X^a_r \slashed\partial^\beta{\bf\uptheta}^m\right),_k \,dx\,d\tau' \notag \\
&- \int_0^\tau \mu^{5-3\gamma} \int_\Omega \psi w^{1+\alpha+a} e^{\bar{S}}   \mathscr{J}^{\frac1\alpha} \mathscr{A}^k_\ell\Lambda^{-1}_{im}
[\text{Curl}_{\Lambda\mathscr{A}_\tau}X^a_r \slashed\partial^\beta{\bf \uptheta}]^\ell_i \left(X^a_r \slashed\partial^\beta{\bf\uptheta}^m\right),_k \,dx\,d\tau' \notag \\
&+\int_0^\tau \mu^{5-3\gamma} \int_\Omega \psi  w^{1+\alpha+a} e^{\bar{S}}  \mathscr{J}^{-\frac1\alpha} \left( \Lambda_{\ell j}[\nabla_\eta X^a_r \slashed\partial^\beta \uptheta]^i_j \Lambda^{-1}_{im}\partial_\tau [\nabla_\eta X^a_r \slashed\partial^\beta \uptheta ]_\ell^m+\tfrac1\alpha\text{div}_\eta (X^a_r \slashed\partial^\beta \uptheta) \partial_\tau \text{div}_\eta ( X^a_r \slashed\partial^\beta \uptheta) \right)dy d\tau' \notag \\
&-\int_0^\tau \mu^{5-3\gamma} \int_\Omega \mathscr{J}^{-\frac1\alpha} \left( \Lambda_{\ell j}[\nabla_\eta X^a_r \slashed\partial^\beta \uptheta]^i_j \Lambda^{-1}_{im}\partial_\tau\mathscr{A}^k_\ell \left(X^a_r \slashed\partial^\beta \uptheta^m\right),_k+ \tfrac1\alpha\text{div}_\eta (X^a_r \slashed\partial^\beta\uptheta)  \partial_\tau \mathscr{A}^k_j \left(X^a_r \slashed\partial^\beta \uptheta^j\right),_k  \right) d\tau' \notag \\
&:=B_1+B_2+B_3+\mathcal{R}_1+\mathcal{R}_2+\mathcal{R}_3+E_1+\mathcal{R}_4,\label{E:HIGHORDERI1EXPANDGAMMAGREATER5OVER3}
\end{align}
where we have used integration by parts. For $B_3$, which is a term unique to the $\gamma > \frac53$ case,
\begin{align}
&(3\gamma-5) \int_0^\tau \mu^{4-3\gamma} \mu_{\tau}  \int_\Omega \psi w^{1+\alpha+a} e^{\bar{S}}   \mathscr{J}^{\frac1\alpha} \mathscr{A}^k_\ell\Lambda^{-1}_{im}
[\text{Curl}_{\Lambda\mathscr{A}}X^a_r \slashed\partial^\beta{\bf \uptheta}]^\ell_i \left(X^a_r \slashed\partial^\beta{\bf\uptheta}^m\right),_k \,dx\,d\tau' \notag \\
&=(3\gamma-5) \int_0^\tau \mu^{4-3\gamma} \mu_{\tau} \frac{1}{\mu^{\tfrac{5-3\gamma}{2}}} \int_\Omega \psi \mu^{\tfrac{5-3\gamma}{2}} w^{1+\alpha+a} e^{\bar{S}}   \mathscr{J}^{\frac1\alpha} \mathscr{A}^k_\ell\Lambda^{-1}_{im}
[\text{Curl}_{\Lambda\mathscr{A}}X^a_r \slashed\partial^\beta{\bf \uptheta}]^\ell_i \left(X^a_r \slashed\partial^\beta{\bf\uptheta}^m\right),_k \,dx\,d\tau' \notag \\
&\lesssim  \left(\kappa \sup_{0 \leq \tau \leq \tau'}  \mu^{5-3\gamma} \| \nabla_\eta X_r^a \slashed\partial^\beta \uptheta \|_{1+\alpha+a,\psi e^{\bar{S}}}^2 + \frac{1}{\kappa} \sup_{0 \leq \tau \leq \tau'}   \mu^{5-3\gamma} \|\text{Curl}_{\Lambda\mathscr{A}}X^a_r \slashed\partial^\beta\uptheta\|^2_{1+\alpha+a,\psi e^{\bar{S}}}\right) \int_0^\tau \mu^{\tfrac{3-3\gamma}{2}} \mu_{\tau} d\tau' \notag \\  
&\lesssim \kappa \tilde{\mathcal{S}}^N(\tau) +\tilde{ \mathcal{B}}^N[\uptheta](\tau),
\end{align}
where we have used the Young inequality with $\varepsilon$ and (\ref{E:EXPMU1MUINEQGAMMALEQ5OVER3}). For $E_1$
\begin{align}
E_1&=\frac12 \int_0^\tau \frac{d}{d\tau}\left\{\mu^{5-3\gamma} \int_\Omega \psi  w^{1+\alpha+a} e^{\bar{S}}  \mathscr{J}^{-\frac1\alpha}\left(\sum_{i,j=1}^3 d_id_j^{-1}(\mathscr M_{a,\beta})^j_{i})^2 + \frac1\alpha\left(\text{div}_\eta X^a_r \slashed\partial^\beta \uptheta\right)^2\right)\,dy\right\}\,d\tau' \notag\\
&+\int_0^\tau \frac{3\gamma-5}{2}\mu^{5-3\gamma}\frac{\mu_\tau}{\mu} \int_\Omega   \psi\mathscr{J}^{-\frac1\alpha}\Big[\sum_{i,j=1}^3 d_id_j^{-1}\left((\mathscr{M}_{a,\beta})^j_{i}\right)^2+\tfrac1\alpha\left(\text{div}_\eta X_r^a\slashed\partial^\beta \uptheta\right)^2\Big] w^{a+\alpha+1}e^{\bar{S}} \, dy \, d\tau' \notag \\
&+ \int_0^\tau \mu^{5-3\gamma} \int_\Omega \psi  w^{1+\alpha+a} e^{\bar{S}}   \mathscr{J}^{-\frac1\alpha} \mathcal T_{a,\beta} dy \,d\tau' \notag \\
& - \frac12 \int_0^\tau \mu^{5-3\gamma} \int_\Omega \psi  w^{1+\alpha+a} e^{\bar{S}}  \partial_\tau\left(\mathscr{J}^{-\frac1\alpha}\right)\left(\sum_{i,j=1}^3 d_id_j^{-1}(\mathscr M_{a,\beta})^j_{i})^2 + \frac1\alpha\left(\text{div}_\eta X^a_r \slashed\partial^\beta \uptheta\right)^2\right)\,dy \,d\tau' \notag \\
&:=E_2+\mathcal{D}_1 + \mathcal{R}_5 + \mathcal{R}_6, \label{E:HIGHORDERE1EXPANDGAMMAGREATER5OVER3}
\end{align}
where we have used Lemma \ref{L:KEYLEMMA}. Then $\mathcal{D}_1$ contributes to $\int_0^\tau \tilde{\mathcal{D}}^N (\tau ') \, d\tau'$ in (\ref{E:ENERGYMAINGAMMAGREATER5OVER3}). This concludes our energy inequality proof outline in the $\gamma > \frac53$ case.

\section*{Acknowledgments}
The authors thank the anonymous referees for their helpful comments that have improved the presentation of the paper. C. Rickard  and J. Jang acknowledge the support of the NSF grant DMS-1608494. 
M. Had\v zi\'c acknowledges the support of the EPSRC grant 
EP/N016777/1. 

\appendix
\renewcommand{\theequation}{\Alph{section}.\arabic{equation}}
\setcounter{theorem}{0}\renewcommand{\theorem}{\Alph{section}.\??arabic{prop}}

\section{$\tau$ based inequalities}
We have the following useful $\tau$ based inequalities, summarized by the Lemma below.
\begin{lemma}\label{L:USEFULTAULEMMAGAMMALEQ5OVER3}
Suppose $\gamma > 1$. Fix an affine motion $A(t)$ from the set $\mathscr{S}$ under consideration, namely require
\begin{equation}
\det A(t) \sim 1 + t^3, \quad t \geq 0.
\end{equation}  
Let
\begin{equation}
\mu_1:=\lim_{\tau \rightarrow \infty} \frac{\mu_\tau(\tau)}{\mu(\tau)}, \quad \mu_0:=\frac{d(\gamma)}{2}\mu_1,
\end{equation}          
where $\mu(\tau)=\det A(\tau)$ and $d(\gamma)=\begin{cases}
3\gamma - 3 & \text{ if } \  1<\gamma\leq \frac53 \\ 
2 & \text{ if } \  \gamma>\frac53
\end{cases}.$ \\ \\
Then we have the following properties
\begin{align}
0<\mu_0&=\mu_0(\gamma)\le\mu_1, \label{E:MU0MU1INEQGAMMALEQ5OVER3}\\
e^{\mu_1\tau} \lesssim & \ \mu(\tau)  \lesssim  e^{\mu_1\tau}, \ \ \tau\ge0, \label{E:EXPMU1MUINEQGAMMALEQ5OVER3}\\
\sum_{a+|\beta|\le N}\|X_r^a \slashed\partial^\beta{\bf V}\|_{1+\alpha+a,\psi e^{\bar{S}}} &+ \sum_{|\nu|\leq N} \|\partial^\nu {\bf V}\|_{1+\alpha,(1-\psi)e^{\bar{S}}} \lesssim e^{-\mu_0\tau}\mathcal{S}^N(\tau)^{\frac12} \label{E:EXPSQRTSNBOUNDGAMMALEQ5OVER3}\\
\| \Lambda_\tau \| \lesssim e^{-\mu_1 \tau}, & \quad  \| \Lambda \| + \| \Lambda^{-1} \| \le C,  \label{E:LAMBDABOUNDSGAMMALEQ5OVER3}\\
\sum_{i=1}^3\left(d_i+\frac{1}{d_i}\right) &\le C \label{E:EIGENVALUESBOUNDGAMMALEQ5OVER3} \\
\sum_{i=1}^3|\partial_\tau d_i| + \|\partial_\tau P\| &\lesssim e^{-\mu_1\tau} \label{E:EIGENVALUEPLUSPBOUNDGAMMALEQ5OVER3}\\
|{\bf w}|^2 \lesssim & \ \langle \Lambda^{-1}{\bf w}, {\bf w}\rangle  \lesssim |{\bf w}|^2 , \ {\bf w}\in \mathbb R^3, \label{E:LAMBDAINVERSEIPINEQGAMMALEQ5OVER3}
\end{align}
for $C > 0$.       
\end{lemma}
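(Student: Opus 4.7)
The plan is to reduce everything to the asymptotic behavior of the matrix $A(t)$ given by Lemma~\ref{L:AASYMPTOTICS}. Since $\det A(t)\sim 1+t^3$, we have $\mu(t)=(\det A)^{1/3}\sim 1+t$, and the change of variables $\frac{d\tau}{dt}=(\det A)^{-1/3}=\mu^{-1}$ integrates to $\tau\sim \ln(1+t)$ as $t\to\infty$. Consequently $\mu(\tau)\sim e^\tau$ for large $\tau$, and differentiating shows $\mu_\tau/\mu\to 1$. This identifies $\mu_1=\lim_{\tau\to\infty}\mu_\tau/\mu$ as a strictly positive number (in fact $\mu_1=1$ after absorbing constants into the time reparametrization), which immediately yields~\eqref{E:MU0MU1INEQGAMMALEQ5OVER3} since $d(\gamma)/2\in(0,1]$ for $\gamma\in(1,5/3]$ and $d(\gamma)/2=1$ for $\gamma>5/3$. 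The two-sided bound~\eqref{E:EXPMU1MUINEQGAMMALEQ5OVER3} then follows from the definition of $\mu_1$ together with continuity/positivity of $\mu$ on $[0,\infty)$.

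For the bounds on $\Lambda$, I would use the decomposition $A=\mu O$ with $O\in\mathrm{SL}(3)$, so that $\Lambda=O^{-1}O^{-\top}$. From~\eqref{E:AASYMP}--\eqref{E:MASYMP} together with $\det A\sim 1+t^3$, one shows that $O(t)$ converges to a finite invertible matrix $O_\infty$ as $t\to\infty$, and that $\partial_t O$ decays like $(1+t)^{-2}$ (coming from the combination $A^{-1}A'=\mu^{-1}\mu' I+O^{-1}O'$ and the sharper decay of $\partial_t M$). Translating to the $\tau$ variable via $\partial_\tau=\mu\partial_t$ gives $\|O_\tau\|\lesssim \mu\cdot(1+t)^{-2}\sim e^{-\mu_1\tau}$, hence $\|\Lambda_\tau\|\lesssim e^{-\mu_1\tau}$, while $\|\Lambda\|+\|\Lambda^{-1}\|\le C$ follows from $O,O^{-1}$ being uniformly bounded. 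Items~\eqref{E:EIGENVALUESBOUNDGAMMALEQ5OVER3} and~\eqref{E:LAMBDAINVERSEIPINEQGAMMALEQ5OVER3} are then immediate consequences of the spectral decomposition $\Lambda=P^\top QP$ together with uniform positivity/boundedness of the eigenvalues $d_i$.

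The derivative estimates~\eqref{E:EIGENVALUEPLUSPBOUNDGAMMALEQ5OVER3} require a perturbation argument for simple and possibly coalescing eigenvalues. I would differentiate $\Lambda P^\top=P^\top Q$ in $\tau$; projecting onto the eigenspaces and using uniform spectral gaps (or, when eigenvalues cluster, the standard resolvent-integral representation of the spectral projectors) gives $|\partial_\tau d_i|,\|\partial_\tau P\|\lesssim \|\Lambda_\tau\|$, which is $\lesssim e^{-\mu_1\tau}$. The potential obstacle here is the case of repeated eigenvalues, where one cannot diagonalize smoothly; however, since only the \emph{norm} $\|\partial_\tau P\|$ (for a suitable choice of $P\in\mathrm{SO}(3)$) appears, one may work with the smooth spectral projectors rather than the individual eigenvectors, which sidesteps this issue.

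Finally, for~\eqref{E:EXPSQRTSNBOUNDGAMMALEQ5OVER3}, I would apply the weighted Sobolev embedding (Lemma~\ref{L:EMBEDDING}) to pass from the $L^2$-based norms inside $\mathcal{S}^N$ to pointwise (or $W^{1,\infty}$-type) bounds on $\mathbf{V}$; the factor $\mu^{d(\gamma)}\|\partial\mathbf V\|^2$ in $\mathcal{S}^N$ is exactly $\mu^{2\mu_0/\mu_1}\|\partial\mathbf V\|^2$, and writing $\mu^{d(\gamma)}=\mu^{2\mu_0/\mu_1}$ together with~\eqref{E:EXPMU1MUINEQGAMMALEQ5OVER3} converts the factor $\mu^{d(\gamma)/2}\sim e^{\mu_0\tau}$ into the claimed $e^{-\mu_0\tau}$ prefactor after division. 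I expect the main technical obstacle to be the sharp $\|\partial_t M\|\lesssim(1+t)^{3-3\gamma}$ decay for $\gamma>5/3$ and making sure, uniformly over $\gamma$, that $O_\tau$ decays at the rate $e^{-\mu_1\tau}$ rather than something slower, which requires carefully separating the $\mu$-part of $A$ from the $O$-part when invoking Lemma~\ref{L:AASYMPTOTICS}.
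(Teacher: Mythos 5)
Your overall strategy --- reducing everything to the asymptotics of $A(t)$ from Lemma \ref{L:AASYMPTOTICS} --- is the same as the paper's, which disposes of \eqref{E:EXPMU1MUINEQGAMMALEQ5OVER3} and \eqref{E:LAMBDABOUNDSGAMMALEQ5OVER3}--\eqref{E:LAMBDAINVERSEIPINEQGAMMALEQ5OVER3} simply by citing Lemma A.1 of \cite{1610.01666}. Where you try to fill in that citation, two steps do not go through as written. First, \eqref{E:EXPSQRTSNBOUNDGAMMALEQ5OVER3} is a weighted $L^2$ estimate, not a pointwise one, so Lemma \ref{L:EMBEDDING} is the wrong tool: the embedding converts $L^2$ control into $L^\infty$ control and would prove a different statement. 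The actual argument is one line: since $w\lesssim 1$ one has $\|\cdot\|_{1+\alpha+a,\psi e^{\bar S}}\lesssim\|\cdot\|_{\alpha+a,\psi e^{\bar S}}$, and the definition \eqref{E:SNNORMGAMMALEQ5OVER3} carries the factor $\mu^{d(\gamma)}$ on every velocity term, so $\|X_r^a\slashed\partial^\beta\mathbf V\|_{a+\alpha,\psi e^{\bar S}}\le\mu^{-d(\gamma)/2}\,\mathcal S^N(\tau)^{1/2}\lesssim e^{-\mu_0\tau}\mathcal S^N(\tau)^{1/2}$ by \eqref{E:EXPMU1MUINEQGAMMALEQ5OVER3}. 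This is exactly what the paper means by ``follows from the definition of $\mathcal S^N$ and \eqref{E:MU0MU1INEQGAMMALEQ5OVER3}--\eqref{E:EXPMU1MUINEQGAMMALEQ5OVER3}.''

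Second, your route to \eqref{E:LAMBDABOUNDSGAMMALEQ5OVER3} and \eqref{E:EIGENVALUEPLUSPBOUNDGAMMALEQ5OVER3} hinges on the claim $\|\partial_t O\|\lesssim(1+t)^{-2}$, and this does not follow from the data recorded in Lemma \ref{L:AASYMPTOTICS} by the route you indicate. Writing $A=tA_1(I+E)$ with $E=t^{-1}A_1^{-1}(A_0+M)$, one has $O^{-1}O_t=(A^{-1}A_t)_{\mathrm{tf}}$ (the trace-free part), and the bounds $\|M\|\lesssim 1+t^{4-3\gamma}$, $\|\partial_tM\|\lesssim(1+t)^{3-3\gamma}$ only give $\|O^{-1}O_t\|\lesssim t^{-2}+t^{2-3\gamma}$; for $1<\gamma<\tfrac43$ the second term dominates and yields $\|\Lambda_\tau\|\lesssim e^{(3-3\gamma)\mu_1\tau}=e^{-2\mu_0\tau}$, which is strictly weaker than the asserted $e^{-\mu_1\tau}$. (You flag $\gamma>\tfrac53$ as the dangerous regime, but there $3-3\gamma<-2$ and $\partial_tM$ decays fast; the delicate end is $\gamma$ near $1$.) Closing this requires the finer quantitative ODE analysis of the remainder carried out in Lemma A.1 of \cite{1610.01666}, which is precisely why the paper cites it rather than rederiving it. A similar caveat applies to \eqref{E:EXPMU1MUINEQGAMMALEQ5OVER3}: the mere existence of the limit $\mu_\tau/\mu\to\mu_1$ does not give two-sided comparison with $e^{\mu_1\tau}$; one needs $\int_0^\infty|\mu_\tau/\mu-\mu_1|\,d\tau<\infty$, which again comes from the quantitative remainder estimates rather than from ``continuity/positivity of $\mu$.''
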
 
\begin{proof}
The result (\ref{E:MU0MU1INEQGAMMALEQ5OVER3}) is clear from the definition of $\mu_0$. For inequalities (\ref{E:EXPMU1MUINEQGAMMALEQ5OVER3}) and (\ref{E:LAMBDABOUNDSGAMMALEQ5OVER3}) through (\ref{E:LAMBDAINVERSEIPINEQGAMMALEQ5OVER3}) we first note that by Lemma \ref{L:AASYMPTOTICS}, there exist matrices $A_0,A_1,M(t)$ such that          
\begin{align}  
A(t) = A_0 + t A_1 + M(t), \quad t \geq 0.
\end{align} 
where $A_0,A_1$ are time-independent and $M(t)$ satisfies the bounds
\begin{align} 
\|M(t)\| = o_{t \rightarrow \infty}(1+t), \ \ \|\partial_t M(t)\| \lesssim (1+t)^{3-3\gamma}.
\end{align}          
We also note $\det A(t) \sim 1 + t^3$. Then inequalities (\ref{E:EXPMU1MUINEQGAMMALEQ5OVER3}) and (\ref{E:LAMBDABOUNDSGAMMALEQ5OVER3}) through (\ref{E:LAMBDAINVERSEIPINEQGAMMALEQ5OVER3}) follow from Lemma A.1 \cite{1610.01666}. Finally, (\ref{E:EXPSQRTSNBOUNDGAMMALEQ5OVER3}) follows from the definition of $\mathcal{S}^N$ (\ref{E:SNNORMGAMMALEQ5OVER3}) and properties (\ref{E:MU0MU1INEQGAMMALEQ5OVER3})-(\ref{E:EXPMU1MUINEQGAMMALEQ5OVER3}) above.
\end{proof}

\section{Derivative Operators}
Our derivative operators $\slashed\partial_{ji}$ and $X_r$ satisfy the following identities:
\begin{lemma}\label{L:DERIVATIVEIDENTITIES}
For $i\in\{1,2,3\}$ we have the decomposition
\begin{equation}\label{E:DERIVATIVEDECOMP}
\partial_i = \frac{y_j}{r^2}\slashed\partial_{ji}+\frac{y_i}{r^2}X_r.
\end{equation}
For $i,j,k,m\in\{1,2,3\},$ we have the commutator identities
\begin{equation}\label{E:DERIVATIVECOMMUTATOR}
[\slashed\partial_{ji},X_r]=0, \quad [\slashed\partial_{ji},\slashed\partial_{ik}]=\slashed\partial_{jk}, \quad [\partial_m,X_r]= \partial_m, \quad [\partial_m,\slashed\partial_{ji}]= \delta_{mj}\partial_i - \delta_{mi}\partial_j.
\end{equation}
\end{lemma}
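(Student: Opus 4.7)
All four identities are purely algebraic consequences of the definitions $\slashed\partial_{ji} = y_j\partial_i - y_i\partial_j$ and $X_r = y_k\partial_k$ (the latter because $X_r = r\partial_r = r\cdot\frac{y_k}{r}\partial_k$), together with the Leibniz rule and the fact that $\partial_m\partial_k = \partial_k\partial_m$. The plan is just to execute each verification in turn, being careful with summation conventions.

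For the decomposition \eqref{E:DERIVATIVEDECOMP}, I would simply expand the right-hand side, using $y_jy_j = r^2$ on the first term and relabeling the dummy index $k\mapsto j$ in the $X_r$-term to cancel the cross term $-\frac{y_iy_j}{r^2}\partial_j$; this gives $\partial_i$ on the nose. This is a one-line computation.

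For the commutators in \eqref{E:DERIVATIVECOMMUTATOR}, I would compute each by applying the operator to an arbitrary smooth test function and collecting terms. The two involving only $\partial_m$ (namely $[\partial_m,X_r]=\partial_m$ and $[\partial_m,\slashed\partial_{ji}]=\delta_{mj}\partial_i-\delta_{mi}\partial_j$) come immediately from $\partial_m(y_\ell) = \delta_{m\ell}$, with all second-order terms canceling. For $[\slashed\partial_{ji},X_r]=0$ I would expand both products $\slashed\partial_{ji}X_r$ and $X_r\slashed\partial_{ji}$: the zeroth-order coefficients each produce $y_j\partial_i - y_i\partial_j$, which cancel, and the second-order pieces match by commutativity of $\partial_k\partial_\ell$.

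The one step that needs a bit of care is $[\slashed\partial_{ji},\slashed\partial_{ik}] = \slashed\partial_{jk}$, because the repeated index $i$ is \emph{not} summed here; treating it as a fixed index avoids the obvious pitfall. I would expand
\[
\slashed\partial_{ji}\slashed\partial_{ik} = (y_j\partial_i - y_i\partial_j)(y_i\partial_k - y_k\partial_i)
\]
and the analogous product with the factors reversed; after collecting terms, the second-order contributions cancel pairwise by $\partial_a\partial_b = \partial_b\partial_a$, while the first-order leftovers collapse to $y_j\partial_k - y_k\partial_j = \slashed\partial_{jk}$. This is the only step where any real bookkeeping is required, but it is still routine. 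Since none of the identities use the affine structure or weights, the whole lemma is self-contained and amounts to calculus in $\mathbb{R}^3$.
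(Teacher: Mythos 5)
Your proposal is correct and takes the same route as the paper, which simply notes that all four identities are straightforward consequences of the definitions $\slashed\partial_{ji}=y_j\partial_i-y_i\partial_j$ and $X_r=y_k\partial_k$ (the paper defers the computation to a reference). Your explicit verifications — including the careful treatment of the unsummed repeated index $i$ in $[\slashed\partial_{ji},\slashed\partial_{ik}]=\slashed\partial_{jk}$ — check out.
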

\begin{proof}
These properties are straightforward consequences of the definitions introduced in (\ref{E:DERIVATIVEDEFN}) \cite{hadzic2017class}.
\end{proof}
The following Lemma will help when differentiating radial functions.
\begin{lemma}\label{L:DERIVATIVERADIAL}
Given a radial function $f:\Omega \rightarrow \mathbb{R}$, say $f(|y|)$ with derivatives with respect to $|y|$ denoted by the standard prime notation, and $a+|\beta| \leq N$, $k \in \{1,2,3\}$, we have
\begin{align}
X_r\left(\frac{y_k}{|y|}\right)&=0, \label{E:RADIALDERVIATIVEFRACYKMAGY}\\
X_r\left(\frac{y_k}{|y|^2}\right)&=-\frac{y_k}{|y|^2}, \label{E:RADIALDERVIATIVE2FRACYKMAGY}\\
X_r^a f(|y|) &= \sum_{j=1}^a p_j(|y|)f^{(j)}(|y|), \label{E:RADIALDERVIATIVERADIALFCT} \\
&\text{ where } p_j \text{ is some polynomial with max degree } j, \notag \\
\slashed\partial_{ji} f(|y|) &= 0, \label{E:ANGULARDERIVATIVERADIALFCT} \\
\slashed\partial^\beta \left(\frac{y_k}{|y|}\right)&=
\begin{cases}
y_{\ell}/|y| \\
0,
\end{cases}  \label{E:ANGULARDERIVATIVEFRACYKMAGY} \\
&\text{ for some } \ell \in \{1,2,3\}, \notag \\
\slashed\partial^\beta \left(\frac{y_k}{|y|^2}\right)&=
\begin{cases}
y_{\ell}/|y|^2 \\
0
\end{cases},  \label{E:ANGULARDERIVATIVE2FRACYKMAGY} \\
&\text{ for some } \ell \in \{1,2,3\}. \notag
\end{align}
\end{lemma}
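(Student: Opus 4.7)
\noindent\textbf{Proposal for proving Lemma \ref{L:DERIVATIVERADIAL}.} The statement is a collection of six routine identities about how the operators $X_r = y_i\partial_i$ and $\slashed\partial_{ji} = y_j\partial_i - y_i\partial_j$ act on radial functions and on the homogeneous rational functions $y_k/|y|$ and $y_k/|y|^2$. The entire lemma will fall out from explicit differentiation together with two simple inductions, and no step presents a genuine obstacle; I therefore plan to dispatch it as a sequence of short calculations in the order the identities are listed.

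For (B.1)–(B.2) I would compute directly: writing $X_r = y_i\partial_i$ and using $\partial_i r = y_i/r$ gives
\[
X_r\!\left(\tfrac{y_k}{r}\right) = y_i\!\left(\tfrac{\delta_{ik}}{r} - \tfrac{y_i y_k}{r^3}\right) = \tfrac{y_k}{r} - \tfrac{y_k}{r} = 0,
\quad
X_r\!\left(\tfrac{y_k}{r^2}\right) = y_i\!\left(\tfrac{\delta_{ik}}{r^2} - \tfrac{2 y_i y_k}{r^4}\right) = -\tfrac{y_k}{r^2}.
\]
Identity (B.4) is equally immediate: $\slashed\partial_{ji} f(r) = y_j(y_i/r)f'(r) - y_i(y_j/r)f'(r) = 0$.

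Next I would prove (B.3) by induction on $a$. The base case $X_r f(r) = y_i (y_i/r) f'(r) = r f'(r)$ sets $p_1(r) = r$, of degree $1$. For the inductive step, assuming $X_r^a f = \sum_{j=1}^{a} p_j(r) f^{(j)}(r)$ with $\deg p_j \le j$, an application of the Leibniz rule and $X_r g(r) = r g'(r)$ yields
\[
X_r^{a+1} f = \sum_{j=1}^{a} \bigl(r p_j'(r) f^{(j)}(r) + r p_j(r) f^{(j+1)}(r)\bigr),
\]
from which the new coefficients $\tilde p_j = r p_j' + r p_{j-1}$ satisfy $\deg \tilde p_j \le j$, closing the induction. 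Identities (B.5)–(B.6) I would handle by induction on $|\beta|$: the base case $|\beta|=1$ reads
\[
\slashed\partial_{ji}\!\left(\tfrac{y_k}{r}\right) = y_j\!\left(\tfrac{\delta_{ik}}{r} - \tfrac{y_i y_k}{r^3}\right) - y_i\!\left(\tfrac{\delta_{jk}}{r} - \tfrac{y_j y_k}{r^3}\right) = \tfrac{y_j \delta_{ik} - y_i \delta_{jk}}{r},
\]
which is $y_\ell/r$ (for some $\ell\in\{i,j\}$) when $k\in\{i,j\}$ and $0$ otherwise; the identical computation with $r$ replaced by $r^2$ handles (B.6). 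The inductive step is trivial because each output is again of the form $y_\ell/r$ (or $y_\ell/r^2$) or $0$, so a further application of any $\slashed\partial_{j'i'}$ reproduces the same form.

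The only place where one must be mildly careful is the bookkeeping of the polynomial degrees in (B.3), but since $X_r$ preserves degree on polynomials and raises the derivative order by exactly one, the induction is transparent. I do not anticipate any genuine difficulty; the lemma is purely computational.
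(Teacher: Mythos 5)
Your proposal is correct and follows essentially the same route as the paper: direct computation of the first-order identities (using $\partial_i r = y_i/r$ and the explicit formula $\slashed\partial_{ji}(y_k/|y|)=(y_j\delta_{ik}-y_i\delta_{jk})/|y|$), plus induction on $a$ via $X_r g(r)=rg'(r)$ for (B.3) and on $|\beta|$ for (B.5)--(B.6); your degree bookkeeping $\tilde p_j = rp_j'+rp_{j-1}$ is sound. The only cosmetic point is that the outputs in (B.5)--(B.6) can carry a minus sign (e.g.\ $-y_i/|y|$ when $k=j\neq i$), but the paper's own statement and proof gloss over this in exactly the same way, and only boundedness is ever used.
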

\begin{proof}
Both (\ref{E:RADIALDERVIATIVEFRACYKMAGY}) and (\ref{E:ANGULARDERIVATIVERADIALFCT}) are straightforward applications of the definitions introduced in (\ref{E:DERIVATIVEDEFN}). Now (\ref{E:RADIALDERVIATIVERADIALFCT}) follows from induction using the following results
\begin{equation*}
X_r f(|y|)=|y|f'(|y|), \quad X_r (|y|^n)=n|y|^n \text{  for } n \in \mathbb{Z}_{\geq 0}.
\end{equation*}
Next (\ref{E:ANGULARDERIVATIVEFRACYKMAGY}) follows easily from the following facts, the first of which is a straightforward calculation,
\begin{equation*}
\slashed\partial_{ji}\left(\frac{y_k}{|y|}\right) = \frac{\delta_{jk}y_i-\delta_{ik}y_j}{|y|}, \quad \slashed\partial_{ii} = 0 \text{  for } i,j \in {1,2,3}.
\end{equation*} 
Similar arguments give (\ref{E:RADIALDERVIATIVE2FRACYKMAGY}) and (\ref{E:ANGULARDERIVATIVE2FRACYKMAGY}) to conclude the proof.
\end{proof}
\section{Weighted Sobolev-Hardy Inequality}
By (\ref{E:WDEMAND}), $w$ behaves like a distance function. Then from Proposition C.2 \cite{1610.01666}, and also using Corollary \ref{C:ENTROPYREGULARITYCOROLLARY}, we have the following modified weighted Sobolev-Hardy inequality:
\begin{lemma}\label{L:EMBEDDING}  
For any $u\in C^\infty(B_1({\bf 0}))$, we have 
\begin{align}
&\sup_{B_1({\bf 0})\setminus B_{\frac14}({\bf 0})}\left| w^{\frac{a_1}{2}} e^{\bar{S}}X_r^{a_1} \slashed\partial^{\beta_1} u \right| \notag\\
& \lesssim \sum_{a+|\beta|\leq a_1+|\beta_1|+\lceil \alpha\rceil+6} \| X_r^a \slashed\partial^{\beta} u \|_{a+\alpha,\psi e^{\bar{S}}} 
+ \sum_{|\nu|\leq a_1+|\beta_1| +2 } \| \partial^\nu u\|_{\alpha,(1-\psi)e^{\bar{S}}} ,\label{E:EMBEDDING1} \\
&\sup_{B_1({\bf 0})\setminus B_{\frac14}({\bf 0})}\left| w^{\frac{a_1}{2}} e^{\bar{S}} D X_r^{a_1}\slashed\partial^{\beta_1} u \right|\notag \\
& \lesssim \sum_{a+|\beta|\leq a_1+|\beta_1|+\lceil \alpha\rceil+6} \| \nabla_\eta X_r^a \slashed\partial^\beta u \|_{a+\alpha+1,\psi e^{\bar{S}}} 
+ \sum_{|\nu|\leq a_1+|\beta_1|+2} \|\nabla_\eta\partial^\nu u\|_{\alpha+1,(1-\psi)e^{\bar{S}}}. \label{E:EMBEDDING2}     
\end{align}
\end{lemma}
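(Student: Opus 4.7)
The plan is to reduce the statement to the ``isentropic'' weighted Sobolev--Hardy inequality established as Proposition C.2 in \cite{1610.01666}, and then absorb the extra factor $e^{\bar{S}}$ using the uniform regularity of the affine entropy from Corollary \ref{C:ENTROPYREGULARITYCOROLLARY}. First I would record the key observation, namely that Corollary \ref{C:ENTROPYREGULARITYCOROLLARY} gives constants $0<c\le C$ with $c\le e^{\bar{S}}(r)\le C$ for all $r\in[0,1]$, together with uniform bounds on the spatial derivatives of $e^{\bar{S}}$ up to order $k$. In particular, since $\bar{S}$ is radial by \eqref{E:BARSDEMAND}, the operators $\slashed\partial_{ji}$ annihilate $e^{\bar{S}}$, and the operator $X_r$ applied to $e^{\bar{S}}$ is uniformly bounded on $B_1(\mathbf{0})$. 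This upgrades pointwise equivalence to equivalence of all the weighted norms appearing in the statement: for each admissible multi-index, $\|\cdot\|_{k,\psi e^{\bar{S}}}\simeq\|\cdot\|_{k,\psi}$ and $\|\cdot\|_{k,(1-\psi)e^{\bar{S}}}\simeq\|\cdot\|_{k,(1-\psi)}$, with constants depending only on $c,C$.

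Next I would invoke Proposition C.2 of \cite{1610.01666}, which provides the corresponding inequality in the isentropic case where the weight $w$ behaves like a distance function to $\partial B_1(\mathbf{0})$. Since $w(y)=(1-|y|)\phi(|y|)^{\gamma-1}$ with $\phi\in\mathcal Z_k$ bounded above and below on $[0,1]$ by \eqref{E:WDEMAND}, $w$ indeed satisfies the distance-function hypothesis required in \cite{1610.01666}. The isentropic version yields the embedding
\[
\sup_{B_1({\bf 0})\setminus B_{\frac14}({\bf 0})}\bigl|w^{a_1/2}X_r^{a_1}\slashed\partial^{\beta_1}u\bigr|\lesssim \sum_{a+|\beta|\le a_1+|\beta_1|+\lceil\alpha\rceil+6}\|X_r^a\slashed\partial^\beta u\|_{a+\alpha,\psi}+\sum_{|\nu|\le a_1+|\beta_1|+2}\|\partial^\nu u\|_{\alpha,1-\psi},
\]
and the analogous statement with $D$ replaced by itself on the left and plain $\partial$ on the right. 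Substituting the norm equivalence from the first paragraph and bounding the pointwise factor $e^{\bar{S}}$ on the left by its uniform upper bound $C$ then yields \eqref{E:EMBEDDING1}.

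For \eqref{E:EMBEDDING2} the only additional subtlety is the presence of $\nabla_\eta$ in place of the Euclidean gradient $D$ on the right-hand side. Here I would use the a priori assumption \eqref{E:APRIORI}, which gives $\|\mathscr{A}-\mathbf{Id}\|_{W^{1,\infty}(\Omega)}<\frac{1}{3}$. Writing $\nabla_\eta F=\mathscr{A}\,DF$, this ensures pointwise equivalence $|\nabla_\eta F|\simeq|DF|$ together with control of one additional derivative of the difference, so that $\|DX_r^a\slashed\partial^\beta u\|_{a+\alpha+1,\psi e^{\bar S}}$ is controlled by $\|\nabla_\eta X_r^a\slashed\partial^\beta u\|_{a+\alpha+1,\psi e^{\bar S}}$ up to commutator terms of the form $(X_r^a\slashed\partial^\beta\mathscr{A})\,Du$, which are of strictly lower order in $u$ and can be folded into the right-hand side via a standard induction on $a_1+|\beta_1|$ together with the product rule. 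After this reduction, the same argument as for \eqref{E:EMBEDDING1} (applied to $D u$ in place of $u$) closes the estimate. The main obstacle, though essentially bookkeeping, is to verify that the commutators $[X_r^a\slashed\partial^\beta,\mathscr{A}\,\partial]$ produced when trading $\nabla_\eta$ for $D$ can be reabsorbed; this is handled by the decomposition formula \eqref{E:DERIVATIVEDECOMP} and the radial derivative identities of Lemma \ref{L:DERIVATIVERADIAL}, which ensure that all such commutator terms carry sufficient powers of $w$ to fit into the weighted norms on the right-hand side.
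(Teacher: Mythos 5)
Your argument is correct and is essentially the paper's own (the paper only sketches the proof: cite Proposition C.2 of \cite{1610.01666} for the isentropic weighted Sobolev--Hardy inequality, using that $w$ behaves like a distance function by \eqref{E:WDEMAND}, and absorb the factor $e^{\bar{S}}$ via the uniform two-sided bounds of Corollary \ref{C:ENTROPYREGULARITYCOROLLARY}). One small simplification to your last paragraph: since $\nabla_\eta$ is applied outermost in the norms, $\nabla_\eta X_r^a\slashed\partial^\beta u=\mathscr{A}\,D\bigl(X_r^a\slashed\partial^\beta u\bigr)$, so the a priori bound on $\mathscr{A}$ gives the pointwise equivalence $|DX_r^a\slashed\partial^\beta u|\simeq|\nabla_\eta X_r^a\slashed\partial^\beta u|$ directly and no commutators of the form $[X_r^a\slashed\partial^\beta,\mathscr{A}\,\partial]$ ever arise.
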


\section{Scaling Analysis and Affine Motion}\label{A:SCALING}
An alternative derivation of the nonisentropic affine motion discussed in Section \ref{S:AFF} is available directly from the structure of the original Eulerian equations.

First for any given $(\rho,{\bf u },S,p)$ and $A\in \text{GL}^+(3)$, consider the following mass critical transformation
\begin{align}
\rho(t,x)&=(\det A)^{-1}\hat{\rho}(s,y), \label{E:TRANSFORMPRE1} \\
\mathbf{u}(t,x)&=(\det A)^{\frac{1-3\gamma}{6}} A\, \hat{\mathbf{u}}(s,y), \label{E:TRANSFORMPRE2}  \\
S(t,x)&=\hat{S}(s,y), \\
p(t,x)&=(\det A)^{-\gamma}\hat{p}(s,y), \\
\frac{ds}{dt}&=(\det A)^{\frac{1-3\gamma}{6}}, \\
y&=A^{-1}x. \label{E:TRANSFORMPRE3}
\end{align}
If $(\rho,{\bf u},S,p)$ solve (\ref{E:MOM})-(\ref{E:M}) and (\ref{E:EEOS})-(\ref{E:S}) in $\Omega(t)$, then $(\hat{\rho},\hat{{\bf u}},\hat{S},\hat{p})$ solve
\begin{align}
\partial_s\hat\rho + \text{div}\, (\hat\rho \hat{\mathbf{u}})& = 0 \label{E:CONTINUITYG}  \\
\hat\rho (\partial_s \mathbf{\hat{u}} +\hat{\mathbf{u}} \cdot \nabla \hat{\mathbf{u}})  +\Lambda\nabla (\hat{p}) &= 0 \label{E:VELOCITYG} \\
\partial_s \hat{S} + \hat{\mathbf{u}} \cdot \nabla \hat{S} & = 0 \\
\hat{p}&=\hat{\rho}^{\gamma}e^{\hat{S}},
\end{align}
in $\hat{\Omega}(s)=A^{-1}\Omega(s)$ with $\Lambda = (\det A)^{\frac23}A^{-1}A^{-\top}$. 

Motivated by the nearly invariant transformation (\ref{E:TRANSFORMPRE1})-(\ref{E:TRANSFORMPRE3}), we are looking for a path
$$ \mathbb R_+\ni t\mapsto A(t) \in \text{GL}^+(3), $$
and consider the transformation
\begin{align}
\rho(t,x)&=(\det A(s))^{-1}\tilde{\rho}(s,y), \label{E:TRANSFORMPRE12} \\
\mathbf{u}(t,x)&=(\det A(s))^{\frac{1-3\gamma}{6}} A(s)\, \tilde{\mathbf{u}}(s,y), \label{E:TRANSFORMPRE22}  \\
S(t,x)&=\tilde{S}(s,y), \label{E:TRANSFORMPRE32} \\
p(t,x)&=(\det A(s))^{-\gamma}\tilde{p}(s,y), \\
\frac{ds}{dt}&=(\det A(t))^{\frac{1-3\gamma}{6}}, \\
y&=A(t)^{-1}x.
\end{align}
We seek $A(t)$ such that this transformation solves the vacuum free boundary nonisentropic Euler system (\ref{E:MOM})-(\ref{E:IC}). Introduce
$$ \mu(s) : = \det A(s)^{\frac13}, \qquad  B(s):=-A^{-1}A_s.$$
Then (\ref{E:MOM})-(\ref{E:M}) and (\ref{E:EEOS})-(\ref{E:S}) can be written as
\begin{align}
\tilde{\rho} \left(\partial_s \tilde{\bf u} + \frac{1-3\gamma}{2} \frac{\mu_s}{\mu}\tilde{\bf u} - B \tilde{\bf u} + B (y \cdot \nabla) \mathbf{\tilde{u}} + (\tilde{\mathbf{u}}\cdot\nabla) \tilde{\mathbf{u}}\right)+ \Lambda \nabla(\tilde{p}) &= 0, \label{E:TRANSFORMMOM} \\
\partial_s \tilde{\rho} - 3 \frac{\mu_s}{\mu} \tilde{\rho} + B y \cdot \nabla \tilde{\rho} + \text{div} (\tilde{\rho} \tilde{\mathbf{u}})&=0, \label{E:TRANSFORMM} \\
\partial_s \tilde{S} + \tilde{\bf u}\cdot \nabla \tilde{S} + By \cdot \nabla \tilde{S}&=0, \label{E:TRANSFORMS} \\
\tilde{p}&=\tilde{\rho}^\gamma e^{\tilde{S}}. \label{E:TRANSFORMP}
\end{align}
Next make the important change of variables
\begin{equation}
{\bf U}(s,y)  : = \tilde{\bf u}(s,y) +B(s) y.
\end{equation}
Notice that
\begin{align*}
{\bf U}\cdot \nabla \bf U &= \tilde{\mathbf{u}} \cdot \nabla \tilde{\mathbf{u}} + B y \cdot \nabla \tilde{\mathbf{u}} + B \tilde{\bf u} + B^2 y, \\
\partial_s {\bf U} &= \partial_s \tilde{\mathbf{u}} + B_s y, \\
\text{div} (B y)&=-\text{Tr}(A^{-1}A_s)=-\frac{\partial_s \det A(s)}{\det A(s)}=-3\frac{\mu_s}{\mu}.
\end{align*}
Then (\ref{E:TRANSFORMMOM})-(\ref{E:TRANSFORMS}) can be expressed as
\begin{align}
\partial_s{\bf U} +({\bf U}\cdot\nabla){\bf U}+\left(\frac{1-3\gamma}{2}\frac{\mu_s}{\mu}\,\text{{\bf Id}}-2B\right){\bf U} &-\left(B_s-B^2+B \frac{1-3\gamma}{2} \frac{\mu_s}{\mu} \right) y + \Lambda \frac{1}{\tilde{\rho}} \nabla(\tilde{p})=0, \label{E:CHANGEOFVARIABLEMOM} \\
\partial_s\tilde\rho +  \text{div}\,\left(\tilde\rho{\bf U}\right)  &= 0, \label{E:CHANGEOFVARIABLEM} \\
\partial_s \tilde{S} + \mathbf{U} \cdot \nabla \tilde{S} &=0. \label{E:CHANGEOFVARIABLES}
\end{align}
We now seek a special solution of (\ref{E:CHANGEOFVARIABLEMOM})-(\ref{E:CHANGEOFVARIABLES}) and (\ref{E:TRANSFORMP}) in $\Omega=B_1(\mathbf{0})$ by setting $\mathbf{U}=\mathbf{0}$. Then (\ref{E:CHANGEOFVARIABLEM})-(\ref{E:CHANGEOFVARIABLES}) reduce to $\partial_s \tilde\rho = \partial_s \tilde{S} = 0$ which implies $\tilde{\rho}$ and $\tilde{S}$ are only $y$-dependent. Furthermore by (\ref{E:TRANSFORMP}), $\tilde{p}$ is also only $y$-dependent.

Now for (\ref{E:CHANGEOFVARIABLEMOM}) to hold, we need to solve
\begin{equation}\label{E:NEEDTOSOLVE}
\left( B_s-B^2+\frac{1-3\gamma}{2} \frac{\mu_s}{\mu}B \right) y = \Lambda \frac{1}{\tilde{\rho}}\nabla \tilde{p}.
\end{equation}
Let
\begin{equation}\label{E:PRERHOPODE}
\frac{1}{\tilde{\rho}}\nabla \tilde{p}=-\delta y \quad \text{in} \quad \Omega.
\end{equation}
By a simple argument, this equation implies $\tilde{p}$ and $\tilde{\rho}$ are radial functions. Thus $\tilde{S}$ is also radial. Now (\ref{E:NEEDTOSOLVE}) will be satisfied if $B$ solves the ODE system
\begin{equation}\label{E:BODE}
B_s-B^2+\frac{1-3\gamma}{2} \frac{\mu_s}{\mu}B =-\delta \Lambda.
\end{equation}
Now using the definition of $B$ and $\frac{dt}{ds}=(\det A(t))^{-\tfrac{1-3\gamma}{6}}$, we have
\begin{align*}
B_s &= -\det A^{\frac{3\gamma-1}{6}} \partial_t(A^{-1} A_t\det{A}^{\frac{3\gamma-1}{6}}) \\
& = \det{A}^{\frac{3\gamma-1}3}A^{-1} A_t A^{-1}A_t - \det{A}^{\frac{3\gamma-1}3} A^{-1}A_{tt} -\frac{3\gamma-1}{6} \det{A}^{\frac{3\gamma-1}3} A^{-1} A_{t}
\frac{\partial_t\det{A}}{\det{A}}\\
& = B^2 +\frac{3\gamma-1}{2} \frac{\mu_s}{\mu} B - \det{A}^{\frac{3\gamma-1}3} A^{-1} A_{tt}.
\end{align*}
Thus recalling the definition of $\Lambda$, (\ref{E:NEEDTOSOLVE}) is equivalent to the following ODE system for $A$
\begin{align*}
- \det{A}^{\frac{3\gamma-1}3} A^{-1}A_{tt} = -\delta \det{A}^{\frac23}A^{-1}A^{-\top}.
\end{align*}
which can be written exactly as the affine fundamental system
\begin{equation}\label{E:SIDERISODE}
A_{tt} = \delta \det{A}^{1-\gamma} A^{-\top}.
\end{equation}
Finally since $\tilde{\rho}$ and $\tilde{p}$ are radial, (\ref{E:PRERHOPODE}) gives the fundamental affine ODE
\begin{equation}\label{E:FUNDAMENTALAFFINEODES}
\tilde{p}'(r)=-\delta r \tilde{\rho}(r).
\end{equation}
Then using (\ref{E:TRANSFORMP}) and our vacuum condition, from (\ref{E:FUNDAMENTALAFFINEODES}) we can obtain the explicit affine entropy formula for $e^{\tilde{S}}$ in terms of $\tilde{\rho}$
\begin{equation}\label{E:AFFENTROPYS}  
e^{\tilde{S}}(r) =  \frac{ \delta \int_{r}^1 \ell \tilde{\rho} (\ell) \, d \ell }{(\tilde{\rho}(r))^\gamma}.
\end{equation}

\end{document}